\documentclass[12pt,reqno]{amsart}
\usepackage{amssymb,color,mathrsfs}
\usepackage[english]{babel}
\usepackage{ifthen}
\newboolean{INSDEL}

\catcode`\@=11

 \def\AMSTeXfeatures{\Plainheads
   \let\current@vert=\AMS@vert}

 \def\Plainheads{\sh@ftdiam=0.05em
   \getlabeldims
   \let\vshaftfill=\plnvsolidfill
   \let\hshaftfill=\plnhsolidfill
   \let\th@rhead=\plnrhead
   \let\th@lhead=\plnlhead
   \let\th@dnhead=\plndnhead
   \let\th@uphead=\plnuphead}

 \def\glet{\global\let}

 \def\LaTeXfeatures{\catcode`\@=11
   \ifx\@clnwd\undefined \nol@g
      \input ltxcode.tex \dol@g \fi
   \ltxheads \let\current@vert=\new@vert
   \providelto \catcode`\@=\active}

 \def\nol@g{\def\wlog{\edef\garbage}}
 \def\dol@g{\let\wlog=\wl@g} \let\wl@g=\wlog
 \nol@g 

 \newbox\ltobox
 \def\providelto{{\setbox\z@=
   \hbox{$\to$}\minharrlen=\wd\z@
   \global\setbox\ltobox=\hbox{$\activeat>>>$}}
   \def\lto{\mathrel{\copy\ltobox}}}

 \def\ltxheads{\sh@ftdiam=\@wholewidth
   \getlabeldims
   \let\vshaftfill= \ltxvsolidfill
   \let\hshaftfill=\ltxhsolidfill
   \let\th@rhead=\ltxrhead
   \let\th@lhead=\ltxlhead
   \let\th@dnhead=\ltxdnhead
   \let\th@uphead=\ltxuphead}
 {\catcode`\@=\active
   \gdef@#1{\csname #1\string@at\endcsname}
   \glet\activeat=@}
 \def\def@#1{\expandafter\def\csname #1@at\endcsname}

 \def@>#1>#2>{\@rrow R{#1}{#2}}
 \def@<#1<#2<{\@rrow L{#1}{#2}}
 \def@ V#1V#2V{\@rrow V{#1}{#2}}
 \def@ A#1A#2A{\@rrow A{#1}{#2}}
 \def@/#1/#2/#3/{\@rrow{#1}{#2}{#3}}
 \def@.{\ifodd\row\ifmmode\noharrow
     \else\leavevmode.\spacefactor3000 \fi
   \else\novarrow\fi}
 \def@={\ifodd\row\harrow\hequalfill{}{}%
   \else\varrow\vequalfill{}{}\fi}
 \def@:#1{\ifx=#1\harrow\deffill{}{}%
   \else\leavevmode\null:#1\fi}
 \def@|{\current@vert}
  \def\AMS@vert{\varrow\vequalfill{}{}}
  \def\new@vert#1|#2|{\ifodd\row
   \let\nextarrow\vertexvarrow
   \else\let\nextarrow\varrow\fi
   \nextarrow\vshaftfill{#1}{#2}}
 \def@-{\ifmmode\let\next\hl@ne
   \else\let\next\AMSatdash \fi \next}
  \def\hl@ne#1-#2-{\harrow\hshaftfill{#1}{#2}}
  \def\AMSatdash{\let\next\relax\leavevmode
    \def\next@{\ifx\next-%
      \def\next-{\futurelet\next\nextii@}%
     \else\def\next{\hbox{-}}\fi\next}%
    \def\nextii@{\ifx\next-\def\next-{\hbox{---}}%
      \else\def\next{\hbox{--}}\fi\next}%
    \futurelet\next\next@}
 \def@(#1){\tweenarrows{#1}}
 \def@[#1]{\setsp@n#1\relax\activeat}
 \def\fiberbox{\hbox{$\vcenter{\hr@le\hbox{\vr@le
   \kern1ex\vbox{\kern1.2ex}\vr@le}\hr@le}$}}
  \def\hr@le{\hrule height \sh@ftdiam}
  \def\vr@le{\vrule width \sh@ftdiam}
 \def@+#1+#2+#3+{\ifodd\row \harrow{#1}{#2}{#3}%
   \else \varrow{#1}{#2}{#3}\fi}


 \def\Dnarrfill{\vequalfill\Dnhe@d}
 \def\Uparrfill{\Uphe@d\vequalfill}
 
 \def\ontofill{\rtarrfill\kern-0.3em 
   \th@rhead\kern 0.3em} 

 \def\rtarrfill{\hshaftfill\th@rhead}
 \def\ltarrfill{\th@lhead\hshaftfill}
 \def\dnarrfill{\vshaftfill\th@dnhead}
 \def\uparrfill{\th@uphead\vshaftfill}
 \def\hequalfill{\plnhfill=}
 \def\deffill{:\plnhfill=}
 \def\plnvextfill#1{\setbox\z@
   \hbox{\the\textfont3 #1}%
   \dimen@=\dp\z@\advance\dimen@\ht\z@
   \copy\z@ \kern-\dimen@ 
   \cleaders\copy\z@ \vfill
   \kern-\dimen@ 
   \box\z@}
 \def\plnhfill#1{$\m@th\mkern-1.5mu\mathord#1\mkern-6mu
    \cleaders\hbox{$\mkern-2mu\mathord#1\mkern-2mu$}\hfill
    \mkern-6mu\mathord#1\mkern-1.5mu$}
 \def\vequalfill{\plnvextfill{\char'167}}
 \def\plnvsolidfill{\plnvextfill{\char'077}}
 \def\plnhsolidfill{\plnhfill-}
 \def\ltxhsolidfill{\leaders\hrule height\topofshaft depth\botofshaft
   \hfill}
 \def\ltxvsolidfill{\leaders\vrule width\sh@ftdiam\vfill}
 \def\hdashfill{\hd@sh\wd@sh
   \xleaders \hbox{\wd@sh\hd@sh\wd@sh}\hfill
   \wd@sh\hd@sh}
 \def\vdashfill{\vd@sh\wd@sh
   \xleaders \vbox{\wd@sh\vd@sh\wd@sh}\vfill
   \wd@sh\vd@sh}
 \def\dashed{\ifinmeasureCD\else
    \ifodd\row\option{\let\hshaftfill=\hdashfill}%
   \else\option{\let\vshaftfill=\vdashfill}\fi\fi}


 \newdimen\CDstrutht  \newdimen\CDstrutdp
   \CDstrutht=0.875\baselineskip
   \CDstrutdp=0.375\baselineskip
 \newdimen\CDstrutlen \CDstrutlen=\CDstrutht
   \advance\CDstrutlen by \CDstrutdp

 \def\CDstrut{\vrule
   height \ifnum\row=1 \z@\else\CDstrutht \fi
   depth \ifnum\row=\numrows \z@ \else\CDstrutdp \fi
   width\z@}

 \newdimen\CDarrsurr \CDarrsurr=0.375em
 \newdimen\CDdashlen
    \CDdashlen= 0.1875\baselineskip
 \newdimen\CDvarrlen \CDvarrlen=1.5\baselineskip
 \newdimen\minharrlen 
  \setbox\z@\hbox{$\longrightarrow$} \minharrlen=\wd\z@
 \newdimen\minCDharrlen \minCDharrlen=2.5em 
\newdimen \minc@lwd
\def\findminc@lwd{\minc@lwd=2\CDarrsurr
  \advance\minc@lwd\minCDharrlen}

 \newdimen\sh@ftdiam


 \newdimen\labelsurr \labelsurr=1.25 em

\newcount\sp@ncnt \sp@ncnt=\@ne
\newcount\sp@ncnt@ \sp@ncnt@=\@ne
\newdimen\@rrwd \newdimen\@rrdp


 \def\adjustbot#1{\option{\advance\@rrdp#1\relax}}
 
\def\pushvertex#1{\global\p@shlen#1\relax
   \global\let\maybepush=\dopush}


 \newdimen\p@shlen \p@shlen=\z@

 
 \let\maybepush=\relax
 \def\dopush{\ifinmeasureCD 
   \advance\locdimen by -\p@shlen 
   \else\advance \@rrwd by -\p@shlen \fi 
   \global\let\maybepush=\relax \global\p@shlen=\z@\relax}


 \def\span@ne{\global\sp@ncnt=\@ne\relax}
 \def\setsp@n#1#2{\global\sp@ncnt=#1\relax
   \ifx\relax#2\relax\else\global\sp@ncnt@=#2\relax\fi}

 \def\plnrhead{\llap{$\rightarrow\mkern-1.5mu$}}
 \def\plnlhead{\rlap{$\mkern-1.5mu\leftarrow$}}

 \def\clap#1{\hbox to \z@{\hss #1\hss}}

 \def\plndnhead{\hbox{\the\textfont3 \char'171}}
 \def\plnuphead{\hbox{\the\textfont3 \char'170}}
 \def\Dnhe@d{\hbox{\the\textfont3 \char'177}}
 \def\Uphe@d{\hbox{\the\textfont3 \char'176}}

 \def\ltxrhead{\raise\@xisheight
   \llap{\smash{\@linefnt\@getrarrow(1,0)}}}
 \def\ltxlhead{\raise\@xisheight
   \rlap{\@linefnt\@getlarrow(-1,0)}}
 \def\ltxuphead{\setbox\z@=\rlap{%
   \kern\@halfwidth\@linefnt\char'66}%
   \copy\z@\kern-\ht\z@}
 \def\ltxdnhead{\setbox\z@=\rlap{%
   \kern\@halfwidth\@linefnt\char'77}%
   \ht\z@=\z@\box\z@}

 \def\wd@sh{\kern0.5\CDdashlen}
 \def\hd@sh{\vrule height\topofshaft depth\botofshaft
    width\CDdashlen}
 \def\vd@sh{\hrule height\CDdashlen
   depth\z@ width\sh@ftdiam}

\def\xylist{14{3434}13{2414}12{1723}%
  23{1413}34{1153}11{0867}43{0707}%
  32{0580}21{0414}31{0291}41{0}}
\newcount\tgtcnt@
\def\find@xyargs{\dimen@=\@rrdp
  \advance\dimen@ by \CDstrutlen
  \tgtcnt@=\dimen@ \dimen@=\@rrwd 
  \divide\dimen@ by \@m 
  \divide \tgtcnt@ by \dimen@ 
  \expandafter\testxy\xylist\relax
  \unitlength=\@xarg\@rrdp
  \divide\unitlength by\@yarg\relax}
\def\testxy#1#2#3{\ifnum\tgtcnt@>#3
    \@xarg=#1\relax \@yarg=#2\relax
    \let\next=\ignorerest
  \else\let\next\testxy\fi\next}
\def\ignorerest#1\relax{\relax}

\let\scalefactor=\@ne
\def\SWarrow{\find@xyargs\vector
  (-\@xarg,-\@yarg)\scalefactor\hskip-\wd\@linechar}
\def\NWarrow{\find@xyargs\vector
  (-\@xarg,\@yarg)\scalefactor\hskip-\wd\@linechar}
\def\NEarrow{\find@xyargs\vector
  (\@xarg,\@yarg)\scalefactor}
\def\SEarrow{\find@xyargs\vector
  (\@xarg,-\@yarg)\scalefactor}
\def\rightupline{\find@xyargs\@linelen=\scalefactor
     \unitlength\@sline}
\def\rightdownline{\find@xyargs\@yarg=-\@yarg\relax
     \@linelen=\scalefactor\unitlength\@sline}

\def\Sim{\ifodd\row\setbox\z@=\hbox{$\sim$}\dimen@=\ht\z@
 \advance\dimen@ by -\@xisheight
  \vbox{\box\z@\kern-\@xisheight\kern\dimen@}%
  \else\hbox{$\wr$}\fi}

%
\def\harrow#1#2#3{\inmeasureCDtrue\findminarrwd
  {#2}{#3}{\sp@ncnt\minharrlen}\inmeasureCDfalse\span@ne
  \mathrel{\hbox{\options\hplace{#1}\ulabel{#2}\dlabel{#3}}}}

\def\noharrow{\harrow\hfill{}{}}
\def\vertexvarrow#1#2#3{\findarrdp \@rrwd=\z@ \setsp@n\@ne\@ne
  \vbox to \z@{\kern-1.2\CDstrutht
  \rlap{\options\vplace{#1}\llabel{#2}\rlabel{#3}}\vss}}

\newif\ifinmeasureCD
\def\measurelabel#1{\setbox\z@
  \hbox{$\scriptstyle#1\kern\labelsurr$}%
  \ifdim\wd\z@>\@rrwd \@rrwd=\wd\z@\fi}
\def\findminarrwd#1#2#3{\@rrwd=#3\relax
   \measurelabel{#1}\measurelabel{#2}}
\def\findCDarrwd#1#2{\@rrwd=\minCDharrlen
   \measurelabel{#1}\measurelabel{#2}%
  }

\newcount\row \row=\@ne \newcount\col \col=\@ne 
 \newcount\numrows
\numrows=\@ne
 \newcount\numcols
\newcount\arrspan \newdimen\vrtxhalfwd  \newbox\tempbox

\def\DANABUG{\advance\col by \@ne
 \@rrwd=\minCDharrlen
  \advance\@rrwd by \vrtxhalfwd
  \advance\@rrwd by \CDarrsurr
  \ifnum\col>\numcols \numcols=\col
     \newlocdimen{col\the\col}\locdimen=\@rrwd 
  \else \ifdim\@rrwd>\c@l \c@l=\@rrwd\fi\fi}

\def\drop#1\\{
  \findvrtxhalfsum\DANABUG\advance\row by 2 \measureinit}

\def\measureinit{\col=\@ne \vrtxhalfwd=-\CDarrsurr\arrspan=\@ne\@rrwd=\z@
   \setbox\tempbox=\hbox\bgroup$}
\def\measure{
  \let\harrow\measureCDarrow
  \let\CDCR=\measureCR 
   \findminc@lwd
  \inmeasureCDtrue
  \row=\@ne \numcols=\z@ \measureinit}

\def\endmeasure{\findvrtxhalfsum\DANABUG
  \numrows=\row 
  \inmeasureCDfalse}




\def\newlocdimen#1{\advance\dimenc@unt by \@ne
  \ifnum\dimenc@unt<\insc@unt
     \else\errmessage{No room for the CD}\fi
  \dimendef\locdimen=\dimenc@unt
  \expandafter\dimendef\csname#1\endcsname=\dimenc@unt}

 \def\r@wc@l{\csname row\the\row col\the\col\endcsname}
 \def\c@l{\csname col\the\col\endcsname}

 \def\findvrtxhalfsum{$\egroup
  \newlocdimen{row\the\row col\the\col}
  \locdimen=\vrtxhalfwd 
  \vrtxhalfwd=0.5\wd\tempbox 
  \advance\vrtxhalfwd by \CDarrsurr
  \advance\locdimen by \vrtxhalfwd 
  \advance\@rrwd by \locdimen 
  \maybepush
  \divide\@rrwd by \arrspan\relax
  \ifdim\@rrwd<\minc@lwd
    \ifnum\col>\@ne \@rrwd=\minc@lwd\fi \fi
  \loop 
    \ifnum\col>\numcols \numcols=\col
       \newlocdimen{col\the\col}
       \locdimen=\@rrwd 
    \else \ifdim\@rrwd>\c@l \c@l=\@rrwd\fi \fi
   \ifnum\arrspan>\@ne
      \advance\arrspan by -1 \advance\col by \@ne
  \repeat }

 \def\measureCDarrow#1#2#3{\findvrtxhalfsum
   \arrspan=\sp@ncnt\relax\global\sp@ncnt=1\relax
   \advance\col by \@ne
   \findCDarrwd{#2}{#3}%
   \setbox\tempbox=\hbox\bgroup$}

 \newcount\dr@tn \dr@tn=\z@
 \def\locate#1:#2{\ifinmeasureCD\else
   \count@=-#1
   \multiply\count@ by 2
   \advance\count@ by #2
   \dimen@=\count@\@rrwd
   \ifnum\dr@tn=\@ne\relax \else\dimen@=-\dimen@ \fi
   \dimen@i=\@rrdp
   \ifnum\dr@tn>\z@\advance\dimen@i by \CDstrutlen \fi
   \dimen@i=\count@\dimen@i
   \count@=#2 \multiply\count@ by 2
   \divide\dimen@ by \count@
   \divide\dimen@i by \count@
   \lift\dimen@i\nudge\dimen@\fi}

\def\betweenCDrows{\advance\row by \@ne \col=\@ne
\options}


\def\hbegin{\hbox\bgroup\kern\c@l \kern-\r@wc@l$}
\def\hend{$\glet\maybepush\relax \CDstrut\egroup}
\def\vbegin{\setbox\tempbox=\hbox\bgroup$}
\def\vend{$\egroup\ht\tempbox=\z@\dp\tempbox\CDvarrlen
  \box\tempbox}
\def\setCD{\let\harrow=\setCDarrow
  \let\CDCR=\setCR 
  \row=\@ne \col=\@ne \hbegin}
\let\endsetCD=\hend 

\def\findarrwd{\@rrwd=\z@ \count@=\col \advance\count@ by\sp@ncnt
  \loop\ifnum\count@>\col \advance\count@ by -1
      \advance\@rrwd by\csname col\the\count@\endcsname\repeat}
\def\setCDarrow#1#2#3{\kern\CDarrsurr\advance\col by \@ne
  \findarrwd \advance\@rrwd by -\r@wc@l
  \@rrdp=\z@ 
  \maybepush
  \advance\col by -\@ne \advance\col by \sp@ncnt \span@ne
  \hbox to \@rrwd{\options
   \@rrwd=\scalefactor\@rrwd\hss
   \hplace{#1}\ulabel{#2}\dlabel{#3}\hss}%
   \kern\CDarrsurr}

\newdimen\labspacei 
\newdimen\labspaceii 

\newdimen\@xisheight
  \@xisheight=\the\fontdimen22\textfont2
\newdimen\labelskip
  \labelskip=\the\fontdimen10\textfont3 
\newdimen\topofshaft
\newdimen\botofshaft
\newdimen\botofulabel
\newdimen\topofdlabel
\def\getlabeldims{
  \topofshaft=0.5\sh@ftdiam
  \botofshaft=\topofshaft
  \advance\topofshaft by \@xisheight
  \advance\botofshaft by -\@xisheight
  \botofulabel=\topofshaft
  \advance\botofulabel by \labelskip
  \topofdlabel=\botofshaft
  \advance\topofdlabel by \labelskip}

\def\ulabel{\ifnum\row=\@ne\let\next\ulabeli
   \else\let\next\ulabellap\fi\next}
\def\ulabeli#1{\vbox{
  \clap{\kern-\@rrwd$\scriptstyle#1$}%
  \kern\botofulabel}\maybeoffset}
\def\ulabellap#1{\vbox to \z@{\vss
  \clap{\kern-\@rrwd$\scriptstyle#1$}%
  \kern\botofulabel}\maybeoffset}
\def\dlabel{\ifnum\row=\numrows\let\next\dlabeli
   \else\let\next\dlabellap\fi\next}
\def\dlabeli#1{\vtop{\kern\topofdlabel
  \clap{\kern-\@rrwd$\scriptstyle#1$}%
  }\maybeoffset}
\def\dlabellap#1{\vbox to \z@{\kern\topofdlabel
  \clap{\kern-\@rrwd$\scriptstyle#1$}%
  \vss}\maybeoffset}
\def\rlabel#1{\vbox to \z@{\vss
  \rlap{\kern\labelskip$\scriptstyle#1$}%
  \vss\kern-\@rrdp}\maybeoffset}
\def\llabel#1{\vbox to \z@{\vss
  \llap{$\scriptstyle#1$\kern\labelskip}%
  \vss\kern-\@rrdp}\maybeoffset}
\def\swlabel#1{\vtop{\kern0.5\@rrdp
  \llap{$\scriptstyle#1$\kern\labelskip\kern-0.5\@rrwd}
  }\maybeoffset}
\def\nwlabel#1{\vbox{
  \llap{$\scriptstyle#1$\kern\labelskip\kern-0.5\@rrwd}%
  \kern-0.5\@rrdp}\maybeoffset}
\def\selabel#1{\vtop{\kern0.5\@rrdp
  \rlap{\kern0.5\@rrwd\kern\labelskip$\scriptstyle#1$}%
  }\maybeoffset}
\def\nelabel#1{\vbox{
  \rlap{\kern0.5\@rrwd\kern\labelskip$\scriptstyle#1$}%
  \kern-0.5\@rrdp}\maybeoffset}
\def\cplace#1{\vbox to \z@{\vss
  \clap{$#1$\kern-\@rrwd}%
  \kern-\@rrdp\vss}\maybeoffset}
\def\hplace#1{\hbox to \@rrwd{#1}\maybeoffset}
\def\vplace#1{\clap{\vbox to \z@{#1\kern-\@rrdp}}\maybeoffset}

\newdimen\nudgeamount \nudgeamount=\z@
\newdimen\liftamount \liftamount=\z@
\let\maybeoffset\relax
\newbox\offsetbox \newdimen\lastheight
\def\dooffset{
  \setbox\offsetbox=\lastbox \lastheight=\ht\offsetbox
  \setbox\offsetbox=\vbox{\kern-\liftamount\box\offsetbox}%
  \ht\offsetbox=\lastheight
  \kern\nudgeamount\box\offsetbox\kern-\nudgeamount
  \global\nudgeamount=\z@ \global\liftamount=\z@
  \glet\maybeoffset=\relax}
\def\nudge#1{\ifinmeasureCD\else
  \global\advance\nudgeamount#1\relax
  \global\let\maybeoffset\dooffset\fi}
\def\lift#1{\ifinmeasureCD\else
  \global\advance\liftamount#1\relax
  \global\let\maybeoffset\dooffset\fi}

\def\findarrdp{\@rrdp=\CDvarrlen
  \ifnum\sp@ncnt@>1
    \advance\@rrdp by \CDstrutlen
    \multiply\@rrdp by \sp@ncnt@
    \advance\@rrdp by -\CDstrutlen \fi
 }

\def\varrow#1#2#3{\ifnum\sp@ncnt>\@ne
     \sp@ncnt@=\sp@ncnt\relax\fi
  \findarrdp \@rrwd=\z@ 
  \kern\c@l
   \hbox to \z@{\options
   \@rrdp=\scalefactor\@rrdp
    \hss\vplace{#1}\llabel{#2}\rlabel{#3}\hss}%
  \global\advance\col by \@ne \setsp@n\@ne\@ne
  }

\def\novarrow{\varrow\vfill{}{}}

\def\tweenarrows#1{\findarrwd \findarrdp \setsp@n\@ne\@ne
  \rlap{\options\cplace{#1}}}

\def\usarrow #1#2#3{\dr@tn=\@ne
  \findarrwd \findarrdp \setsp@n\@ne\@ne
  \rlap{\options\cplace{#1}\nwlabel{#2}\selabel{#3}}%
  \dr@tn=\z@}
\def\dsarrow #1#2#3{\dr@tn=\tw@
  \findarrwd \findarrdp \setsp@n\@ne\@ne
  \rlap{\options\cplace{#1}\swlabel{#2}\nelabel{#3}}%
  \dr@tn=\z@}
 \def\@rrow#1{\csname #1@rrow\endcsname}
 \def\R@rrow{\harrow \rtarrfill}
 \def\L@rrow{\harrow \ltarrfill}
 \def\V@rrow{\varrow \dnarrfill}
 \def\A@rrow{\varrow \uparrfill}
 \def\SE@rrow{\dsarrow \SEarrow}
 \def\NW@rrow{\dsarrow \NWarrow}
 \def\SW@rrow{\usarrow \SWarrow}
 \def\NE@rrow{\usarrow \NEarrow}
 \def\DS@rrow{\dsarrow \dnslope}
 \def\US@rrow{\usarrow \upslope}
 \def\upslope{\find@xyargs
       \@linelen=\unitlength\@sline}
 \def\dnslope{\find@xyargs\@yarg=-\@yarg\relax
       \@linelen=\unitlength\@sline}

\newtoks\optionlist
\optionlist={}
\let\options\relax
\def\dooptions{\the\optionlist\global\optionlist={}%
  \glet\options=\relax}
\def\option#1{\ifinmeasureCD\else
  \glet\options=\dooptions
  \global\optionlist=\expandafter{\the\optionlist\relax#1}\fi}
\def\wider#1{\ifinmeasureCD\else
   \option{\advance\@rrwd by #1}\fi}
\def\deeper#1{\ifinmeasureCD\else
   \option{\advance\@rrdp by #1}\fi}


{\def\\{\global\let\sptoken= }\\ }

\def\CR{\futurelet\nexttok\testCR}
\def\testCR{\ifx\nexttok\sptoken
   \let\next\eatspaceCR\else\let\next\CDCR\fi\next}
\def\eatspaceCR#1 {\CR}
\def\measureCR{\ifx\nexttok\endmeasure\let\nextCR\relax
    \else\let\nextCR\drop\fi\nextCR}
\def\setCR{\ifodd\row
  \ifx\nexttok\endsetCD\else\hend\betweenCDrows\vbegin\fi
  \else\vend\betweenCDrows\hbegin\fi}

\countdef\dimenc@unt=11
\def\CD#1\endCD{
   \begingroup\let\\=\CR
  \m@th\offinterlineskip
   \measure#1\endmeasure\null\,\vcenter{\setCD#1\endsetCD}\,
   \endgroup
    }

\ifx\@clnwd\undefined \nol@g\else\catcode`\ =14\relax\fi
 \font\@linefnt=line10
 \newcount\@tempcnta
 \newcount\@tempcntb
 \newdimen\@tempdima
 \newdimen\@tempdimb
 \newdimen\@wholewidth
 \newdimen\@halfwidth
   \@wholewidth\fontdimen8\@linefnt \@halfwidth .5\@wholewidth
 \newdimen\unitlength
 \newcount\@xarg
 \newcount\@yarg
 \newcount\@yyarg
 \newbox\@linechar
 \newdimen\@linelen
 \newdimen\@clnwd
 \newdimen\@clnht
 \newif\if@negarg

 \def\@whilenoop#1{}

 \def\@whiledim#1\do #2{\ifdim #1\relax#2\@iwhiledim{#1\relax#2}\fi}

 \def\@iwhiledim#1{\ifdim #1\let\@nextwhile=\@iwhiledim
         \else\let\@nextwhile=\@whilenoop\fi\@nextwhile{#1}}

 \def\@sline{\ifnum\@xarg< 0 \@negargtrue \@xarg -\@xarg \@yyarg -\@yarg
   \else \@negargfalse \@yyarg \@yarg \fi
 \ifnum \@yyarg >0 \@tempcnta\@yyarg \else \@tempcnta -\@yyarg \fi
 \ifnum\@tempcnta>6 \@badlinearg\@tempcnta0 \fi
 \ifnum\@xarg>6 \@badlinearg\@xarg 1 \fi
 \setbox\@linechar\hbox{\@linefnt\@getlinechar(\@xarg,\@yyarg)}%
 \ifnum \@yarg >0 \let\@upordown\raise \@clnht\z@
    \else\let\@upordown\lower \@clnht \ht\@linechar\fi
 \@clnwd=\wd\@linechar
 \if@negarg \hskip -\wd\@linechar \def\@tempa{\hskip -2\wd\@linechar}\else
      \let\@tempa\relax \fi
 \@whiledim \@clnwd <\@linelen \do
   {\@upordown\@clnht\copy\@linechar
    \@tempa
    \advance\@clnht \ht\@linechar
    \advance\@clnwd \wd\@linechar}%
 \advance\@clnht -\ht\@linechar
 \advance\@clnwd -\wd\@linechar
 \@tempdima\@linelen\advance\@tempdima -\@clnwd
 \@tempdimb\@tempdima\advance\@tempdimb -\wd\@linechar
 \if@negarg \hskip -\@tempdimb \else \hskip \@tempdimb \fi
 \multiply\@tempdima \@m
 \@tempcnta \@tempdima \@tempdima \wd\@linechar \divide\@tempcnta \@tempdima
 \@tempdima \ht\@linechar \multiply\@tempdima \@tempcnta
 \divide\@tempdima \@m
 \advance\@clnht \@tempdima
 \ifdim \@linelen <\wd\@linechar
    \hskip \wd\@linechar
   \else\@upordown\@clnht\copy\@linechar\fi}

 \def\@getlinechar(#1,#2){\@tempcnta#1\relax\multiply\@tempcnta 8
 \advance\@tempcnta -9 \ifnum #2>0 \advance\@tempcnta #2\relax\else
 \advance\@tempcnta -#2\relax\advance\@tempcnta 64 \fi
 \char\@tempcnta}

 \def\vector(#1,#2)#3{\@xarg #1\relax \@yarg #2\relax
 \@tempcnta \ifnum\@xarg<0 -\@xarg\else\@xarg\fi
 \ifnum\@tempcnta<5\relax
 \@linelen=#3\unitlength
 \ifnum\@xarg =0 \@vvector
   \else \ifnum\@yarg =0 \@hvector \else \@svector\fi
 \fi
 \else\@badlinearg\fi}

 \def\@svector{\@sline
 \@tempcnta\@yarg \ifnum\@tempcnta <0 \@tempcnta=-\@tempcnta\fi
 \ifnum\@tempcnta <5
   \hskip -\wd\@linechar
   \@upordown\@clnht \hbox{\@linefnt  \if@negarg
   \@getlarrow(\@xarg,\@yyarg) \else \@getrarrow(\@xarg,\@yyarg) \fi}%
 \else\@badlinearg\fi}

 \def\@getlarrow(#1,#2){\ifnum #2 =\z@ \@tempcnta='33\else
 \@tempcnta=#1\relax\multiply\@tempcnta \sixt@@n \advance\@tempcnta
 -9 \@tempcntb=#2\relax\multiply\@tempcntb \tw@
 \ifnum \@tempcntb >0 \advance\@tempcnta \@tempcntb\relax
 \else\advance\@tempcnta -\@tempcntb\advance\@tempcnta 64
 \fi\fi\char\@tempcnta}

 \def\@getrarrow(#1,#2){\@tempcntb=#2\relax
 \ifnum\@tempcntb < 0 \@tempcntb=-\@tempcntb\relax\fi
 \ifcase \@tempcntb\relax \@tempcnta='55 \or
 \ifnum #1<3 \@tempcnta=#1\relax\multiply\@tempcnta
 24 \advance\@tempcnta -6 \else \ifnum #1=3 \@tempcnta=49
 \else\@tempcnta=58 \fi\fi\or
 \ifnum #1<3 \@tempcnta=#1\relax\multiply\@tempcnta
 24 \advance\@tempcnta -3 \else \@tempcnta=51\fi\or
 \@tempcnta=#1\relax\multiply\@tempcnta
 \sixt@@n \advance\@tempcnta -\tw@ \else
 \@tempcnta=#1\relax\multiply\@tempcnta
 \sixt@@n \advance\@tempcnta 7 \fi\ifnum #2<0 \advance\@tempcnta 64 \fi
 \char\@tempcnta}
\catcode`\ =10

\dol@g 
\catcode`\@=\active
\LaTeXfeatures

\newlength{\mylen}

\def\dela#1{\ifmmode
\text{\kern1pt\mbox{$#1$}\kern1pt\settowidth{\mylen}{$#1$}%
\addtolength{\mylen}{0.7pt}\capt{-\the\mylen}{0pt}%
{\kern-1pt\rule[-2pt]{0.5pt}{9pt}\rule[2pt]{\mylen}{0.5pt}%
\kern-0.5pt\rule[-2pt]{0.5pt}{9pt}\capt{-1.1pt}{4.6pt}{$_\wr$}}\kern0.8pt}%
\else
\kern1pt\mbox{#1\/}\kern1pt\settowidth{\mylen}{#1\/}%
\addtolength{\mylen}{0.7pt}\capt{-\the\mylen}{0pt}%
{\kern-1pt\rule[-2pt]{0.5pt}{9pt}\rule[2pt]{\mylen}{0.5pt}%
\kern-0.5pt\rule[-2pt]{0.5pt}{9pt}\capt{-1.1pt}{4.6pt}{$_\wr$}}\kern0.8pt
\fi}

\def\insa#1{\ifmmode
\text{\boxed{\vphantom{)}{\kern-1pt}#1{\kern-1pt}}}%
\else%
\boxed{\vphantom{)}{\kern-1pt}\text{#1\/}{\kern-1pt}}%
\fi}

\def\delbreaka#1{\break\dela{#1}}

\let\ins=\insb
\let\del=\delb
\let\delbreak=\delbreakb

\def\insdel{\fboxsep=1pt
\emergencystretch=42pt
\let\ins=\insa
\let\del=\dela
\let\delbreak=\delbreaka}

\def\capt#1#2#3{\rlap{\kern #1\smash{\lower #2\hbox{#3}}}}
\def\pict#1#2#3#4#5{\rlap{\kern #1\smash{\lower #2
\hbox{\includegraphics[bb= 0 0 #3 #4]{#5}}}}}
\def\pictPS#1#2#3{\rlap{\kern #1\smash{\lower
#2\hbox{\includegraphics{#3}}}}}

\insdel


\usepackage{epsfig}
\usepackage{yfonts}
\usepackage{enumerate, amssymb}
\usepackage[colorlinks]{hyperref}
\usepackage{hyperref}
\usepackage{comment}
\textheight=191mm 
\textwidth=117mm
\oddsidemargin=1.5cm \evensidemargin=1.5cm \topmargin=1.5cm

\newcommand{\R}{\ensuremath{\mathbb{R}}}

\newcommand{\Z}{\ensuremath{\mathbb{Z}}}
\newcommand{\N}{\ensuremath{\mathbb{N}}}

\newcommand{\p}{\partial}
\newcommand{\id}{{\rm id}}

\def\bals#1\eals{\begin{align*}#1\end{align*}}
\def\bal#1\eal{\begin{align}#1\end{align}}

\def\Aut{\mathop{\rm Aut}}
\def\SAut{\mathop{\rm SAut}}

\def\Der{\mathop{\rm Der}}
\def\Norm{\mathop{\rm Norm}}

\def\deg{\mathop{\rm deg}}

\def\reg{{\mathop{\rm reg}}}

\def\rank{\mathop{\rm rank}}

\def\ML{\mathop{\rm ML}}

\def\lto{\longrightarrow}

\def\and{\quad\mbox{and}\quad}
\def\discr{\mathop{\rm discr}}

\def\Aff{\mathop{\rm Aff}}

\renewcommand{\epsilon}{\varepsilon}
\renewcommand{\phi}{\varphi}

\newcommand{\bnum}{\begin{itemize}}
\newcommand{\enum}{\end{itemize}}
\renewcommand{\emptyset}{\varnothing}
\addtolength{\topmargin}{-12mm} \addtolength{\textheight}{1.5cm}
\setlength{\textwidth}{15.5cm} \addtolength{\oddsidemargin}{-1cm}
\addtolength{\evensidemargin}{-1cm} \raggedbottom

\newtheorem{thm}{Theorem}[section]
\newtheorem{cor}[thm]{Corollary}
\newtheorem{lem}[thm]{Lemma}
\newtheorem{prop}[thm]{Proposition}
\newtheorem{prob}[thm]{Problem}

\theoremstyle{definition}
\newtheorem{defi}[thm]{Definition}
\newtheorem{defis}[thm]{Definitions}
\newtheorem{conj}[thm]{Problem}
\newtheorem{conv}[thm]{Convention}
\newtheorem{nota}[thm]{Notation}
\newtheorem{rem}[thm]{Remark}
\newtheorem{rems}[thm]{Remarks}
\newtheorem{rem-def}[thm]{Remark-Definition}
\newtheorem{exa}[thm]{Example}
\newtheorem{exas}[thm]{Examples}
\newtheorem{exa-def}[thm]{Example-Definition}
\newtheorem{sit}[thm]{}

\newcommand{\brem}{\begin{rem}}
\newcommand{\brems}{\begin{rems}}
\newcommand{\erem}{\end{rem}}
\newcommand{\erems}{\end{rems}}
\newcommand{\bprob}{\begin{prob}}
\newcommand{\eprob}{\end{prob}}
\newcommand{\bprobs}{\begin{probs}}
\newcommand{\eprobs}{\end{probs}}
\newcommand{\bques}{\begin{ques}}
\newcommand{\eques}{\end{ques}}
\newcommand{\bexa}{\begin{exa}}
\newcommand{\bexas}{\begin{exas}}
\newcommand{\eexa}{\end{exa}}
\newcommand{\eexas}{\end{exas}}
\newcommand{\bdefi}{\begin{defi}}
\newcommand{\edefi}{\end{defi}}
\newcommand{\bdefis}{\begin{defis}}
\newcommand{\edefis}{\end{defis}}
\newcommand{\bcor}{\begin{cor}}
\newcommand{\ecor}{\end{cor}}
\newcommand{\blem}{\begin{lem}}
\newcommand{\elem}{\end{lem}}
\newcommand{\bconv}{\begin{conv}}
\newcommand{\econv}{\end{conv}}
\newcommand{\bconj}{\begin{conj}}
\newcommand{\econj}{\end{conj}}
\newcommand{\bprop}{\begin{prop}}
\newcommand{\eprop}{\end{prop}}
\newcommand{\bthm}{\begin{thm}}
\newcommand{\ethm}{\end{thm}}
\newcommand{\bnota}{\begin{nota}}
\newcommand{\enota}{\end{nota}}
\newcommand{\bsit}{\begin{sit}}
\newcommand{\esit}{\end{sit}}
\newcommand{\be}{\begin{equation}}
\newcommand{\ee}{\end{equation}}
\newcommand{\bproof}{\begin{proof}}
\newcommand{\eproof}{\end{proof}}
\def\ba{\begin{array}}
\def\ea{\end{array}}


\DeclareMathOperator{\bc}{bc} 
\DeclareMathOperator{\const}{const}

\DeclareMathOperator{\End}{End} 
\DeclareMathOperator{\hol}{hol} \DeclareMathOperator{\Hom}{Hom}
 
 \DeclareMathOperator{\Mor}{Mor}

\def\Lie{\mathop{\rm Lie}}
\def\LND{\mathop{\rm LND}}
\def\sing{\mathop{\rm sing}}

 \DeclareMathOperator{\Sym}{Sym}

\DeclareMathOperator{\Tors}{Tors} 

\DeclareMathOperator{\ord}{ord} \DeclareMathOperator{\blc}{blc}

\newcommand{\Def}{\stackrel{\mathrm{def}}{=\!\!=}}

\thanks{A part of this paper was done during the stay of the second author at the Max Planck 
Institute for Mathematics in Bonn, Germany and in the Technion, Haifa, Israel. The second author thanks these institutions for their  hospitality, 
excellent working conditions, and support.}

\begin{document}

\title[Automorphism groups of configuration spaces]{Automorphism groups of configuration spaces and discriminant varieties}

\author{Vladimir Lin and Mikhail Zaidenberg}
\address{Technion-Israel institute of Technology, Haifa 32000, Israel}
\email{vlin{\char064}tx.technion.ac.il}
\address{Universit\'e Grenoble I, Institut Fourier, UMR 5582
CNRS-UJF, BP 74, 38402 Saint Martin d'H\`eres c\'edex, France}
\email{Mikhail.Zaidenberg{\char064}ujf-grenoble.fr}

\begin{abstract} The configuration space $\mathcal{C}^n(X)$
of an algebraic curve $X$ is the algebraic variety consisting of
all $n$-point subsets $Q\subset X$. We describe the
automorphisms of $\mathcal{C}^n(\mathbb{C})$, deduce that the
(infinite dimensional) group $\Aut\mathcal{C}^n(\mathbb{C})$ is
solvable, and obtain an analog of the Mostow decomposition in this
group.  The Lie algebra and the Makar-Limanov invariant of
$\mathcal{C}^n(\mathbb{C})$ are also computed. We obtain similar
results for the  level hypersurfaces of the discriminant,
including its singular zero level.

This is an extended version of our paper \cite{Lin-Zaidenberg14}. 
We strengthened the results concerning the automorphism groups of cylinders 
over rigid bases, replacing the rigidity assumption by the weaker assumption 
of tightness. 
We also added 
alternative proofs of  two auxiliary results cited in \cite{Lin-Zaidenberg14} 
and due to  Zinde and to the first author.  This allowed us to provide 
the optimal dimension bounds in our theorems. \end{abstract}

\maketitle

\thanks{
{\renewcommand{\thefootnote}{} \footnotetext{ 2010
\textit{Mathematics Subject Classification:}
14R20,\,32M17.\mbox{\hspace{11pt}}\\{\it Key words}: affine
variety, automorphism group, configuration space, cylinder,  discriminant.}}

\vfuzz=2pt

\section{Introduction}\label{Sec: Introduction}
Let $X$ be an irreducible smooth algebraic curve over the field
$\mathbb{C}$. The $n$th {\em configuration space}
$\mathcal{C}^n(X)$ of $X$ is a smooth affine algebraic variety of dimension
$n$ consisting of all $n$-point subsets
$Q=\{q_1,\ldots,q_n\}\subset X$ with distinct $q_1,\ldots,q_n$.
We would like to study its biregular automorphisms and the
structure of the group $\Aut\mathcal{C}^n(X)$.

For a hyperbolic curve $X$ the group $\Aut\mathcal{C}^n(X)$ is finite
(possibly, trivial for a generic curve). We are interested in the case where $X$ is non-hyperbolic, i.e., 
one of the curves $\mathbb{C}$, ${\mathbb{P}}^1={\mathbb{P}}_{\mathbb{C}}^1$,
$\mathbb{C}^*=\mathbb{C}\setminus\{0\}$, and an elliptic curve. In the latter two cases the
 groups $\Aut\mathcal{C}^n(X)$ were described in \cite{Zin78} and \cite{FelTo07}, respectively. 
Here we investigate automorphisms of the configuration space 
$\mathcal{C}^n=\mathcal{C}^n(\mathbb{C})$ and of some related spaces.
Our central results are Theorems \ref{Thm: 1st main theorem in the Introduction} and 
\ref{Thm: 2d main theorem in the Introduction}; see also Theorems \ref{Thm: Mostow}, 
\ref{Thm: structure}, \ref{Thm: presentation}, and Corollary \ref{cor: neutral comp}. 
Let us first introduce some conventions and a portion of notation.  

All varieties in this paper are algebraic varieties defined over\footnote{However, 
all general results remain valid over any algebraically closed field of characteristic zero.} 
$\mathbb{C}$ 
and reduced; in general, irreducibility is not required. {\em Morphism} means a regular morphism of varieties. 
The same applies to the terms {\em automorphism} and {\em endomorphism}. 
The actions of algebraic groups are assumed to be regular. We use the standard notation 
$\mathcal{O}(\mathcal{Z})$, $\mathcal{O}_+(\mathcal{Z})$, and $\mathcal{O}^\times(\mathcal{Z})$
 for the algebra of all regular functions on a variety $\mathcal{Z}$, the additive group of this algebra,
 and its group of invertible elements, respectively.

For $z\in\mathbb{C}^n$, let $d_n(z)$ denote the discriminant of the monic polynomial
\begin{equation}\label{eq: universal polynomial}
P_n(\lambda,z)=\lambda^n+z_1 \lambda^{n-1}+\ldots+z_n \,, \ \ z
=(z_1,...,z_n)\in\mathbb{C}^n=\mathbb{C}^n_{(z)}\,.
\end{equation}
If $d_n(z)\ne{0}$ and $Q\subset\mathbb{C}$ is the set of all roots of $P_n(\cdot,z)$,
 then $Q\in\mathcal{C}^n$ and
\begin{equation}\label{eq: discriminant of Q}
D_n(Q)\Def\prod\limits_{\{q',q''\}\subset{Q}}(q'-q'')^2
=d_n(z)\,.
\end{equation}
Denoting by $\mathcal{P}^n$ the space of all polynomials (\ref{eq: universal polynomial}) 
with simple roots, we have the natural identification
\begin{equation}\label{eq: Cn(C)=Cn-Sigma}
\mathcal{C}^n=\{Q\subset\mathbb{C}\mid\,\#Q=n\}
\cong\mathcal{P}^n=\mathbb{C}^n_{(z)}\setminus\Sigma^{n-1}\,, \ \
Q\leftrightarrow{z=(z_1,...,z_n)}\,,
\end{equation}
where\,\footnote{The upper index will usually mean the dimension of the variety.} 
the {\em discriminant variety} $\Sigma^{n-1}$ is defined by
\begin{equation}\label{eq: discriminant variety}
\Sigma^{n-1}\Def\{z\in\mathbb{C}^n\mid\,d_n(z)=0\}\,.
\end{equation}
For $n>2$, we describe the automorphisms of the configuration
space $\mathcal{C}^n$, the discriminant variety
$\Sigma^{n-1}$, and the 
{\em special configuration space}
\begin{equation}\label{eq: unit discriminant level}
\mathcal{SC}^{n-1}\Def\left\{Q\in\mathcal{C}^n\mid\,D_n(Q)=1\right\}
\cong\{z\in\mathbb{C}^n\mid\,d_n(z)=1\}\,.
\end{equation}
This leads to structure theorems for the automorphism groups $\Aut\mathcal{C}^n$, 
$\Aut\mathcal{SC}^{n-1}$, and $\Aut\Sigma^{n-1}$.
\vskip3pt

The varieties $\mathcal{C}^n$ and $\Sigma^{n-1}$ can be viewed as
the complementary to each other parts of the symmetric power
$\Sym^n\mathbb{C}=\mathbb{C}^n/\mathbf{S}(n)$, where $\mathbf{S}(n)$
 is the symmetric group permuting the coordinates $q_1,...,q_n$ in
$\mathbb{C}^n=\mathbb{C}^n_{(q)}$. We have the natural morphisms
\begin{equation}\label{eq: projection p}
p\colon\mathbb{C}_{(q)}^n\to\Sym^n\mathbb{C}\cong\mathbb{C}^n_{(z)}\,,\ \
\Delta^{n-1}\to\Sigma^{n-1}\,, \ \ \textup{and} \ \
\mathbb{C}_{(q)}^n\setminus\Delta^{n-1}\to\mathcal{C}^n\,,
\end{equation}
where $\Delta^{n-1}\Def\bigcup\limits_{i\ne{j}}\{q=(q_1,...,q_n)\in\mathbb{C}_{(q)}^n\mid\,
q_i=q_j\}$} is the big diagonal.  The points $z\in\Sigma^{n-1}$ are in one-to-one 
correspondence with unordered $n$-term multisets (or corteges) 
$Q=\{q_1,\ldots,q_n\}$, $q_i\in\mathbb{C}$, 
with at least one repetition.
\vskip3pt

The {\em barycenter} $\bc(Q)$ of a point $Q\in\Sym^n\mathbb{C}=
\mathcal{C}^n\cup\Sigma^{n-1}\cong\mathbb{C}^n_{(z)}$
is defined as
\begin{equation}\label{eq: barycenter}
\bc(Q)\Def\frac{1}{n}\sum\limits_{q\in{Q}}q=-z_1/n
\end{equation}
(if $Q$ is a multiset, the summation takes into account multiplicities).
The {\em balanced
configuration space} $\mathcal{C}^{n-1}_{\blc}\subset\mathcal{C}^n$ is defined by
\begin{equation}\label{eq: balanced spaces0}
\mathcal{C}^{n-1}_{\blc}=\{Q\in\mathcal{C}^n\mid\, \bc(Q)=0\}\,.
\end{equation}
One defines similarly  balanced hypersurfaces $\mathcal{SC}^{n-2}_{\blc}\subset\mathcal{SC}^{n-1}$ and $\Sigma^{n-2}_{\blc}\subset\Sigma^{n-1}$ (see also Notation \ref{Ntn: ordered configuration space}). 
\vskip3pt

Our main results related to automorphisms of $\mathcal{C}^n$ are
the following two theorems (for more general results see Theorems 
\ref{Thm: Mostow}, \ref{Thm: structure}, \ref{Thm: presentation}, 
and Corollary \ref{cor: neutral comp}).

\begin{thm}\label{Thm: 1st main theorem in the Introduction}
Assume that $n>2$. A map $F\colon\,\mathcal{C}^n\to\mathcal{C}^n$
is an automorphism if and only if it is of the form
\begin{equation}\label{eq: formula for automorphism of Cn}
F(Q)=s\cdot\pi(Q)+A(\pi(Q))\bc(Q) \ \ \textup{for any} \
Q\in\mathcal{C}^n\,,
\end{equation}
where $\pi(Q)=Q-\bc(Q)$, $s\in\mathbb{C}^*$, and
$A\colon\,\mathcal{C}^{n-1}_{\blc}\to\Aff\mathbb{C}$ is a  regular
map.
\end{thm}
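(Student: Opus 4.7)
The guiding idea is to recognize $\mathcal{C}^n$ as a trivial $\mathbb{A}^1$-cylinder over the balanced configuration space and then combine the structure theorem for automorphisms of cylinders with a tight base with the Zinde--Lin rigidity result for the base.

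First, I would make explicit the canonical decomposition
\[
\psi\colon\mathcal{C}^n\xrightarrow{\;\sim\;}\mathbb{C}\times\mathcal{C}^{n-1}_{\blc},\qquad Q\longmapsto\bigl(\bc(Q),\,\pi(Q)\bigr),
\]
with inverse $(b,y)\mapsto b+y=\{b+q\mid q\in y\}$. Here the $\mathbb{A}^1$-factor is the fiber of the barycenter projection, on which the additive group $\mathbb{G}_a$ acts by translating $Q$, while the base records the shape $\pi(Q)=Q-\bc(Q)$ of the configuration.

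Second, I would apply the cylinder-automorphism theorem developed earlier in the paper in its tight-base version. Assuming (as the paper establishes for $n>2$) that the base $\mathcal{C}^{n-1}_{\blc}$ is tight, any $F\in\Aut\mathcal{C}^n$ must preserve the cylinder projection onto the base. In the coordinates $(b,y)$ this forces
\[
F(b,y)=\bigl(\alpha(y)\,b+\beta(y),\,\phi(y)\bigr),
\]
for some $\phi\in\Aut\mathcal{C}^{n-1}_{\blc}$, some $\alpha\in\mathcal{O}^\times(\mathcal{C}^{n-1}_{\blc})$, and some $\beta\in\mathcal{O}(\mathcal{C}^{n-1}_{\blc})$, because on each fiber $F$ restricts to an automorphism of $\mathbb{A}^1$, hence to an affine map depending regularly on $y$.

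Third, I would invoke the Zinde--Lin description of $\Aut\mathcal{C}^{n-1}_{\blc}$ (one of the auxiliary results the authors re-prove and use): for $n>2$ this group is exhausted by the $\mathbb{C}^*$-action $\phi_s\colon y\mapsto sy$. Substituting $\phi(y)=sy$ and bundling the pair $(\alpha,\beta)$ into the affine map $A(y)\colon b\mapsto\alpha(y)b+\beta(y)\in\Aff\mathbb{C}$ converts the above normal form into
\[
F(Q)=s\cdot\pi(Q)+A(\pi(Q))\,\bc(Q),
\]
which is exactly the claimed shape. Conversely, every map of the stated form is biregular with inverse of the same form (with $s$ replaced by $s^{-1}$ and $A$ appropriately rescaled and composed with the scaling), so the characterization is tight in both directions.

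The main obstacle is not the assembly above but the two inputs it consumes: the tightness of $\mathcal{C}^{n-1}_{\blc}$ and the fact that $\Aut\mathcal{C}^{n-1}_{\blc}=\mathbb{C}^*$ for $n>2$. Both rest on the rigidity machinery due to Zinde and to the first author, re-proved in this paper in order to cover the optimal dimension range; once those are accepted, the argument above is essentially formal.
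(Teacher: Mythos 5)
Your proposal is correct and follows essentially the same route as the paper: the barycenter decomposition $\mathcal{C}^n\cong\mathcal{C}^{n-1}_{\blc}\times\mathbb{C}$ of (\ref{eq: 3 cylinders}), triangularity of all automorphisms via rigidity/tightness (Proposition \ref{Prp: rigidity} and Corollary \ref{Crl: our automorphisms are triangular}), and the identification $\Aut\mathcal{C}^{n-1}_{\blc}\cong\mathbb{C}^*$ from Theorem \ref{Thm: Kaliman-Lin-Zinde}(a), exactly as in the proof of Theorem \ref{Thm: presentation}. The only minor remark is terminological: in the paper tightness is a property of the cylinder rather than of the base (the base is shown to be rigid, which implies the cylinder is tight), but this does not affect the argument.
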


\begin{thm}\label{Thm: 2d main theorem in the Introduction}
If $n>2$, then the following hold.
\vskip3pt

\textup{(a)} The group $\Aut\mathcal{C}^n$ is solvable. More precisely,
it is a semi-direct product
$$
\Aut\mathcal{C}^n\cong\left(\mathcal{O}_+(\mathcal{C}^{n-1}_{\blc})
\rtimes(\mathbb{C}^*)^2\right)\rtimes\mathbb{Z}\,.
$$
\vskip3pt

\textup{(b)} Any finite subgroup $\Gamma\subset\Aut\mathcal{C}^n$ is Abelian.
\vskip3pt

\textup{(c)} Any connected algebraic subgroup $G$ of $ \Aut\mathcal{C}^n$
is either Abelian or metabelian of rank $\le 2$.
 \footnote{The rank of an affine algebraic group is the dimension of its maximal tori. }
\vskip3pt

\textup{(d)} Any two maximal tori in $\Aut\mathcal{C}^n$ are conjugated.
\end{thm}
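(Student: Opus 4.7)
My plan is to derive all four parts directly from the explicit parametrization of $\Aut\mathcal{C}^n$ provided by Theorem \ref{Thm: 1st main theorem in the Introduction}. Using the isomorphism $\mathcal{C}^n\cong\mathbb{C}\times\mathcal{C}^{n-1}_{\blc}$ via $Q\mapsto(\bc(Q),\pi(Q))$, each automorphism has the form $(c,P)\mapsto(\alpha(P)\,c+\beta(P),\,sP)$ with $s\in\mathbb{C}^*$, $\alpha\in\mathcal{O}^\times(\mathcal{C}^{n-1}_{\blc})$, and $\beta\in\mathcal{O}(\mathcal{C}^{n-1}_{\blc})$. Since $\mathcal{C}^{n-1}_{\blc}$ is the complement of the irreducible discriminant hypersurface in an affine space, $\mathcal{O}^\times(\mathcal{C}^{n-1}_{\blc})=\mathbb{C}^*\cdot d_n^{\mathbb{Z}}$, so $\alpha(P)=c_0\,d_n(P)^k$ for unique $c_0\in\mathbb{C}^*$ and $k\in\mathbb{Z}$. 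For (a), I compute the composition law directly: the subgroup of pure translations $(c,P)\mapsto(c+\beta(P),P)$ is normal and isomorphic to the additive group $\mathcal{O}_+(\mathcal{C}^{n-1}_{\blc})$; the elements with $\beta=0$, $k=0$ form a natural $(\mathbb{C}^*)^2$ which normalizes this subgroup; and the discriminant exponent $k\in\mathbb{Z}$ provides the outer factor, the nontrivial twisting coming from the homogeneity $d_n(sP)=s^{n(n-1)}d_n(P)$. This yields both the semi-direct product structure and solvability.

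Part (b) follows by projecting a finite subgroup $\Gamma$ onto the quotient $\Aut\mathcal{C}^n/\mathcal{O}_+(\mathcal{C}^{n-1}_{\blc})\cong(\mathbb{C}^*)^2\rtimes\mathbb{Z}$. Since $\mathbb{Z}$ is torsion-free, any finite subgroup of this quotient lies in $(\mathbb{C}^*)^2$ and is Abelian; the intersection $\Gamma\cap\mathcal{O}_+(\mathcal{C}^{n-1}_{\blc})$ is a finite subgroup of a $\mathbb{C}$-vector group, hence trivial in characteristic zero. So $\Gamma$ injects into $(\mathbb{C}^*)^2$ and is Abelian.

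For (c), a connected algebraic subgroup $G$ lies in the identity component $\mathcal{O}_+(\mathcal{C}^{n-1}_{\blc})\rtimes(\mathbb{C}^*)^2$. Its projection to $(\mathbb{C}^*)^2$ has image a subtorus of rank at most $2$, and its kernel is a connected finite-dimensional algebraic subgroup of the additive group $\mathcal{O}_+(\mathcal{C}^{n-1}_{\blc})$, which must itself be a vector group. Thus $G$ is an extension of a torus by a vector group; its commutator subgroup sits inside the vector group and is therefore Abelian, making $G$ metabelian with $\rk G\le 2$, and Abelian whenever either factor degenerates.

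Finally, for (d), any maximal torus has rank exactly $2$ since the standard $(\mathbb{C}^*)^2$ already realizes this. Such a $T$ projects isomorphically onto $(\mathbb{C}^*)^2$ (a torus in an additive group is trivial), so $T$ is the graph of an algebraic section $(\mathbb{C}^*)^2\to\mathcal{O}_+(\mathcal{C}^{n-1}_{\blc})\rtimes(\mathbb{C}^*)^2$, encoded by a 1-cocycle with values in $\mathcal{O}_+(\mathcal{C}^{n-1}_{\blc})$. Conjugating $T$ to the standard torus by a translation amounts to showing this cocycle is a coboundary; this is the main obstacle. I expect to handle it either by decomposing $\mathcal{O}_+(\mathcal{C}^{n-1}_{\blc})$ into weight spaces for the $(\mathbb{C}^*)^2$-action and observing that the algebraic $H^1$ of a torus vanishes on each weight component, or more cleanly by invoking the Mostow-type decomposition of Theorem \ref{Thm: Mostow}, which is designed precisely to produce conjugacy of maximal reductive subgroups in this semi-direct product setting.
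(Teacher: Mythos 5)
Your proposal is correct, and its backbone coincides with the paper's: part (a) is obtained there (Theorem \ref{Thm: structure}) from exactly the ingredients you use, namely triangularity of all automorphisms of the cylinder $\mathcal{C}^n\cong\mathcal{C}^{n-1}_{\blc}\times\mathbb{C}$, the computation $\mathcal{O}^\times(\mathcal{C}^{n-1}_{\blc})\cong\mathbb{C}^*\times\mathbb{Z}$ generated by constants and $D_n$ (Example \ref{ex: Mult}), and $\Aut\mathcal{C}^{n-1}_{\blc}\cong\mathbb{C}^*$ (Theorem \ref{Thm: Kaliman-Lin-Zinde}(a)), all of which are already packaged in Theorem \ref{Thm: 1st main theorem in the Introduction} that you take as input. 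Where you genuinely diverge is in (b)--(d): the paper deduces these from its general Mostow-type theorem for tight cylinders (Theorem \ref{Thm: Mostow} via Corollary \ref{cor: neutral comp}), whose proof exhausts ${\Aut}_0\,\mathcal{C}^n$ by connected algebraic subgroups $B_i=U_i\rtimes(\mathbb{C}^*)^2$ with $U_i\vartriangleleft{\Aut}_0\,\mathcal{C}^n$ (Lemma \ref{lem: normal}, Corollary \ref{cor: exhaustion}) and then quotes the classical Mostow and Humphreys results; you instead argue directly on the semidirect product -- trivial intersection with the torsion-free group $\mathcal{O}_+(\mathcal{C}^{n-1}_{\blc})$ for (b), extension of a subtorus by a vector group for (c), and a cocycle/coboundary argument for (d), with Theorem \ref{Thm: Mostow} named as your fallback (which is literally the paper's route). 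The direct route buys more elementary proofs of (b) and (c) and makes the source of Abelianness transparent.

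Two points your sketch relies on silently. First, to treat the projection of a connected algebraic subgroup or torus $T$ to $(\mathbb{C}^*)^2$ as an algebraic homomorphism, and to reduce your $1$-cocycle $\bar T\to\mathcal{O}_+(\mathcal{C}^{n-1}_{\blc})$ to a finite-dimensional module where $H^1(\bar T,-)=0$ applies, you need the local finiteness of the $(\mathbb{C}^*)^2$-action on $\mathcal{O}_+(\mathcal{C}^{n-1}_{\blc})$ and the resulting exhaustion by invariant finite-dimensional subgroups; this is exactly the content of Lemma \ref{lem: normal} and Corollary \ref{cor: exhaustion}, so the ind-group bookkeeping is not avoided, only relocated. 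Second, the sentence that every maximal torus has rank $2$ ``since the standard $(\mathbb{C}^*)^2$ already realizes this'' is not by itself a proof: a priori a rank-one torus could fail to embed into a rank-two one. It does follow from your own scheme, because the same coboundary argument conjugates an \emph{arbitrary} torus into the standard $(\mathbb{C}^*)^2$ by an element of $\SAut\mathcal{C}^n$, so no torus of rank $<2$ is maximal; just run the argument in that order (conjugate first, read off the rank afterwards), and (d) closes completely.
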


Similar facts are established for $\mathcal{SC}^{n-1}$ and $\Sigma^{n-1}$, 
see loc. cit.

Let us overview  some results of \cite{Lin72b, Kal76a, Zin78, Lin03,
Lin04b, FelTo07, Lin11} initiated the present paper and used in
the proofs. Given a smooth irreducible non-hyperbolic algebraic curve $X$, 
consider the diagonal action of the group $\Aut{X}$ on the configuration 
space $\mathcal{C}^n(X)$,
\begin{equation}\label{eq: diagonal Aut X action}
\Aut{X}\ni{A}\colon\,{\mathcal{C}}^n(X)\to {\mathcal{C}}^n(X),
\quad {Q}=\{q_1,\dots,q_n\}\mapsto{A}Q\Def\{Aq_1,\dots, Aq_n\}\,.
\end{equation}
To any morphism $T\colon{\mathcal{C}}^n(X)\to\Aut{X}$ we assign an endomorphism 
$F_T$ of $\mathcal{C}^n(X)$ defined by
\begin{equation}\label{eq: tame endomorphism}
F_T(Q){\Def}T(Q)Q \ \ \textup{for all} \ \
Q\in{\mathcal{C}}^n(X)\,.
\end{equation}
Such endomorphisms $F_T$ are called {\em tame}. A tame
endomorphism preserves each $(\Aut{X})$-orbit in
${\mathcal{C}}^n(X)$.  For automorphisms, the converse is also true: an automorphism of ${\mathcal{C}}^n(X)$ preserving  each $(\Aut{X})$-orbit is tame, see Proposition \ref{prop: weak-TMT} and the remark following this proposition.
In the general case, Tame Map Theorem below implies the following:  an endomorphism of
${\mathcal{C}}^n(X)$ whose image is not contained in a  single
$(\Aut{X})$-orbit is tame and hence preserves each
$(\Aut{X})$-orbit. If the image of $F$ is contained in a
single $(\Aut{X})$-orbit, then $F$ is called {\em orbit-like}.

The braid group of $X$, $B_n(X)=\pi_1(\mathcal{C}^n(X))$, is
non-Abelian for any $n\ge 3$. If $X=\mathbb{C}$ then
$B_n(X)=\mathbf{A}_{n-1}$ is the  Artin braid group on $n$ strands.
\footnote{In \cite{Lin-Zaidenberg14} 
we used the notation $B_n$ for the Artin braid group 
on $n$ strands. Here we prefer the notation $\mathbf{A}_{n-1}$ 
that indicates the place of  this group among the Artin-Brieskorn groups of series $A$. 
A similar notation will be applied to the Artin-Brieskorn groups 
of other series $B, D$ etc., while $\mathbf{WA}_{n-1},\,\mathbf{WB}_{n}$, 
etc.\ stands for the corresponding Coxeter group.
} An
endomorphism $F$ of $\mathcal{C}^n(X)$ is called {\em non-Abelian}
if the image of the induced endomorphism
$F_*\colon\,\pi_1(\mathcal{C}^n(X))\to\pi_1(\mathcal{C}^n(X))$ is
a non-Abelian group. Otherwise, $F$ is said to be {\em Abelian}.
Rather unexpectedly, this  evident  algebraic dichotomy gives rise
to the following analytic one.
\vskip3pt

\noindent {\bf Tame Map Theorem.}\label{tame-map-theorem} {\em Let
$X$ be a smooth irreducible non-hyperbolic algebraic curve. For
$n>4$ any non-Abelian endomorphism of $\mathcal{C}^n(X)$ is tame,
whereas any Abelian endomorphism of $\mathcal{C}^n(X)$ is
orbit-like.}
\vskip3pt

\begin{rems}\label{Rms: remarks to TMT}
(a) A proof of Tame Map Theorem for $X=\mathbb{C}$ is sketched in
\cite{Lin72b} and \cite{Lin79}; a complete proof for
$X=\mathbb{C}$ or $\mathbb{P}^1$ in the analytic
category can be found in \cite{Lin03}, \cite{Lin04b}, and
\cite{Lin11}. For $X={\mathbb{C}}^*$ the theorem is proved in
\cite{Zin78} \footnote{The complex Weyl chamber of type $B$
studied in \cite{Zin78} is isomorphic to
${\mathcal{C}}^n({\mathbb{C}}^*)$.}, and for elliptic curves in
\cite{FelTo07}. The proofs  apply mutatis mutandis in the algebraic setting.  
We use this theorem to describe automorphisms of the balanced spaces 
$\mathcal{C}^{n-1}_{\blc}$ and $\Sigma^{n-2}_{\blc}$
(see Theorems \ref{Thm: Kaliman-Lin-Zinde}(a),(c) and \ref{Thm: aut balanced Sigma});
 its analytic counterpart is involved in the proof of Theorem 
\ref{Thm: Non-Abelian endomorphisms of Cblc(n-1)}.
\vskip3pt

(b) A morphism $T\colon\mathcal{C}^n(X)\to\Aut{X}$ in the tame representation
$F=F_T$ is uniquely determined by a non-Abelian endomorphism $F$. Indeed, 
if $T_1(Q)Q=T_2(Q)Q$ for all $Q\in\mathcal{C}^n(X)$, then the automorphism 
$[T_1(Q)]^{-1}T_2(Q)$ is contained in the $\Aut(X)$-stabilizer of $Q$, which is trivial 
for general configurations $Q$. Therefore, $T_1=T_2$.
\vskip3pt

(c) According to Tame Map Theorem and Theorem \ref{Thm: 1st
main theorem in the Introduction}, the map $F$ in (\ref{eq:
formula for automorphism of Cn}), being an automorphism, must be
tame. This is indeed the case, with the morphism
\begin{equation}\label{eq: TMT-T}
T\colon\,\mathcal{C}^n\to\Aff\mathbb{C},\,\quad \  T(Q)\zeta
=s\cdot(\zeta-\bc(Q))+A(\pi(Q))\bc(Q)\ \,,
\end{equation}
where $\zeta\in\mathbb{C}$ and $Q\in\mathcal{C}^n$.
\vskip3pt

(d) Let $X=\mathbb{C}$. Then Tame Map Theorem holds also for $n=3$, but not for $n=4$. 
However, any {\em automorphism} of $\mathcal{C}^4(\mathbb{C})$
is tame. The automorphism groups of $\mathcal{C}^1(\mathbb{C})\cong\mathbb{C}$ and
$\mathcal{C}^2(\mathbb{C})\cong\mathbb{C}^*\times\mathbb{C}$ are well known, so we assume
 in the sequel that $n>2$.
\end{rems}

Using Tame Map Theorem, Zinde and Feler [loc. cit.] described all automorphisms of $\mathcal{C}^n(X)$ 
when $\dim_{\mathbb{C}}\Aut{X}=1$, or
more precisely, when $X$ is $\mathbb{C}^*$ and $n>4$, or $X$ is an elliptic curve and $n>2$. For $\mathbb{C}$ and $\mathbb{P}^1$, 
where the automorphism groups $\Aff\mathbb{C}$ and $\mathbf{PSL}(2,\mathbb{C})$ 
have dimension $2$ and $3$, respectively,
the problem becomes more difficult. The group $\Aut\mathcal{C}^n=\Aut\mathcal{C}^n(\mathbb{C})$
 is the subject of the present paper; the case $X=\mathbb{P}^1$ remains open.
\vskip3pt

The content of the paper is as follows. In Sections \ref{sc:
abstract scheme} and \ref{sc: special-Aut} we propose an abstract scheme to study the
automorphism groups of cylinders over rigid bases and, more generally, of tight  cylinders. 
An irreducible affine variety $\mathcal{X}$ will be called {\em rigid} if the images 
of non-constant morphisms 
$\mathbb{C}\to\reg\,\mathcal{X}$ do not cover any Zariski open dense subset in 
the smooth locus $\reg\,\mathcal{X}$, i.e., if
$\reg\,\mathcal{X}$ is {\em non-$\mathbb{C}$-uniruled}, see Definition \ref{def: rigidity}. 
Any cylinder $\mathcal{X}\times\mathbb{C}$ over a rigid base $\mathcal{X}$ is {\em tight}, 
meaning that its cylinder structure over $\mathcal{X}$ is unique, see Definition \ref{def: tightness} 
and Corollary \ref{Cor: automorphisms-of-rigid-cylinder-are-triangular}.

We show in Section \ref{ss: dir prod} that the bases $\mathcal{C}^{n-1}_{\blc}$,
 $\mathcal{SC}^{n-2}_{\blc}$, and $\Sigma^{n-2}_{\blc}$ of cylinders (\ref{eq: 3 cylinders}) are
rigid. So, the scheme of Section \ref{ss:
rigid cylinders} applied to the latter cylinders yields that their
automorphisms have a triangular form, see
(\ref{eq: triangular automorphism1})-(\ref{eq: tr-form1}).

For a tight cylinder $\mathcal{X}\times\mathbb{C}$ we describe in Subsection \ref{ss: spe-aut} the special automorphism group 
$\SAut(\mathcal{X}\times\mathbb{C})$ (see Definition \ref{def: alg-subgr}), and 
in Section \ref{sc: ind-grp} the
neutral component $\Aut_0\, (\mathcal{X}\times\mathbb{C})$ of the group
$\Aut (\mathcal{X}\times\mathbb{C})$ and its algebraic subgroups. In
Theorem \ref{Thm: Mostow} we establish an analog of Theorem
\ref{Thm: 2d main theorem in the Introduction} for such cylinders.
Besides, in Sections \ref{ss: actions and LND's on a rigid cylinder}-
\ref{ss: ML invariant of a cylinder over rigid base} we find the locally
nilpotent derivations of the algebra
$\mathcal{O}
(\mathcal{X}\times\mathbb{C})$ and its Makar-Limanov invariant subalgebra. In
Section \ref{ss: Lie-alg} we study the Lie algebra
$\Lie\,(\Aut_0\, (\mathcal{X}\times\mathbb{C}))$. These results are used
in the subsequent sections in the concrete setting of the
varieties $\mathcal{C}^n$, $\mathcal{SC}^{n-1}$, and $\Sigma^{n-1}$. 

Theorems \ref{Thm: 1st main theorem in the Introduction} and
\ref{Thm: 2d main theorem in the Introduction} are proven in Section
\ref{sc: Aut C-n Sigma-n}, see Theorems \ref{Thm: presentation}
and \ref{Thm: structure}, respectively. We provide analogs of our
main results for the automorphism groups of the special configuration space 
$\mathcal{SC}^{n-1}$, the discriminant variety $\Sigma^{n-1}$, and the pair 
$(\mathbb{C}^n,\Sigma^{n-1})$. All these groups are solvable; we also find 
presentations of their Lie algebras. In Section \ref{sc: Aut C-n} we show that all these groups are 
centerless and describe their commutator series,
semisimple and torsion elements. In Section \ref{sec:Zinde's Thm.} we give a description of the 
group $\Aut\mathcal{C}^{n}(\mathbb{C}^*)$ due to Zinde \cite{Zin78}. We provide a new proof 
based upon Zinde's analog of Artin's Theorem on the pure braid groups. 
In Section \ref{Sec: Aut-discriminant}  we apply Zinde's Theorem in order to complete the description of the automorphism group of $\Sigma^{n-1}$. In Section \ref{sec: KLZ-thm revisited} we
 provide an alternative proof of the structure theorem for the group $\Aut\mathcal{C}^{n-1}_{\blc}$, 
which does not refer to Tame Map Theorem.  In Section \ref{sec: Kaliman for n=4} we give a proof of Kaliman's Theorem on the group $\Aut\mathcal{SC}^{n-2}_{\rm blc}$ in the exceptional case $n=4$, where the original proof does not work. We reproduce an example from \cite{Lin79} which shows that the original Kaliman's Theorem on  endomorphisms of $\mathcal{SC}^{n-2}_{\rm blc}$ does not hold in this generality for $n=4$ (see Example \ref{ex: Lin's counterexample}).
Finally, in Section \ref{Sec: Holomorphic endomorphisms of Cblc(n-1)}, using the analytic counterpart of 
Tame Map Theorem, we obtain its analog for the
space $\mathcal{C}^{n-1}_{\blc}$ ($n>4$), describe the proper holomorphic self-maps of this space and the group 
of its biholomorphic automorphisms $\Aut_{\rm hol}\mathcal{C}^{n-1}_{\blc}$.

For the sake of uniformity, we work over the field $\mathbb C$. Indeed, 
this is essential when dealing with configuration spaces, 
since in this case we employ certain topological methods. 
By contrast, all the results 
concerning rigid varieties and tight cylinders, along with their proofs, remain 
valid over an arbitrary algebraically closed field of characteristic zero.

This preprint is an extended version of our paper \cite{Lin-Zaidenberg14}. 
Sections 2--4 of \cite{Lin-Zaidenberg14} have been modified so that the 
results stated in \cite{Lin-Zaidenberg14} under the  rigidity assumption are 
proven now under a weaker assumption of tightness. 
New Sections \ref{sec:Zinde's Thm.}--\ref{sec: Kaliman for n=4} 
are devoted to alternative proofs of some of the aforementioned results. 
This allowed to provide the optimal dimension bounds in our main theorems. 

The authors thank S.\ Kaliman for useful discussions concerning Section 
\ref{ss: actions and LND's on a rigid cylinder},  especially 
Example \ref{exa: Danielewski} and Remark \ref{rem: flexibility}. 
They are grateful also to N.~Ivanov and L.~Paris for 
providing useful information concerning different 
generalizations of the classical Artin's theorem on braid groups used in 
Sections \ref{sec:Zinde's Thm.} and \ref{sec: KLZ-thm revisited}.

\section{Automorphisms of cylinders over rigid bases}
\label{sc: abstract scheme}
\subsection{Triangular automorphisms}\label{ss: Triangular automorphisms}
(a) Let $\mathfrak{C}$ be a category of sets admitting direct
products $\mathcal{X}\times\mathcal{Y}$ of its objects with the 
standard projections to $\mathcal{X}$ and $\mathcal{Y}$ being morphisms.

Suppose that the automorphism group $\Aut\mathcal{Y}$ 
of a certain object $\mathcal{Y}$ is an object in $\mathfrak{C}$ 
satisfying the usual axioms, namely, that the following maps are morphisms:
\begin{equation}\label{eq: restriction for an object Y}
\aligned
&\textup{\em the action}\ \, (\Aut\mathcal{Y})\times\mathcal{Y}\to 
\mathcal{Y}\,, \ (\alpha, y)\mapsto \alpha(y)\,,\\
&\textup{\em the multiplication} \ \, \Aut\mathcal{Y}\times
\Aut\mathcal{Y}\to \Aut\mathcal{Y}\,, (\alpha,\beta)\mapsto 
\beta\circ\alpha\\
&\textup{\em the inversion}\ \, \Aut\mathcal{Y}\to\Aut\mathcal{Y}\,,\ 
\alpha\mapsto\alpha^{-1}.
\endaligned
\end{equation}
\noindent Then for any $S\in\Aut\mathcal{X}$ and any morphism 
$A\colon\,\mathcal{X}\to\Aut\mathcal{Y}$ the map
\begin{equation}\label{eq: triangular automorphism}
F=F_{S,A}\colon\,\mathcal{X}\times\mathcal{Y}
\to\mathcal{X}\times\mathcal{Y}\,, \ \ F(x,y)=(Sx,A(x)y)\ 
\textup{for all} \ (x,y)\in\mathcal{X}\times\mathcal{Y}\,,
\end{equation}
is a morphism in the category $\mathfrak{C}$. 
Moreover, $F$ is an {\em automorphism} of 
$\mathcal{X}\times\mathcal{Y}$. Indeed, given $S,S'\in\Aut{X}$ 
and morphisms $A,A'\colon\,\mathcal{X}\to\Aut\mathcal{Y}$, 
for the corresponding $F,F'$ we have $F'F(x,y)=(S'Sx,A'(Sx)A(x)y)$, 
whereas the inverse $F^{-1}$ of $F$ corresponds to the couple $(S',A')$, 
where $S'=S^{-1}$ and the morphism 
$A'\colon\,\mathcal{X}\to\Aut\mathcal{Y}$ is defined by 
$A'(x)=(A(S^{-1}x))^{-1}$ for all $x\in\mathcal{X}$. 
We call such automorphisms $F$ of $\mathcal{X}\times\mathcal{Y}$ {\em
triangular} (with respect to the given product structure). 
All triangular automorphisms form a subgroup
${\Aut}_{\vartriangle}(\mathcal{X}\times\mathcal{Y})
\subset\Aut(\mathcal{X}\times\mathcal{Y})$.

(b) Suppose that an object $\mathcal{Y}\in\mathfrak{C}$
satisfies the conditions in (a). 
Then $\Mor(\mathcal{X},\Aut\mathcal{Y})$ with the pointwise 
multiplication of morphisms can be embedded in
${\Aut}_{\vartriangle}(\mathcal{X}\times\mathcal{Y})$
as a normal subgroup consisting of all $F$ of the form $F(x,y)=(x,A(x)y)$,
$A\in\Mor(\mathcal{X},\Aut\mathcal{Y})$. 
The corresponding quotient group is isomorphic to $\Aut\mathcal{X}$,  
and we have 
the semi-direct
product decomposition
\begin{equation}\label{eq: s-d-decomp}
{\Aut}_{\vartriangle}(\mathcal{X}\times\mathcal{Y})
\cong\Mor(\mathcal{X},\Aut\mathcal{Y})\rtimes\Aut\mathcal{X}\,.
\end{equation}
The second factor acts by conjugation on the first via $S.A=A\circ S^{-1}$, 
where $S\in\Aut\mathcal{X}$ and $A\in\Mor(\mathcal{X},\Aut\mathcal{Y})$.

\subsection{Automorphisms of tight cylinders}
\label{ss: rigid cylinders}
We are interested in the case where $\mathfrak{C}$ is 
the category of complex algebraic varieties and their morphisms, and 
$\mathcal{Y}=\mathbb{C}$. Thus, in the sequel we deal with {\em cylinders} 
$\mathcal{X}\times\mathbb{C}$. Since
$\Aut\mathcal{Y}=\Aff\mathbb{C}\in\mathfrak{C}$, the 
conditions 
(\ref{eq: restriction for an object Y}) 
are 
fulfilled. In fact, we deal mainly with affine varieties; moreover, 
in all our results
these varieties are assumed to be irreducible.

Let us introduce the following notions.

\bdefi\label{def: rigidity}
An irreducible variety $\mathcal{X}$ is called  $\mathbb{C}$-{\em uniruled} if for some variety $\mathcal{V}$ 
there is a dominant morphism $\mathcal{V}\times\mathbb{C}\to\mathcal{X}$ non-constant on a 
general ruling $\{v\}\times\mathbb{C}$, $v\in\mathcal{V}$ (\cite[Definition 5.2 and Proposition 5.1]{Je99}). 
We say that $\mathcal{X}$ is {\em rigid} if its smooth locus $\reg\,\mathcal{X}$ is
non-$\mathbb{C}$-uniruled. For such $\mathcal{X}$, the variety $\mathcal{X}\times\mathbb{C}$ is said to be the
{\em cylinder over a rigid base}.
\edefi

\bdefi\label{def: tightness} For an irreducible $\mathcal{X}$, we call the cylinder 
$\mathcal{X}\times\mathbb{C}$ {\em tight} if its cylinder
structure over $\mathcal{X}$ is unique, that is, if for any
automorphism $F\in\Aut(\mathcal{X}\times\mathbb{C})$ there is a
(unique) automorphism $S\in\Aut\mathcal{X}$ that fits in the
commutative diagram
\begin{equation}\label{eq: diagram0}
\CD
{\mathcal{X}\times\mathbb{C}} @ > {F} >> {\mathcal{X}\times\mathbb{C}}\\
@V{\textup{pr}_1}VV @VV{\textup{pr}_1}V\\
{\mathcal{X}} @ > {S}>>{\mathcal{X}}
\endCD
\end{equation}
Thus, $\mathcal{X}\times\mathbb{C}$ is tight if and only if every
$F\in\Aut(\mathcal{X}\times\mathbb{C})$ is triangular, so that
\begin{equation}\label{eq: tr-form0}
\Aut(\mathcal{X}\times\mathbb{C})
={\Aut}_{\vartriangle}(\mathcal{X}\times\mathbb{C})\,.
\end{equation}
For a cylinder $\mathcal{X}\times\mathbb{C}$ formula (\ref{eq: triangular automorphism}) 
takes the form
\begin{equation}\label{eq: triangular automorphism1}
F(x,y)=(Sx,A(x)y)=(Sx, ay+b)\,\ \textup{for any} \
(x,y)\in\mathcal{X}\times\mathbb{C}
\end{equation}
with $a\in\mathcal{O}^\times (\mathcal{X})$ and
$b\in\mathcal{O}_+(\mathcal{X})$. If $\mathcal{X}\times\mathbb{C}$ is tight, 
then, by (\ref{eq: tr-form0}) and (\ref{eq: triangular automorphism1}), we have
\begin{equation}\label{eq: tr-form1}
\Aut(\mathcal{X}\times\mathbb{C})
\cong\Mor\,(\mathcal{X},\Aff\mathbb{C})\rtimes\Aut\mathcal{X}
\ \ \textup{and} \ \ \Mor\,(\mathcal{X},\Aff\mathbb{C})\cong
\mathcal{O}_+(\mathcal{X})\rtimes\mathcal{O}^\times
(\mathcal{X})\,,
\end{equation}
where $\mathcal{O}^\times (\mathcal{X})$ acts on
$\mathcal{O}_+(\mathcal{X})$ by multiplication $b\mapsto{ab}$ for 
$a\in\mathcal{O}^\times(\mathcal{X})$ and $b\in\mathcal{O}_+(\mathcal{X})$.
 The group $\Aut(\mathcal{X}\times\mathbb{C})$ of a tight cylinder is solvable 
as long as $\Aut\mathcal{X}$ is.
\edefi

Thus the tightness is an important property. In 
Theorem \ref{cor: BMLC-improvement} we give several useful criteria of tightness. 

\bdefi\label{def: strong-cancellation} One says that a variety $\mathcal{X}$ possesses the 
{\em strong cancellation property} if
for any $m>0$, any variety $\mathcal{Y}$, and any isomorphism
$F\colon \mathcal{X}\times\mathbb{C}^m\stackrel{\simeq}{\longrightarrow}
\mathcal{Y}\times\mathbb{C}^m$ there is an isomorphism
$S\colon\mathcal{X}\stackrel{\simeq}{\longrightarrow}\mathcal{Y}$ that
fits in the commutative diagram
\begin{equation}\label{eq: diagram1}
\CD
{\mathcal{X}\times\mathbb{C}^m} @ > {F} >> 
{\mathcal{Y}\times\mathbb{C}^m}\\
@V{\textup{pr}_1}VV @VV{\textup{pr}_1}V\\
{\mathcal{X}} @ > {S}>>{\mathcal{Y}}
\endCD
\end{equation}
\edefi

\noindent {\bf Dry{\l}o's Theorem I}  (\cite[(I)]{Dr05bis}). 
{\em The strong cancellation holds for any rigid affine variety.}
\vskip3mm

For the reader's convenience, we provide a short argument for 
the next corollary.

\bcor\label{Cor: automorphisms-of-rigid-cylinder-are-triangular}
If $\mathcal{X}$ is rigid, then $\mathcal{X}\times\mathbb{C}$
is tight, i.e., $\Aut(\mathcal{X}\times\mathbb{C})
={\Aut}_{\vartriangle}(\mathcal{X}\times\mathbb{C})$.
\ecor 

\bproof
Let us show that any $F\in\Aut (\mathcal{X}\times\mathbb{C})$
sends the rulings $\{x\}\times\mathbb{C}$ into rulings.
Then the same holds for $F^{-1}$, and so $S\Def {\rm pr}_1\circ
F\vert_{\mathcal{X}\times\{0\}}\in \Aut\mathcal{X}$ fits in
diagram (\ref{eq: diagram0}).

Assuming the contrary, we consider the family
$\{F(\{x\}\times\mathbb{C})\}_{x\in\reg\,\mathcal{X}}$. Projecting
it to $\reg\,\mathcal{X}$ we get a contradiction with the rigidity
assumption.
\eproof

\bdefi\label{def: strong-1-cancellation} We say that the {\em strong 1-cancellation} holds for $\mathcal{X}$
if one has diagram (\ref{eq: diagram1})  with $m=1$.\edefi

Clearly, the strong 1-cancellation property implies the tightness. 
The converse is also true, see Proposition \ref{prop: non-tightness}. 

\brem\label{rem: BML05} The tightness of the cylinder $\mathcal{X}\times\mathbb{C}$ 
does not imply the rigidity of  $\mathcal{X}$, in general. Indeed, there are examples of
 non-rigid smooth, affine surfaces $\mathcal{X}$ with a tight cylinder $\mathcal{X}\times\mathbb{C}$, 
cf., e.g., \cite[Example 3]{BML05} and Theorem \ref{cor: BMLC-improvement}.\erem

\subsection{$\mathbb{C}_+$-actions and LND's on tight cylinders}
\label{ss: actions and LND's on a rigid cylinder}
In this subsection we propose several criteria of tightness. Let us recall the necessary notions. 

\bdefi\label{sit: lnd's}
A derivation $\p$ of a ring
$A$ is {\em locally nilpotent} if $\p^n a=0$ for any $a\in A$ and
for some $n\in\N$ depending on $a$. For any $f\in \ker\partial$
the derivation $f\partial\in\Der{A}$ is again locally nilpotent; it is called a {\em replica} of $\partial$ (see \cite{AFKKZ}). 
We let $\LND(A)$ denote the set of all  nonzero locally nilpotent derivations of $A$.\edefi

Let $\mathcal{X}$ be an affine variety. For any $\partial\in\LND(\mathcal{O}(\mathcal{X}))$ the correspondence
$\mathbb{C}_+\ni t\mapsto \exp(t\partial)\in\Aut\,\mathcal{X}$ defines  
a  unipotent one-parameter group of automorphisms of $\mathcal{X}$, or, in other words, 
an effective $\mathbb{C}_+$-action on $\mathcal{X}$. In fact, any $\mathbb{C}_+$-action 
on $\mathcal{X}$ arises in this way (see e.g., \cite{Fr}).

\bprop\label{prop: non-tightness}  \begin{itemize}\item[(a)] 
If the cylinder $\mathcal{X}\times\mathbb{C}$  is tight, then $\mathcal{X}$ does not admit any 
nontrivial $\mathbb{C}_+$-action, i.e., $\LND(\mathcal{O}(\mathcal{X}))=\emptyset$. 
\item[(b)] The cylinder $\mathcal{X}\times\mathbb{C}$ is tight if and only if 
 the strong 1-cancellation holds for $\mathcal{X}$.\end{itemize}\eprop

\bproof (a) 
Assume to the contrary that $\mathcal{X}\times\mathbb{C}$ is tight, while there is a 
nontrivial $\mathbb{C}_+$-action $t\mapsto \exp(t\partial)$ on $\mathcal{X}$, where 
$\partial\in\LND(\mathcal{O}(\mathcal{X})$. The induced $\mathbb{C}_+$-action on the cylinder 
$\mathcal{X}\times\mathbb{C}$ has the same infinitesimal generator  $\partial$, regarded this time 
as a locally nilpotent derivation of the algebra $\mathcal{O}(\mathcal{X})[u]$  
with $\partial u=0$. 
The derivation $\partial_1=u\partial$ is again locally nilpotent. The associated vector field on 
$\mathcal{X}\times\mathbb{C}$ vanishes on  the section $\mathcal{X}\times\{0\}$. 
Hence the induced $\mathbb{C}_+$-action $t\mapsto \exp(t\partial_1)\in\Aut\,(\mathcal{X}\times\mathbb{C})$ 
fixes each point $Q_0\in\mathcal{X}\times\{0\}$, and moves a general point $Q_1\in \mathcal{X}\times\mathbb{C}$ 
in the horizontal direction. Taking the points $Q_0$ and $Q_1$ on the same ruling $\{q\}\times\mathbb{C}$, 
where $q\in\mathcal{X}$ is general,  we see that their images under the automorphism $\alpha=exp(\partial_1)$ 
do not belong any longer to the same ruling. It follows that $\alpha\in\Aut(\mathcal{X}\times\mathbb{C})$ 
is not triangular. Therefore, the cylinder $\mathcal{X}\times\mathbb{C}$ cannot be tight, a contradiction. 

(b) Clearly, the  strong 1-cancellation property implies tightness. To prove the converse, suppose that the 
 strong 1-cancellation fails for $\mathcal{X}$, i.e., that 
$\mathcal{X}\times\mathbb{C}\simeq\mathcal{Y}\times\mathbb{C}$,
 where the rulings of these cylinders are different. Let us show that
 in this case $\mathcal{X}\times\mathbb{C}$ cannot be tight. 

Indeed, assume to the contrary that $\mathcal{X}\times\mathbb{C}$ is tight. Let $\tau'$ be the free 
$\mathbb{C}_+$-action on $\mathcal{X}\times\mathbb{C}\simeq\mathcal{Y}\times\mathbb{C}$ 
via the shifts along the rulings of the cylinder $\mathcal{Y}\times\mathbb{C}$. Since 
$\mathcal{X}\times\mathbb{C}$ is tight,
this action is triangular, and so, it induces a $\mathbb{C}_+$-action on $\mathcal{X}$. The latter 
action is non-trivial, since otherwise the both families of rulings would be the same. By (a), this implies
 that $\mathcal{X}\times\mathbb{C}$ is not tight, a contradiction.\eproof

\begin{exa}\label{exa: Danielewski}  Let $\mathcal{X}$ be the surface  given in $\mathbb{C}^3$ 
by equation $x^ny = p(x, z)$,
where $n\in\N$, $p\in\mathbb{C}[x,y]$,  and $p(0,z)\neq 0$. Then $\mathcal{X}$ admits
 a nontrivial $\mathbb{C}_+$-action $t\mapsto \exp(t\partial)$, where
 $\partial=(\partial p/\partial z)\partial/\partial y+x^n\partial/\partial z$.
Hence by Proposition \ref{prop: non-tightness}(a) the cylinder $\mathcal{X}\times\mathbb{C}$ is not tight.

This applies, in particular, to the Danielewski surfaces $\mathcal{X}_n=\{x^ny-z^2+1=0\}$ in $\mathbb{C}^3$, 
$n\in\N$. By Danielewski's Theorem \cite{Dan89},  the cylinders $\mathcal{X}_n\times\mathbb{C}$
 are all isomorphic, whereas, according to Fieseler \cite{Fi94}, $\mathcal{X}_n$ is not even
 homeomorphic to $\mathcal{X}_m$ for $n\neq m$. Thus these surfaces provide counterexamples
 to cancellation\footnote{See e.g., \cite{Je09} 
for further examples of non-cancellation.}.
In addition,  the cylinders over the Danielewski surfaces are not tight.

To make the latter example more explicit, take two points 
$Q_0=(q,0)$ and $Q_1=(q,1)$ on the same ruling $\{q\}\times\mathbb{C}$
 of the cylinder $\mathcal{X}_n\times\mathbb{C}$, where $q=(0,0,1)\in \mathcal{X}_n$. 
Letting $\partial=2z\p/\p y+x^n\partial/\partial z\in\LND(\mathbb{C}[x,y,z])$, 
we consider the locally nilpotent derivation $\partial_1=u\partial$ of the algebra 
$\mathbb{C}[x,y,z,u]$. Since $\partial(x^ny-z^2+1)=0$, the derivation $\partial_1$ 
descends to a locally nilpotent derivation of the quotient
 $\mathbb{C}[x,y,z,u]/(x^ny-z^2+1)\simeq\mathcal{O}(\mathcal{X}_n\times\mathbb{C})$. 
The triangular automorphism
$$\alpha=\exp(\partial_1)\in\Aut\,\mathbb{C}^4,\quad  (x,y,z,u)\mapsto (x,y+2zu+x^nu^2,z+x^nu,u)\,,$$
 preserves the hypersurface
$\{x^ny-z^2+1=0\}\simeq\mathcal{X}_n\times\mathbb{C}$ in $\mathbb{C}^4$. 
This action fixes $Q_0$ and sends
$Q_1$ to  $\alpha(Q_1)=(0,2,1,1)$, so that the points $\alpha(Q_0)$ and $\alpha(Q_1)$ 
do not belong any more to the same ruling of the cylinder.
 Therefore, $\alpha\in\Aut (\mathcal{X}_n\times\mathbb{C})\setminus\Aut_{\vartriangle} (\mathcal{X}_n\times\mathbb{C})$.
\end{exa}

\brem\label{rem: flexibility}
By Theorem 3.1 in \cite{AKZ}, the surface $\mathcal{X}_1$ is {\em flexible}, i.e.,
 the tangent vectors to the orbits of the $\mathbb{C}_+$-actions on $\mathcal{X}_1$ generate 
the tangent space at any point of $\mathcal{X}_1$. It follows that the cylinder $\mathcal{X}_1\times\mathbb{C}$ 
is also flexible. By Theorem 0.1 in \cite{AFKKZ}, the flexibility implies the $k$-transitivity of the automorphism group 
$\Aut (\mathcal{X}_1\times\mathbb{C})$ for any $k\ge 1$. In particular, for any $n\ge 1$, there are automorphisms
of the cylinder  $\mathcal{X}_n\times\mathbb{C}\cong \mathcal{X}_1\times\mathbb{C}$
 that do not preserve the cylinder structure, and so send it to another such structure 
over the same base $\mathcal{X}_n$. 
This shows again that none of the cylinders $\mathcal{X}_n\times\mathbb{C}$ is tight. \erem

 We call the {\em rulings} of a cylinder $\mathcal{X}\times\mathbb{C}$ the fibers of the first
 projection ${\rm pr}_1\colon\,\mathcal{X}\times\mathbb{C}\to \mathcal{X}$.
 These are the orbits of the `vertical' free $\mathbb{C}_+$-action $\tau$ on 
 $\mathcal{X}\times\mathbb{C}$ via translations along the second factor.
In addition to the criterion of tightness in Proposition \ref{prop: non-tightness}(b), 
we have the following one. Consider the locally nilpotent derivation $\p/\p y$ on the
algebra $\mathcal{O}(\mathcal{X}\times\mathbb{C})
\cong\mathcal{O}(\mathcal{X})[y]$ with the phase flow $(x,y)\mapsto (x, y+t x)$. Its replica $b(x)\p/\p y$, where
$b\in\mathcal{O}(\mathcal{X})
$, has the phase flow $
(x,y)\mapsto (x, y+t b(x))$.

\bprop\label{prop: preservation of a cylinder} 
The cylinder $\mathcal{X}\times\mathbb{C}$ is tight if and only if any $\mathbb{C}_+$-action 
on this cylinder preserves each ruling, i.e., is of the form 
$
(x,y)\mapsto (x, y+t b(x))$, where
$b\in\mathcal{O}(\mathcal{X})
$. 
\eprop

\bproof 
Suppose first that the cylinder $\mathcal{X}\times\mathbb{C}$ is not tight, and so, 
 admits a second, different cylinder structure with a different family of rulings. 
Then the induced $\mathbb{C}_+$-action, say, $\tau'$ on $\mathcal{X}\times\mathbb{C}$
 has different orbits, and so,  does not preserve the rulings of the original cylinder. 

Conversely, suppose that  the cylinder 
$\mathcal{X}\times\mathbb{C}$ 
is tight.
Consider a $\mathbb{C}_+$-action $\varphi$  on this cylinder. 
Since $\varphi$ is triangular, it induces a $\mathbb{C}_+$-action, say, 
$\psi$ on $\mathcal{X}$. Assuming to the contrary that $\varphi$ is not vertical, i.e., 
does not preserve the rulings $\{x\}\times\mathbb{C}$, $x\in\mathcal{X}$, 
 the action $\psi$ is nontrivial. 
By Proposition \ref{prop: non-tightness}(a), the cylinder $\mathcal{X}\times\mathbb{C}$
 cannot be tight, a contradiction. 
\eproof

Let us show further that an arbitrary locally 
nilpotent derivation on a tight cylinder is a replica of the derivation $\p/\p y$.

\bprop\label{Prp: nilpotent derivations in rigid cylinder}
If $\mathcal{X}\times\mathbb{C}$ is tight then any 
$\p\in\LND(\mathcal{O}(\mathcal{X}\times\mathbb{C}))$ is a replica of the derivation $\p/\p y$, i.e.,
$$
\p = f\p/\p y,\quad\mbox{where}\quad
f\in\mathcal{O}^\tau(\mathcal{X}\times\mathbb{C}) = {\rm
pr}_1^*(\mathcal{O}(\mathcal{X}))=\ker \p/\p y\,.
$$
Consequently, any $\mathbb{C}_+$-action on $\mathcal{X}\times\mathbb{C}$ is of the form
$$
(x,y)\mapsto (x, y+t b(x)),\quad\mbox{where}\quad
t\in\mathbb{C}
\quad\mbox{and}\quad b\in\mathcal{O}(\mathcal{X})\,.
$$
\eprop

\bproof Indeed, both $\p$ and $\p/\p y$ can be viewed as regular vector
fields on $\mathcal{X}\times\mathbb{C}$, where the latter field is
non-vanishing. By Proposition \ref{prop: preservation of a cylinder},
these vector fields are proportional. That is, there exists a function
$f\in\mathcal{O}^\tau(\mathcal{X}\times\mathbb{C})$ such that
$\p = f\p/\p y$, which proves the first assertion. Now the second follows.
\end{proof}

\subsection{The group $\SAut(\mathcal{X}\times\mathbb{C})$}
\label{ss: spe-aut} \bdefi\label{def: alg-subgr}
Let $\mathcal{Z}$ be an irreducible algebraic variety. A subgroup $G\subset\Aut\mathcal{Z}$
 is called {\em algebraic} if it admits a structure of an algebraic group such that the natural map 
$G\times\mathcal{Z}\to\mathcal{Z}$ is a morphism. The {\em special automorphism group} $\SAut\mathcal{Z}$ is the
subgroup of $\Aut\mathcal{Z}$ generated by all the algebraic subgroups of
$\Aut\mathcal{Z}$ isomorphic to $\mathbb{C}_+$ (see e.g. \cite{AFKKZ}). 
Clearly,  $\SAut\mathcal{Z}$ is a normal subgroup of $\Aut\mathcal{Z}$.
\edefi

Assume that $\mathcal{X}$ is tight. Due to
(\ref{eq: tr-form1}) we have the
decomposition
\begin{equation}\label{eq: triple decomposition}
\Aut(\mathcal{X}\times\mathbb{C})\cong
\left(\mathcal{O}_+ (\mathcal{X})\rtimes\mathcal{O}^\times
(\mathcal{X})\right)\rtimes\Aut{\mathcal{X}}\,.
\end{equation}
In the next corollary we show that the group $\SAut(\mathcal{X}\times\mathbb{C})$ corresponds
 to the factor $\mathcal{O}_+ (\mathcal{X})$ in (\ref{eq: triple decomposition}).

\bprop\label{Crl: SAut for rigid cylinder}
If the cylinder $\mathcal{X}\times\mathbb{C}$ is tight, then $G=\SAut (\mathcal{X}\times\mathbb{C})$
is an Abelian group with Lie algebra\footnote{See \S \ref{ss: Lie-alg}.}
$$
L={\rm Lie}\, G=\mathcal{O}_+^{\tau}(\mathcal{X}\times\mathbb{C})\p/\p
y=\mathcal{O}_+ (\mathcal{X})\p/\p y\,.
$$
Furthermore, the exponential map $\exp\colon\, L\to G$ yields an isomorphism of groups
$$
\mathcal{O}_+ (\mathcal{X})\stackrel\cong\longrightarrow
\SAut (\mathcal{X}\times\mathbb{C})\,.
$$
\eprop

\bproof
This follows easily from Proposition
\ref{Prp: nilpotent derivations in rigid cylinder}.
\eproof

From Proposition \ref{prop: preservation of a cylinder}  we deduce such a corollary.

\bcor\label{cor: tightness-SAut}  The cylinder $\mathcal{X}\times\mathbb{C}$ 
is tight if and only if the orbits of the group $\SAut (\mathcal{X}\times\mathbb{C})$ are the rulings of this cylinder. \ecor

\subsection{The Makar-Limanov invariant of a cylinder}
\label{ss: ML invariant of a cylinder over rigid base}  The subalgebra of
$\tau$-invariants $\mathcal{O}^\tau(\mathcal{X}\times\mathbb{C})
\subset\mathcal{O}(\mathcal{X}\times\mathbb{C})$ admits
yet another interpretation. 

\bdefi\label{lnds} Let $\mathcal{Z}$ be an affine
algebraic variety over $\mathbb{C}$. The ring of invariants
$\mathcal{O}(\mathcal{Z})^{\SAut \mathcal{Z}}$ is called the {\em Makar-Limanov
invariant} of $\mathcal{Z}$ and is denoted by $\ML(\mathcal{Z})$. This ring is
invariant under the induced action of the group $\Aut \mathcal{Z}$ on
$\mathcal{O}(\mathcal{Z})$.

Consider a locally nilpotent
derivation $\partial\in\LND(\mathcal{O}(\mathcal{Z}))$
and the corresponding unipotent one-parameter algebraic group $H=\exp(\mathbb{C}\partial)\subset\Aut \mathcal{Z}$.
We have $\mathcal{O}(\mathcal{Z})^H=\ker\partial$ and so
\be\label{eq: ML}
\ML(\mathcal{Z})=\bigcap_{\partial\in\LND(\mathcal{O}(\mathcal{Z}))}\ker\partial\,.
\ee
Since the $\SAut(\mathcal{X}\times\mathbb{C})$-invariant functions 
are exactly the functions constant on each ruling of the cylinder, the next corollary is immediate from
Corollary \ref{cor: tightness-SAut}.
\edefi

\bcor\label{cor: ML} The cylinder $\mathcal{X}\times\mathbb{C}$ over an affine variety 
$\mathcal{X}$ is tight if and only if
\be\label{eq: ML=O}
\ML(\mathcal{X}\times\mathbb{C})
=\mathcal{O}^\tau(\mathcal{X}\times\mathbb{C})=\mathcal{O}(\mathcal{X})\,.
\ee
\ecor

It is worthwhile mentioning the following related 
result. 
\medskip

\noindent {\bf Dry{\l}o's Theorem II} (\cite{Dr11}). 
Let $\mathcal{X}$ and $\mathcal{Y}$ be irreducible affine varieties over an algebraically
 closed field $k$. If $\mathcal{X}$ is rigid, then 
$\ML(\mathcal{X} \times\mathcal{Y})=\mathcal{O}(\mathcal{X}) \otimes_k \ML(\mathcal{Y})$.
\vskip 3mm

There is a stronger statement in the case where $\mathcal{Y}$ is the affine line. 
\vskip 3mm

\noindent {\bf Bandman, Makar-Limanov, and Crachiola's Theorem}
 (\cite[Lemma 2]{BML05}, \cite[Theorem 3.1]{CML08}). {\em Let $\mathcal{X}$
 be an affine variety over 
an arbitrary field $k$. If $\ML(\mathcal{X})=\mathcal{O}(\mathcal{X})$, then 
the equality $\ML(\mathcal{X}\times\mathbb{A}^1_k)
=\mathcal{O}(\mathcal{X})$ holds. }
\vskip 3mm

Summarizing several previous  results, we can deduce the following tightness 
criteria\footnote{As we already mentioned, these criteria remain valid 
over any algebraically closed field $k$ of zero characteristic.}.

\bthm\label{cor: BMLC-improvement} For  an affine variety $\mathcal{X}$, 
the following conditions are equivalent:
\begin{itemize}\item[(i)]  the cylinder $\mathcal{X}\times\mathbb{C}$ is tight, i.e., 
$\Aut(\mathcal{X}\times\mathbb{C})
={\Aut}_{\vartriangle}(\mathcal{X}\times\mathbb{C})$;
\item[(ii)] the strong 
1-cancellation holds for $\mathcal{X}$;
 \item[(iii)] any $\mathbb{G}_a$-action on 
$\mathcal{X}\times\mathbb{C}$ is of the form
\smallskip
$
(x,y)\mapsto (x, y+t b(x))$, where
$t\in\mathbb{C}$ and
$b\in\mathcal{O}(\mathcal{X})$;
\item[(iv)] any 
$\p\in\LND(\mathcal{O}(\mathcal{X}\times\mathbb{C}))$ is of the form
\smallskip
$
\p = b(x)\p/\p y$, where
$b\in\mathcal{O}(\mathcal{X})
$; 
\item[(v)] the orbits of the group $\SAut (\mathcal{X}\times\mathbb{C})$ are the rulings of this cylinder;
\item[(vi)] 
$\ML(\mathcal{X}\times\mathbb{C})
=\mathcal{O}(\mathcal{X})$;  
\item[(vii)] $\ML(\mathcal{X})=\mathcal{O}(\mathcal{X})$, i.e., $\LND(\mathcal{O}(\mathcal{X}))=\emptyset$. 
\end{itemize}\ethm

\bproof 
Equivalences (i)$\Leftrightarrow$(ii),  (i)$\Leftrightarrow$(iii),  
(i)$\Leftrightarrow$(v), and (i)$\Leftrightarrow$(vi) are established in Propositions   
\ref{prop: non-tightness} and \ref{prop: preservation of a cylinder} 
and Corollaries \ref{cor: tightness-SAut}  and \ref{cor: ML}, respectively. 
Equivalence (iii)$\Leftrightarrow$(iv) is easy, and  implication (i)$\Rightarrow$(iv) 
follows from Proposition \ref{Prp: nilpotent derivations in rigid cylinder}. 
Thus the conditions (i)-(vi) are all equivalent.  Implication 
(vii)$\Rightarrow$(vi) follows from the
Bandman-Makar-Limanov-Crachiola Theorem.  To show (vi)$\Rightarrow$(vii)  
suppose that 
$\ML(\mathcal{X})\neq\mathcal{O}(\mathcal{X})$.  
Then $\LND(\mathcal{O}(\mathcal{X}))\neq\emptyset$, 
and so, there is a nontrivial $\mathbb{C}_+$-action on $\mathcal{X}$. 
 It follows by Proposition \ref{prop: non-tightness}(a) that the cylinder 
$\mathcal{X}\times\mathbb{C}$ admits a non-triangular automorphism.  
Then it possesses a second cylinder structure (over the same base $\mathcal{X}$). 
Let $\partial$ be
the locally nilpotent derivation of $\mathcal{O}(\mathcal{X}\times \mathbb{C})$ 
generating the one-parameter group of shifts along the rulings of the new cylinder. 
The kernel  $\ker\partial$ is different from $\mathcal{O}^\tau(\mathcal{X}\times \mathbb{C})$. 
Hence the intersection $\ML(\mathcal{X}\times \mathbb{C})$ of the kernels of the 
locally nilpotent derivations on $\mathcal{O}(\mathcal{X}\times \mathbb{C})$
 is strictly smaller than $\mathcal{O}^\tau(\mathcal{X}\times \mathbb{C})$, that is, 
$\ML(\mathcal{X}\times \mathbb{C})
\neq\mathcal{O}^\tau(\mathcal{X}\times \mathbb{C})$ (see (\ref{eq: ML})). 
This provides  (vi)$\Rightarrow$(vii). 
\eproof

\subsection{Configuration spaces and discriminant levels as cylinders over rigid bases}\label{ss: dir prod}
In Proposition \ref{Prp: rigidity} we show that the bases 
$\mathcal{C}^{n-1}_{\blc}$, $\mathcal{SC}^{n-2}_{\blc}$, and $\Sigma^{n-2}_{\blc}$ 
of the cylinders (\ref{eq: 3 cylinders}) possess a property, which implies the rigidity. 
Therefore, all automorphisms of these cylinders are triangular 
(Corollary \ref{Crl: our automorphisms are triangular}). 
The latter applies as well to the hypersurfaces $D_n(Q)=c\ne{0}$. Indeed, since $D_n$ 
is homogeneous, any such hypersurface is isomorphic to 
$\mathcal{SC}^{n-1}$. 

\begin{nota}\label{Ntn: ordered configuration space} 
For any $X$ and any $n\in\mathbb{N}$, let $\mathcal{C}^n_{\ord}(X)$ denote
the {\em ordered configuration space} of $X$, i.e.,
$\mathcal{C}^n_{\ord}(X)=\{(q_1,...,q_n)\in{X}^n\mid\, q_i\ne{q_j}\ \
\textup{for all} \ i\ne{j}\}$. The symmetric group $\mathbf{S}(n)$ acts freely on 
$\mathcal{C}^n_{\ord}(X)$  permuting the coordinates $q_1,...,q_n$. By definition, 
$\mathcal{C}^n_{\ord}(X)/\mathbf{S}(n)=\mathcal{C}^n(X)$. We  let
$$
\mathcal{C}^n_{\ord}\Def\mathcal{C}^n_{\ord}(\mathbb{C})\quad\textup{and}\quad
\mathcal{C}^{n-1}_{\ord,\blc}\Def\{q=\{q_1,...,q_n\}\in\mathcal{C}^n_{\ord}
\mid\, q_1+...+q_n=0\}\,.
$$
Clearly $\mathcal{C}^{n}_{\ord}/\mathbf{S}(n)
=\mathcal{C}^{n}$ (see (\ref{eq: Cn(C)=Cn-Sigma})) 
and $\mathcal{C}^{n-1}_{\ord,\blc}/\mathbf{S}(n)
=\mathcal{C}^{n-1}_{\blc}$.
The variety $\mathcal{C}^n$ and the discriminant variety
$$
\Sigma^{n-1}=\{z\in\mathbb{C}^n\mid\,d_n(z)=0\}\,
$$
(see (\ref{eq: discriminant variety})) can be viewed as
complementary to each other parts of the symmetric power
$\Sym^n\mathbb{C}=\mathbb{C}^n_{(q)}/\mathbf{S}(n)$.
We have the quotient morphisms
\begin{equation}\label{eq: projection p}
p\colon\mathbb{C}^n_{(q)}\to\Sym^n\mathbb{C}\cong\mathbb{C}^n_{(z)}\,,\ \
\Delta^{n-1}\to\Sigma^{n-1}\,, \ \ \textup{and} \ \
\mathbb{C}^n_{(q)}\setminus\Delta^{n-1}\to\mathcal{C}^n\,,
\end{equation}
where $\Delta^{n-1}\Def\bigcup
\limits_{i\ne{j}}\{q=(q_1,...,q_n)\in\mathbb{C}^n\mid\,
q_i=q_j\}$
is the big diagonal.  The points $z\in\Sigma^{n-1}$ are in one-to-one 
correspondence with unordered $n$-term multisets (or corteges) 
$Q=\{q_1,\ldots,q_n\}$, $q_i\in\mathbb{C}$, 
with at least one repetition.
\vskip3pt

Let $\mathcal{Z}$ be one of the varieties $\mathcal{C}^n$, $\mathcal{SC}^{n-1}$, 
or $\Sigma^{n-1}$. The corresponding {\em balanced}
variety $\mathcal{Z}_{\blc}\subset\mathcal{Z}$ is defined by
\begin{equation}\label{eq: balanced spaces}
\mathcal{Z}_{\blc}=\{Q\in\mathcal{Z}\mid\, \bc(Q)=0\}\,, \ \
\dim_{\mathbb{C}}\mathcal{Z}_{\blc}=\dim_{\mathbb{C}}\mathcal{Z}-1\,
\end{equation}
(cf.\ (\ref{eq: balanced spaces0})). The free regular $\mathbb{C}_+$-action $\tau$ 
on $\mathbb{C}^n$ defined by
$\tau_\zeta{Q}=Q+\zeta=\{q_1+\zeta,...,q_n+\zeta\}$ for $\zeta\in\mathbb{C}$ and 
$Q\in\mathbb{C}^n$ preserves $\mathcal{Z}$. The orbit map of $\tau$ gives the morphism
\begin{equation}\label{eq: pi}
\pi\colon\,\mathcal{Z}\to\mathcal{Z}_{\blc},\quad
{Q}\mapsto{Q^\circ\Def{Q-\bc(Q)}}\,,\quad \textup{and}\quad \pi'\colon
\,\mathcal{Z}\to\mathbb{C}\,,\quad {Q\mapsto\bc(Q)}\,
\end{equation}
with all fibers reduced and isomorphic to 
$\mathbb{C}$. The corresponding {\em cylindrical} direct decomposition 
$\mathcal{Z}=\mathcal{Z}_{\blc}\times\mathbb{C}$ 
leads to decompositions of our varieties
\begin{equation}\label{eq: 3 cylinders}
\mathcal{C}^n=\mathcal{C}^{n-1}_{\blc}\times\mathbb{C}\,,
\quad \mathcal{SC}^{n-1}=\mathcal{SC}^{n-2}_{\blc}\times\mathbb{C}\,,
\quad\textup{and}\quad \ \Sigma^{n-1}=\Sigma^{n-2}_{\blc}\times\mathbb{C}\,,
\end{equation}
which play an important part in what follows. 
\vskip3pt
\end{nota}

Note that the regular part $\reg\,\Sigma^{n-1}$ 
of the discriminant variety $\Sigma^{n-1}$ 
consists of all the unordered $n$-multisets $Q=\{q_1,...,q_{n-2},u,u\}\subset\mathbb{C}$ 
with $q_i\ne{q_j}$ for $i\ne{j}$ and $q_i\ne{u}$ for all $i$. Since the
hyperplane $q_1+...+q_n=0$ is transversal to each of the hyperplanes $q_i=q_j$, 
the regular part $\reg\,\Sigma^{n-2}_{\blc}$ of\,
$\Sigma^{n-2}_{\blc}$ consists of all the multisets $Q=\{q_1,...,q_{n-2},u,u\}$ 
as above that satisfy the additional condition $\sum\limits_{i=1}^{n-2}q_i+2u=0$. 
In the proofs of Proposition \ref{Prp: rigidity} and Theorem \ref{Thm: aut balanced Sigma} 
 we need the following lemma.

\blem\label{Lm: reg Sigma(n-2)blc=C(n-2)(C*)}
For $n>2$ the regular part $\reg\,\Sigma^{n-2}_{\blc}$ of\,
$\Sigma^{n-2}_{\blc}$ is isomorphic to the configuration space
$\mathcal{C}^{n-2}(\mathbb{C}^{*})$. Consequently,
$\Aut(\reg\,\Sigma^{n-2}_{\blc})\cong
\Aut\mathcal{C}^{n-2}(\mathbb{C}^{*})$.
\elem

\begin{proof}
An isomorphism
$\reg\,\Sigma^{n-2}_{\blc}\cong\mathcal{C}^{n-2}(\mathbb{C}^{*})$ 
does exist since both these varieties are cross-sections of the standard $\mathbb{C}_+$-action 
$\tau$ on the cylinder $\reg\,\Sigma^{n-2}=(\reg\,\Sigma^{n-2}_{\blc})\times\mathbb{C}$
(see (\ref{eq: 3 cylinders})). To construct such an isomorphism explicitly, for any
$$
Q=\{q_1,...,q_{n-2},u,u\}\in\reg\,\Sigma^{n-2}_{\blc}\,,\ \ \textup{where} \ u
=-\frac{1}{2}\sum\limits_{i=1}^{n-2}q_i\,,
$$
we let $\widetilde{Q}=\{q_1-u,...,q_{n-2}-u\}$.
Then $\widetilde{Q}\in\mathcal{C}^{n-2}(\mathbb{C}^{*})$, and we have an epimorphism
\begin{equation}\label{eq: phi-isomorphism}
\phi\colon\,\reg\,\Sigma^{n-2}_{\blc}\to\mathcal{C}^{n-2}(\mathbb{C}^{*})\,,
\quad \phi(Q)=\widetilde{Q}\,.
\end{equation}
To show that $\phi$ is an isomorphism,
for any $Q'=\{q'_1,...,q'_{n-2}\}\in\mathcal{C}^{n-2}(\mathbb{C}^{*})$
take
$$
v=-\frac{1}{n}\sum\limits_{i=1}^{n-2}q'_i\quad  \textup{and} \quad
Q''=\{q'_1+v,...,q'_{n-2}+v,v,v\}\,;
$$
note that $v=u$ for $Q'=\widetilde{Q}$ as above. Then $Q''\in\reg\,\Sigma^{n-2}_{\blc}$ and the morphism
\begin{equation}\label{eq: psi-isomorphism}
\psi\colon\mathcal{C}^{n-2}(\mathbb{C}^{*})\to\reg\,\Sigma^{n-2}_{\blc}\,, \quad \psi(Q')=Q''\,,
\end{equation}
is the inverse of $\phi$.
\end{proof}

\begin{prop}\label{Prp: rigidity}
For $n>2$, let $\mathcal{X}$ be one of the varieties 
$\mathcal{C}^{n-1}_{\blc}$, $\mathcal{SC}^{n-2}_{\blc}$, or $\Sigma^{n-2}_{\blc}$. 
Then any morphism
$\mathbb{C}\to\reg\,\mathcal{X}$ is constant. In particular, these varieties are rigid, 
and the cylinders $\mathcal{C}^{n-1}_{\blc}\times\mathbb{C}$,\
$\mathcal{SC}^{n-2}_{\blc}\times\mathbb{C}$, and
$\Sigma^{n-2}_{\blc}\times\mathbb{C}$
in {\rm (\ref{eq: 3 cylinders})} are tight.
\end{prop}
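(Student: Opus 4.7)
The plan is to reduce the entire proposition to the elementary fact that any nonvanishing regular function on $\mathbb{A}^1$ is a nonzero constant. First, note that once one establishes the first assertion (every morphism $\mathbb{C}\to\reg\,\mathcal{X}$ is constant), rigidity is automatic: if there were a dominant morphism $\mathcal{V}\times\mathbb{C}\to\reg\,\mathcal{X}$ non-constant on a general ruling $\{v\}\times\mathbb{C}$, then the restriction to that ruling would be a non-constant morphism $\mathbb{C}\to\reg\,\mathcal{X}$. Tightness of the three cylinders in \eqref{eq: 3 cylinders} then follows immediately from Corollary \ref{Cor: automorphisms-of-rigid-cylinder-are-triangular}.

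For $\mathcal{X}=\mathcal{C}^{n-1}_{\blc}$ (which is smooth, so $\reg\,\mathcal{X}=\mathcal{X}$), I would lift a given morphism $f\colon\mathbb{C}\to\mathcal{C}^{n-1}_{\blc}$ to the ordered configuration space along the finite étale Galois cover $\mathcal{C}^{n-1}_{\ord,\blc}\to\mathcal{C}^{n-1}_{\blc}$ with group $\mathbf{S}(n)$. The pullback of this cover over $\mathbb{A}^1$ is étale, and since $\mathbb{A}^1$ is simply connected it splits into $n!$ copies of $\mathbb{A}^1$, so an algebraic lift exists. The lift produces polynomials $q_1(t),\dots,q_n(t)\in\mathbb{C}[t]$ with $\sum_i q_i=0$ and $q_i(t)\neq q_j(t)$ for all $t$ and $i\neq j$. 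Each difference $q_i-q_j$ is a nonvanishing element of $\mathbb{C}[t]$, hence a nonzero constant $c_{ij}$. Together with the barycenter relation $\sum_i q_i=0$ this forces each $q_i$ to be constant, so $f$ is constant. For $\mathcal{X}=\mathcal{SC}^{n-2}_{\blc}$, the inclusion $\reg\,\mathcal{SC}^{n-2}_{\blc}\hookrightarrow\mathcal{C}^{n-1}_{\blc}$ reduces the claim immediately to the case just treated.

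For $\mathcal{X}=\Sigma^{n-2}_{\blc}$, I would apply Lemma \ref{Lm: reg Sigma(n-2)blc=C(n-2)(C*)}, which gives $\reg\,\Sigma^{n-2}_{\blc}\cong\mathcal{C}^{n-2}(\mathbb{C}^{*})$. It thus suffices to show that every morphism $\mathbb{C}\to\mathcal{C}^{n-2}(\mathbb{C}^{*})$ is constant. Repeating the same lifting argument through the étale Galois cover $\mathcal{C}^{n-2}_{\ord}(\mathbb{C}^*)\to\mathcal{C}^{n-2}(\mathbb{C}^*)$ gives regular maps $q_1(t),\dots,q_{n-2}(t)\colon\mathbb{C}\to\mathbb{C}^{*}$. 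Each $q_i$ is a nonvanishing polynomial in $\mathbb{C}[t]$, hence a nonzero constant, which concludes the proof.

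The only non-routine point is the algebraic lift through the symmetric group quotient, and this is standard once one invokes the simple connectedness of $\mathbb{A}^1$ together with the fact that the natural projections onto the unordered configuration spaces are étale. The rest is pure one-variable polynomial algebra: the rigidity of $\mathcal{O}(\mathbb{A}^1)=\mathbb{C}[t]$ (nonvanishing elements are constants) is precisely what drives the entire statement, and it is this feature that fails, for example, for $\mathbb{C}^*$ in place of $\mathbb{C}$, reflecting the genuine difficulty present in the elliptic or $\mathbb{P}^1$-cases.
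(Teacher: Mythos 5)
Your proposal is correct and follows essentially the same route as the paper: lift through the finite étale symmetric-group covering onto the ordered configuration space (the paper invokes the monodromy theorem, you invoke simple connectedness of $\mathbb{A}^1$, which amounts to the same thing), then use that nonvanishing regular functions on $\mathbb{C}$ are constant, together with the barycenter relation, and reduce $\mathcal{SC}^{n-2}_{\blc}$ by inclusion and $\Sigma^{n-2}_{\blc}$ via Lemma \ref{Lm: reg Sigma(n-2)blc=C(n-2)(C*)}. Your explicit remarks that the first assertion implies rigidity and that tightness then follows from Corollary \ref{Cor: automorphisms-of-rigid-cylinder-are-triangular} merely spell out what the paper leaves implicit.
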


\begin{proof} Let us show first that any morphism
$f\colon\,\mathbb{C}\to\mathcal{C}^{n-1}_{\blc}$ is constant. Consider the
unramified $\mathbf{S}(n)$-covering
$p\colon\mathcal{C}^{n-1}_{\blc,\ord}\to\mathcal{C}^{n-1}_{\blc}$. By the 
monodromy theorem $f$ can be lifted to a morphism
$g=(g_1,\ldots,g_n)\colon\,\mathbb{C}\to\mathcal{C}^{n-1}_{\blc,\ord}$.
For any $i\ne{j}$ the regular function $g_i-g_j$ on $\mathbb{C}$ does not
vanish, hence is constant. In particular, $g_i=g_1+c_i$, where
$c_i\in\mathbb{C}$, $i=1,\ldots,n$, and so,
$0=\sum\limits_{i=1}^ng_i=ng_1+c$, where $c=\sum\limits_{i=1}^n c_i$.
Thus, $g_1=\const$, so $g_i=\const$ for all $i=1,\ldots,n$. Hence $f=\const$, 
and the variety $\mathcal{C}^{n-1}_{\blc}$ is rigid.
\vskip4pt

Since $\mathcal{SC}^{n-2}_{\blc}\subset\mathcal{C}^{n-1}_{\blc}$,
any morphism $\mathbb{C}\to\mathcal{SC}^{n-2}_{\blc}$
is constant and $\mathcal{SC}^{n-2}_{\blc}$ is rigid.
\vskip4pt

It remains to show that any  morphism
$\mathbb{C}\to\reg\,\Sigma^{n-2}_{\blc}$ is constant.  For $n=3$ we have
$\reg\,\Sigma^{n-2}_{\blc}\cong\mathbb{C}^*$, hence the claim follows.

For $n\ge{4}$, by Lemma \ref{Lm: reg Sigma(n-2)blc=C(n-2)(C*)}, 
it suffices to show that any morphism $f\colon\,\mathbb{C}\to\mathcal{C}^{n-2}(\mathbb{C}^{*})$ is constant.
By monodromy theorem $f$ admits a lift $g\colon\,\mathbb{C}\to\mathcal{C}^{n-2}_{\ord}(\mathbb{C}^*)
\subset(\mathbb{C}^*)^{n-2}$ to the unramified $\mathbf{S}(n-2)$-covering
$\mathcal{C}^{n-2}_{\ord}(\mathbb{C}^*)\to\mathcal{C}^{n-2}(\mathbb{C}^{*})$.
This implies that both $g$ and $f$ are constant, since any morphism $\mathbb{C}\to\mathbb{C}^*$ is.
\end{proof}

Using Corollary \ref{Cor: automorphisms-of-rigid-cylinder-are-triangular} 
and Proposition \ref{Prp: rigidity}, we can deduce such a corollary.

\bcor\label{Crl: our automorphisms are triangular}
For $n>2$, all automorphisms of the cylinders
$$
\mathcal{C}^n\cong\mathcal{C}^{n-1}_{\blc}\times\mathbb{C},\quad
\mathcal{SC}^{n-1}\cong\mathcal{SC}^{n-2}_{\blc}\times\mathbb{C},
\quad\textup{and}\quad \Sigma^{n-1}\cong\Sigma^{n-2}_{\blc}\times\mathbb{C}
$$
are triangular, and {\rm (\ref{eq: tr-form0})}-{\rm
(\ref{eq: tr-form1})} hold for the corresponding automorphism
groups.
\ecor

\section{The automorphism groups of tight cylinders}\label{sc: ind-grp}
\label{sc: special-Aut}

\subsection{The  structure of the orbits}\label{ss: orbits}
Let $\tau$ stands as before for the standard $\mathbb{C}_+$-action  on
$\mathcal{X}\times\mathbb{C}$ by shifts along the second factor, and let
$U=\exp(\mathbb{C}\p/\p y)$ be the corresponding one-parameter unipotent
subgroup of $\SAut\,(\mathcal{X}\times\mathbb{C})$. Consider also the subgroup 
$B\Def{U}\cdot\Aut\mathcal{X}\cong U\rtimes\Aut\mathcal{X}$ of
$\Aut\,(\mathcal{X}\times\mathbb C)$, and let
$B_0\cong U\rtimes\Aut_0\,\mathcal{X}$ be its neutral component. 

More generally, given a character $\chi$ of $\Aut \mathcal{X}$
(of $\Aut_0\,\mathcal{X}$, respectively) we let 
$$
B(\chi)=\big\{F\in \Aut\,(\mathcal{X}\times\mathbb{C})\,|\,F:(x,y)\mapsto (Sx, \chi(S)y+b),\, 
S\in \Aut \mathcal{X}, \,b\in\mathbb{C}\big\}\,,
$$
 and let $B_0(\chi)$ be the neutral component of $B(\chi)$.
 Thus, $B=B(1)$ and $B_0=B_0(1)$. Clearly, $B(\chi)$ ($B_0(\chi)$, respectively) 
is algebraic as soon as $\Aut \mathcal{X}$  ($\Aut_0\,\mathcal{X}$, respectively) is. 

From Proposition \ref{Prp: nilpotent derivations in rigid cylinder} we deduce the following result.

\bcor\label{cor: orbits-rigid-cylinder} If the cylinder $\mathcal{X}\times\mathbb C$
over an affine variety $\mathcal{X}$ is tight,
then the orbits of the automorphism group
$\Aut (\mathcal{X}\times\mathbb C)$ {\rm (}of $\Aut_0\, (\mathcal{X}\times\mathbb C)$, 
respectively{\rm )} coincide
with the orbits of the group $B(\chi)$  {\rm (}$B_0(\chi)$, respectively{\rm )}, 
whatever is the character $\chi$ of $\Aut \mathcal{X}$ {\rm (}of $\Aut_0 \mathcal{X}$, respectively{\rm )}.
\ecor

\bproof We give a proof for the group $\Aut (\mathcal{X}\times\mathbb C)$; that for 
$\Aut_0\, (\mathcal{X}\times\mathbb C)$ is similar.  Recall that any automorphism $F$ 
of the tight cylinder $\mathcal{X}\times\mathbb C$ is triangular, and so, can be written as
\begin{equation}\label{eq: triangular automorphism of XxC}
F(x,y)=(Sx,A(x)y)\,\ \textup{for any} \
(x,y)\in\mathcal{X}\times\mathbb C\,,
\end{equation}
where $S\in\Aut\mathcal{X} $ and $A\in\Mor(\mathcal{X},\Aff\mathbb C)$.
It follows that the $B(\chi)$-orbit $B(\chi)Q$ of a
point $Q=(x,y)$ in $\mathcal{X}\times\mathbb C$ is
$B(\chi)Q=[(\Aut\mathcal{X})x]\times\mathbb C$. By virtue of Proposition
\ref{Prp: nilpotent derivations in rigid cylinder}
the $\SAut\, (\mathcal{X}\times\mathbb C)$-orbits in
$\mathcal{X}\times\mathbb C$ coincide with the $\tau$-orbits, i.e.,
with the rulings of the cylinder $\mathcal{X}\times\mathbb C$. Now the assertion 
follows from decomposition (\ref{eq: triple
decomposition}) and the isomorphism $\mathcal{O}_+(\mathcal{X})
\cong\SAut(\mathcal{X}\times\mathbb{C})$ of 
Proposition \ref{Crl: SAut for rigid cylinder}.
\eproof

\brem\label{rems: alg-grp=torus} 
Let $\mathcal{X}$ be an affine variety  with  
$\LND(\mathcal{X})=\emptyset$. 
If $\Aut_0\, \mathcal{X}$ is an algebraic group, 
then this is an algebraic torus, see \cite[Lemma 3]{Ii}. 
In this case $B_0(\chi)$ 
is a metabelian linear algebraic group isomorphic to 
a semi-direct product 
$\mathbb{C}_+\rtimes(\mathbb{C}^*)^r$, where $r\ge 0$ 
and $(\mathbb{C}^*)^r$ acts on $\mathbb{C}_+$ via 
multiplication by the character 
$\chi$ of the torus $(\mathbb{C}^*)^r$.
\erem

In the spirit of Tame Map Theorem, the following holds.

\bprop\label{prop: weak-TMT}
Given a tight cylinder $\mathcal{X}\times\mathbb{C}$ 
over an affine variety 
$\mathcal{X}$ and a character $\chi$ of $\Aut \mathcal{X}$, any automorphism $F$ of
$\mathcal{X}\times\mathbb{C}$ 
admits a unique factorization
\be\label{eq: TMT-factorization-rigid-cylinder}
F\colon\mathcal{X}\times\mathbb{C}\stackrel{T\times\{\id\}}{\longrightarrow}
B(\chi)\times
(\mathcal{X}\times\mathbb{C})\stackrel{\alpha}{\longrightarrow}
\mathcal{X}\times\mathbb{C}\,,
\ee
where $\alpha$ stands
for the $B(\chi)$-action on $\mathcal{X}\times\mathbb{C}$, and
$T\colon\mathcal{X}\times\mathbb{C}\to B(\chi)$ is a morphism
with a constant $(\Aut\mathcal{X})$-component. 
\eprop

\bproof   By Corollary \ref{cor: orbits-rigid-cylinder}
for any point $Q=(x,y)\in\mathcal{X}\times\mathbb{C}$ there
exists an element $T(Q)\in B(\chi)$, $T(Q)\colon (x',y')
\mapsto (S(Q)x', \chi(S(Q))y'+f(Q))$ for some $f(Q)\in\mathbb{C}$, such that
\be\label{eq: B-representation}
F(Q)=T(Q)Q=(S(Q)x, \chi(S(Q))y+f(Q))\in\mathcal{X}\times\mathbb{C}\,.
\ee
On the other hand, according to (\ref{eq: triangular automorphism of XxC}),
\be\label{eq: A-representation} F(Q)=(Sx, a(x)y+b(x))\in \mathcal{X}\times\mathbb{C}\,,\ee
where $S\in\Aut\mathcal{X}$, $a\in
\mathcal{O}^\times(\mathcal{X})$, and
$b\in\mathcal{O}_+(\mathcal{X})$ are uniquely determined by $F$.
Comparing  (\ref{eq: B-representation}) and (\ref{eq: A-representation})  yields $S(Q)x=Sx$ 
and $f(Q)=(a(x)-\chi(S))y+b(x)$ for any $Q\in\mathcal{X}\times\mathbb{C}$.
Vice versa, the latter equalities define unique
$f\in\mathcal{O}(\mathcal{X}\times\mathbb{C})$ and
$S\in\Aut\mathcal{X}$ such that $T\colon\mathcal{X}\times\mathbb{C}\to B(\chi)$, 
$Q\mapsto (S, z\mapsto\chi(S)z+f(Q))$,
fits in (\ref{eq: TMT-factorization-rigid-cylinder}) i.e., $F=\alpha\circ (T\times\id)$, as required.  \eproof

Formula (\ref{eq: TMT-T}) corresponds to the particular case where
$\mathcal{X}\times\mathbb{C}=\mathcal{C}^{n-1}_{\blc}\times\mathbb{C}\cong \mathcal{C}^n$. 
In this case $\Aut\mathcal{X}=\Aut\mathcal{C}^{n-1}_{\blc}= \mathbb{C}^*$ 
(see Theorem \ref{Thm: Kaliman-Lin-Zinde}(a)), and the  character 
$\chi:\mathbb{C}^*\to \mathbb{C}^*$ is the identity. 

\subsection{$\Aut (\mathcal{X}\times\mathbb{C})$ as ind-group}
\label{ss: ind-grp}
Recall the following notions (see \cite{Ku}, \cite{Sh}).

\bdefi\label{def: ind-groups}
An {\em ind-group} is a group $G$ equipped with an increasing filtration
$G=\bigcup_{i\in\N} G_i$, where the components $G_i$ are algebraic varieties 
(and not necessarily algebraic groups) such that the natural inclusion 
$G_i\hookrightarrow {G}_{i+1}$, the multiplication map $G_i\times G_j\to G_{m(i,j)}$, 
$(g_i,g_j)\mapsto g_ig_j$, and the inversion $G_i\to G_{k(i)}$, $g_i\mapsto g_i^{-1}$, 
are morphisms for any $i,j\in\N$ with a suitable choice of $m(i,j),\, k(i)\in\N$.
\edefi

\bexas \label{ex: ind-structure on Add}
(a) ({\em Ind-structure on $\mathcal{O}_+(\mathcal{X})$}).
Given an affine variety $\mathcal{X}$ we fix a closed embedding
$\mathcal{X}\hookrightarrow \mathbb{C}^N$. For
$f\in\mathcal{O}_+(\mathcal{X})$ we define its degree $\deg f$ as
the minimal degree of a polynomial extension of $f$ to
$\mathbb{C}^N$. Letting
\begin{equation}\label{eq: Gi}
G_i=\{f\in\mathcal{O}_+(\mathcal{X})\,\vert\,\deg f\le i\}
\end{equation}
we obtain a filtration of the group $\mathcal{O}_+(\mathcal{X})$
by an increasing sequence of connected Abelian algebraic subgroups $G_i$ 
($i\in\N$), hence an ind-structure on $\mathcal{O}_+(\mathcal{X})$.
\medskip

(b) ({\em Ind-structure on $\Aut\mathcal{X}$}).
\label{ex: ind-structure on Aut}
Given a closed embedding
$\mathcal{X}\hookrightarrow \mathbb{C}^N$, any automorphism $F\in
\Aut\mathcal{X}$ can be written as $F=(f_1,\ldots,f_N)$, where
$f_j\in\mathcal{O}_+(\mathcal{X})$. Letting
$$
\deg F= \max_{j=1,\ldots,N}\,\{\deg f_j\}\quad\mbox{and}\quad G_i=
\{F\in  \Aut\mathcal{X}\,\vert\,\deg F\le i\}
$$
we obtain an ind-group structure $\Aut\mathcal{X}=\bigcup_{i\in\N} G_i$
compatible with the action of $\Aut\mathcal{X}$ on $\mathcal{X}$.
The latter means that the maps $G_i\times \mathcal{X}\to
\mathcal{X}$, $(F,x)\mapsto F(x)$, are morphisms of algebraic
varieties. It is well known that any two such ind-structures on
$\Aut\mathcal{X}$ are equivalent.
\medskip

(c) ({\em Ind-structure on} $\Aut(\mathcal{X}\times\mathbb{C})$).
\label{ex:  ind-str  on Aut-cylin}
For a tight cylinder $\mathcal{X}\times\mathbb{C}$ an ind-structure on the group 
$\Aut(\mathcal{X}\times\mathbb{C})$ can be defined via the ind-structures on the 
factors $\mathcal{O}_+(\mathcal{X})$, $\mathcal{O}^\times(\mathcal{X})$, and 
$\Aut \mathcal{X}$ in decomposition (\ref{eq:
triple decomposition}).
\eexas

\subsection{$\mathcal{O}^\times(\mathcal{X})$  as ind-group}
\label{ss: ind-grp Mult} Any extension of an algebraic group by a countable group is an ind-group. 
In particular, the group $\mathcal{O}^\times(\mathcal{X})$ is an ind-group due 
to the following well-known fact (see \cite[Lemme 1]{Samuel}; see also \cite[Ch.\ 3, Lemma 1.2.1]{Mi} 
or \cite[Lemma 1.1]{Ford}).

\begin{lem}[{\em Samuel's Lemma}\rm]\label{lem: Mult-group}
For any irreducible algebraic variety $\mathcal{X}$ defined over an algebraically closed field $k$ we have
$$
\mathcal{O}^\times(\mathcal{X})\cong k^*\times\mathbb{Z}^m \ \
\textup{for some} \ \ m\ge 0\,.
$$
If $k=\mathbb{C}$, then $m\le \rank{H^1(\mathcal{X},\mathbb{Z})}$.
\end{lem}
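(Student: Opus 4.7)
The plan is to split the statement into two parts: first produce the algebraic structure $\mathcal{O}^\times(\mathcal{X}) \cong k^*\times\mathbb{Z}^m$, and then, over $\mathbb{C}$, bound $m$ by $\rank H^1(\mathcal{X},\mathbb{Z})$. For the first part, I would pass to a normal projective compactification $\bar{\mathcal{X}}\supset\mathcal{X}$; using Nagata/Hironaka one may assume the boundary $\bar{\mathcal{X}}\setminus\mathcal{X}$ is a finite union of prime divisors $D_1,\ldots,D_N$. Any regular unit $f\in\mathcal{O}^\times(\mathcal{X})$ extends as a rational function on $\bar{\mathcal{X}}$ whose divisor is supported on $D_1\cup\cdots\cup D_N$, because $f$ has neither zeros nor poles on $\mathcal{X}$ itself (here normality is used to make sense of valuations along the $D_i$). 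The map
\[
\phi\colon\mathcal{O}^\times(\mathcal{X})\to\bigoplus_{i=1}^N\mathbb{Z}D_i\cong\mathbb{Z}^N,\qquad f\mapsto(f)_{\bar{\mathcal{X}}},
\]
is a group homomorphism. Its kernel consists of units on $\bar{\mathcal{X}}$, i.e.\ of regular functions on a projective variety, hence of the constants $k^*$.

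Consequently $\mathcal{O}^\times(\mathcal{X})/k^*$ embeds in $\mathbb{Z}^N$, so it is free Abelian of some rank $m\le N$. This yields a short exact sequence
\[
1\longrightarrow k^*\longrightarrow\mathcal{O}^\times(\mathcal{X})\longrightarrow\mathbb{Z}^m\longrightarrow 1,
\]
which splits because $\mathbb{Z}^m$ is a free Abelian group (equivalently, $\Ext^1(\mathbb{Z}^m,k^*)=0$). Picking lifts of free generators of the quotient provides an isomorphism $\mathcal{O}^\times(\mathcal{X})\cong k^*\times\mathbb{Z}^m$, proving the first assertion over any algebraically closed $k$.

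For the refinement over $\mathbb{C}$, I would build the monodromy homomorphism
\[
\mu\colon\mathcal{O}^\times(\mathcal{X})\longrightarrow H^1(\mathcal{X}^{\mathrm{an}},\mathbb{Z}),\qquad f\mapsto f^*\alpha,
\]
where $\alpha$ is a fixed generator of $H^1(\mathbb{C}^*,\mathbb{Z})\cong\mathbb{Z}$ and $f$ is regarded as a holomorphic map $\mathcal{X}^{\mathrm{an}}\to\mathbb{C}^*$. The kernel of $\mu$ consists of those $f$ whose logarithm lifts to a single-valued holomorphic function on $\mathcal{X}^{\mathrm{an}}$, i.e.\ $f=\exp g$ with $g\in\mathcal{O}(\mathcal{X}^{\mathrm{an}})$. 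I claim that any such $f$ which is regular and algebraic on $\mathcal{X}$ must be a non-zero constant: working near a general point of any boundary component $D_i$ in a smooth compactification, the rational extension of $f$ can have neither a zero nor a pole there, because $\exp g$ is holomorphic and nowhere zero on a punctured disc transverse to $D_i$ (a pole or zero would force $g\sim c\log z$, violating single-valuedness). Hence $f$ extends to a global regular function on the projective variety $\bar{\mathcal{X}}$ and therefore lies in $\mathbb{C}^*$. Thus $\ker\mu=\mathbb{C}^*$, and $\mu$ factors through an injection $\mathcal{O}^\times(\mathcal{X})/\mathbb{C}^*\hookrightarrow H^1(\mathcal{X}^{\mathrm{an}},\mathbb{Z})$, giving $m\le\rank H^1(\mathcal{X},\mathbb{Z})$.

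The main technical obstacle is the computation of $\ker\mu$: one must know that a non-constant algebraic unit on $\mathcal{X}$ cannot have a single-valued holomorphic logarithm, which requires controlling the behaviour of $\exp g$ at the boundary of a compactification. A clean way to handle this is to resolve singularities and reduce to the case of a smooth projective $\bar{\mathcal{X}}$ with a simple normal crossing boundary, where the local computation on a transversal disc makes the argument rigorous. A secondary subtlety is verifying the splitting in characteristic zero, but this is automatic from the freeness of $\mathbb{Z}^m$ and does not require divisibility of $k^*$.
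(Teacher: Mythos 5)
Your argument is correct in substance, but its two halves relate differently to the paper. The paper proves the lemma only for $k=\mathbb{C}$, by the sheaf-theoretic route (following Bruschlinsky--Eilenberg): the exponential sequence of sheaves of germs of continuous functions, together with fineness of the sheaf of continuous $\mathbb{C}$-valued functions, gives an exact sequence $C(\mathcal{X})\to C^*(\mathcal{X})\stackrel{\rho}{\to}H^1(\mathcal{X},\mathbb{Z})\to 0$; restricting $\rho$ to $\mathcal{O}^\times(\mathcal{X})$ one finds that $\mathcal{O}^\times(\mathcal{X})/\mathbb{C}^*$ embeds into the finitely generated free group $H^1(\mathcal{X},\mathbb{Z})$, and the general-field statement is quoted from Samuel rather than reproved. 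Your second half (the monodromy homomorphism $\mu$) is that same argument in homotopy-theoretic form: $\mu$ is exactly $\rho$, and both proofs turn on the fact that an algebraic unit admitting a single-valued logarithm is constant. Here you are in fact more careful than the text: the paper only invokes ``$\phi$ regular and $e^\phi$ regular imply $\phi=\const$'', whereas the logarithm one actually gets is a priori only holomorphic, and your transversal-disc computation (no zero or pole of $f$ along any boundary divisor, hence $f$ extends regularly to the projective closure and is constant) is precisely the missing justification. Your first half --- the divisor homomorphism $f\mapsto(f)_{\bar{\mathcal{X}}}\in\mathbb{Z}^N$ with kernel $k^*$ --- is genuinely different from anything in the paper: it is essentially Samuel's/Rosenlicht's algebraic proof, valid over any algebraically closed field and independent of the complex topology, while the paper's route buys the sharp bound $m\le\rank H^1(\mathcal{X},\mathbb{Z})$ with almost no algebraic geometry.

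One small repair: an arbitrary irreducible $\mathcal{X}$ need not be normal, and a non-normal variety admits no normal compactification containing it as an open subset (open subsets of normal varieties are normal). So first pull units back along the normalization (or a resolution) $\nu\colon\widetilde{\mathcal{X}}\to\mathcal{X}$ and compactify $\widetilde{\mathcal{X}}$; since $\nu$ is surjective, constancy of $f\circ\nu$ forces constancy of $f$, and $\mathcal{O}^\times(\mathcal{X})/k^*$ embeds into $\mathcal{O}^\times(\widetilde{\mathcal{X}})/k^*$, so freeness and finiteness of the rank descend to $\mathcal{X}$. The same remark applies to the smooth compactification with normal crossing boundary used in your kernel computation, where you already anticipate the resolution step. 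With this adjustment both halves are sound.
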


We provide an argument for $k=\mathbb{C}$, which follows  the sheaf-theoretic proofs of the 
topological Bruschlinsky \cite{Bru} and Eilenberg \cite{Eil38}, \cite{Eil39} theorems.
\vskip3pt

\begin{proof} The sheaves $\mathcal{Z}_\mathcal{X}$, $\mathcal{C}_\mathcal{X}$, 
and $\mathcal{C}^*_\mathcal{X}$ of germs of continuous functions with values in $\mathbb{Z}$, 
$\mathbb{C}$, and $\mathbb{C}^*$, respectively, form the exact sequence
$
0\to\mathcal{Z}_\mathcal{X}\overset{{\boldsymbol\cdot}\,2\pi{i}} 
\longrightarrow\mathcal{C}_\mathcal{X}\overset{\exp}
\longrightarrow\mathcal{C}^*_\mathcal{X}
\to{1}\,.
$
As $\mathcal{X}$ is connected and paracompact,  and the sheaf $\mathcal{C}_\mathcal{X}$ is fine, 
${H^1}(\mathcal{X},\mathcal{C}_\mathcal{X})=0$ and the corresponding exact cohomology
 sequence takes the form
$$
0\to\mathbb{Z}\overset{{\boldsymbol\cdot}\,2\pi{i}} \longrightarrow{C}(\mathcal{X})
\overset{\exp}\longrightarrow{C}^*(\mathcal{X})
\overset{\rho}\longrightarrow{H^1}(\mathcal{X},\mathbb{Z})\to{0}\,.
$$
Restricting the homomorphism $\rho$ to 
$\mathcal{O}^\times(\mathcal{X})\subset\mathcal{C}^*(\mathcal{X})$ 
and taking into account that the conditions $\phi\in\mathcal{O}(\mathcal{X})$ 
and $e^\phi\in\mathcal{O}^\times(\mathcal{X})$ imply $\phi=\const$, 
we obtain the exact sequence
$$
\mathbb{C}\overset{\exp}\longrightarrow\mathcal{O}^\times(\mathcal{X})
\overset{\rho}\longrightarrow{H^1}(\mathcal{X},\mathbb{Z})\,.
$$
Since $H^1(\mathcal{X},\mathbb{Z})$ is a free Abelian group of
finite rank, the image of $\rho$ is isomorphic to $\mathbb{Z}^m$
with some $m\le\rank{H^1(\mathcal{X},\mathbb{Z})}$. This implies
that the Abelian group $\mathcal{O}^\times(\mathcal{X})$ admits
the desired direct decomposition.
\end{proof}

\bexas [{\em The groups of units on the balanced spaces}\rm]
\label{ex: Mult}
(a) The discriminant $D_n$ is the restriction to 
$\mathcal{C}^n=\mathbb{C}^n_{(z)}\setminus\{z\mid\, d_n(z)=0\}$ 
of the irreducible discriminant polynomial $d_n$. Since 
$\mathcal{C}^n=\mathcal{C}^{n-1}_{\blc}\times\mathbb{C}$,
the group $H^1(\mathcal{C}^{n-1}_{\blc},\mathbb{Z})
=H^1(\mathcal{C}^n,\mathbb{Z})\cong\mathbb{Z}$ is generated by the
cohomology class of $D_n$. Any element of
$\mathcal{O}^\times(\mathcal{C}^{n-1}_{\blc})$ is of the form
$sD_n^k$ with $s\in\mathbb{C}^*$ and $k\in\mathbb{Z}$. Hence
$\mathcal{O}^\times(\mathcal{C}^{n-1}_{\blc})
\cong\mathbb{C}^*\times\mathbb{Z}$.
\vskip3pt

(b) The projection $D_n\colon\, \mathcal{C}^{n-1}_{\blc}\to\mathbb{C}^*$,
$Q\mapsto{D_n(Q)}$, is a locally trivial fiber bundle with 
fibers isomorphic to $\mathcal{SC}^{n-2}_{\blc}$. 
Since $\pi_2(\mathbb{C}^*)=0$, the final segment 
of the corresponding long exact sequence of homotopy groups looks as follows:
$$
1\to\pi_1(\mathcal{SC}^{n-2}_{\blc})\to\pi_1(\mathcal{C}^{n-1}_{\blc})
\to\pi_1(\mathbb{C}^*)\to{1}\,.
$$
Now, $\pi_1(\mathcal{C}^{n-1}_{\blc})$ is the Artin braid group 
$\mathbf{A}_{n-1}$, 
and we can rewrite this sequence as
$$
1\to\pi_1(\mathcal{SC}^{n-2}_{\blc})\to{\mathbf{A}_{n-1}}
\to\mathbb{Z}\to{1}\,,
$$
so that the commutator subgroup $\mathbf{A}'_{n-1}$ 
is contained in $\pi_1(\mathcal{SC}^{n-2}_{\blc})$.
Since $\mathbf{A}_{n-1}/\mathbf{A}'_{n-1}\cong\mathbb{Z}$ 
and the torsion of any nontrivial quotient group of 
$\mathbb{Z}$ is nontrivial, it follows that 
$\pi_1(\mathcal{SC}^{n-2}_{\blc})\cong{\mathbf{A}'_{n-1}}$. 
By \cite[Lemma
2.2]{G-L69}, $\mathbf{A}''_{n-1}=\mathbf{A}'_{n-1}$ 
for any $n>4$, and so 
$\Hom(\mathbf{A}'_{n-1},\mathbb{Z})=0$.
Finally, $H^1(\mathcal{SC}^{n-2}_{\blc},\mathbb{Z})
\cong\Hom(\pi_1(\mathcal{SC}^{n-2}_{\blc}),\mathbb{Z})
=\Hom(\mathbf{A}'_{n-1},\mathbb{Z})=0$ and
$\mathcal{O}^\times(\mathcal{SC}^{n-2}_{\blc})\cong\mathbb{C}^*$.
\vskip3pt

(c) The discriminant $d_n$ and its restriction $d_n|_{z_1=0}$ 
to the hyperplane $z_1=0$ are quasi-homogeneous. So, the zero level sets 
$\Sigma^{n-1}=\{d_n=0\}$ and 
$\Sigma^{n-2}_{\blc}=\Sigma^{n-1}\cap\{z_1=0\}$ 
are contractible. Hence $H^1(\Sigma^{n-2}_{\blc},\mathbb{Z})=0$. 
By Lemma \ref{lem: Mult-group},
$\mathcal{O}^\times(\Sigma^{n-2}_{\blc})\cong\mathbb{C}^*$.
\eexas

\subsection{The neutral component 
$\Aut_0\,(\mathcal{X}\times\mathbb{C})$}
\label{ss: neutral component}
\bdefi\label{def: neutral comp}
The {\em neutral component} $G_0$ of an ind-group 
$G=\bigcup_{i\in\N} G_i$ is defined as the  union of those 
connected components of the 
$G_i$ that contain the unity $e_G$ of $G$. In other words, $G_0$ 
is the union of all connected algebraic subvarieties of $G$ passing
through $e_G$. Recall that a subset $V\subset G$ is an algebraic
subvariety if it is a subvariety of some $G_i$. Clearly, $G_0$
is a normal ind-subgroup of $G$.

For an irreducible affine variety $\mathcal{X}$, 
${\Aut}_{0}\,\mathcal{X}$ 
is as well the neutral component
of $\Aut\mathcal{X}$ in the sense of \cite{Ra}.
\edefi

From Corollary \ref{Cor: automorphisms-of-rigid-cylinder-are-triangular},
Lemma \ref{lem: Mult-group}, and decomposition (\ref{eq: triple decomposition}) we derive the following.

\bthm\label{Thm: neutral comp}
For a tight cylinder $\mathcal{X}\times\mathbb{C}$ we have
\begin{equation}\label{eq: triple decomposition0}
{\Aut}_0\, (\mathcal{X}\times\mathbb{C})\cong
\mathcal{O}_+(\mathcal{X})\rtimes(\mathbb{C}^*\times{\Aut}_0\, \mathcal{X})\,.
\end{equation}
\ethm

\bproof For a semi-direct product of two ind-groups $H$ and $H'$
we have $(H\rtimes H')_0=H_0\rtimes H'_0$. Thus, from (\ref{eq:
triple decomposition}) we get a decomposition
$$
{\Aut}_0\, (\mathcal{X}\times\mathbb{C})\cong (\mathcal{O}_+(\mathcal
X)\rtimes \mathbb{C}^*)\rtimes{\Aut}_0\,\mathcal{X}\,.
$$
 It suffices to show that the factors $\mathbb{C}^*$ and ${\Aut}_0\, (\mathcal{X})$
in this decomposition  commute, i.e., that $FF'=F'F$ for any
two automorphisms $F,F'\in {\Aut}_0\, (\mathcal{X}\times\mathbb{C})$ of
the form $F:(x,y)\mapsto (x,ty)$ and $F':(x,y)\mapsto (Sx,y)$,
where $S\in\Aut\mathcal{X}$ and $t\in\mathbb{C}^*$, see (\ref{eq:
triangular automorphism1}). However, the latter equality is
evident. \eproof

\brem [{\em The unipotent radical of} $\Aut_0\, (\mathcal{X}\times\mathbb{C})$]\label{rem: alg-gen}
Due to Proposition \ref{prop: non-tightness}(a), the base $\mathcal{X}$ of 
a tight cylinder $\mathcal{X}\times\mathbb{C}$ (in particular, any rigid variety $\mathcal{X}$) 
does not admit any non-trivial action of a unipotent linear algebraic group. 
Thus, any such subgroup of $\Aut_0\, (\mathcal{X}\times\mathbb{C})$ is contained in the subgroup
$\SAut(\mathcal{X}\times\mathbb{C})\cong\mathcal{O}_+(\mathcal{X})$, see (\ref{eq: triple
decomposition0}), and so, is Abelian. Due to Proposition 
\ref{Crl: SAut for rigid cylinder}, 
the normal Abelian subgroup $\SAut(\mathcal{X}\times\mathbb{C})$ 
can be regarded as the unipotent radical of 
$\Aut_0\, (\mathcal{X}\times\mathbb{C})$. 
Note that $\SAut(\mathcal{X}\times\mathbb{C})$ is 
a union of an increasing sequence of connected algebraic subgroups, 
see Example \ref{ex: ind-structure
on Add}\,(a). We need the following more precise statement.
\erem

\blem\label{lem: normal}
Let $\mathcal{X}\times\mathbb{C}$ be a tight cylinder.
Then the special automorphism group 
$\SAut(\mathcal{X}\times\mathbb{C})\subset\Aut_0\, (\mathcal{X}\times\mathbb{C})$
 is a countable increasing union of connected unipotent algebraic subgroups 
$U_i\subset\SAut(\mathcal{X}\times\mathbb{C})$, which are normal in 
$\Aut_0\, (\mathcal{X}\times\mathbb{C})$.
\elem

\bproof The action of ${\Aut}_0\,\mathcal{X}$ on the normal subgroup 
$\mathcal{O}_+(\mathcal{X})\vartriangleleft
\Aut_0\, (\mathcal{X}\times\mathbb{C})$ in (\ref{eq:
triple decomposition0}) is given by $b\mapsto b\circ{S}$
for $b\in\mathcal{O}_+(\mathcal{X})$ and $S\in{\Aut}_0\,\mathcal{X}$,
cf. the proof of Theorem \ref{Thm: neutral comp}. The
$\mathbb{C}^*$-subgroup in (\ref{eq: triple decomposition0}) acts
on $\mathcal{O}_+(\mathcal X)$ via homotheties $b\mapsto t^{-1}b$,
where $b\in\mathcal{O}_+(\mathcal X)$ and $t\in\mathbb{C}^*$. 
Therefore, the linear representation of the product 
$\mathbb{C}^*\times {\Aut}_0\, \mathcal{X}$ on $\mathcal{O}_+(\mathcal{X})$
 is locally finite. In particular, the finite dimensional subspace 
$G_i=\{f\in\mathcal{O}_+(\mathcal{X})\,\vert\,\deg f\le i\}$ as in (\ref{eq: Gi}) is contained in another finite dimensional subspace, say $U_i$,
which is stable under the action of 
$\mathbb{C}^*\times {\Aut}_0\,\mathcal{X}$, hence is
normal when regarded as a
subgroup of $\Aut_0\, (\mathcal{X}\times\mathbb{C})$. Since the
sequence $(G_i)_{i\in\N}$ is increasing, we can choose the
sequence $(U_i)_{i\in\N}$ being also increasing.
\eproof

\bcor\label{cor: exhaustion} Let $\mathcal{X}\times\mathbb{C}$ be a tight cylinder
over an affine variety $\mathcal{X}$ Suppose that ${\Aut}_0\, \mathcal{X}$ is an algebraic  
group.\footnote{The latter assumption holds if $\bar k(\reg\,\mathcal{X})\ge 0$, where 
$\bar k$ stands for the logarithmic Kodaira dimension. If this is the  case, then 
${\Aut}_0\, \mathcal{ X}$ is an algebraic torus (\cite[Proposition 5]{Ii}).}
Then $\Aut_0\, (\mathcal{X}\times\mathbb{C})=\bigcup_{i\in\N}
B_i$, where $(B_i)_{i\in\N}$ is an increasing sequence of
connected algebraic subgroups.\ecor

\bproof It is enough to let $B_i=U_i\rtimes (\mathbb{C}^*\times{\Aut}_0\,
\mathcal{X})$.\eproof

\subsection{Algebraic subgroups of $\Aut_0\, (\mathcal{ X}\times\mathbb{C})$}
\label{ss: Algebraic subgroups of Aut0}  In this subsection we
keep the assumptions of Corollary \ref{cor: exhaustion}. By this
corollary the group $\Aut_0\, (\mathcal{X}\times\mathbb{C})$ is a union
of connected affine algebraic subgroups. The notions of
semisimple and unipotent elements, and as well of the Jordan
decomposition, are well defined in $\Aut_0\, (\mathcal{X}\times\mathbb{C})$ due to their 
invariance. Moreover, by virtue of
Remark \ref{rem: alg-gen} for any connected affine algebraic
subgroup $G$ of $\Aut_0\, (\mathcal{X}\times\mathbb{C})$, the unipotent
radical of $G$ equals $G\cap \SAut  (\mathcal{X}\times\mathbb{C})$. So
$\SAut (\mathcal{X}\times\mathbb{C})$ is the set of all unipotent
elements of $\Aut_0\, (\mathcal{X}\times\mathbb{C})$. The next result
shows that, under the assumptions of Corollary \ref{cor: exhaustion},  
decomposition (\ref{eq: triple decomposition0}) can be viewed as an analog
of the Mostow decomposition  for algebraic groups. Recall that Mostow's version of the Levi-Malcev 
Theorem  \cite{Mo55} (see also  \cite{Hoch61} or
\cite[ Ch.\ II, \S 1, Theorem 3]{PR}) states that any connected
algebraic group over a field of characteristic zero admits a
decomposition into a semi-direct product of its unipotent radical
and a maximal reductive subgroup. Any two such maximal reductive
subgroups are conjugated via an element of the unipotent radical.

\bthm\label{Thm: Mostow} Let $\mathcal{X}$ be an affine
variety of dimension $>1$ such that $\Aut_0\,\mathcal{X}$ is an algebraic
group. Then the following hold.
\vskip3pt

\textup{(a)} The group ${\Aut}_0\,\mathcal{X}$ is isomorphic to an algebraic torus 
$(\mathbb{C}^*)^r$.
\vskip3pt

\textup{(b)} The group ${\Aut}_0\, (\mathcal{X}\times\mathbb{C})\cong
\mathcal{O}_+(\mathcal{X})\rtimes (\mathbb{C}^*)^{r+1}$ is metabelian.
\vskip3pt

\textup{(c)} Any connected algebraic subgroup $G$ of $ \Aut (\mathcal{X}\times\mathbb{C})$
 is either Abelian or metabelian of rank $\le r+1$.
\vskip3pt

\textup{(d)} Any algebraic torus in \
${\Aut}_0\, (\mathcal{X}\times\mathbb{C})$ is contained in a maximal torus.  
Any maximal torus is of rank  $r+1$, and two such tori are  conjugated via  an element of $\SAut (\mathcal{X}\times\mathbb{C})$.
\vskip3pt

\textup{(e)} Any semisimple element of ${\Aut}_0\,(\mathcal{X} \times\mathbb{C})$ 
is contained 
in a maximal torus. Any finite subgroup of ${\Aut}_0\, (\mathcal{X}\times\mathbb{C})$ 
is Abelian  and contained in a maximal torus.
\ethm

\bproof By our assumptions  ${\Aut}_0\, \mathcal{X}$ is a
connected linear algebraic group without any unipotent subgroup. 
Indeed, assuming that there is such a subgroup $U$, and taking it to be one-dimensional, 
we have $U=\exp(\mathbb{C}\partial)$, where $\partial\in\LND(\mathcal{O}(\mathcal{X}))$. Then
$\mathcal{U}=\exp((\ker\partial)\partial)$ is an infinite dimensional subgroup of 
${\Aut}_0\, \mathcal{X}$, a contradiction. 

Hence by Theorem \ref{cor: BMLC-improvement} 
the  cylinder $\mathcal{X}\times\mathbb{C}$ is  tight, and so, 
${\Aut}_0\, \mathcal{X}$ is an algebraic torus  by \cite[Lemma 3]{Ii}.
This proves (a).

By virtue of (\ref{eq: triple decomposition0}) and (a) we have
\be\label{eq: double decomposition}
{\Aut}_0\,(\mathcal{X}\times\mathbb{C})\cong
\mathcal{O}_+(\mathcal{X})\rtimes(\mathbb{C}^*)^{r+1}\,.
\ee
This proves (b).

By Corollary \ref{cor: exhaustion} the group 
$\Aut_0\,(\mathcal{X}\times\mathbb{C})$ is covered by 
an increasing sequence of connected algebraic subgroups 
$(B_i)_{i\in\mathbf{N}}$, where $B_i=U_i\rtimes (\mathbb{C}^*)^{r+1}$ 
is metabelian.. Any algebraic subgroup 
$G\subset\Aut_0\,(\mathcal{X}\times\mathbb{C})$ 
is contained in one of them, say $G\subset{B_i}$. This proves (c).

Now (d) follows by the classical Mostow Theorem applied to an
appropriate subgroup $B_i$, which contains the tori under
consideration.

 The same argument proves (e). Indeed, both assertions of (e)
hold for connected solvable affine algebraic groups due to
\cite[Ch.\ VII, Proposition 19.4(a)]{Hum}. \eproof

\brem\label{rem: non-algebraic} The assumption that $\Aut_0\,(\mathcal{X})$ is
an algebraic group
is important. For instance, the group $\Aut_0\,(S_n)$ of  the Danielewski surface $S_n$ is non-algebraic,
and (d) does not hold for the cylinder $S_n\times\mathbb{C}$, 
see Example \ref{exa: Danielewski}. Indeed, the group $\Aut\,(S_n\times\mathbb{C})$ contains 
a sequence of pairwise non-conjugate algebraic two-tori (\cite[Thm.\ 2]{Dan89}).
\erem

\subsection{Semisimple and torsion elements}\label{ss: ss-and-torsion elms}
We let $\mathcal{T}$ denote the maximal torus in
$\Aut_0\,(\mathcal{X}\times\mathbb{C})$ which corresponds to the
factor $\mathbb{C}^*\times\Aut_0\,\mathcal{X}\cong
(\mathbb{C}^*)^{r+1}$ under the isomorphisms as in (\ref{eq:
triple decomposition0}) and (\ref{eq: double decomposition}).
 From
Theorem \ref{Thm: Mostow} ((d) and (e)) we deduce the following
corollary.

\bcor\label{cor: torsion-rigid-cylinder}
Under the assumptions of Theorem {\rm \ref{Thm: Mostow}} 
any semisimple $($in particular, any torsion$)$ 
element of the group
$\Aut_0\,(\mathcal{X}\times\mathbb{C})$ is conjugate to an element
of the maximal torus $\mathcal{T}$ via an element of the unipotent
radical $\,\SAut(\mathcal{X}\times\mathbb{C})$. The same
conclusion holds for any finite subgroup of
$\Aut_0\,(\mathcal{X}\times\mathbb{C})$.
\ecor

Using Corollary \ref{cor: torsion-rigid-cylinder} we arrive at the following description of all
semisimple and  torsion elements in the automorphism groups of
tight  cylinders.

\bprop\label{cor: form of ss elements}
Under the assumptions of Theorem {\rm \ref{Thm: Mostow}} an element
$F\in\Aut_0\,(\mathcal{X}\times\mathbb{C})$ is semisimple if and
only if it can be written as
\begin{equation}\label{eq: form  of ss elements}
F\colon(x,y)\mapsto\left(Sx,ty+tb(x)-b(Sx)\right),\quad\mbox{where}\quad 
(x,y)\in\mathcal{X}\times\mathbb{C}\,,
\end{equation}
for some triplet $(S,t,b)$ with $S\in\Aut_0\,\mathcal{X}$,
$t\in\mathbb{C}^*$, and $b\in\mathcal{O}(\mathcal{X})$. Such an
element $F$ is torsion with $F^m=\id$ if and only if $S^m=\id$ and
$t^m=1$. 
\eprop

\subsection{The Lie algebra of $\Aut_0\, (\mathcal{X}\times\mathbb{C})$}
\label{ss: Lie-alg} The Lie algebra of an ind-group is defined in
\cite{Sh}, see also \cite{Ku}. For an ind-group $G$ of type $G=
\underrightarrow{\lim}_i G_i$, where $(G_i)_{i\in\N}$ is an
increasing sequence of connected algebraic subgroups of $G$, the
Lie algebra $\Lie\,(G)$ coincides with the inductive limit $
\underrightarrow{\lim}_i \Lie\,(G_i)\,.$ From Corollary \ref{cor:
exhaustion} and decomposition (\ref{eq: triple decomposition0}) we
deduce the following presentation.

\bthm\label{Thm: Lie-alg} Under the assumptions of Theorem {\rm
\ref{Thm: Mostow}} we have
\begin{equation}\label{eq: Lie-alg}
\Lie\left({\Aut}_0\, (\mathcal{X}\times\mathbb{C})\right)=I\rtimes L\,.
\end{equation}
Here
\begin{equation}\label{eq: I}
I\Def\left\{b(x)\p/\p y\,\vert\,b\in \mathcal{O}_+(\mathcal{X})\right\}=\mathcal{O}_+(\mathcal{X})\p/\p y
\end{equation}
is the Abelian ideal consisting of all locally nilpotent
derivations of the algebra $\mathcal{O}(\mathcal{X}\times\mathbb{C})$,
and
\begin{equation}\label{eq: L}
L\cong\Lie \left(\mathbb{C}^*\times{\Aut}_0\, \mathcal{X}\right)
\end{equation}
is  the  Cartan Lie subalgebra  of $\Lie\,({\Aut}_0\, (\mathcal{X}\times\mathbb{C}))$ corresponding to the second factor in {\rm (\ref{eq: triple
decomposition0})}, i.e., a maximal Abelian
subalgebra
consisting of semisimple elements. Furthermore, we have
\begin{equation}\label{eq: Lie-presentation}
\Lie\left({\Aut}_0\, (\mathcal{X}\times\mathbb{C})\right)=\left\langle
b(x)\p/\p y,\, y\p/\p y,\,\p\,\vert\,b\in 
\mathcal{O}_+(\mathcal{X}),\,\p\in \Lie\,({\Aut}_0\,\mathcal{X}) \right\rangle\,
\end{equation}
with relations
\begin{equation}\label{eq: Lie-relations}
[\p, y\p/\p y]=0,\quad [\p, b\p/\p y]= (\p b)\p/\p
y,\quad\mbox{and}\quad [b\p/\p y, y\p/\p y]=b\p/\p y\,
\end{equation}
for any $b\in\mathcal{O}_+(\mathcal{X})$ and any $\p\in
\Lie\,({\Aut}_0\,\mathcal{X})$. \ethm

\bproof Decomposition (\ref{eq: Lie-alg}) is a direct consequence
of (\ref{eq: triple decomposition0}), and (\ref{eq:
Lie-presentation}) follows from (\ref{eq: triple decomposition0})
and (\ref{eq: Lie-alg}). The first relation in (\ref{eq:
Lie-relations}) follows from the fact that the factors $\mathbb{C}^*$ and
$\Aut_0\,\mathcal{X}$ in (\ref{eq: triple decomposition0})
commute. To show the  other two relations it suffices to verify
these on the functions of the form
$f(x)y^k\in\mathcal{O}(\mathcal{X}\times\mathbb{C})=
\mathcal{O}(\mathcal{X})[y]$, where $k\ge 0$. The latter
computation is easy, and so we omit it.\eproof

\section{Automorphisms of configuration spaces and discriminant levels}
\label{sc: Aut C-n Sigma-n}

\subsection{Automorphisms of balanced spaces}
\label{ss: Automorphisms of balanced spaces}
In view of Corollary \ref{Crl: our automorphisms are triangular},
to compute the automorphism groups of the varieties $\mathcal{C}^{n}$,
$\mathcal{SC}^{n-1}$, and $\Sigma^{n-1}$ we need to know
the automorphism groups of the corresponding balanced spaces 
$\mathcal{C}^{n-1}_{\blc}$, $\mathcal{SC}^{n-2}_{\blc}$, and 
$\Sigma^{n-2}_{\blc}$. The latter groups have been already described in
 the literature. We formulate 
the corresponding results and provide necessary references. 
Then we give a short argument for (a) based on Tame Map Theorem. 
The proof of (c) will be done in Section \ref{Sec: Aut-discriminant}.

\bthm\label{Thm: Kaliman-Lin-Zinde} For any natural $n>2$ the following holds. 

\textup{(a)} $\Aut\mathcal{C}^{n-1}_{\blc}\cong\mathbb{C}^*$.
Any automorphism $S\in\Aut\mathcal{C}^{n-1}_{\blc}$ is of the form\, 
$Q^\circ\mapsto{sQ^\circ}$, where
$Q^\circ\in\mathcal{C}^{n-1}_{\blc}$ and $s\in\mathbb{C}^*$. While 
$\Aut\mathcal{C}^{1}_{\blc}=\Aut\mathbb{C}^*\cong\mathbb{C}^*\rtimes (\Z/2\Z)$.
\vskip3pt

\textup{(b)}
$\Aut\mathcal{SC}^{n-2}_{\blc}\cong\mathbb{Z}/n(n-1)\mathbb{Z}$. 
Any automorphism $S\in\Aut\mathcal{SC}^{n-2}_{\blc}$ is of the form 
$Q^\circ\mapsto{sQ^\circ}$, where $Q^\circ\in\mathcal{SC}^{n-2}_{\blc}$, 
$s\in\mathbb{C}^*$, and $s^{n(n-1)}=1$.
\vskip3pt

\textup{(c)} $\Aut\Sigma^{n-2}_{\blc}\cong\mathbb{C}^*$.
Any automorphism $S\in\Aut\Sigma^{n-2}_{\blc}$ is of
the form $Q^\circ\mapsto{s}Q^\circ$, where $s\in\mathbb{C}^*$ and
every point $Q^\circ\in\Sigma^{n-2}_{\blc}$ is considered as
an unordered multiset $Q^\circ=\{q_1,\ldots,q_n\}\subset\mathbb{C}$
with at least one repetition.
\ethm

For $n>4$ statement (a) is a simple consequence of Tame Map Theorem, 
see \cite{Lin72b} and \cite[Sec. 8.2.1]{Lin79}; we reproduce
a short argument. In Theorem \ref{Thm: properties of C*-tame
maps}(c) we provide a more general result in the analytic setting. 
See also Section \ref{sec: KLZ-thm revisited} for an alternative proof 
avoiding the reference to Tame Map Theorem 
and including the cases $n=3,4$. 

A proof of (b) is sketched in \cite{Kal76a}. Actually, the theorem
of Kaliman (\cite[Theorem]{Kal76a}) says  that {\em for $n\neq 4$ every non-constant
holomorphic endomorphism of $\mathcal{SC}^{n-2}_{\blc}$ is a biregular
automorphism of the above form}. A complete proof of this result can be found in
\cite[Theorem 12.13]{Lin04b}. This proof exploits the
following property of the Artin braid group $\mathbf{A}_{n-1}$ (see
\cite[Theorem 7.7]{Lin96} or \cite[Theorem 8.9]{Lin04a}): {\em
For $n>4$, the intersection 
$\mathbf{A}'_{n-1}\cap{\mathbf{PA}_{n-1}}$ of the commutator
subgroup $\mathbf{A}'_{n-1}$ of $\mathbf{A}_{n-1}$ 
with the pure braid group 
$\mathbf{PA}_{n-1}$ is 
invariant under any 
endomorphism of
$\mathbf{A}'_{n-1}$.} This is no longer true for $n=3, 4$, see Example \ref{ex: Lin's counterexample-2}. 
In case $n=3$, $\mathcal{SC}^{1}_{\blc}$
 is a smooth affine elliptic curve with $j=0$, and, once again, 
its automorphism group 
is as in (b). 
In case $n=4$ we extend 
Kaliman's Theorem for automorphisms using a different 
approach, see Theorem \ref{thm: Kaliman for n=4}. 
Note that in the case of endomorphisms, the original 
Kaliman's Theorem does not hold if $n=4$; 
see Example \ref{ex: Lin's counterexample}.

Our proof of (c) is based on a part of Tame Map Theorem due to Zinde 
(\cite[Theorems 7 and 8]{Zin78}), which describes the
automorphisms of the configuration space $\mathcal{C}^{n}(\mathbb{C}^*)$. 
Since by Lemma \ref{Lm: reg Sigma(n-2)blc=C(n-2)(C*)}
$\reg\,\Sigma^{n-2}_{\blc}\cong\mathcal{C}^{n-2}(\mathbb{C}^*)$,
from results in [loc. cit.] it follows that for $n>4$
$$
\Aut(\reg\,\Sigma^{n-2}_{\blc})\cong\Aut\mathcal{C}^{n-2}(\mathbb{C}^*)
\cong(\Aut\mathbb{C}^*)\times\mathbb{Z}
\cong(\mathbb{C}^*\times\mathbb{Z})\rtimes(\mathbb{Z}/2\mathbb{Z})\,.
$$
In Theorem \ref{Thm: aut balanced Sigma} we show that
only the elements of the connected component $\mathbb{C}^*$ of
$\Aut(\reg\,\Sigma^{n-2}_{\blc})$ can be extended to automorphisms
of the whole variety $\Sigma^{n-2}_{\blc}$. This implies both
assertions in (c) for $n>4$. In Section \ref{sec:Zinde's Thm.} 
we provide an alternative proof of 
Zinde's Theorem in our particular setting, which does not address 
Tame Map Theorem. We extend the description of the group 
$\Aut\mathcal{C}^{n}(\mathbb{C}^*)$ given in Zinde's Theorem 
to the cases where $n=4$ and $n=3$. 
The theorem does not hold any longer for $n=2$; this case is treated 
in Theorem \ref{thm: Zinde Thm for n=2}. Using the description of the group
$\Aut\mathcal{C}^{2}(\mathbb{C}^*)$ from 
Theorem \ref{thm: Zinde Thm for n=2} we complete the proof of (c) in cases
$n=4$ and $n=3$.
\medskip

\noindent {\em Proof of \textup{(a)} for $n>4$.} The extension $F$ of $S$ to $\mathcal{C}^n$ defined by 
$F(Q)=S(Q-\bc(Q))$ for all $Q\in\mathcal{C}^n$
is a non-Abelian endomorphism of $\mathcal{C}^n$ such that
$F(\mathcal{C}^n)\subset\mathcal{C}^{n-1}_{\blc}$.
By Tame Map Theorem and Remark \ref{Rms: remarks to TMT}\,(b), there is a unique morphism 
$T\colon\,\mathcal{C}^{n}\to\Aff\mathbb{C}$ such that $F=F_T$. Since $F$ preserves  
$\mathcal{C}^{n-1}_{\blc}$, we have $T(Q)(0)=0$ for any $Q\in \mathcal{C}^{n-1}_{\blc}$ 
and hence also for any $Q\in\mathcal{C}^{n}$. So, $T(Q)\zeta=a(Q)\zeta$ for all $\zeta\in\mathbb{C}$ 
and $Q\in\mathcal{C}^{n}$, where $a\in\mathcal{O}^\times(\mathcal{C}^{n})$. According to Example 
\ref{ex: Mult}\,(a), $a=sD_n^k$ for some $s\in\mathbb{C}^*$ and $k\in\mathbb{Z}$, so that 
$S(Q)=sD_n^k(Q)\cdot Q$ on $\mathcal{C}^{n-1}_{\blc}$. %
Similarly, for the inverse automorphism $S^{-1}$ we obtain that $S^{-1}(Q)=tD_n^l\cdot Q$ on 
$\mathcal{C}^{n-1}_{\blc}$ with some $t\in\mathbb{C}^*$ and $l\in\mathbb{Z}$. 
Since $D_n$ is a homogeneous function on $\mathcal{C}^n$ (namely, $D_n(sQ)=s^{n(n-1)}Q$ 
for all $Q\in\mathcal{C}^n$ and $s\in\mathbb{C}^*$), from the identity $S\circ{S}^{-1}=\id$ 
we deduce that $k=l=0$ and $t=s^{-1}$, as required. \hfill $\square$
\medskip

By Theorem \ref{Thm: Kaliman-Lin-Zinde} in all three cases
the automorphism groups of the  corresponding  balanced spaces are
algebraic groups. Hence Theorem \ref{Thm: Mostow} applies and
leads to the following corollary.

\bcor\label{cor: neutral comp} For any $n>2$
the conclusions \textup{(b)-(e)} of Theorem \textup{\ref{Thm: Mostow}} 
hold  with $r=1$ for the groups $\Aut_0\,\mathcal{C}^{n}$
and $\Aut_0\,\Sigma^{n-1}$, and  with $r=0$ for the group
$\Aut_0\,\mathcal{SC}^{n-1}$, when these varieties are viewed as the cylinders in 
{\rm (\ref{eq: 3 cylinders})}.
\ecor

\begin{rem} \label{Rm: orbits of Aut and Aff}
Recall that 
$\Sym^n\mathbb{C}$ regarded as the space of all unordered 
multisets $Q=\{q_1,...,q_n\}\subset\mathbb{C}$ is a disjoint union of 
$\mathcal{C}^n$ and $\Sigma^{n-1}$. 
The tautological  $(\Aff\mathbb{C})$-action on $\mathbb{C}$ 
induces the diagonal $(\Aff\mathbb{C})$-action on 
$\Sym^n\mathbb{C}$; both $\mathcal{C}^n$ and 
$\Sigma^{n-1}$ are invariant under the latter action. It follows from 
Tame Map Theorem and Remark \ref{Rms: remarks to TMT}\,(d) that for $n>2$ the 
$(\Aut\mathcal{C}^{n})$-orbits coincide with the orbits of the diagonal $(\Aff\mathbb{C})$-action 
on $\mathcal{C}^{n}$ (see \cite[Section 2.2]{Lin11}). As follows from Corollaries \ref{cor: orbits-rigid-cylinder}, 
\ref{cor: neutral comp} and Theorems \ref{Thm: Mostow}, \ref{Thm: Kaliman-Lin-Zinde}, for 
$n>2$ the ($\Aut\Sigma^{n-1}$)-orbits coincide with the orbits of the above diagonal 
$(\Aff\mathbb{C})$-action on $\Sigma^{n-1}$. For $n>2$ the ($\Aut\mathcal{SC}^{n-1}$)-orbits 
coincide with the orbits of the subgroup $\mathbb{C}\rtimes (\mathbb{Z}/n(n-1)\mathbb{Z})\subset 
\Aff\mathbb{C}$ acting on $\mathcal{SC}^{n-1}$.
\end{rem}

\subsection{The groups $\Aut\mathcal{C}^{n}$,
$\Aut\mathcal{SC}^{n-1}$, and $\Aut\Sigma^{n-1}$}
\label{ss: main results}
For our favorite varieties $\mathcal{C}^{n}$,
$\mathcal{SC}^{n-1}$, and $\Sigma^{n-1}$ we dispose at present all
necessary ingredients in decomposition (\ref{eq: triple
decomposition}). Gathering this information we obtain the
following description.

\bthm\label{Thm: structure}
If $n>2$, then 
\be\label{eq: decomp-autC}
\Aut\mathcal{C}^n\cong\left(\mathcal{O}_+(\mathcal{C}^{n-1}_{\blc})\rtimes
(\mathbb{C}^*)^2\right)\rtimes \mathbb{Z}\,,
\ee
while for $n=2$,
\be\label{eq: decomp-autC-2}
\Aut\mathcal{C}^2\cong\left(\mathcal{O}_+(\mathbb{C}^{*})\rtimes
(\mathbb{C}^*)^2\right)\rtimes (\mathbb{Z}\rtimes (\mathbb{Z}/2\mathbb{Z}))\,,
\ee
where $\mathcal{O}_+(\mathbb{C}^{*})=\mathbb{C}[t,t^{-1}]$. Furthermore, for $n>2$
\be\label{eq: decomp-autSC}
\Aut\mathcal{SC}^{n-1}\cong\mathcal{O}_+(\mathcal{SC}^{n-2}_{\blc})\rtimes 
\left(\mathbb{C}^*\times (\mathbb{Z}/n(n-1)\mathbb{Z})\right)\,
\ee
and
\be\label{eq: decomp-autSigma}
\Aut\Sigma^{n-1}\cong \mathcal{O}_+(\Sigma^{n-1}_{\blc})\rtimes (\mathbb{C}^*)^2\,.
\ee
All these groups are solvable, and 
$\Aut\mathcal{SC}^{n-1}$ and $\Aut\Sigma^{n-1}$
are metabelian. In addition, any finite subgroup of one of the
groups in {\rm (\ref{eq: decomp-autC})},{\rm  (\ref{eq: decomp-autSC})}, and {\rm (\ref{eq: decomp-autSigma})} is Abelian. 
\ethm

\bproof Likewise this is done
in the proof of Theorem \ref{Thm:
neutral comp}, one can show that the factor $\mathbb{C}^*$ of the group of
units on the corresponding balanced space commutes with the last
factor in (\ref{eq: triple decomposition}). Taking this into
account, the isomorphisms in (\ref{eq: decomp-autC})-(\ref{eq:
decomp-autSigma}) are obtained after substitution of the factors
in (\ref{eq: triple decomposition}) using Examples \ref{ex: Mult}
and Theorem \ref{Thm: Kaliman-Lin-Zinde}.

For the connected group $\Aut \Sigma^{n-1}$ in $(\ref{eq:
decomp-autSigma})$ the last assertion holds due to Theorem
\ref{Thm: Mostow}. The same argument applies in the case of
$\Aut\mathcal{C}^n$. Indeed, the decomposition in (\ref{eq:
decomp-autC}) provides a surjection $\eta:
\Aut\mathcal{C}^n\to\mathbb{Z}$, and any finite subgroup of
$\Aut\mathcal{C}^n$ is contained in the kernel
$\ker\eta=\Aut_0\,\mathcal{C}^n$.

The isomorphism in (\ref{eq: decomp-autSC}) yields a surjection
$\Aut\mathcal{SC}^{n-1}\to \mathbb{C}^*\times (\mathbb{Z}/n(n-1)\mathbb{Z})$ with a torsion free kernel
$\SAut\mathcal{SC}^{n-1}\cong\mathcal{O}_+(\mathcal{SC}^{n-2}_{\blc})$.
 Since any finite subgroup of $\Aut\mathcal{SC}^{n-1}$
meets this kernel just in the neutral element, it injects into the
Abelian group $\mathbb{C}^*\times (\mathbb{Z}/n(n-1)\mathbb{Z})$ and so is Abelian.
\eproof

\subsection{Automorphisms of $\mathcal{C}^{n}$, $\mathcal{SC}^{n-1}$, and
$\Sigma^{n-1}$}\label{ss: presentation}
These varieties can be viewed as subvarieties of the $n$th symmetric power 
$\Sym^n\mathbb{C}=\mathbb{C}^{n}_{(q)}/
\mathbf{S}(n)\cong\mathbb{C}^{n}_{(z)}$. 
The elements of the first two are $n$-point configurations in $\mathbb{C}$, 
while 
the discriminant variety 
$\Sigma^{n-1}=(\Sym^n\mathbb{C})\setminus\mathcal{C}^{n}$ 
consists of all
unordered multisets 
$Q=\{q_1,\ldots,q_n\}\subset\mathbb{C}$ with at least one repetition 
(see Section \ref{Sec: Introduction}). As before, we let $D_n$ be the discriminant viewed 
as a regular function on $\mathcal{C}^{n}$.

\bthm\label{Thm: presentation} Let $n>2$, 
let $\mathcal{Z}$ be one of the varieties $\mathcal{C}^n$, 
$\mathcal{SC}^{n-1}$, and $\Sigma^{n-1}$, and $\mathcal{Z_{\blc}}$ be the corresponding balanced space 
\textup{({\em see} (\ref{eq: balanced spaces}))}. A map $F\colon\mathcal{Z}\to\mathcal{Z}$ 
is an automorphism if and only if
\begin{equation}\label{eq: presentation}
F(Q)=sQ^\circ+a(Q^\circ)\bc(Q)+b(Q^\circ)\ \
\textup{\em for all} \ \, Q\in\mathcal{Z}\,,
\end{equation}
where $Q^\circ=Q-\bc(Q)\in\mathcal{Z}_{\blc}$, $s\in\mathbb{C}^*$, 
$b\in\mathcal{O}_+(\mathcal{Z}_{\blc})$, and
\begin{itemize}
\item[(a)] $a=tD_n^k$ with $t\in\mathbb{C}^*$ and $k\in\mathbb{Z}$,
if \,$\mathcal{Z}=\mathcal{C}^{n}$;
\item[(b)] $a\in\mathbb{C}^*$ and $s^{n(n-1)}=1$, if
\,$\mathcal{Z}=\mathcal{SC}^{n-1}$;
\item[(c)] $a\in\mathbb{C}^*$, if \,$\mathcal{Z}=\Sigma^{n-1}$.
\end{itemize}
\ethm

\bproof
Let $F$ be an automorphism of the cylinder $\mathcal{Z}=\mathcal{Z}_{\blc}\times
\mathbb{C}$ (cf. (\ref{eq: 3 cylinders})). 
According to Corollary \ref{Crl: our automorphisms are triangular}, 
Theorem \ref{Thm: Kaliman-Lin-Zinde}, and Example \ref{ex: Mult}, 
$F$ is triangular of the form
$$
F(Q)=(sQ^\circ,a(Q^\circ)\bc(Q)+b(Q^\circ))
=sQ^\circ+a(Q^\circ)\bc(Q)+b(Q^\circ)\,,
$$
where in each of the cases (a), (b), (c) the triplet ($s, a, b$) is as before. 
Conversely, such an $F$ 
is a (triangular) automorphism of 
 $\mathcal{Z}_{\blc}\times\mathbb{C}=\mathcal{Z}$ 
corresponding to the automorphism 
$S\colon\,Q^\circ\mapsto{s}Q^\circ$ of $\mathcal{Z}_{\blc}$ 
and the morphism 
$A\colon\,\mathcal{Z}_{\blc}\to\Aff\mathbb{C}$, \ 
$A(Q^\circ)\colon\,\zeta\mapsto{a\zeta+b}$ for all 
$Q^\circ\in\mathcal{Z}_{\blc}$ and $\zeta\in\mathbb{C}$.
\eproof

\begin{rems}\label{rm: TMThm} (a) Consider the algebraic torus 
$\mathcal{T}$ of rank $2$ consisting of all transformations
\begin{equation}\label{eq: 2-torus}
\nu(s,t)\colon Q\mapsto s\cdot (Q-\bc(Q))+t\bc(Q)\quad\textup{for
any}\quad (s,t)\in(\mathbb{C}^*)^2 \ \ \textup{and} \
Q\in\Sym^n\mathbb{C}\,.
\end{equation}
Both subvarieties $\mathcal{C}^n$ and $ \Sigma^{n-1}$ in $\Sym^n\mathbb{C}$ 
are invariant under this action. In fact, $\mathcal{T}$ is a maximal torus 
in each of the automorphism groups $\Aut\mathcal{C}^n$ and $\Aut \Sigma^{n-1}$. 
The subgroup of $\mathcal{T}$ given by $s^{n(n-1)}=1$ and isomorphic to 
$(\mathbb{Z}/n(n-1)\mathbb{Z})\times \mathbb{C}^*$ acts on
$\mathcal{SC}^{n-1}$.
\vskip3pt

(b) Let $\mathcal{Z}$ be again one of the varieties $\mathcal{C}^{n}$, 
$\mathcal{SC}^{n-1}$, and $\Sigma^{n-1}$, where  $n>2$, 
and let $\mathcal{Z_{\blc}}$ 
be the corresponding balanced space. 
Using Proposition \ref{Prp: nilpotent derivations in rigid cylinder} 
one can deduce the following:
{\em Any $\mathbb{C}_+$-action on $\mathcal{Z}$ is of the form}
\begin{equation}\label{eq: Cplus-actions}
Q\mapsto Q+\lambda b(Q-\bc(Q))\,, \ \, \textup{\em where} \ \,  
Q\in\mathcal{Z}\,, \ \, \lambda\in\mathbb{C}_+\,, \ \,
b\in\mathcal{O}(\mathcal{Z}_{\blc})\,.
\end{equation}
{\em The case $b=1$ corresponds to the $\tau$-action on $\mathcal{Z}$.}
\vskip3pt

(c) It follows from (\ref{eq: presentation}) 
 that for any $F$ in one of the above groups, one has $F=F_T$ 
with $T$ as in (\ref{eq: TMT-T}).
\end{rems}

\subsection{The group $\Aut\,(\mathbb{C}^n,
\Sigma^{n-1})$}\label{ss:aut rel}
The space $\Sym^n\mathbb{C}\cong\mathcal{C}^n\cup\Sigma^{n-1}$
of all unordered $n$-multisets $Q=\{q_1,...,q_n\}\subset\mathbb{C}$
can be identified with the space $\mathbb{C}^n_{(z)}\cong\mathbb{C}^n$
of all polynomials (\ref{eq: universal polynomial}). The corresponding balanced space 
$\mathcal{C}^{n-1}_{\blc}\cup\Sigma^{n-2}_{\blc}\cong\mathbb{C}^{n-1}$ 
consists of all polynomials $\lambda^n+z_2\lambda^{n-2}+...+z_n$.
An automorphism $F$ of $\mathcal{C}^n$ as in
(\ref{eq: presentation}) extends to an endomorphism of the ambient
affine space $\mathbb{C}^n$ if and only if the rational functions
$a(Q-\bc(Q))$ and $b(Q-\bc(Q))$ on $\mathbb{C}^n$ in (\ref{eq:
presentation}) are regular, i.e.\ $a,b\in\mathcal{O}
(\mathbb{C}^n_{\blc})\cong\mathbb{C}[z_2,\ldots,z_n]$.
Such an endomorphism $F$ admits an inverse, say $F'$, on
$\mathbb{C}^n$ if and only if the corresponding functions $a'$ and
$b'$ are also regular i.e. $a',b'\in\mathcal{O}
(\mathbb{C}^n_{\blc})$. In particular $a=\const\in\mathbb{C}^*$. 
This leads to the following description.

\bthm\label{rel-aut} For any $n>2$ we have
$$
\Aut(\mathbb{C}^n,\Sigma^{n-1})\cong
\mathbb{C}[z_2,\ldots,z_n]\rtimes (\mathbb{C}^*)^2\,,
$$
where the $2$-torus $(\mathbb{C}^*)^2$ and the group 
$\mathbb{C}[z_2,\ldots,z_n]\cong\mathcal{O}_+(\mathbb{C}^{n-1})$ 
act on $\mathbb{C}^n\cong\Sym^n\mathbb{C}$
via {\rm (\ref{eq: 2-torus})} and
{\rm (\ref{eq: Cplus-actions})} with $\lambda=1$, respectively.  
\hfill $\square$
\ethm

\subsection{The Lie algebras $\Lie\,(\Aut_0\,\mathcal{C}^{n})$,
$\Lie\,(\Aut_0\,\mathcal{SC}^{n-1})$, and
$\Lie\,(\Aut_0\,\Sigma^{n-1})$}\label{ss: main Lie results}

\subsubsection{The Lie algebra $\Lie\,(\Aut_0\,\mathcal{C}^n)$}
\label{sss: Lie algebra Cn}
Let $\p_\tau\in\LND(\mathcal{O}(\mathcal{C}^n))$ be the infinitesimal
generator of the $\mathbb{C}_+$-action $\tau$ on
$\mathcal{C}^n\subset\mathbb{C}^n_{(z)}$. By (\ref{eq: Lie-alg}) and
Remark \ref{rm: TMThm}\,(b),
 for $n>2$ there is the Levi-Malcev-Mostow decomposition
$$
\Lie\,({\Aut}_{0}\,\mathcal{C}^n)=I\oplus \Lie \mathcal{T}
$$
with Abelian summands, where
$I=\mathcal{O}(\mathcal{C}^{n-1}_{\blc})\p_\tau$ is as in
(\ref{eq: I}) and the $2$-torus $\mathcal{T}\subset
{\Aut}_{0}\,\mathcal{C}^n$ as in Remark \ref{rm: TMThm}\,(b)
consists of all transformations $\nu(s,t)$ as in (\ref{eq:
2-torus}) with $(s,t)\in (\mathbb{C}^*)^2$ and $Q\in\mathcal{C}^n$. 
Thus $\mathcal{T}$ is the direct product of two its 1-subtori  
with infinitesimal generators, say $\p_s$ and $\p_t$, respectively. 
These derivations are locally finite and locally bounded on 
$\mathcal{O}(\mathcal{C}^n)$, 
and their sum $\p_s+\p_t$ is the infinitesimal generator of the
$\mathbb{C}^*$-action $Q\mapsto\lambda Q$ ($\lambda\in\mathbb{C}^*$) on
$\mathcal{C}^n$.
With this notation we have the following
description.

\bprop\label{pr: brack-Cn} For $n>2$ the Lie algebra 
\begin{equation}\label{eq: 3-generators}
\Lie\,({\Aut}_0\,\mathcal{C}^n)=\langle{I}, \p_s,\p_t\rangle\,,
\quad\mbox{where}\quad I=\mathcal{O}(\mathcal{C}^{n-1}_{\blc})\p_\tau\,,
\end{equation}
is uniquely determined by the commutator relations
\be\label{eq: relations-Cn}
[\p_s,\p_t]=0,\quad
[\p_s,b\p_\tau]=
(\p_sb)\p_\tau,\quad\mbox{and}\quad
[b\p_\tau,\p_t]=b\p_\tau\,,
\ee
where $b$ runs over $\mathcal{O}(\mathcal{C}^{n-1}_{\blc})$.
Furthermore, in the coordinates  $z_1,\ldots,z_n$
 in $\mathbb{C}^n=\mathbb{C}^n_{(z)}$ the derivations $\p_\tau$,
$\p_t$, and $\p_s$ are given by
\be\label{eq: three derivations} \p_\tau=\sum_{i=1}^n
(n-i+1)z_{i-1}\frac{\partial}{\partial z_i},\quad \p_t
=(-z_1/n)\p_\tau\,,\quad\mbox{and}\quad \p_s=\sum_{k=1}^n
kz_k\frac{\p}{\p z_k}-\p_t\,,\ee
where $z_0\Def 1$.\eprop

\bproof From (\ref{eq: Lie-presentation}) and (\ref{eq:
Lie-relations}) in Theorem \ref{Thm: Lie-alg} we obtain (\ref{eq:
3-generators})  and (\ref{eq: relations-Cn}), respectively. The
diagonal $\mathbb{C}_+$-action $(q_1,\ldots,q_n)\mapsto
(q_1+\lambda,\ldots,q_n+\lambda),\quad\lambda\in\mathbb{C}_+\,,$ on the
affine space $\mathbb{C}^n_{(q)}$ has for infinitesimal generator
the derivation
$$
\partial^{(n)}=\sum_{i=1}^n \frac{\partial}{\partial q_i}\in
{\rm LND}(\mathbb{C}[q_1,\ldots,q_n])\,.
$$
This $\mathbb{C}_+$-action on $\mathbb{C}^n_{(q)}$ descends to the
$\mathbb{C}_+$-action $\tau$ on the base of the Vieta covering
$$
p\colon\,\mathbb{C}^n_{(q)}\to\mathbb{C}^n_{(q)}/\mathbf{S}(n)=
\mathbb{C}^n_{(z)},\quad (q_1,\ldots,q_n)\mapsto
(z_1,\ldots,z_n)\,,
$$
where $z_i=(-1)^i\sigma_i(q_1,\ldots,q_n)$ and $\sigma_i$ is
the elementary symmetric polynomial of degree $i$. We have
$\partial^{(n)}(\sigma_i)=(n-i+1)\sigma_{i-1}$. Hence
in the coordinates $z_1,\ldots,z_n$ on $\mathbb{C}^n_{(z)}$
the infinitesimal generator $\p_{\tau}$ of $\tau$ is given by the first equality in (\ref{eq: three derivations}).

The derivations $\p_s,\p_t$, and $\p_{\tau}$ preserve the subring
$\mathbb{C}[z_1,\ldots,z_n]\subset\mathcal{O}(\mathcal{C}^n)$
and admit natural extensions from $\mathbb{C}[z_1,\ldots,z_n]$ to $\mathbb{C}[q_1,\ldots,q_n]$ 
denoted by the same symbols, where
$$
\p_{\tau} \colon q_i\mapsto 1,\quad \p_t\colon q_i\mapsto
\frac{1}{n}\sum_{k=1}^n q_k,\quad\mbox{and}\quad\p_s\colon
q_i\mapsto q_i-\frac{1}{n}\sum_{k=1}^n q_k,\quad i=1,\ldots,n\,.
$$
It follows that $\p_t=(-z_1/n)\p_t$, which yields the second
equality in (\ref{eq: three derivations}).
Applying these derivations to the coordinate functions
$z_i=(-1)^i\sigma_i(q_1,\ldots,q_n)$ the last equality in
(\ref{eq: three derivations}) follows as well. \eproof

\subsubsection{The Lie algebra $\Lie\,(\Aut_0\,\Sigma^{n-1})$}
\label{sss: Lie Sigma} We have seen in the proof of Proposition
\ref{pr: brack-Cn} that the $\mathbb{C}_+$-action $\tau$ and the action of
the 2-torus $\mathcal{T}$ on $\mathcal{C}^n$ extend regularly to
the ambient affine space $\mathbb{C}^n_{(z)}$, along with the derivations
$\p_{\tau}$, $\p_t$, and $\p_s$ given by (\ref{eq: three derivations}).  The
discriminant $d_n$ on $\mathbb{C}^n_{(z)}$ is invariant under $\tau$.
Hence $\p_\tau d_n=0$, and so, the complete vector field $\p_{\tau}$
is tangent along the level hypersurfaces of $d_n$, in particular,
along $\mathcal{SC}^{n-1}=\{d_n=1\}$ and $\Sigma^{n-1}=\{d_n=0\}$.
The induced locally nilpotent derivations of the structure rings
$\mathcal{O}(\mathcal{SC}^{n-1})$ and $\mathcal{O}(\Sigma^{n-1})$
will be still denoted by $\p_{\tau}$.

The action of the 2-torus $\mathcal{T}$ on $\mathbb{C}^n_{(z)}$ stabilizes
$\Sigma^{n-1}$. Hence $\p_t$ and $\p_s$ generate commuting
semisimple derivations of $\mathcal{O}(\Sigma^{n-1})$ denoted by
the same symbols. Using these observations and notation we can
deduce  from Theorem \ref{Thm: Lie-alg}  and Corollary \ref{cor:
neutral comp}  the following description  (cf.\ \cite{Lya78}).

\bprop\label{pr: brack-Sigma1}
For $n>2$ the Lie algebra 
$$
\Lie\,({\Aut}_{0}\,\Sigma^{n-1})=\langle I, \p_s,\p_t\rangle\,,\quad\mbox{where}\quad I=
\mathcal{O}_+(\Sigma^{n-2}_{\blc})\p_{\tau}\,,
$$
is uniquely determined   by relations {\rm (\ref{eq: relations-Cn})}, where $b$
runs over $\mathcal{O}_+(\Sigma^{n-2}_{\blc})$.
\eprop

\bproof
The proof goes along the same lines as that of Proposition
\ref{pr: brack-Cn}, and so we leave it to the reader.
\eproof

\subsubsection{The Lie algebra $\Lie\,(\Aut_0\,\mathcal{SC}^{n-1})$}
\label{sss: Lie SC}

Since $\p_\tau d_n=0$, for the derivation $\p_t =(-z_1/n)\p_\tau$
(see (\ref{eq: three derivations})) we have $\p_td_n=0$. Hence the vector field
$\p_t$ is tangent  as well to each of the level hypersurfaces of
$d_n$. In particular, $\p_t$ induces a semisimple derivation of
$\mathcal{O}(\mathcal{SC}^{n-1})$ (denoted again by $\p_t$) and
generates a $\mathbb{C}^*$-action $T$ on $\mathcal{SC}^{n-1}$.  So we
arrive at  the following description.

\bprop\label{pr: brack-Sigma}
For $n>2$ the Lie algebra 
$$
\Lie\,({\Aut}_{0}\,\mathcal{SC}^{n-1})=\langle I,\p_t\rangle\,,\quad\mbox{where}\quad I=
\mathcal{O}_+(\mathcal{SC}^{n-2}_{\blc})\p_{\tau}\,,
$$
is uniquely determined  by the relations $[b\p_\tau,\p_t]=b\p_\tau$,
where $b$ runs over
$\mathcal{O}_+(\mathcal{SC}^{n-2}_{\blc})$.\eprop

\bproof
This follows from Theorem \ref{Thm: Lie-alg} and Corollary
\ref{cor: neutral comp} in the same way as before. We leave the
details to the reader.
\eproof

\section{More on the group $\Aut\,(\mathcal{X}\times\mathbb{C})$}
\label{sc: Aut C-n}
\subsection{The center of $\Aut\,(\mathcal{X}\times\mathbb{C})$}
\label{ss: center}

The following lemma provides a formula for the commutator of two
triangular automorphisms of a product  $\mathcal{X}\times \mathbb{C}$. We
let
$$F=F(S,A) \colon (x,y)\mapsto(Sx,A(x)y)\,,$$
where $(x,y)\in\mathcal{X}\times \mathbb{C}$, $S\in\Aut \mathcal{X}$, and
$A:\mathcal{X}\to \Aff \mathbb{C}$ (cf.\ (\ref{eq: triangular
automorphism of XxC})).

\blem\label{l: commutator}
Suppose that the group $\Aut \mathcal{X}$ is
Abelian. Then for any $F=F(S,A)$ and $F'=F(S',A')$ in
$\Aut_{\vartriangle}\,(\mathcal{X}\times \mathbb{C})$ and any
$(x,y)\in\mathcal{X}\times \mathbb{C}$ we have
\begin{equation}\label{eq: commutator}
\aligned {[F',F]}(x,y)=(x,
(A'(x))^{-1}(A(S'x))^{-1}A'(Sx)A(x))y)\,.
\endaligned
\end{equation}
Consequently, $F$ and $F'$ commute if and only if
\be\label{eq: commuting} A(S'x)A'(x)=A'(S x)A(x)\quad \mbox{for
any}\quad x\in\mathcal{X}\,.\ee
\elem

\bproof The proof is straightforward.\eproof

\smallskip

Applying this lemma to general cylinders we deduce the following
facts.

\begin{prop}\label{pr: center}
Let $\mathcal{X}$ be an affine variety. If the group $\Aut
\mathcal{X}$ is Abelian then the center of the group
$\Aut_{\vartriangle}\,(\mathcal{X}\times\mathbb{C})$ is trivial.
The same conclusion holds for the groups $\Aut\mathcal{C}^{n}$, 
$\Aut\mathcal{SC}^{n-1}$, and $\Aut\Sigma^{n-1}$, where  $n>2$.
\end{prop}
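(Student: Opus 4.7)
The plan is to apply Lemma \ref{l: commutator} to an alleged central automorphism $F=F(S,A)$ with two well-chosen families of test automorphisms $F'=F(S',A')$, extracting first that the $\Aff\mathbb{C}$-component $A$ is trivial, and then that $S$ is the identity. Throughout, I will use the commuting criterion $A(S'x)A'(x)=A'(Sx)A(x)$ from (\ref{eq: commuting}).

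First I would set $S'=\id$ and let $A'$ be an arbitrary constant morphism $A'\equiv\beta\in\Aff\mathbb{C}$. The commuting condition collapses to $A(x)\,\beta=\beta\, A(x)$ for every $x\in\mathcal{X}$ and every $\beta\in\Aff\mathbb{C}$. Since the center of $\Aff\mathbb{C}$ is trivial (a direct one-line computation in the group of maps $y\mapsto ay+b$), this forces $A(x)=\id$ for all $x$, i.e.\ $A\equiv\id$, so $F(x,y)=(Sx,y)$.

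With $A\equiv\id$ in hand, the commuting condition for an arbitrary morphism $A'\colon\mathcal{X}\to\Aff\mathbb{C}$ (still with $S'=\id$) becomes $A'(x)=A'(Sx)$ for every $x\in\mathcal{X}$. Specializing $A'(x)\colon\zeta\mapsto\zeta+h(x)$ for an arbitrary $h\in\mathcal{O}(\mathcal{X})$ yields $h=h\circ S$ for every regular function $h$. Because $\mathcal{X}$ is an affine variety, $\mathcal{O}(\mathcal{X})$ separates points, and therefore $S=\id$. This gives $F=\id$ and proves the triviality of the center of $\Aut_{\vartriangle}(\mathcal{X}\times\mathbb{C})$.

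For the three configuration-type groups, I would reduce to the abstract case just treated. By Corollary \ref{Crl: our automorphisms are triangular} and decomposition (\ref{eq: 3 cylinders}), each of $\Aut\mathcal{C}^n$, $\Aut\mathcal{SC}^{n-1}$, and $\Aut\Sigma^{n-1}$ coincides with the group of triangular automorphisms of the corresponding cylinder $\mathcal{Z}_{\blc}\times\mathbb{C}$ over the affine base $\mathcal{Z}_{\blc}$. By Theorem \ref{Thm: Kaliman-Lin-Zinde}, the automorphism group of each balanced base is Abelian (isomorphic to $\mathbb{C}^*$, $\mathbb{Z}/n(n-1)\mathbb{Z}$, and $\mathbb{C}^*$ respectively). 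Hence the first assertion of the proposition applies verbatim, giving the triviality of the center in each of the three cases. The only nontrivial ingredients in the whole argument are the center computation in $\Aff\mathbb{C}$ and the separation of points by regular functions on an affine variety; neither presents a serious obstacle, so I expect the write-up to be short once the choice of test automorphisms is fixed.
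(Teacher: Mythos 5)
Your proof is correct and follows essentially the same route as the paper's: both apply the commuting criterion of Lemma \ref{l: commutator} to a would-be central element with specially chosen test automorphisms (your constant affine maps and translations versus the paper's choices $a'=2,\,b'=0$ and then $b'=1$), conclude $A\equiv\id$ and then $S=\id$ via separation of points by regular functions, and then reduce the three concrete groups to this abstract statement using triangularity of all automorphisms of the cylinders (\ref{eq: 3 cylinders}) together with the Abelian base automorphism groups from Theorem \ref{Thm: Kaliman-Lin-Zinde}. The differences are only in the cosmetic choice of test elements, so no further comparison is needed.
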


\bproof
Consider two elements $F=F(S,A)$ and $F'=F(S',A')$ in
$\Aut_{\vartriangle}\,(\mathcal{X}\times\mathbb{C})$, where
$S,S'\in\Aut\mathcal{X}$ and  $A\colon y\mapsto ay+b$, $A'\colon
y\mapsto a'y+b'$ with $a, a'\in\mathcal{O}^\times (\mathcal{X})$
and $b,b'\in\mathcal{O}_+ (\mathcal{X})$. If $F$ and $F'$ commute
then (\ref{eq: commuting}) is equivalent to the system
\be\label{eq: commuting1}
(a\circ S')\cdot a'=a\cdot (a'\circ S)
\quad\mbox{and}\quad (a'\circ S)\cdot b+b'\circ S=(a\circ S')\cdot
b'+b\circ S'\,.
\ee
Assume that $F$ is a central element i.e. (\ref{eq: commuting1})
holds for any $F'$. Letting in the second relation $S'=\id$,
$a'=2$, and $b'=0$ yields $b=0$. Now this relation reduces to
$$
b'\circ S=(a\circ S')\cdot{b'}\,.
$$
Letting $b'=1$ yields $a=1$ and so $A=\id$ and $b'\circ S=b'$ for
any $b'\in\mathcal{O}_+ (\mathcal{X})$. If $S\neq\id$ this leads
to a contradiction, provided that $b'$ is non-constant on an
$S$-orbit in $\mathcal{X}$. Hence $S=\id$, and so, $F=\id$, as
claimed.

The last assertion follows now from Corollary
\ref{Cor: automorphisms-of-rigid-cylinder-are-triangular},
since the bases of the cylinders $\mathcal{C}^{n},\,\mathcal{SC}^{n-1}$, and 
$\Sigma^{n-1}$ are rigid varieties with Abelian automorphism groups, see (\ref{eq: 3 cylinders}), 
Proposition \ref{Prp: rigidity}, and Theorem \ref{Thm: Kaliman-Lin-Zinde}.
\eproof

\subsection{Commutator series}\label{ss: commutator}
Let us introduce the following notation.
\smallskip

\noindent {\bf Notation.} Let $\mathcal{X}$ be an affine variety with a tight cylinder 
$\mathcal{X}\times\mathbb{C}$, and let $\mathcal{D}
\subseteq\Aut(\mathcal{X}\times\mathbb{C})$ 
be the subgroup consisting of all automorphisms of the form $F=F(\id, A)$,
where $A\colon{y\mapsto}ty+b$ with $t\in\mathbb{C}^*$ and
$b\in\mathcal{O}_+(\mathcal{X})$. It is easily seen that
$$
\SAut\,(\mathcal{X}\times\mathbb{C})\vartriangleleft
\mathcal{D}\vartriangleleft {\Aut}_0\,(\mathcal{X}\times\mathbb{C})\,.
$$
Furthermore, $\mathcal{D}\cong
\mathcal{O}_+(\mathcal{X})\rtimes\mathbb{C}^*$ under the isomorphism in
(\ref{eq: triple decomposition0}), with quotient group
${\Aut}_0\,(\mathcal{X}\times\mathbb{C})/\mathcal{D}\cong{\Aut}_0\,
\mathcal{X}$. In particular, for
$\mathcal{X}\times\mathbb{C}\cong\mathcal{SC}^{n-1}$, $n>2$, we have
$\mathcal{D}=\Aut_0\,(\mathcal{X}\times\mathbb{C})$, see
Corollary \ref{Cor: automorphisms-of-rigid-cylinder-are-triangular}
and Theorem \ref{Thm: structure}.

It is known (\cite[Ch.\ VII, Theorem 19.3(a)]{Hum}) that for any
connected solvable affine algebraic group $G$ the commutator
subgroup $[G,G]$ is contained in the unipotent radical $G_{\rm u}$
of $G$. In our setting a similar result holds.

\bthm\label{Thm: com-ser}
Let $\mathcal{X}\not\cong\mathbb{C}$ be an affine variety such that $\Aut_0\,\mathcal{X}$ is an 
algebraic group. Then 
\be\label{eq: commutators equalities0}
[{\Aut}_0\,(\mathcal{X}\times\mathbb{C}),\,
{\Aut}_0\,(\mathcal{X}\times\mathbb{C})] 
=[\mathcal{D},\mathcal{D}]=\SAut\,(\mathcal{X}\times\mathbb{C})\,.
\ee
Consequently, the commutator series of the group
${\Aut}_0\,(\mathcal{X}\times\mathbb{C})$ is
\be\label{eq: commutator series-0}
1\vartriangleleft \SAut(\mathcal{X}\times\mathbb{C})\vartriangleleft
{\Aut}_0\, (\mathcal{X}\times\mathbb{C}) \,.\ee
The same conclusions hold for the groups 
$\Aut_0\,\mathcal{C}^{n}$, 
$\Aut_0\,\mathcal{SC}^{n-1}$, and
$\Aut\Sigma^{n-1}=\Aut_0\,\Sigma^{n-1}$, where $n>2$.
\ethm

\bproof Since the  group $\Aut_0\,\mathcal{X}$ is  
algebraic and $\mathcal{X}\not\cong\mathbb{C}$, 
this group does not contain any 
unipotent one-parameter subgroup. Hence by \cite[Lemma 3]{Ii} 
 $\Aut_0\,\mathcal{X}$ is an algebraic torus, hence is Abelian. 
By (\ref{eq: triple decomposition0}),
$\SAut\,(\mathcal{X}\times\mathbb{C})\vartriangleleft
{\Aut}_0\,(\mathcal{X}\times\mathbb{C})$ is a normal subgroup with the
Abelian quotient
$$
{\Aut}_0\,(\mathcal{X}\times\mathbb{C})/\SAut\,(\mathcal{X}
\times\mathbb{C})\cong
\mathbb{C}^*\times {\Aut}_0\,\mathcal{X}\,.
$$
Hence $[{\Aut}_0\,(\mathcal{X}\times\mathbb{C}),\,
{\Aut}_0\,(\mathcal{X}\times\mathbb{C})]\subset
\SAut\,(\mathcal{X}\times\mathbb{C})$.
To show (\ref{eq: commutators equalities0}) it suffices to
establish the inclusion $\SAut\,(\mathcal{X}\times\mathbb{C})\subset
[\mathcal{D}, \mathcal{D}]$. However, by virtue of (\ref{eq:
commutator}) any $F=F(\id,A)\in\SAut\,(\mathcal{X}\times\mathbb{C})$,
where $A:y\mapsto y+b$ with $b\in\mathcal{O}_+(\mathcal{X})$, can
be written as commutator $F=[F',F'']$, where $F'=F(\id, A')$ and
$F''=F(\id, A'')$ with
$A':y\mapsto -y-b/2$ and $A'':y\mapsto{y+b/2}$ (in fact, $A=[A',A'']$).

Now (\ref{eq: commutator series-0}) follows from (\ref{eq:
commutators equalities0}), since the group
$\SAut\,(\mathcal{X}\times\mathbb{C})\cong\mathcal{O}_+(\mathcal{X})$ is
Abelian.

By Theorem \ref{Thm: Kaliman-Lin-Zinde}, for $n>2$ the groups 
${\Aut}_0\,\mathcal{C}^n$, 
$\Aut_0\,\mathcal{SC}^{n-1}$, 
and $\Aut\Sigma^{n-1}=\Aut_0\Sigma^{n-1}$ satisfy our assumptions. 
So,  the conclusions hold also for these groups.
\eproof

For the group $\Aut\mathcal{SC}^{n-1}$  the following hold.

\bthm\label{Thm: commutator subgroup-SCn-1}
For $n>2$  we have
$[\Aut\mathcal{SC}^{n-1},\Aut\mathcal{SC}^{n-1}]=\SAut\mathcal{SC}^{n-1}$. 
Hence the commutator series of $\Aut\mathcal{SC}^{n-1}$ is
$$
1\vartriangleleft\SAut\mathcal{SC}^{n-1}\vartriangleleft
\Aut \mathcal{SC}^{n-1}
$$
with the Abelian normal subgroup
$\SAut\mathcal{SC}^{n-1}\cong\mathcal{O}_+( \mathcal{SC}^{n-2}_{\blc})$
and the Abelian quotient group
\begin{equation}\label{eq: factor}
\Aut\mathcal{SC}^{n-1}/\SAut \mathcal{SC}^{n-1}\cong
\mathbb{C}^*\times(\mathbb{Z}/n(n-1)\mathbb{Z})\,.
\end{equation}
\ethm

\bproof
Theorem \ref{Thm: com-ser} yields the inclusion
$\SAut\mathcal{SC}^{n-1}\subset [\Aut\mathcal{SC}^{n-1},\Aut\mathcal{SC}^{n-1}]$. The opposite
inclusion follows from (\ref{eq: factor}), which is in turn a
consequence of Theorem \ref{Thm: structure}. Hence the assertions
follow.
\eproof

Consider further the group $\Aut\mathcal{C}^n$. Note that the
quotient groups
\be\label{eq: factors}
(\Aut\mathcal{C}^n)/\mathcal{D}\cong\mathbb{C}^*\times
\mathbb{Z}\quad\mbox{and}\quad
\mathcal{D}/\SAut\mathcal{C}^n\cong\mathbb{C}^*\,
\, \ee
are Abelian, see Theorem \ref{Thm: structure}. Hence
\be\label{eq: incl}
[\Aut\mathcal{C}^n,\Aut\mathcal{C}^n]\subseteq
\mathcal{D},\quad\mbox{where}\quad [\mathcal{D}, \mathcal{D}] =
\SAut\mathcal{C}^n\,,
\ee
see Theorem \ref{Thm: com-ser}. More precisely, the following holds.

\bthm\label{Thm: commutator subgroup}
For $n>2$  we have
$[\Aut \mathcal{C}^n,\Aut \mathcal{C}^n]=\mathcal{D}$. Hence the
commutator series of the group $\Aut \mathcal{C}^n$ is
$$1\vartriangleleft \SAut\mathcal{C}^n\vartriangleleft  \mathcal{D}
\vartriangleleft  \Aut \mathcal{C}^n\,$$
with Abelian quotient groups, see {\rm (\ref{eq: factors})}.
\ethm

\bproof
By virtue of (\ref{eq: incl}) to establish the first
equality it suffices  to prove the inclusion $\mathcal{D}\subseteq
[\Aut\mathcal{C}^n,\Aut\mathcal{C}^n]$. We show that,
moreover, any element $F_0\in\mathcal{D}$ is a product of two
commutators in $\Aut\mathcal{C}^n$.

Indeed, choosing as before $F$ and $F'$ in $\mathcal{D}$ such that
$[F',F]\colon{Q\to}Q+b(Q)$ and replacing $F_0$ by $[F',F]^{-1}F_0$ we
may suppose that $F_0=F(\id, A_0)$, where $A_0\colon{y\mapsto}ty$ with
$t\in\mathbb{C}^*$.

Let $\widetilde{F}=F(S,A)$ and $\widetilde{F}'=F(S',A')$, where
$$
S\colon Q^\circ\mapsto{sQ^\circ},\ \, S'\colon
Q^\circ\mapsto{s'Q^\circ},\ \,\mbox{and}\ \, A(Q^\circ)\colon y
\mapsto{sD_n^k(Q^\circ)y},\ \, A'(Q^\circ)\colon 
y\mapsto{s'D_n^{k'}}(Q^\circ)y
$$
for $Q^\circ\in\mathcal{C}_{\blc}^{n-1}$ and $y\in\mathbb{C}$.
By (\ref{eq: commutator}) we obtain
$$
{[\widetilde{F}',\widetilde{F}]}(Q)=F({\id},A''),\quad\mbox{where} \quad A'':y\mapsto
(s^{k'}{s'}^{-k})^{n(n-1)}y
$$ 
does not depend on $Q^\circ\in\mathcal{C}_{\blc}^{n-1}$.
Given $t\in\mathbb{C}^*$ we can find $s,s'\in\mathbb{C}^*$ and
$k,k'\in\mathbb{Z}$ such that $(s^{k'}{s'}^{-k})^{n(n-1)}=t$.
With this choice, $F_0=[\widetilde{F}',\widetilde{F}]$ and we are
done.
\eproof

\subsection{Torsion in $\Aut_0\,(\mathcal{Z}_{\blc}\times\mathbb{C})$}
 Let $\mathcal{Z}$ be one of the varieties $\mathcal{C}^n$, 
$\mathcal{SC}^{n-1}$, or $\Sigma^{n-1}$, where $n>2$, 
and let $\mathcal{Z_{\blc}}$ be the corresponding balanced variety. 
Denote by $G_\mathcal{Z}$ one of the groups
$\Aut\mathcal{C}^n$, $\Aut_0\,\mathcal{SC}^{n-1}$, and $\Aut\,\Sigma^{n-1}$. 
With this notation we have the following results.

\bthm\label{Thm: torsion-config-spaces}
The semisimple elements of the group $G_\mathcal{Z}$ 
are precisely the automorphisms of the form
\begin{equation}\label{eq: torsion-config-spaces}
F\colon Q\mapsto{sQ^\circ+t\cdot\bc (Q)+t\cdot{b(Q^\circ)}-b(sQ^\circ)}
\quad\textup{for all}\ \,Q\in\mathcal{Z}\,,
\end{equation}
where $Q^\circ=Q-\bc(Q)\in\mathcal{Z}_{\blc}$,  
$b\in\mathcal{O}_+(\mathcal{Z}_{\blc})$, and $s,t\in\mathbb{C}^*$, with
$s^{n(n-1)}=1$ when $\mathcal{Z}=\mathcal{SC}^{n-1}$. 
Such an element $F$ is torsion with $F^m=\id$ if and only if, in addition, 
$s^m=t^m=1$.
\ethm

\bproof Since
$\Aut\mathcal{C}^n/{\Aut}_0\,\mathcal{C}^n\cong\mathbb{Z}$, the
torsion elements of $\Aut\mathcal{C}^n$ are that of the neutral
component $\Aut_0\,\mathcal{C}^n$. Taking into account that the
group $\Aut\,\Sigma^{n-1}$ $(n>2)$ is connected, in all three
cases Proposition \ref{cor: form of ss elements} applies and yields
the result after a simple calculation using the description in
(\ref{eq: 2-torus}) and (\ref{eq: Cplus-actions}). \eproof

In Example \ref{Example: finite order automorphisms} we
construct some particular torsion elements of the group
$\Aut\mathcal{C}^n$. We show that for any
$b\in\mathcal{O}_+(\mathcal{C}^{n-1}_{\blc})$ there is an element
$F\in\Tors(\Aut\mathcal{C}^n)$ of the form
\be\label{eq: torsion model} F\colon\,Q\mapsto{sQ^\circ}+t\bc(Q)
+b(Q^\circ),\quad\textup{where}\quad Q^\circ=Q-\bc(Q)\,.
\ee
We use the following lemma. Its proof proceeds by induction on
$m$; we leave the details to the reader.

\begin{lem}\label{Lm: F{a,t,0,b}m}
Let $n>2$, and let $F\in{\Aut}_0\,\mathcal{C}^n$ be given by
\textup{(\ref{eq: torsion model})}. Letting $Q^\circ=Q-\bc(Q)$ for any
$m\in\mathbb{N}$ we have
$$
F^m(Q)=s^m Q^\circ+t^m\bc(Q)+\sum_{j=0}^{m-1}
t^{m-j-1}b(s^jQ^\circ)\,.
$$
Consequently, $F^m=\id$ if and only if $s^m=t^m=1$ and the
function $b$ satisfies the equation
\begin{equation}\label{eq: parameters of finite order automorphism}
\sum_{j=0}^{m-1} t^{m-j-1}b(s^jQ^\circ)=0 \ \ \textup{for any} \
\, {Q^\circ}\in\mathcal{C}^{n-1}_{\blc}\,.
\end{equation}
\end{lem}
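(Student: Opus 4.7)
The plan is to establish the formula for $F^m(Q)$ by induction on $m$, and then extract the ``consequently'' clause by exploiting the cylindrical product decomposition $\mathcal{C}^n \cong \mathcal{C}^{n-1}_{\blc}\times\mathbb{C}$ from (\ref{eq: 3 cylinders}), which makes $Q\mapsto(Q^\circ,\bc(Q))$ a coordinate system on $\mathcal{C}^n$.

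For the base case $m=1$, the claimed formula is the definition (\ref{eq: torsion model}) itself. For the inductive step, the key preliminary observation is that, writing $R=F^m(Q)$, the inductive hypothesis presents $R$ as $s^m Q^\circ$ (an element of $\mathcal{C}^{n-1}_{\blc}$ since scaling preserves the balanced condition) plus a scalar shift $t^m\bc(Q)+\sum_{j=0}^{m-1}t^{m-j-1}b(s^j Q^\circ)\in\mathbb{C}$. Hence the cylindrical coordinates of $R$ are
\begin{equation*}
R^\circ = s^m Q^\circ,\qquad \bc(R) = t^m\bc(Q)+\sum_{j=0}^{m-1}t^{m-j-1}b(s^j Q^\circ).
\end{equation*}
Substituting these into $F(R)=sR^\circ+t\,\bc(R)+b(R^\circ)$ and re-indexing the resulting sum by including the new term $b(s^m Q^\circ)$ as the $j=m$ summand of $\sum_{j=0}^{m}t^{m-j}b(s^j Q^\circ)$ yields exactly the claimed formula at level $m+1$. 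This is a short direct computation with no real obstacle; the only place to stay alert is keeping the exponent of $t$ correctly aligned when shifting indices.

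For the last assertion, I would use that the splitting $Q\mapsto(Q^\circ,\bc(Q))\in\mathcal{C}^{n-1}_{\blc}\times\mathbb{C}$ is bijective, so $F^m=\id$ is equivalent to the two independent identities
\begin{equation*}
s^m Q^\circ = Q^\circ\quad\textup{and}\quad t^m\,\bc(Q)+\sum_{j=0}^{m-1}t^{m-j-1}b(s^j Q^\circ)=\bc(Q)
\end{equation*}
holding for all $Q\in\mathcal{C}^n$. The first forces $s^m=1$ since $Q^\circ$ ranges over all of $\mathcal{C}^{n-1}_{\blc}$. In the second identity the summation does not involve $\bc(Q)$, while $\bc(Q)$ varies freely over $\mathbb{C}$ for fixed $Q^\circ$; separating the coefficient of $\bc(Q)$ from the $Q^\circ$-dependent terms forces $t^m=1$ and then leaves precisely equation (\ref{eq: parameters of finite order automorphism}). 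Conversely, these conditions plainly make the formula for $F^m(Q)$ collapse to $Q^\circ+\bc(Q)=Q$, finishing the proof.
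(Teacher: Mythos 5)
Your proposal is correct and follows exactly the route the paper indicates: the paper proves the formula for $F^m$ by induction on $m$ (leaving the details to the reader), and your inductive step via the identification $(F^m(Q))^\circ=s^mQ^\circ$, $\bc(F^m(Q))=t^m\bc(Q)+\sum_{j=0}^{m-1}t^{m-j-1}b(s^jQ^\circ)$ supplies precisely those details. The extraction of $s^m=t^m=1$ and equation (\ref{eq: parameters of finite order automorphism}) by separating the two cylindrical coordinates (using that $\bc(Q)$ varies freely for fixed $Q^\circ$ and that a generic balanced configuration has trivial $\mathbb{C}^*$-stabilizer) is the intended, straightforward argument.
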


\brem\label{rem: comparison} It follows from Lemma \ref{Lm:
F{a,t,0,b}m} and Theorem \ref{Thm: torsion-config-spaces} that for $m\ge 2$ a
function $b\in\mathcal{O}(\mathcal{C}^{n-1}_{\blc})$ satisfies
(\ref{eq: parameters of finite order automorphism}) for a given pair  $(s,t)$ of
$m$th roots of unity if and only if it can be written
as $b(Q^\circ)=t{\widetilde b}(Q^\circ)-{\widetilde b}(sQ^\circ)$
for some ${\widetilde b}\in
\mathcal{O}(\mathcal{C}^{n-1}_{\blc})$. The inversion formula
\begin{equation}\label{eq: inversion}
{\widetilde b}(Q^\circ)=
\sum_{j=0}^{m-1} \frac{m-j}{m} t^{m-j}b(s^{j-1}Q^\circ)\ \
\textup{for any} \
\, {Q^\circ}\in\mathcal{C}^{n-1}_{\blc}\,
\end{equation}
allows to find such a function ${\widetilde b}\in \mathcal{O}(\mathcal{C}^{n-1}_{\blc})$ 
for a given solution $b\in \mathcal{O}(\mathcal{C}^{n-1}_{\blc})$ of
(\ref{eq: parameters of finite order automorphism}).
\erem

\begin{exas}[{\em Automorphisms of $\mathcal{C}^n$ of finite order\rm}]
\label{Example:
finite order automorphisms} \textup{(a)} For $m>1$, pick any
$b\in\mathcal{O}(\mathcal{C}^{n-1}_{\blc})$ and any $m$th root of
unity $t\ne{1}$. Then the automorphism $F\colon\,Q\mapsto
(Q-\bc(Q))+t\bc(Q)+b(Q-bc(Q))$ satisfies $F\ne\id$ and $F^m=\id$.
\vskip3pt

\textup{(b)} Let $b\in\mathcal{O}(\mathcal{C}^{n})$ be invariant
under the diagonal $(\Aff\mathbb{C})$-action on $\mathcal{C}^n$.
For instance,
$\displaystyle
b(Q)=c{D}_n^{-k}(Q)\sum\limits_{q',\,q''\in Q} (q'-q'')^{kn(n-1)}$
is such a function  for any $k\in\mathbb{N}$ and $c\in\mathbb{C}$.
Take any $m>2$, and let $s$ and $t$ be two distinct $m$th roots of
unity, where $t\neq 1$. Then the automorphism $F$ as in
(\ref{eq: torsion model}) satisfies $F\ne\id$ and $F^m=\id$.
\vskip3pt

\textup{(c)} Any automorphism $F$ of $\mathcal{C}^n$ of the form
$F\colon\,Q\mapsto -Q+b(Q-\bc(Q))$ with
$b\in\mathcal{O}(\mathcal{C}^{n-1}_{\blc})$ is an involution. For
instance, one can take 
$$
b(Q)=cD_n^r(Q)\sum\limits_{\{q',\,q''\}\subset Q}
(q'-q'')^{2m}\quad\mbox{with}\quad r,m\in\mathbb{Z},\quad
|r|+|m|>0,\quad\mbox{and}\quad c\in\mathbb{C}\,.
$$
\end{exas}

\section{The group $\Aut\mathcal{C}^n(\mathbb{C}^*)$: 
Zinde's Theorem }\label{sec:Zinde's Thm.}

In this section we provide a description of the group 
$\Aut\mathcal{C}^n(\mathbb{C}^*)$ according to Zinde \cite{Zin78}. 
This description will be used in the next section. 
For the convenience of the reader, we give an alternative proof in our setting. 
The original Zinde's Theorem contains a description 
of the biholomorphic automorphisms of $\mathcal{C}^n(\mathbb{C}^*)$ for 
$n>4$, 
and the structure of the group of all  such automorphisms. 
The proof in \cite{Zin78} is based on Zinde's part of Tame Map Theorem  
(see  the Introduction), 
which says that for $n>4$ any non-Abelian 
holomorphic endomorphism of $\mathcal{C}^n(\mathbb{C}^*)$ is
 tame (see  \cite[Theorem 7]{Zin78}). 
Our approach is quite different. 
Indeed, we deal with biregular automorphisms only. In this particular case, 
for any $n>2$ 
we provide a proof which does not refer to Tame Map Theorem. 
However, the starting point of both proofs is the same, see Corollary \ref{lem: Zin77}. 

To formulate our extension of Zinde's Theorem, we need a portion of notation.
Consider the function $h_n\in\mathcal{O}^\times(\mathcal{C}^{n}(\mathbb{C}^*))$, where
$h_n(Q)=D_n(Q)/(q_1\cdot...\cdot{q_n})^{n-1}$. It is invariant under 
the diagonal action of $\Aut\mathbb{C}^*$ 
on $\mathcal{C}^{n}(\mathbb{C}^*)$.
For $\epsilon\in\{1,-1\}$ and $Q=\{q_1,...,q_n\}\in\mathcal{C}^{n}(\mathbb{C}^*)$ we let 
$Q^\epsilon=\{q_1^\epsilon,\ldots,q_n^\epsilon\}$.
With this notation, we have the following description of the group 
$\Aut\mathcal{C}^n(\mathbb{C}^*)$.
\vskip4pt

\noindent {\bf Zinde's Theorem} (\cite[Theorem 8]{Zin78}).
{\em Let $n>2$. A map $F\colon\,\mathcal{C}^{n}(\mathbb{C}^*)
\to\mathcal{C}^{n}(\mathbb{C}^*)$ 
is an automorphism if and only if there exist $\epsilon\in\{1,-1\}$, 
$s\in\mathbb{C}^*$, and $k\in\mathbb{Z}$ 
such that
\begin{equation}\label{eq: aut-form CnCstar}
F(Q)=sh_n^k(Q)Q^\epsilon \ \, \textup{\em for all} \ \, Q
\in\mathcal{C}^{n}(\mathbb{C}^*)\,.
\end{equation}
Furthermore, for $n>2$
$$\Aut\mathcal{C}^{n}(\mathbb{C}^*)\cong 
(\mathbb{C}^*\times \mathbb{Z})\rtimes(\mathbb{Z}/2\mathbb{Z})\,, $$
where the factors $\Z$ and $\Z/2\Z$ commute,   while for $n=2$
$$\Aut \mathcal{C}^{2}(\mathbb{C}^*)
\cong \left((\mathbb{C}^*\times \mathbb{Z})\rtimes(\mathbb{Z}/2\mathbb{Z})\right)
\rtimes (\mathbb{Z}/2\mathbb{Z})\,.$$
}

Thus, the case $n=2$ is exceptional: the automorphisms as in 
{\rm (\ref{eq: aut-form CnCstar}) }
form a subgroup of index 2 in the group 
$\Aut\mathcal{C}^{2}(\mathbb{C}^*)$; see Theorem 
\ref{thm: Zinde Thm for n=2} for a more precise description. 
The proof of Zinde's Theorem for $n>2$ is done in 
\ref{lem: Zin77-alg}-\ref{prop: end-Zinde-thm}. 

The following lemma is a generalization to the series B of the classical Artin Theorem 
(\cite[Theorem 3]{Artin}), which says that 
the pure braid group $\mathbf{PA}_{n-1}$ 
is a characteristic subgroup of  the Artin braid group $\mathbf{A}_{n-1}$. 
Recall that a subgroup $N$ of a group $G$ is called {\em characteristic} if $N$ is stable 
under all automorphisms of $G$.  

\begin{lem} \label{lem: Zin77-alg} 
Let $\mathbf{B}_{n}=\pi_1(\mathcal{C}^n(\mathbb{C}^*))$ 
be the Artin-Brieskorn braid group of type $B_n$, and let $\mathbf{PB}_n\subset \mathbf{B}_{n}$ 
be the kernel of the natural surjection $\mathbf{B}_{n}\to \mathbf{S}(n)$. 
Then $\mathbf{PB}_n$ is a characteristic subgroup of $\mathbf{B}_{n}$ for any $n>2$. 
\elem

\bproof For $n\ge 5$ the assertion follows from a more general result due to Zinde 
(\cite[Theorem 5]{Zin77}), which says that $\mathbf{PB}_n$ is invariant under any endomorphism of 
$\mathbf{B}_{n}$ with non-Abelian image. 
Alternatively, the assertion also follows for $n=3$ and any $n\ge 5$  from a result of Ivanov 
(\cite[Theorem 2]{Ivanov}), 
which generalizes Artin's Theorem to the braid groups of general surfaces. 
The result for $n=4$ is as well announced  in \cite{Ivanov} 
(a remark after Theorem 2), however, the proof of this was never published. 
We provide therefore yet another proof, which works for all $n\ge 3$. 

Our approach uses the following result. We let $\mathbf{WB}_n$ be the finite 
Coxeter group of type $B_n$, 
and $\mathbf{CB}_{n}$ (the `colored braid group') be the kernel of the natural surjection 
$\omega_n\colon \mathbf{B}_{n}\to \mathbf{WB}_n$. 
By a result of Cohen and Paris (\cite[Propositions 2.4 and 3.9]{CP03}), 
$\mathbf{CB}_{n}$ is a characteristic subgroup of $\mathbf{B}_{n}$ for any $n\in\N$. 

Let further  $\mathcal{E}_n\cong(\Z/2\Z)^n$ be the subgroup of $\mathbf{WB}_n$
generated by the orthogonal reflexions in $\mathbb{R}^n$ with mirrors being 
the coordinate hyperplanes. 
We have $\mathbf{WB}_n=\mathcal{E}_n\rtimes\mathbf{S}(n)$. If $\pi_n\colon  \mathbf{WB}_n\to \mathbf{S}(n)$ 
is the surjection with kernel $\mathcal{E}_n$, then 
$\mathbf{PB}_{n}=\ker (\pi_n\circ\omega_n)\supset\ker\omega_n=\mathbf{CB}_{n}$. 

Let $\alpha\in\Aut \mathbf{B}_{n}$. Since $\mathbf{CB}_{n}\subset \mathbf{B}_{n}$ 
is a characteristic  subgroup, we have 
$\alpha(\mathbf{CB}_{n})=\mathbf{CB}_{n}$. Hence $\alpha$ induces an automorphism 
 $\bar{\alpha}\in\Aut \mathbf{WB}_n$ 
of the quotient group. By a lemma of Franzsen (\cite[Lemma 2.6]{Franzsen}),
for any $n\ge3$ the subgroup $\mathcal{E}_n\subset \mathbf{WB}_n$ is characteristic. 
Thus, $\bar{\alpha}(\mathcal{E}_n)=\mathcal{E}_n$. 
Since $\mathbf{PB}_{n}=\omega_n^{-1}(\ker\pi_n)=\omega_n^{-1}(\mathcal{E}_n)$, 
then $\alpha(\mathbf{PB}_{n})=\mathbf{PB}_{n}$, 
and so, $\mathbf{PB}_{n}\subset \mathbf{B}_{n}$ is a characteristic  subgroup too. 
 \eproof

\brems\label{rem: Franzsen-n=2} 1. Lemma \ref{lem: Zin77-alg} does not hold any longer for $n=2$. 
Indeed, recall that the group $\mathbf{B}_{2}$ admits a presentation 
$$
\mathbf{B}_{2}=\langle \sigma_1,\sigma_2\,|\,\sigma_1\sigma_2\sigma_1\sigma_2=\sigma_2\sigma_1\sigma_2\sigma_1\rangle
$$
with standard generators $\sigma_1$ and $\sigma_2$.
Let $\rho\colon \mathbf{B}_{2}\to\mathbf{S}(2)$ be the standard surjection sending $\sigma_1$ to 
$\sigma=(1,2)$ and $\sigma_2$ to $\id$.
Thus, we have $\mathbf{PB}_{2}=\ker\rho=\langle \sigma_1^2,\sigma_2\rangle$. 
Let $\alpha\in\Aut \mathbf{B}_{2}$ be the involution  of $\mathbf{B}_{2}$
interchanging $\sigma_1$ and $\sigma_1$. It is called a {\em graph involution} in \cite{Franzsen}, since it is induced by an involution of the Dynkin graph of type 
$B_2$ (the only graph of series $B$, which admits a nontrivial automorphism).
Then  $\alpha(\mathbf{PB}_{2})=\langle\sigma_1,\sigma_2^2\rangle\neq \mathbf{PB}_{2}$.
Therefore, the  subgroup
$\mathbf{PB}_{2}\subset \mathbf{B}_{2}$ is not characteristic. 

2. Likewise, Franzsen's Lemma cited above does not hold for $n=2$. 
Indeed, we have 
$$\mathbf{WB}_2=\mathcal{E}_2\rtimes\langle\sigma\rangle=\{\id, \varepsilon_1,\varepsilon_2, 
w_0,
\sigma, w_0\sigma, \varepsilon_1\sigma, \varepsilon_2\sigma\}\,,
$$
where $\sigma=(1,2)$ acts  naturally on $\R^2$ as an orthogonal  reflexion, 
$\mathcal{E}_2=\langle\varepsilon_1,\varepsilon_2\rangle
\cong(\Z/2\Z)^2$ with
$\varepsilon_i$ being the reflexion  in $\R^2$ of sign change of the $i$th coordinate, 
$i=1,2$, and $w_0=\varepsilon_1\varepsilon_2=-\id$. \footnote{Note that the natural surjection 
$\mathbf{B}_2\to \mathbf{WB}_2$ sends the standard generators $\sigma_1$ and 
$\sigma_2$ of $\mathbf{B}_2$ to $\sigma$ and 
$\varepsilon_1$, respectively, and sends the generator $(\sigma_1\sigma_2)^2$ of the center 
$Z(\mathbf{B}_2)\cong\Z$ to $\varepsilon_1\varepsilon_2$.}
It is easy to check that the involution
$$
\bar\alpha\in\Aut \mathbf{WB}_2,\quad \id\mapsto \id, w_0\mapsto w_0, \varepsilon_1 \leftrightarrow\sigma, 
\varepsilon_2 \leftrightarrow w_0\sigma, \varepsilon_1 \sigma \leftrightarrow\varepsilon_2 \sigma
$$ 
does not stabilize the subgroup $\mathcal{E}_2$. Hence this subgroup is not  characteristic. 
(In fact, $\bar\alpha$ is induced by $\alpha$ as above via the natural surjection $\Aut \mathbf{B}_2\to\Aut \mathbf{WB}_2$.)
\erems

The next corollary follows from a more general result of Zinde if $n>4$, see \cite[\S 2]{Zin78}. 

\begin{cor}\label{lem: Zin77}  
For $n>2$ any 
automorphism  $F$ of $\mathcal{C}^{n}(\mathbb{C}^*)$ 
can be lifted  to an automorphism 
$\widetilde F$ of 
$\mathcal{C}_{\ord}^{n}(\mathbb{C}^*)$. 
\end{cor}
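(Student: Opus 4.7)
The plan is to invoke the standard lifting criterion from covering space theory. Recall that $p\colon\mathcal{C}^{n}_{\ord}(\mathbb{C}^*)\to\mathcal{C}^{n}(\mathbb{C}^*)$ is an unramified Galois covering with deck group $\mathbf{S}(n)$, and the subgroup of $\mathbf{B}_{n}=\pi_1(\mathcal{C}^{n}(\mathbb{C}^*))$ that classifies this covering is exactly $\mathbf{PB}_{n}=p_*\pi_1(\mathcal{C}^{n}_{\ord}(\mathbb{C}^*))$.

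Given an automorphism $F$ of $\mathcal{C}^{n}(\mathbb{C}^*)$, I would consider the composition $F\circ p$ and ask whether it lifts through $p$ to a map $\widetilde F\colon\mathcal{C}^{n}_{\ord}(\mathbb{C}^*)\to\mathcal{C}^{n}_{\ord}(\mathbb{C}^*)$ satisfying $p\circ\widetilde F=F\circ p$. By the lifting criterion, such a continuous (hence holomorphic, since $p$ is a local biholomorphism) lift exists if and only if $(F\circ p)_*\pi_1(\mathcal{C}^{n}_{\ord}(\mathbb{C}^*))\subset p_*\pi_1(\mathcal{C}^{n}_{\ord}(\mathbb{C}^*))$, i.e., if and only if $F_*(\mathbf{PB}_{n})\subset\mathbf{PB}_{n}$. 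Since $F$ is an automorphism, $F_*$ is an automorphism of $\mathbf{B}_{n}$, and Lemma \ref{lem: Zin77-alg} says that for $n>2$ the subgroup $\mathbf{PB}_{n}$ is characteristic in $\mathbf{B}_{n}$, so $F_*(\mathbf{PB}_{n})=\mathbf{PB}_{n}$ and $\widetilde F$ exists.

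It remains to check two routine items. First, that $\widetilde F$ is regular, not merely holomorphic: since $p$ is étale and algebraic, every local section of $p$ is a morphism, so locally $\widetilde F=\sigma\circ F\circ p$ for an appropriate section $\sigma$ of $p$, hence $\widetilde F$ is a morphism of algebraic varieties. Second, that $\widetilde F$ is an \emph{automorphism}: applying the same argument to $F^{-1}$ produces a lift $\widetilde{F^{-1}}$, and the composition $\widetilde F\circ\widetilde{F^{-1}}$ is a lift of $\id_{\mathcal{C}^{n}(\mathbb{C}^*)}$ through $p$, hence a deck transformation $\tau\in\mathbf{S}(n)$; then $\tau^{-1}\circ\widetilde{F^{-1}}$ is a two-sided inverse of $\widetilde F$.

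There is essentially no obstacle left in this step: the entire content of the corollary is concentrated in Lemma \ref{lem: Zin77-alg}, and what remains is the formal application of covering space theory together with standard facts about lifts of morphisms through finite étale covers. The one point that needs a line of verification is the algebraicity of $\widetilde F$, but this is immediate from the étale property of $p$.
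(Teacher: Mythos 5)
Your proposal is correct and follows essentially the same route as the paper: the monodromy/lifting criterion for the Galois covering $\pi\colon\mathcal{C}_{\ord}^{n}(\mathbb{C}^*)\to\mathcal{C}^{n}(\mathbb{C}^*)$ combined with Lemma \ref{lem: Zin77-alg}, which makes $\mathbf{PB}_n$ characteristic in $\mathbf{B}_n$; the normality of $\mathbf{PB}_n$ removes the basepoint/conjugation ambiguity in $F_*$, just as the paper notes. Your extra checks (algebraicity of the lift via the \'etale property and the construction of the inverse lift) are routine and consistent with what the paper leaves implicit.
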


\bproof
By the monodromy theorem, a continuous selfmap $F$ of the base of the Galois covering 
$\pi\colon\mathcal{C}_{\ord}^{n}(\mathbb{C}^*)\to \mathcal{C}^{n}(\mathbb{C}^*)=
\mathcal{C}_{\ord}^{n}(\mathbb{C}^*)/\mathbf{S}(n)$ admits a lift to the cover if and only 
if the induced endomorphism $F_*$ of the fundamental group  
$\pi_1 (\mathcal{C}^{n}(\mathbb{C}^*))=\mathbf{B}_n$ 
stabilizes the subgroup 
$\pi_*(\pi_1(\mathcal{C}_{\ord}^{n}(\mathbb{C}^*)))=\mathbf{PB}_n$. 
Note that this endomorphism is well defined only up to conjugation, 
i.e., up to the left multiplication by an inner automorphism. 
However, the property to stabilize the normal subgroup 
$\mathbf{PB}_n$ does not depend of this ambiguity in the definition of $F_*$.
If $F$ is an automorphism, 
then $F_*(\mathbf{PB}_n)=\mathbf{PB}_n$ by Lemma \ref{lem: Zin77-alg}. 
Hence $F$ admits a lift $\widetilde F$ 
to an automorphism of $\mathcal{C}_{\ord}^{n}(\mathbb{C}^*)$.
\eproof

Note that this corollary does not hold for $n=2$; see Example \ref{exa: non-liftable autom-m}.

The following lemma should be well known. For the lack of a reference, 
we provide a short argument. 

\blem\label{lem: normalizer} Let $\pi\colon\widetilde{\mathcal{X}}\to\mathcal{X}$ 
be a Galois covering with a Galois group $G$. Suppose that any automorphism 
$\alpha\in\Aut\mathcal{X}$ 
admits a lift to an automorphism $\widetilde{\alpha}\in\Aut\widetilde{\mathcal{X}}$. 
Then the set of all possible such lifts coincides with the normalizer $N$ of $G$ in 
$\Aut\widetilde{\mathcal{X}}$, and $\Aut\mathcal{X}\cong N/G$.
\elem

\bproof An automorphism $\gamma\in\Aut\widetilde{\mathcal{X}}$ is a lift of some 
$\alpha\in\Aut\mathcal{X}$ if and only if $\gamma$ sends the fibers of $\pi$ into fibers. 
The $\pi$-fibers are the $G$-orbits, hence $\gamma$ acts on the family of the $G$-orbits via
\be
\label{eq: lift} \gamma(G.{\tilde x})=G.\gamma({\tilde x})\quad\forall 
{\tilde x}\in \widetilde{\mathcal{X}}\,.
\ee
 Letting $\tilde x'=\gamma({\tilde x})$ we can write (\ref{eq: lift}) in the form 
\be
\label{eq: normalizing} \gamma\circ G\circ \gamma^{-1}({\tilde x'})=
G.{\tilde x'}\quad\forall {\tilde x'}\in \widetilde{\mathcal{X}}\,.
\ee
Any $\gamma\in N$ satisfies (\ref{eq: normalizing}). 
Conversely, let $\gamma\in\Aut\widetilde{\mathcal{X}}$ satisfies (\ref{eq: normalizing}). Then 
 $\gamma\circ G\circ \gamma^{-1}$ consists of deck transformations. 
However, the deck transformations form
the Galois group $G\simeq\pi_1(\mathcal{X})/\pi_1(\widetilde{\mathcal{X}})$.
Hence $\gamma\circ G\circ \gamma^{-1}=G$, these groups being of the same cardinality. 
Thus, $\gamma\in N$. This proves the first assertion.
The proof of the second is easy and is left to the reader.
\eproof

\bnota\label{sit: subgroup-of-Aut}  Given a group $G$ and a subgroup
$S$ of $G$, we let $\Norm_G\,(S)$ denote the normalizer of $S$ in $G$. 
In the sequel
$N= {\rm Norm}_{\Aut \mathcal{C}_{\ord}^{n}
(\mathbb{C}^*)}(\mathbf{S}(n))$ 
stands for the normalizer of $\mathbf{S}(n)$ in 
$\Aut\mathcal{C}_{\ord}^{n}(\mathbb{C}^*)$.
\enota

\bcor\label{cor: configuration-lift} Consider the Galois covering 
$ \mathcal{C}_{\ord}^{n}(\mathbb{C}^*)\to \mathcal{C}^{n}(\mathbb{C}^*)$
with the Galois group $ \mathbf{S}(n)$ acting on $ \mathcal{C}_{\ord}^{n}(\mathbb{C}^*)$ 
via permutations of coordinates. 
Then $\Aut \mathcal{C}^{n}(\mathbb{C}^*)\cong N/\mathbf{S}(n)$.
\ecor

\bproof Indeed, the assumptions of Lemma \ref{lem: normalizer} are fulfilled 
in this example due to Lemma \ref{lem: Zin77}. Applying Lemma \ref{lem: normalizer} 
yields the result.\eproof

Thus, to prove Zinde's Theorem we have to determine the normalizer $N$. 
This is done below. 

To describe the automorphism group 
$\Aut \mathcal{C}_{\ord}^{n}(\mathbb{C}^*)$
we use the following direct product decomposition.

\begin{nota}\label{lem: Cstar-cylinder} We let as usual $\mathbb{C}^{**}=
\mathbb{C}^{*}\setminus\{1\}$.
For any $n\ge 1$ there is an isomorphism 
\begin{equation}\label{eq: transform of forms} 
\mathcal{C}_{\ord}^{n}(\mathbb{C}^*)\stackrel{\simeq}{\longrightarrow} 
\mathcal{C}_{\ord}^{n-1}(\mathbb{C}^{**})\times\mathbb{C}^*,\,\,\, 
Q=(q_1,...,q_n)\mapsto (Q',y)= 
\left(\left(\frac{q_1}{q_n},...,\frac{q_{n-1}}{q_n}\right),q_n\right)\,
\end{equation}
with inverse $\eta\colon (Q',y)\mapsto (yQ',y)$, where 
$Q'\in \mathcal{C}_{\ord}^{n-1}(\mathbb{C}^{**})$ and 
$y\in\mathbb{C}^*$.
It is equivariant with respect to the $\mathbb{C}^*$-actions on $\mathcal{C}_{\ord}^{n}(\mathbb{C}^*)$ 
via $ Q\mapsto sQ$  and on  the product $\mathcal{C}_{\ord}^{n-1}(\mathbb{C}^{**})\times\mathbb{C}^*$ 
via  $(Q',y)\mapsto (Q',sy)$, where $s\in\mathbb{C}^*$.
The first projection ${\rm pr}_1:\mathcal{C}_{\ord}^{n}(\mathbb{C}^*)
\to \mathcal{C}_{\ord}^{n-1}(\mathbb{C}^{**}),\,Q\mapsto Q'$, 
is the orbit map of the $\mathbb{C}^*$-actions on
 $\mathcal{C}_{\ord}^{n}(\mathbb{C}^*)$. 
 \end{nota}

The following simple lemma is a crucial point of our approach.

\begin{lem}\label{lem: preservation-of-cylinder} For $n\ge 1$ 
the image of any regular map 
$f\colon\mathbb{C}^*\to\mathcal{C}_{\ord}^{n}(\mathbb{C}^*)$ 
is contained in an orbit of the $\mathbb{C}^*$-action on 
$\mathcal{C}_{\ord}^{n}(\mathbb{C}^*)$. 
\end{lem}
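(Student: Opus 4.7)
My plan is to exploit the explicit description of $\mathcal{O}^\times(\mathbb{C}^*)$ given by Samuel's Lemma (Lemma~3.19), applied to both the components of $f$ and their pairwise differences.

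Write $f(t)=(f_1(t),\ldots,f_n(t))$ with each $f_i\colon\mathbb{C}^*\to\mathbb{C}^*$ a regular map, so $f_i\in\mathcal{O}^\times(\mathbb{C}^*)$. By Samuel's Lemma,
\[
\mathcal{O}^\times(\mathbb{C}^*)\cong\mathbb{C}^*\times\mathbb{Z},
\]
so there exist $c_i\in\mathbb{C}^*$ and $k_i\in\mathbb{Z}$ with $f_i(t)=c_i t^{k_i}$. The condition that $f$ lands in the \emph{ordered} configuration space $\mathcal{C}^n_{\ord}(\mathbb{C}^*)$ means that $f_i(t)\neq f_j(t)$ for all $t\in\mathbb{C}^*$ and all $i\neq j$; equivalently, $f_i-f_j\in\mathcal{O}^\times(\mathbb{C}^*)$. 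Applying Samuel's Lemma again, each difference must itself be a Laurent monomial:
\[
c_i t^{k_i}-c_j t^{k_j}=c_{ij}t^{k_{ij}}\quad\text{for some }c_{ij}\in\mathbb{C}^*,\ k_{ij}\in\mathbb{Z}.
\]

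The key step is to conclude from this monomial identity that $k_i=k_j$ for all $i,j$. Indeed, if $k_i\neq k_j$, say $k_i>k_j$, then
\[
c_i t^{k_i}-c_j t^{k_j}=t^{k_j}\bigl(c_i t^{k_i-k_j}-c_j\bigr),
\]
and the factor in parentheses is a Laurent polynomial with two nonzero terms of distinct degrees, hence not a monomial, which contradicts the equality above. Therefore there is a common integer $k$ with $k_i=k$ for all $i$, and
\[
f(t)=t^k(c_1,\ldots,c_n)\quad\text{for all }t\in\mathbb{C}^*.
\]
Since $t\mapsto t^k$ takes values in $\mathbb{C}^*$, the image of $f$ is contained in the $\mathbb{C}^*$-orbit of the fixed point $(c_1,\ldots,c_n)\in\mathcal{C}^n_{\ord}(\mathbb{C}^*)$ (and is the full orbit when $k\neq 0$, a single point when $k=0$). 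This is exactly the claim.

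There is really no serious obstacle here: the whole argument is a two-line application of the structure of units on $\mathbb{C}^*$, first to the coordinates and then to their pairwise differences. The only point that requires care is verifying that a binomial $c_i t^{k_i}-c_j t^{k_j}$ with $k_i\neq k_j$ and both coefficients nonzero cannot equal a single Laurent monomial, which is immediate.
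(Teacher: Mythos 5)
Your proof is correct. The only cosmetic quibbles: invoking Samuel's Lemma is a bit heavy for the fact that every unit of $\mathcal{O}(\mathbb{C}^*)=\mathbb{C}[t,t^{-1}]$ is a monomial $ct^k$ (and one needs this concrete form, not just the abstract isomorphism $\mathbb{C}^*\times\mathbb{Z}$, but that is standard), and ``fixed point'' should read ``point'' -- $(c_1,\ldots,c_n)$ is of course not fixed by the action. Your route differs from the paper's in mechanism though not in spirit: the paper uses the $\mathbb{C}^*$-equivariant isomorphism $\mathcal{C}_{\ord}^{n}(\mathbb{C}^*)\cong\mathcal{C}_{\ord}^{n-1}(\mathbb{C}^{**})\times\mathbb{C}^*$ of Notation \ref{lem: Cstar-cylinder}, given by the ratios $q_i/q_n$, and simply observes that any morphism $\mathbb{C}^*\to\mathbb{C}^{**}$ is constant, so $\mathrm{pr}_1\circ f$ is constant and the image of $f$ lies in a fiber of $\mathrm{pr}_1$, which is precisely a $\mathbb{C}^*$-orbit. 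You instead argue coordinate-wise: each $f_i$ is a unit, each difference $f_i-f_j$ is a unit, and a binomial with two distinct exponents cannot be a monomial, forcing all exponents to coincide. Both arguments ultimately rest on the same elementary fact about nowhere-vanishing regular functions on $\mathbb{C}^*$; what the paper's formulation buys is that the conclusion is immediate from the product structure (the fibers of $\mathrm{pr}_1$ \emph{are} the orbits), keeping the proof aligned with the cylinder formalism used throughout and with the analogous rigidity argument of Proposition \ref{Prp: rigidity}, whereas your computation is more self-contained and has the small bonus of exhibiting the orbit representative $(c_1,\ldots,c_n)$ and the degree $k$ explicitly.
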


\begin{proof} Since any morphism $\mathbb{C}^*\to\mathbb{C}^{**}$ 
is constant, also 
${\rm pr}_1\circ f\colon\mathbb{C}^*\to \mathcal{C}_{\ord}^{n-1}(\mathbb{C}^{**})
\subset (\mathbb{C}^{**})^{n-1}$ is.
This yields the assertion.
\end{proof}

The next corollary is straightforward. 

\begin{cor}\label{cor: C*-orbits} For $n\ge 1$ any automorphism 
$F\in \Aut \mathcal{C}_{\ord}^{n}(\mathbb{C}^*)$ 
preserves the family of the  
$\mathbb{C}^*$-orbits in $ \mathcal{C}_{\ord}^{n}(\mathbb{C}^*)$
and induces an automorphism $\phi\in  \Aut\mathcal{C}_{\ord}^{n-1}(\mathbb{C}^{**})$ 
such that ${\rm pr}_1\circ F= \phi\circ {\rm pr}_1$. The correspondence $F\mapsto\phi$  
defines a surjective homomorphism $\rho: \Aut \mathcal{C}_{\ord}^{n}(\mathbb{C}^*)\to 
\Aut\,\mathcal{C}_{\ord}^{n-1}(\mathbb{C}^{**})$.
\end{cor}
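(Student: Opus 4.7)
The proof should follow almost directly from Lemma \ref{lem: preservation-of-cylinder} together with the product decomposition in Notation \ref{lem: Cstar-cylinder}. The first step is to show that $F$ sends $\mathbb{C}^*$-orbits to $\mathbb{C}^*$-orbits. Given $Q\in\mathcal{C}_{\ord}^{n}(\mathbb{C}^*)$, the orbit $\mathbb{C}^*\cdot Q$ is the image of the morphism $f_Q\colon\mathbb{C}^*\to \mathcal{C}_{\ord}^{n}(\mathbb{C}^*)$, $s\mapsto sQ$. Composing with $F$ we obtain a morphism $F\circ f_Q\colon\mathbb{C}^*\to\mathcal{C}_{\ord}^{n}(\mathbb{C}^*)$, whose image is contained in a single $\mathbb{C}^*$-orbit by Lemma \ref{lem: preservation-of-cylinder}. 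Hence $F(\mathbb{C}^*\cdot Q)\subset \mathbb{C}^*\cdot F(Q)$, and applying the same argument to $F^{-1}$ yields equality.

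Next, since the orbits of the $\mathbb{C}^*$-action are exactly the fibers of ${\rm pr}_1$, the automorphism $F$ descends to a well-defined set-theoretic self-map $\phi$ of $\mathcal{C}_{\ord}^{n-1}(\mathbb{C}^{**})$ satisfying ${\rm pr}_1\circ F=\phi\circ{\rm pr}_1$. Regularity of $\phi$ is a consequence of the existence of the regular section $\sigma\colon Q'\mapsto(Q',1)$ of ${\rm pr}_1$ under the identification (\ref{eq: transform of forms}); indeed, $\phi={\rm pr}_1\circ F\circ\sigma$. The same reasoning applied to $F^{-1}$ produces the inverse of $\phi$, so $\phi\in\Aut\mathcal{C}_{\ord}^{n-1}(\mathbb{C}^{**})$.

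The correspondence $\rho\colon F\mapsto\phi$ is a homomorphism: if $F_1,F_2\in\Aut\mathcal{C}_{\ord}^{n}(\mathbb{C}^*)$ induce $\phi_1,\phi_2$, then ${\rm pr}_1\circ(F_1\circ F_2)=\phi_1\circ{\rm pr}_1\circ F_2=\phi_1\circ\phi_2\circ{\rm pr}_1$, so $\rho(F_1\circ F_2)=\phi_1\circ\phi_2$.

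Finally, for surjectivity, given $\phi\in\Aut\mathcal{C}_{\ord}^{n-1}(\mathbb{C}^{**})$, use the trivialization in Notation \ref{lem: Cstar-cylinder} to define $F\in\Aut\mathcal{C}_{\ord}^{n}(\mathbb{C}^*)$ by $F\colon(Q',y)\mapsto(\phi(Q'),y)$; then clearly $\rho(F)=\phi$. The main (and only) nontrivial point is the orbit-preservation property, which rests entirely on the observation that a morphism from $\mathbb{C}^*$ to a cylinder with base of the form $\mathcal{C}_{\ord}^{n-1}(\mathbb{C}^{**})\subset(\mathbb{C}^{**})^{n-1}$ must be constant on the base, since $\mathcal{O}^\times(\mathbb{C}^*)=\mathbb{C}^*\times\mathbb{Z}$ admits no nonconstant morphism into $\mathbb{C}^{**}=\mathbb{C}^*\setminus\{1\}$.
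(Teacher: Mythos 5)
Your proof is correct and follows the same route the paper intends: the paper simply declares the corollary straightforward from Lemma \ref{lem: preservation-of-cylinder} together with the identification in Notation \ref{lem: Cstar-cylinder} (orbits $=$ fibers of ${\rm pr}_1$), and your argument fills in exactly those routine steps, including the section $Q'\mapsto(Q',1)$ for regularity of $\phi$ and the obvious lift $(Q',y)\mapsto(\phi(Q'),y)$ for surjectivity, which is also the splitting used in Proposition \ref{cor: sdp-decomposition}. No gaps.
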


Due to the following proposition, the automorphisms of  
$\mathcal{C}_{\ord}^{n}(\mathbb{C}^*)$ are ``triangular'' in the sense close to that 
of Section \ref{sc: abstract scheme}; cf. Corollary \ref{Crl: our automorphisms are triangular}.

\begin{prop}\label{cor: sdp-decomposition} {\rm (a)} For $n\ge 1$
 any automorphism 
$F\in \Aut ( \mathcal{C}_{\ord}^{n-1}(\mathbb{C}^{**})\times\mathbb{C}^*)
\cong \Aut \mathcal{C}_{\ord}^{n}(\mathbb{C}^*)$ can be written as $F(x,y)=
(Sx, A(x)y)$ for some $A\in \Mor \left(\mathcal{C}_{\ord}^{n-1}(\mathbb{C}^{**}), 
\Aut\,\mathbb{C}^{*}\right)$ and  
$S\in \Aut \mathcal{C}_{\ord}^{n-1}(\mathbb{C}^{**})$, for all $(x,y)\in 
\mathcal{C}_{\ord}^{n-1}(\mathbb{C}^{**})\times\mathbb{C}^*$. Vice versa, for
any such pair $(S,A)$  the above formula defines an automorphism  $F=  F(S,A)$ 
of the direct product $\mathcal{C}_{\ord}^{n-1}(\mathbb{C}^{**})\times\mathbb{C}^*$.

 {\rm (b)} There is a  decomposition 
\begin{equation} \label{eq: sdp-decomposition}
\Aut \mathcal{C}_{\ord}^{n}(\mathbb{C}^*)
\cong
\Mor \left(\mathcal{C}_{\ord}^{n-1}(\mathbb{C}^{**}), 
\Aut \mathbb{C}^*\right)\rtimes  \Aut\,\mathcal{C}_{\ord}^{n-1}(\mathbb{C}^{**})\,.
\end{equation}
\end{prop}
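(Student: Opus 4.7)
The plan is to combine Corollary~\ref{cor: C*-orbits} with the cylinder structure from Notation~\ref{lem: Cstar-cylinder} to extract the decomposition $F(x,y) = (Sx, A(x)y)$. Writing $\mathcal{B} := \mathcal{C}_{\ord}^{n-1}(\mathbb{C}^{**})$ for brevity, I would first invoke Corollary~\ref{cor: C*-orbits}: any $F \in \Aut \mathcal{C}_{\ord}^{n}(\mathbb{C}^*)$ preserves the family of $\mathbb{C}^*$-orbits, which in the product decomposition $\mathcal{C}_{\ord}^{n}(\mathbb{C}^*) \cong \mathcal{B} \times \mathbb{C}^*$ are precisely the rulings $\{x\} \times \mathbb{C}^*$. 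Hence there is an induced $S \in \Aut\mathcal{B}$ with $\textup{pr}_1 \circ F = S \circ \textup{pr}_1$, and we may write $F(x,y) = (Sx, G(x,y))$ for some morphism $G : \mathcal{B} \times \mathbb{C}^* \to \mathbb{C}^*$.

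Next I would show that the second coordinate yields a morphism $A : \mathcal{B} \to \Aut\mathbb{C}^*$. Since $F$ is bijective on each ruling, for every $x \in \mathcal{B}$ the map $A(x) : y \mapsto G(x,y)$ is a regular bijection of $\mathbb{C}^*$, hence lies in $\Aut\mathbb{C}^* \cong \mathbb{C}^* \rtimes (\mathbb{Z}/2\mathbb{Z})$ (the nontrivial factor acting by $y \mapsto y^{-1}$). Because $\mathcal{B}$ is irreducible (open in the irreducible variety $(\mathbb{C}^{**})^{n-1}$), hence connected, the image of $x \mapsto A(x)$ lies in a single connected component of $\Aut\mathbb{C}^*$. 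Thus either $A(x)(y) = a(x)y$ for all $x$ or $A(x)(y) = a(x)/y$ for all $x$, where in both cases $a(x) = G(x,1) \in \mathcal{O}^\times(\mathcal{B})$ is regular. This gives $A \in \Mor(\mathcal{B}, \Aut\mathbb{C}^*)$.

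The converse in (a) is immediate from the abstract setup of Subsection~\ref{ss: Triangular automorphisms}: given $(S, A)$ with $S \in \Aut\mathcal{B}$ and $A \in \Mor(\mathcal{B}, \Aut\mathbb{C}^*)$, the formula $F(x,y) = (Sx, A(x)y)$ defines a regular morphism whose explicit inverse $F^{-1}(x,y) = (S^{-1}x,\, A(S^{-1}x)^{-1}y)$ is also regular. For part (b), I would apply the surjective homomorphism $\rho : \Aut \mathcal{C}_{\ord}^{n}(\mathbb{C}^*) \to \Aut \mathcal{B}$ from Corollary~\ref{cor: C*-orbits}, $F \mapsto S$; its kernel is the subgroup of automorphisms of the form $(x,y) \mapsto (x, A(x)y)$, canonically isomorphic to $\Mor(\mathcal{B}, \Aut\mathbb{C}^*)$ under pointwise multiplication. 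The obvious section $S \mapsto ((x,y) \mapsto (Sx, y))$ splits $\rho$ and yields the semidirect product decomposition~(\ref{eq: sdp-decomposition}).

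The main obstacle is verifying that $A$ is a \emph{morphism} to $\Aut\mathbb{C}^*$ rather than merely a set-theoretic family of automorphisms. This hinges on the connectedness of $\mathcal{B}$ to globally fix the connected component of $\Aut\mathbb{C}^*$ in which $A(x)$ sits; once this is done, regularity of the single coefficient $a(x) = G(x,1)$ follows at once from regularity of $G$. Everything else reduces to bookkeeping within the formalism of Subsection~\ref{ss: Triangular automorphisms}.
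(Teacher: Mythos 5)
Your proposal is correct and follows essentially the same route as the paper: both use the homomorphism $\rho$ from Corollary \ref{cor: C*-orbits}, identify its kernel with $\Mor(\mathcal{C}_{\ord}^{n-1}(\mathbb{C}^{**}),\Aut\mathbb{C}^*)$ via the triangular form, and split the resulting exact sequence by the obvious section $S\mapsto (S,\id)$. The only difference is that you spell out the connectedness argument pinning down the component of $\Aut\mathbb{C}^*$ and the regularity of $a(x)=G(x,1)$, a detail the paper's proof leaves implicit.
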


\begin{proof} The exact sequence
\begin{equation}\label{eq: ex-sec-rho}
0\to\ker\rho\to \Aut \mathcal{C}_{\ord}^{n}(\mathbb{C}^*)\stackrel{\rho}{\longrightarrow } 
\Aut\,\mathcal{C}_{\ord}^{n-1}(\mathbb{C}^{**})\to 0\,
\end{equation}
splits, with splitting homomorphism 
$$
\Aut\,\mathcal{C}_{\ord}^{n-1}(\mathbb{C}^{**})\ni\phi\mapsto (\phi, \id)\in 
\Aut (\mathcal{C}_{\ord}^{n-1}(\mathbb{C}^{**})\times\mathbb{C}^*)\cong 
\Aut \mathcal{C}_{\ord}^{n}(\mathbb{C}^*)\,.
$$  
Hence $ \Aut \mathcal{C}_{\ord}^{n}(\mathbb{C}^*)\cong\ker\rho\rtimes  
\Aut\,\mathcal{C}_{\ord}^{n-1}(\mathbb{C}^{**})$.
It remains to show that $\ker\rho\cong\Mor \left(\mathcal{C}_{\ord}^{n-1}(\mathbb{C}^{**}), 
\Aut\,\mathbb{C}^{*}\right)$. 

Indeed, any $F\in\ker\rho$ preserves every $\mathbb{C}^*$-orbit 
and induces 
an automorphism of this orbit. Identifying $ \mathcal{C}_{\ord}^{n}(\mathbb{C}^*)$ 
with the direct product $ \mathcal{C}_{\ord}^{n-1}(\mathbb{C}^{**})\times\mathbb{C}^*$ 
via isomorphism (\ref{eq: transform of forms}), we
obtain a morphism $A\colon \mathcal{C}_{\ord}^{n-1}(\mathbb{C}^{**})\to 
\Aut \mathbb{C}^*$ such that $F(x,y)=(x, A(x)y)$, where 
$x\in \mathcal{C}_{\ord}^{n-1}(\mathbb{C}^{**})$ and $y\in \mathbb{C}^*$. 
Vice versa, for every morphism 
$A\colon \mathcal{C}_{\ord}^{n-1}(\mathbb{C}^{**})\to \Aut \mathbb{C}^*$ this formula 
gives an automorphism $F\in\ker\rho$. 
\end{proof}

Our next aim is to describe the factors in (\ref{eq: sdp-decomposition}). 
We start with the factor $\Aut \mathcal{C}_{\ord}^{n-1}(\mathbb{C}^{**})$, 
see Lemma \ref{lem: Kaliman}. This lemma follows from a more general 
result of Kaliman  \cite{Kal76a}; 
see \cite[Theorem 16]{Lin11} for a proof. 

\begin{lem}\label{lem: Kaliman} $(\rm S.\ Kaliman \,\,\,$\cite{Kal76a}$)$ 
For  $n>1$ we have
$\Aut  \mathcal{C}_{\ord}^{n-1}(\mathbb{C}^{**})\cong \mathbf{S}(n+2)\,.
$
\end{lem}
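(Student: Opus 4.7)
The plan is to realize $\mathcal{C}^{n-1}_{\ord}(\mathbb{C}^{**})$ as a moduli space of ordered pointed rational curves and exploit the natural $\mathbf{S}(n+2)$-action obtained by permuting marked points. First, I would observe that three distinct points of $\mathbb{P}^1$ can be sent to $(0,1,\infty)$ by a unique M\"obius transformation. This gives the isomorphism $\mathcal{C}^{n-1}_{\ord}(\mathbb{C}^{**}) \cong \mathcal{C}^{n+2}_{\ord}(\mathbb{P}^1) / \mathbf{PSL}(2,\mathbb{C})$ for the free diagonal action, realized via the section $(q_1,\ldots,q_{n-1}) \mapsto (0,1,\infty,q_1,\ldots,q_{n-1})$, and simultaneously trivializes the principal bundle as $\mathcal{C}^{n+2}_{\ord}(\mathbb{P}^1) \cong \mathcal{C}^{n-1}_{\ord}(\mathbb{C}^{**}) \times \mathbf{PSL}(2,\mathbb{C})$.

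Next, I would build the embedding $\mathbf{S}(n+2) \hookrightarrow \Aut\,\mathcal{C}^{n-1}_{\ord}(\mathbb{C}^{**})$ by descent. Since the coordinate-permutation action of $\mathbf{S}(n+2)$ on $\mathcal{C}^{n+2}_{\ord}(\mathbb{P}^1)$ commutes with the diagonal M\"obius action, it descends to a regular action on the quotient: for $\sigma \in \mathbf{S}(n+2)$ and $q = (q_1,\ldots,q_{n-1})$, let $(p_1,\ldots,p_{n+2}) = \sigma \cdot (0,1,\infty,q_1,\ldots,q_{n-1})$, take $M_\sigma(q) \in \mathbf{PSL}(2,\mathbb{C})$ to be the unique M\"obius transformation sending $(p_1,p_2,p_3)$ to $(0,1,\infty)$, and set $\sigma \cdot q = (M_\sigma(q)(p_4),\ldots,M_\sigma(q)(p_{n+2}))$. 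Faithfulness follows from the freeness of the M\"obius action on ordered triples and from the fact that a generic configuration, together with $(0,1,\infty)$, reconstructs the unordered set of marked points.

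Finally, for surjectivity, given $F \in \Aut\,\mathcal{C}^{n-1}_{\ord}(\mathbb{C}^{**})$, I would lift $F$ through the trivialization to the automorphism $\widetilde{F} = F \times \id$ of $\mathcal{C}^{n+2}_{\ord}(\mathbb{P}^1)$. The key step is then to invoke a rigidity statement on the cover: every automorphism of $\mathcal{C}^{n+2}_{\ord}(\mathbb{P}^1)$ has the form $(q_1,\ldots,q_{n+2}) \mapsto (M(q_{\tau(1)}),\ldots,M(q_{\tau(n+2)}))$ for some $M \in \mathbf{PSL}(2,\mathbb{C})$ and $\tau \in \mathbf{S}(n+2)$. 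Pushing back down to the quotient, the M\"obius factor $M$ is absorbed and $F$ is exhibited as the action of $\tau$, finishing the proof. The main obstacle is precisely this rigidity of $\mathcal{C}^{n+2}_{\ord}(\mathbb{P}^1)$: Tame Map Theorem as stated in the excerpt is formulated for the unordered space $\mathcal{C}^n(X)$, and one must either pass to the $\mathbf{S}(n+2)$-cover (arguing that an automorphism of the ordered space normalizes $\mathbf{S}(n+2)$ and thus descends to the unordered one, whose automorphisms one then analyzes via the tame-map/braid-group machinery) or adapt the proof of Tame Map Theorem directly to the ordered, projective setting. It is at this step that one ultimately appeals to the Kaliman/Lin framework cited in the statement.
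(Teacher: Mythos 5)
Your first two steps (the identification $\mathcal{C}_{\ord}^{n-1}(\mathbb{C}^{**})\cong\mathcal{C}_{\ord}^{n+2}(\mathbb{P}^1)/\mathbf{PSL}(2,\mathbb{C})$ and the descended faithful $\mathbf{S}(n+2)$-action) are exactly the paper's setup, but the surjectivity step rests on a claim that is false. The ``rigidity'' you invoke --- that every automorphism of $\mathcal{C}_{\ord}^{n+2}(\mathbb{P}^1)$ is a \emph{constant} M\"obius transformation composed with a permutation of coordinates --- fails badly: your own trivialization $\mathcal{C}_{\ord}^{n+2}(\mathbb{P}^1)\cong\mathcal{C}_{\ord}^{n-1}(\mathbb{C}^{**})\times\mathbf{PSL}(2,\mathbb{C})$ shows that for \emph{any} morphism $A\colon\mathcal{C}_{\ord}^{n-1}(\mathbb{C}^{**})\to\mathbf{PSL}(2,\mathbb{C})$ the map $(x,g)\mapsto(x,gA(x))$ is an automorphism of the product, hence of $\mathcal{C}_{\ord}^{n+2}(\mathbb{P}^1)$. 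Non-constant such $A$ exist, e.g.\ $x\mapsto\diag(x_1,1)$ modulo the center, since the coordinate $x_1$ is an invertible function on $\mathcal{C}_{\ord}^{n-1}(\mathbb{C}^{**})$; a short cross-ratio computation shows such an automorphism cannot be written as $P\mapsto M\cdot\tau(P)$ with constant $M$. So $\Aut\mathcal{C}_{\ord}^{n+2}(\mathbb{P}^1)$ is in fact infinite-dimensional (it contains $\Mor(\mathcal{C}_{\ord}^{n-1}(\mathbb{C}^{**}),\mathbf{PSL}(2,\mathbb{C}))$), and the inference ``$\widetilde F=F\times\id$ must be $(M,\tau)$'' collapses.

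Your fallback suggestions do not repair this within the stated framework. Descending to the unordered space $\mathcal{C}^{n+2}(\mathbb{P}^1)$ presupposes that $\widetilde F$ normalizes the $\mathbf{S}(n+2)$-action, which is essentially the statement to be proved; and Tame Map Theorem for $\mathbb{P}^1$ applies only for $n+2>4$, whereas the lemma is stated (and used in the paper) for all $n>1$, in particular $n=2,3$ --- precisely the low-dimensional cases the paper is at pains to include; moreover the paper explicitly leaves the structure of $\Aut\mathcal{C}^{m}(\mathbb{P}^1)$ open, so there is no ready-made description to quote. The actual argument behind the lemma (Kaliman; see the remark following it and \cite[Theorem 16]{Lin11}) is of a different nature: each coordinate of an automorphism $F=(f_1,\dots,f_{n-1})$ is a non-constant morphism $\mathcal{C}_{\ord}^{n-1}(\mathbb{C}^{**})\to\mathbb{C}^{**}$, and one uses an explicit classification of \emph{all} such morphisms (they are cross-ratios built from the $n+2$ points $0,1,\infty,q_1,\dots,q_{n-1}$, i.e.\ coordinates of the $\mathbf{S}(n+2)$-translates); it is this classification, not any rigidity of the ordered projective configuration space, that forces $\Aut\mathcal{C}_{\ord}^{n-1}(\mathbb{C}^{**})$ to coincide with the image of $\mathbf{S}(n+2)$. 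Without supplying that classification (or an equivalent substitute), your proof has a genuine gap at its key step.
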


\brem\label{sit: Lin-Kaliman} A hint for the proof is as follows; see \cite[\S 3.2 and \S 4.10]{Lin11}.  
The diagonal action of the group 
$\mathbf{PSL}(2,\mathbb{C})$ on the configuration space 
$\mathcal{C}_{\ord}^{n+2}(\mathbb {P}^{1})$ commutes with the  
$\mathbf{S}(n+2)$-action via permutations of coordinates. 
There is an isomorphism 
$\mathcal{C}_{\ord}^{n-1}(\mathbb{C}^{**})\cong 
\mathcal{C}_{\ord}^{n+2}(\mathbb {P}^{1})/\mathbf{PSL}(2,\mathbb{C})$. 
The $\mathbf{S}(n+2)$-action on $\mathcal{C}_{\ord}^{n+2}(\mathbb {P}^{1})$ 
descends to an effective $\mathbf{S}(n+2)$-action on the quotient 
$\mathcal{C}_{\ord}^{n-1}(\mathbb{C}^{**})$. Using an explicit description 
of all non-constant morphisms $\mathcal{C}_{\ord}^{n-1}(\mathbb{C}^{**})\to\mathbb{C}^{**}$, 
it was shown in [ibid] that
the image of $\mathbf{S}(n+2)$ exhausts the whole automorphism group 
$\Aut  \mathcal{C}_{\ord}^{n-1}(\mathbb{C}^{**})\,.$ 
\erem

From Proposition 
\ref{cor: sdp-decomposition}(b) and Lemma \ref{lem: Kaliman} we obtain such a corollary.

\bcor\label{cor: identity-component} For $n\ge 2$  we have 
\be\label{eq: units0}
{\Aut}\, \mathcal{C}_{\ord}^{n}(\mathbb{C}^*)\cong
\left(\mathcal{O}^\times(\mathcal{C}_{\ord}^{n-1}(\mathbb{C}^{**}))
\rtimes (\Z/2\Z)\right)\rtimes  \mathbf{S}(n+2)\,,
\ee
where 
\be\label{eq: units1}
\mathcal{O}^\times(\mathcal{C}_{\ord}^{n-1}(\mathbb{C}^{**}))\cong
\mathcal{O}^\times (\mathcal{C}_{\ord}^{n}
(\mathbb{C}^*))^{\mathbb{C}^*}\,.
\ee
\ecor

In the following lemma we describe the group of units on the configuration space 
$\mathcal{C}^n(\mathbb{C}^*)$ and its $\mathbb{C}^*$-stable part.

\begin{lem}\label{lem: units on Cstar-n} {\rm (a)} For $n\ge 2$  
we have $\mathcal{O}^\times (\mathcal{C}^n(\mathbb{C}^*)) =\mathbb{C}^*\rtimes \Z^2$, 
where the factor $\Z^2$ is freely generated by the invertible functions $D_n$ and $z_n$ on 
$\mathcal{C}^n(\mathbb{C}^*)\subset\mathbb{C}^n_{(z)}$. 

 {\rm (b)} Furthermore, a function 
$f\in \mathcal{O}^\times (\mathcal{C}^n(\mathbb{C}^*))$ is 
$\mathbb{C}^*$-invariant if and only if $f=sh_n^k$, where $s\in \mathbb{C}^*$, $h_n=D_n z_n^{1-n}$, 
and $k\in\Z$. Hence $$\mathcal{O}^\times
 (\mathcal{C}^n(\mathbb{C}^*))^{\mathbb{C}^*}\cong \mathbb{C}^*\times\Z\,.$$
 \end{lem}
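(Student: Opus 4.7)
The plan is to establish (a) by realizing $\mathcal{C}^n(\mathbb{C}^*)$ as the complement of an explicit divisor in $\mathbb{C}^n$ and applying a standard fact on units, and then to deduce (b) from a weight computation for the $\mathbb{C}^*$-action.

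First, I would identify $\mathcal{C}^n(\mathbb{C}^*)\subset\mathcal{C}^n\subset\mathbb{C}^n_{(z)}$ explicitly: a multiset $Q=\{q_1,\ldots,q_n\}$ lies in $\mathcal{C}^n(\mathbb{C}^*)$ iff the $q_i$ are pairwise distinct and nonzero, that is, iff $d_n(z)\neq 0$ and $z_n=(-1)^n q_1\cdots q_n\neq 0$. Hence $\mathcal{C}^n(\mathbb{C}^*)=\mathbb{C}^n_{(z)}\setminus V(d_n z_n)$. The polynomials $d_n$ and $z_n$ are irreducible in the factorial ring $\mathbb{C}[z_1,\ldots,z_n]$ and are coprime (since $d_n$ is not divisible by $z_n$, as one checks by restricting to $z_n=0$, where $d_n$ does not vanish identically). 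Standard factoriality then gives that every element of $\mathcal{O}^\times(\mathcal{C}^n(\mathbb{C}^*))$ has the form $s\, D_n^a\, z_n^b$ with $s\in\mathbb{C}^*$, $a,b\in\mathbb{Z}$, and this representation is unique because $\mathcal{O}^\times(\mathbb{C}^n)=\mathbb{C}^*$ and $D_n,z_n$ are coprime irreducibles. This yields the direct decomposition $\mathcal{O}^\times(\mathcal{C}^n(\mathbb{C}^*))\cong\mathbb{C}^*\times \mathbb{Z}^2$ of (a). (Alternatively, Samuel's Lemma \ref{lem: Mult-group} plus the well-known computation $H^1(\mathcal{C}^n(\mathbb{C}^*),\mathbb{Z})\cong\mathbb{Z}^2$ via the abelianization of the Artin--Brieskorn group $\mathbf{B}_n$ provides the upper bound $m\le 2$, and the two explicit independent units $D_n,z_n$ force equality.)

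For (b), I would compute the weights of the generators under the scaling $\mathbb{C}^*$-action $Q\mapsto sQ$. In the coordinates $z_i=(-1)^i\sigma_i(q_1,\ldots,q_n)$ this action reads $z_i\mapsto s^i z_i$, so $z_n\mapsto s^n z_n$. On the other hand, $D_n=\prod_{i<j}(q_i-q_j)^2$ is homogeneous of total degree $n(n-1)$ in $(q_1,\ldots,q_n)$, whence $D_n\mapsto s^{n(n-1)} D_n$. A unit $f=s'\, D_n^a z_n^b$ is therefore $\mathbb{C}^*$-invariant iff $s^{a n(n-1)+bn}=1$ for all $s\in\mathbb{C}^*$, i.e.\ iff $b=-a(n-1)$. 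This gives $f=s'(D_n z_n^{1-n})^a=s' h_n^a$, and hence $\mathcal{O}^\times(\mathcal{C}^n(\mathbb{C}^*))^{\mathbb{C}^*}\cong\mathbb{C}^*\times\mathbb{Z}$, as claimed.

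No serious obstacle is anticipated; the only nontrivial input is the irreducibility of the discriminant $d_n\in\mathbb{C}[z_1,\ldots,z_n]$, which is classical. The rest is a combination of factoriality of $\mathbb{C}[z_1,\ldots,z_n]$ and a one-line weight count.
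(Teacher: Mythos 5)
Your proof is correct and follows essentially the same route as the paper: the same realization of $\mathcal{C}^n(\mathbb{C}^*)$ as $\mathbb{C}^n_{(z)}\setminus V(d_nz_n)$ for (a), and the same weight count ($D_n$ of weight $n(n-1)$, $z_n$ of weight $n$ under $Q\mapsto sQ$) for (b). The only difference is cosmetic: you justify that $D_n$ and $z_n$ freely generate the units modulo $\mathbb{C}^*$ by factoriality of $\mathbb{C}[z_1,\ldots,z_n]$ and irreducibility of $d_n$, whereas the paper cites $\mathcal{O}^\times/\mathbb{C}^*\cong H^1(\mathcal{C}^n(\mathbb{C}^*);\mathbb{Z})\cong\mathbb{Z}^2$ (the Samuel-type argument you mention as an alternative); both are fine.
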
 

\begin{proof}
The configuration space $\mathcal{C}^{n}(\mathbb{C}^*)$ 
can be realized as the complement 
in the affine space $\mathbb{C}^{n}_{(z)}$ 
to the union of the coordinate hyperplane $z_n=0$ 
and the discriminant hypersurface $d_n(z)=0$. Hence
the discriminant $D_n=d_n\vert \mathcal{C}^{n}(\mathbb{C}^*)$ 
and the function 
$z_n(Q)={(-1)^n}\displaystyle{\prod\limits_{q\in{Q}}q}$ freely 
generate the quotient group 
$\mathcal{O}^\times (\mathcal{C}^n(\mathbb{C}^*))/
\mathbb{C}^*\cong
H^1(\mathcal{C}^n(\mathbb{C}^*);\Z)\cong\Z^2$.  
This proves (a). Now (b) follows readily, because $D_n(\lambda Q)=\lambda^{n(n-1)} Q$ 
and $z_n(\lambda Q)=\lambda^{n} Q$ 
for any $Q\in \mathcal{C}^n(\mathbb{C}^*)$ and $\lambda\in \mathbb{C}^*$.
\end{proof}

We need the following elementary lemma. 

\blem\label{lem: algebra} Let
$\rho\colon G\to H$ be a homomorphism of groups, and let $S$ be a subgroup of 
$G$.  
If $N=\Norm_{G}(S)$  and $\bar N=\Norm_{H}(\rho(S))$,
then $N\subset \rho^{-1}(\bar N)$. 
\elem

\bsit\label{sit: tilde h} By Lemma \ref{lem: units on Cstar-n}(b), the function 
$\widetilde  h_n=h_n\circ \pi$ generates the group 
$\mathcal{O}^\times(\mathcal{C}_{\ord}^{n}
(\mathbb{C}^*))^{\mathbb{C}^*\times\mathbf{S}(n)}/\mathbb{C}^*$.
 This function participates in the next lemma. 
\esit

\blem\label{lem: 3-points}  Let $n\ge 2$.  {\rm (a)}
For any $\epsilon\in\{1,-1\}$, $s\in\mathbb{C}^*$, 
and $k\in\mathbb{Z}$, the formula\footnote{Cf.\ (\ref{eq: aut-form CnCstar}).} 
\begin{equation}\label{eq: aut-form CnCstar1}
F\colon Q\mapsto s\widetilde h_n^k(Q)Q^\epsilon, \ \, \textup{\em where} \ \, 
Q\in\mathcal{C}_{\ord}^{n}(\mathbb{C}^*)\,,
\end{equation}
defines an automorphism $F\in\Aut \mathcal{C}_{\ord}^{n}(\mathbb{C}^*)$. 
All these automorphisms form
a subgroup $H_1\subset\Aut\mathcal{C}_{\ord}^{n}(\mathbb{C}^*)$ 
such that $H_1\cong (\mathbb{C}^*\times \mathbb{Z})\rtimes(\mathbb{Z}/2\mathbb{Z})$, 
where the factors $\Z$ and $\Z/2\Z$ commute. 

{\rm (b)}
The subgroup $H_1$ 
commutes with the Galois group $H_2=\mathbf{S}(n)$ of the covering 
$\pi\colon\mathcal{C}_{\ord}^{n}(\mathbb{C}^*)\to\mathcal{C}^{n}(\mathbb{C}^*)$. 
If  $H\subset\Aut\mathcal{C}_{\ord}^{n}(\mathbb{C}^*)$ is the  subgroup generated by 
$H_1$ and $H_2$, then 
$$
H\cong H_1\times H_2\cong \left((\mathbb{C}^*\times 
\mathbb{Z})\rtimes(\mathbb{Z}/2\mathbb{Z})\right)\times\mathbf{S}(n)\,.
$$

{\rm (c)} We have 
$H\subset N=
\Norm_{\Aut\mathcal{C}_{\ord}^{n}(\mathbb{C}^*)}\,(\mathbf{S}(n))$. 
\elem

\bproof 
The inclusion $H\subset N$ follows from the fact that the groups 
$H_1$ and $\mathbf{S}(n)$ commute.
The rest of the proof is easy, so we leave it to the reader. 
\eproof

\bsit\label{sit: remark-Zinde-thm} In Proposition \ref{prop: end-Zinde-thm} 
we show that, in fact, $N=H$  for any $n> 2$.   
Then by Corollary \ref{cor: configuration-lift},
$\Aut\mathcal{C}^{n}(\mathbb{C}^*)=
H/\mathbf{S}(n)\simeq(\mathbb{C}^*\times \mathbb{Z})
\rtimes(\mathbb{Z}/2\mathbb{Z})$, with the factors defined 
as in Zinde's Theorem. This gives a proof of Zinde's Theorem.
\esit

In the sequel we use the following portion of notation. 

\bnota\label{sit: decomposition-again} 
Recall that by Corollary \ref{cor: identity-component},  for $n> 1$
\be\label{eq: units3}
{\Aut}\, \mathcal{C}_{\ord}^{n}(\mathbb{C}^*)\cong
\left(\mathcal{O}^\times \left(\mathcal{C}_{\ord}^{n}
(\mathbb{C}^*)\right)^{\mathbb{C}^*}
\rtimes (\Z/2\Z)\right)\rtimes  \mathbf{S}(n+2)= 
\widetilde H_1\rtimes \widetilde H_2\,,
\ee
where $ \widetilde H_1=\mathcal{O}^\times \left(\mathcal{C}_{\ord}^{n}
(\mathbb{C}^*)\right)^{\mathbb{C}^*}
\rtimes (\Z/2\Z)$
and $ \widetilde H_2=\mathbf{S}(n+2)$. Letting 
$ \widetilde N_2=\Norm_{ \widetilde H_2}\,(\rho(\mathbf{S}(n)))$, 
where $\rho\colon \widetilde H_1\rtimes \widetilde H_2\to  \widetilde H_2$ 
is the quotient morphism, by Lemma \ref{lem: algebra} we obtain 
$ \rho(N)\subset\widetilde N_2$.
 In Lemma  \ref{lem: N2} we describe the subgroup
$\widetilde N_2\subset \widetilde H_2$.
\enota

\bnota\label{sit: involutions}
In the sequel we deal with the following involutions in 
${\Aut}\, \mathcal{C}_{\ord}^{n}(\mathbb{C}^*)$. We let 
$\sigma_i=(i.i+1)\in  \mathbf{S}(n)$, $i=1,\ldots,n-1$, where 
$\mathbf{S}(n)$ acts on $\mathcal{C}_{\ord}^{n}(\mathbb{C}^*)$ via 
permutations of coordinates. Clearly, 
$\langle \sigma_1,\ldots,\sigma_k\rangle=\mathbf{S}(k+1)$ for any 
$k=1,\ldots,n-1$. 
For 
$Q=(q_1,\ldots,q_n)\in\mathcal{C}_{\ord}^{n}(\mathbb{C}^*)$ we let
\be\label{eq: iota-tau}
\iota\colon Q\mapsto Q^{-1},\quad 
\tau\colon Q\mapsto q_n^{-2}Q,\quad \upsilon\colon Q\mapsto q_n^2Q^{-1},
\quad\mbox{and}\quad \sigma'=q_{n-1}^{-1}q_n\sigma_{n-1}(Q)\,.
\ee
We have 
$\upsilon=\tau\circ\iota=\iota\circ\tau$. In fact, $\upsilon$ and
$\sigma'$ represent  the commuting transpositions 
$(n,n+2)\in \mathbf{S}(n+2)$ and $(n-1,n+1)\in\mathbf{S}(n+2)$, 
respectively, under the 
$\mathbf{S}(n+2)$-action on $\mathcal{C}_{\ord}^{n}(\mathbb{C}^*)$ 
(see Remark \ref{sit: Lin-Kaliman}).
\enota

In the next lemma we describe the normalizer 
$ \widetilde N_2=\Norm_{ \widetilde H_2}\,(\rho(\mathbf{S}(n)))$, where 
$\widetilde H_2=\mathbf{S}(n+2)\cong
\Aut\mathcal{C}_{\ord}^{n-1}(\mathbb{C}^{**})$. 

\blem \label{lem: N2} \begin{itemize}\item[(a)]
In the  notation of \ref{sit: involutions} we have 
$$
\rho(\mathbf{S}(n))=
\langle \sigma_1,\ldots,\sigma_{n-2},\sigma'
\rangle={\rm Stab}_{\mathbf{S}(n+2)}(n)\cap 
{\rm Stab}_{\mathbf{S}(n+2)}(n+2)
\cong\mathbf{S}(n)\,.
$$
\item[(b)] Furthermore, 
$$ 
\widetilde N_2=\langle \sigma_1,\ldots,\sigma_{n-2}, \sigma', 
\upsilon
\rangle=\rho(\mathbf{S}(n))\times \langle
\upsilon
\rangle\cong \mathbf{S}(n)\times(\Z/2\Z)\,.
$$
\end{itemize}
\elem

\bproof (a) It is easily seen that  $\rho(\sigma_i)=\sigma_i$, $i=1,\ldots,n-2$, 
while $\rho(\sigma_{n-1})=\sigma'\notin \mathbf{S}(n)$. Hence
the intersection of $\mathbf{S}(n)\subset 
{\Aut}\, \mathcal{C}_{\ord}^{n}(\mathbb{C}^*)$ with the subgroup
$\mathbf{S}(n+2)$ as in (\ref{eq: units3}) coincides with 
$\mathbf{S}(n-1)=\langle \sigma_1,\ldots,\sigma_{n-2}
\rangle={\rm Stab}_{\mathbf{S}(n)}(n)\subset \mathbf{S}(n)$. In particular,
 the restriction $\rho|_{\mathbf{S}(n-1)}$ is identical. Now (a) follows.

(b) Let $\sigma\in \widetilde N_2\subset \mathbf{S}(n+2)$, i.e., 
$\sigma$ normalizes the subgroup 
$\rho(\mathbf{S}(n))\subset\mathbf{S}(n+2)$.
Then $\sigma$ respects the orbit structure of $\rho(\mathbf{S}(n))$ acting 
on $\{1,\ldots, n+2\}$. 
Hence it preserves the orbit $\{1,\ldots, n-1, n+1\}$ and sends 
the fixed points $n$ and $n+2$ of $\rho(\mathbf{S}(n))$ 
into fixed points. 
This implies (b). 
\eproof

\bsit\label{sit: 4cases} By Lemma \ref{lem: algebra} we have 
$\rho(N)\subset\widetilde N_2$. Then by
Lemma \ref{lem: N2}(b)
and decomposition (\ref{sit: decomposition-again}),
any automorphism $F\in N$ admits one 
of the following presentations:
\begin{itemize}
\item[(i)] $F\colon Q\mapsto f(Q)\cdot\rho(\sigma)(Q)$,
\item[(ii)] $F\colon Q\mapsto f(Q)\cdot (\rho(\sigma)\circ\upsilon) (Q)$,
\item[(iii)] $F\colon Q\mapsto f(Q)\cdot (\tau\circ\rho(\sigma))(Q)$,
\item[(vi)] $F\colon Q\mapsto f(Q)\cdot (\tau\circ\rho(\sigma)\circ\upsilon) (Q)$,
\end{itemize}
where $Q\in\mathcal{C}_{\ord}^{n}(\mathbb{C}^*)$, 
$f\in\mathcal{O}^\times
(\mathcal{C}_{\ord}^{n}(\mathbb{C}^*))^{\mathbb{C}^*}$, 
$\sigma\in \mathbf{S}(n)$, and
the involutions $\tau$ and $\upsilon$ are defined in (\ref{eq: iota-tau}). Note that 
$\tau$ generates the factor $\Z/2\Z$ in 
(\ref{eq: units3}).
\esit

In the following proposition 
we prove the equality $N=H$, thus finishing the proof of Zinde's Theorem, 
see \ref{sit: remark-Zinde-thm}.

\bprop\label{prop: end-Zinde-thm}  Let $n>2$. Then for $F\in N$ the cases 
$(ii)$ and $(iii)$ never happen.
Furthermore, any $F\in N$ as in $(i)$ and $(iv)$, respectively,
can be given via 
$$
F\colon Q\mapsto 
s\tilde h_n^k(Q)\cdot \sigma(Q)\quad\mbox{and}\quad F\colon Q\mapsto 
s\tilde h_n^k(Q)\cdot (\iota\circ \sigma)(Q)\,,
$$ respectively,
where $Q\in\mathcal{C}_{\ord}^{n}(\mathbb{C}^*)$, 
$s\in\mathbb{C}^*$, $k\in\Z$,
$\sigma\in \mathbf{S}(n)$, and $\iota$ is defined in $(\ref{eq: iota-tau})$.
Consequently, $N=H$.
\eprop

\bproof We begin with the following observations. 
Note first that the multiplication by a function $f\in\mathcal{O}^\times
(\mathcal{C}_{\ord}^{n}(\mathbb{C}^*))$ and the $\mathbf{S}(n)$-action 
on 
$\mathcal{C}_{\ord}^{n}(\mathbb{C}^*)$ commute. Furthermore,
any $\sigma\in  \mathbf{S}(n)$
can be written as a word, say,   $w(\sigma_1,\ldots,\sigma_{n-1})$.
Hence $\rho(\sigma)=w(\sigma_1,\ldots,\sigma_{n-2},\sigma')$, where 
$\sigma'\in\mathbf{S}(n+2)$ and 
$\sigma'=\rho(\sigma_{n-1})=q_{n-1}^{-1}q_n\sigma_{n-1}$,
see (\ref{eq: iota-tau}) and the proof of Lemma \ref{lem: N2}(a). Since 
$q_{n-1}^{-1}q_n\in\mathcal{O}^\times
(\mathcal{C}_{\ord}^{n}(\mathbb{C}^*))^{\mathbb{C}^*}$, 
it follows that 
$\rho(\sigma)=g\cdot w(\sigma_1,\ldots,\sigma_{n-1})$ for a certain
function $g\in\mathcal{O}^\times
(\mathcal{C}_{\ord}^{n}(\mathbb{C}^*))^{\mathbb{C}^*}$.

For $F$ in (i), the latter yields a presentation 
$F\colon Q\mapsto \tilde f(Q) \cdot\sigma(Q)$, where 
$\tilde f\in \mathcal{O}^\times
(\mathcal{C}_{\ord}^{n}(\mathbb{C}^*))^{\mathbb{C}^*}$.  
If $F\in N$, then also 
$F\circ\sigma^{-1}\in N$, because $\sigma\in\mathbf{S}(n)\subset N$.  
However,
$F\circ\sigma^{-1}\colon Q\mapsto \tilde f(Q)Q$ normalizes the subgroup 
$\mathbf{S}(n)\subset \Aut\mathcal{C}_{\ord}^{n}(\mathbb{C}^*)$ 
if and only if $ \tilde f\in \mathcal{O}^\times
(\mathcal{C}_{\ord}^{n}(\mathbb{C}^*))^{\mathbb{C}^*}$ is
$\mathbf{S}(n)$-invariant, i.e., if and only if $\tilde f=s\tilde h_n^k$ for some 
$s\in\mathbb{C}^*$ and $k\in\Z$, see \ref{sit: tilde h}. Thus $F\in H$.

The same argument allows to write $F$ in (iv) as 
$$F\colon Q\mapsto \tilde f(Q)\cdot (\tau\circ\sigma\circ\upsilon) (Q)
={\tilde f}(Q)\cdot (\tau\circ\upsilon\circ\sigma) (Q)=
{\tilde f}(Q)\cdot (\iota\circ\sigma) (Q)\,,$$ where 
$\tilde f\in \mathcal{O}^\times
(\mathcal{C}_{\ord}^{n}(\mathbb{C}^*))^{\mathbb{C}^*}$.  
We use here the equality
$\iota=\tau\circ\upsilon$ (see \ref{sit: involutions}) and the fact that 
$\upsilon$ and $\sigma$ commute. Assume further that $F\in N$.
Since both $\sigma$ and $\iota$ belong to $N$, then 
$Q\mapsto {\tilde f}(Q)Q$ is in $N$, too. This implies as before that 
$\tilde f=s\tilde h_n^k$ for some 
$s\in\mathbb{C}^*$ and $k\in\Z$, and so, $F\in H$.

It remains to eliminate the possibility that some
$F$ in (ii) or in (iii) belongs to $N$. 
With the same reasoning as before, any $F$ in (ii) can be presented  as 
$$
F\colon Q\mapsto \tilde f(Q)\cdot (\sigma\circ\upsilon) (Q)=
\tilde f(Q)\cdot (\iota\circ\sigma\circ\tau) (Q)=
q_n^{-2}\tilde f(Q)(\iota\circ\sigma) (Q)\,,
$$
where 
$\tilde f\in \mathcal{O}^\times
(\mathcal{C}_{\ord}^{n}(\mathbb{C}^*))^{\mathbb{C}^*}$.  
Suppose to the contrary that $F\in N$. Then also $\tilde F=F\circ
 (\sigma^{-1}\circ\iota) \in N$, where 
$\tilde F\colon Q\mapsto q_n^{-2}\tilde f(Q)Q$. Hence 
for any $\sigma\in \mathbf{S}(n)$ there exists $\sigma''\in\mathbf{S}(n)$ 
 such that $\tilde F\circ\sigma=\sigma''\circ \tilde F$. Thus, for any 
$Q=(q_1,\ldots,q_n)\in  \mathcal{C}_{\ord}^{n}(\mathbb{C}^*)$,
\be\label{eq: norm-f}
q_{\sigma(n)}^{-2}\tilde f(\sigma(Q)) \sigma(Q)=
q_n^{-2} \tilde f(Q)\sigma''(Q)\,.
\ee
Let $Q$ be chosen so that $|q_i|< |q_j|$ for $i<j$. Then
both $\sigma$ and $\sigma''$ are uniquely determined by the images 
$\sigma(Q)$ and
$\sigma''(Q)$. Since by (\ref{eq: norm-f}) these 
two sequences are proportional, we have 
$ \sigma= \sigma''$. So, (\ref{eq: norm-f})
is equivalent to 
\be\label{eq: norm-f1}
q_{\sigma(n)}^{-2} f(\sigma(Q)) =q_n^{-2} f(Q)\quad\forall 
\sigma\in\mathbf{S}(n),\,\,\,\forall Q\in \mathcal{C}_{\ord}^{n}
(\mathbb{C}^*)\,.
\ee
We claim that there is no function $\tilde f\in \mathcal{O}^\times \left(\mathcal{C}_{\ord}^{n}
(\mathbb{C}^*)\right)^{\mathbb{C}^*}$ satisfying (\ref{eq: norm-f1}). Indeed,  
these equalities mean that the function $\tilde f/q_n^2$ is  
$\mathbf{S}(n)$-invariant. 
Hence $\tilde f/q_n^2$ descends to a function, say, $g\in \mathcal{O}^\times
(\mathcal{C}^{n}(\mathbb{C}^*))$. By Lemma \ref{lem: units on Cstar-n}(a) 
we have $g=sD_n^kz_n^l$ for some 
$s\in\mathbb{C}^*$ and $k,l\in\Z$. Therefore, $\tilde f=g\circ\pi= 
sq_n^2D_n^kz_n^l$. Since $\tilde f$ is $\mathbb{C}^*$-invariant, we obtain 
$n(n-1)k+nl+2=0$. The latter equality is impossible whatever are $k,l\in\Z$, 
because 
by our assumption $n>2$, in particular, $n\not |\, 2$. This proves our claim.

The possibility that some $F$ in (iii) belongs to $ N$ can be ruled out in a similar way. 
We leave the details to the reader.
\eproof

\bsit\label{sit: n=2} Next we turn to the case $n=2$. Example \ref{exa: non-liftable autom-m} 
shows that Zinde's Theorem does not hold any more for  $n=2$ (however, it is evidently true for $n=1$). 
Note that for any $n\ge 1$, the automorphisms of $\mathcal{C}_{\ord}^{n}(\mathbb{C}^*)$ 
as in (\ref{eq: aut-form CnCstar}) form a subgroup of 
the group $\Aut \mathcal{C}^{n}(\mathbb{C}^*)$. We let 
$\Aut_{\rm Zin}\, \mathcal{C}^{n}(\mathbb{C}^*)$ denote this subgroup. 
Our proof of Zinde's Theorem shows actually that for any $n\ge 1$, an automorphism 
$F\in\Aut \mathcal{C}^{n}(\mathbb{C}^*)$ admits a lift to 
$\widetilde F\in\Aut \mathcal{C}_{\ord}^{n}(\mathbb{C}^*)$ if and only if 
$F\in\Aut_{\rm Zin}\, \mathcal{C}^{n}(\mathbb{C}^*)$. 
In the following example we consider an automorphism 
$U\in\Aut \mathcal{C}^{2}(\mathbb{C}^*)$, which
does not admit a lift to $\mathcal{C}_{\ord}^{2}(\mathbb{C}^*)$. Hence 
$\Aut_{\rm Zin}\, \mathcal{C}^{2}(\mathbb{C}^*)\neq\Aut \mathcal{C}^{2}(\mathbb{C}^*)$. 
In particular, Corollary \ref{lem: Zin77} 
does not hold any longer for $n=2$. \esit

\bexa\label{exa: non-liftable autom-m} 
The affine surface 
$\mathcal{C}^{2}(\mathbb{C}^*)$ is isomorphic to the complement
$\mathbb{C}^2_z\setminus (\Gamma_1\cup \Gamma_2)$, where the plane affine curves 
$\Gamma_1$ and $\Gamma_2$ are 
given in the coordinates $(z_1,z_2)$ by equations 
$z_1^2-4z_2=0$ and $z_2=0$, respectively. Consider the involution 
$U\in\Aut\mathcal{C}^{2}(\mathbb{C}^*)$ 
given in these coordinates as
$$
U\colon Q=(z_1,z_2)\mapsto U(Q)=\left(z_1, z_1^2/4 - z_2\right).
$$
Then $U$ extends to a triangular automorphism 
of the plane $\mathbb{C}^2_z$ interchanging $\Gamma_1$ and $\Gamma_2$. 
Choose vanishing loops $\sigma_1$ and $\sigma_2$
of $\Gamma_1$ and $\Gamma_2$, respectively, with the base point $(4,2)\in\mathbb{C}^2_z$ fixed by $U$. 
Then these loops are 
interchanged by $U$. According to Brieskorn (\cite{Brieskorn})
the classes of these loops (denoted by the same letters) 
are the standard generators of the Artin-Brieskorn braid group 
$\mathbf{B}_{2}=\pi_1(\mathcal{C}^{2}(\mathbb{C}^*))=
\pi_1(\mathbb{C}^2_z\setminus (\Gamma_1\cup \Gamma_2))$. The induced 
graph automorphism $U_*$ of $\mathbf{B}_{2}$
interchanges 
$\sigma_1$ and $\sigma_2$, see Remark \ref{rem: Franzsen-n=2}.1.
It does not stabilize the pure braid group 
$\mathbf{PB}_{2}=\langle\sigma_1^2,\sigma_2\rangle\subset \mathbf{B}_{2}$; indeed, 
$U_*(\mathbf{PB}_{2})=\langle\sigma_1,\sigma_2^2\rangle\neq \mathbf{PB}_{2}$. Hence 
$U$ cannot be lifted to $\mathcal{C}_{\ord}^{2}(\mathbb{C}^*)$; 
see the proof of Corollary \ref{lem: Zin77}.

Alternatively, the latter can be seen as follows.
Assuming that there exists a lift $\widetilde U$ of $U$ to $\mathcal{C}_{\ord}^{2}(\mathbb{C}^*)$, it must 
be given for $Q\in \mathcal{C}_{\ord}^{2}(\mathbb{C}^*)$ by
$$
\widetilde U\colon  Q=(q_1,q_2)\mapsto 
\left(\frac{q_1+q_2}{2}\pm \sqrt{q_1q_2},\, \frac{q_1+q_2}{2}\mp 
\sqrt{q_1q_2}\right)
\,.
$$
However, due to ramifications this  formula does not  define a morphism of 
$\mathcal{C}_{\ord}^{2}(\mathbb{C}^*)$.

Thus, $U\in  \Aut \mathcal{C}^{n}(\mathbb{C}^*)\setminus \Aut_{\rm Zin}\, \mathcal{C}^{n}(\mathbb{C}^*)$, 
i.e., $U$ is not one of the automorphisms as in (\ref{eq: aut-form CnCstar}). So, Zinde's Theorem
does not hold any longer for $n=2$. 
\eexa

The next result completes the description of the groups  
$\Aut \mathcal{C}^{n}(\mathbb{C}^*)$, $n\ge 2$.

\bthm\label{thm: Zinde Thm for n=2} We have $\Aut \mathcal{C}^{2}(\mathbb{C}^*)
=\Aut_{\rm Zin}\, \mathcal{C}^{2}(\mathbb{C}^*)
\rtimes\langle U\rangle$, where $U\in\Aut \mathcal{C}^{2}(\mathbb{C}^*)$ 
is the involution 
as in Example {\rm \ref{exa: non-liftable autom-m}}.  \ethm

In the proof we use the following simple lemma.  
Consider the graph automorphism $\bar \alpha\in \Aut \mathbf{WB}_{2}$ 
as in Remark \ref{rem: Franzsen-n=2}.2. Given a group $G$, we 
let ${\rm Inn}\,G$ stand for the group of inner automorphisms of $G$.  

\blem\label{lem: Franzsen-2} {\rm (Franzsen, \cite[p.\ 21]{Franzsen})} We have 
$\Aut \mathbf{WB}_{2}= ({\rm Inn}\, 
\mathbf{WB}_{2})\rtimes\langle\bar \alpha\rangle$. 
In particular, the outer automorphism group ${\rm Out}\,\mathbf{WB}_{2}$ 
is a cyclic group of order 2 generated by the image of the graph automorphism 
$\bar \alpha$.  
\elem

Since the proof is not given in \cite{Franzsen}, we provide a short argument. 

\bproof In the notation of Remark \ref{rem: Franzsen-n=2}.2 
the conjugacy classes of  
$\mathbf{WB}_{2}$ are 
\be\label{eq: conj-classes}
C_0=\{e\},\,\, C_1=\{\varepsilon_1\varepsilon_2\},\,\,C_2
=\{\varepsilon_1\sigma, \varepsilon_2\sigma\},\,\,C_3=
\{\varepsilon_1, \varepsilon_2\},\,\,C_4
=\{\sigma, \varepsilon_1\varepsilon_2\sigma\}\,.
\ee
Any automorphism $\beta\in\Aut \mathbf{WB}_{2}$ induces a permutation 
of the set $(C_0, C_1, C_2, C_3, C_4)$. It is easily seen that $\beta$
preserves the classes 
$C_0, C_1, C_2$ 
and either preserves or interchanges 
$C_3$ and $C_4$. Thus, the  induced action of $\beta$ on the conjugacy 
classes yields a surjection 
$\mu\colon\Aut \mathbf{WB}_{2}\to \mathbf{S}(2)$. 
Clearly, $\bar \alpha\not\in\ker\mu$, whereas 
${\rm Inn}\,\mathbf{WB}_{2}\subset\ker\mu$. 
It suffices to show that ${\rm Inn}\,\mathbf{WB}_{2}=\ker\mu$. 
The latter holds indeed, since $C_2\cdot C_3=C_4$, 
and so, there are just 4 different possibilities for 
the action of an element $\beta\in\ker\mu$ on $\mathbf{WB}_{2}$. 
However, it is easy to check that all these possibilities 
can be realized by inner automorphisms.   
Finally, our argument shows that 
$$
{\rm Out}\,\mathbf{WB}_{2}={\rm Aut}\,\mathbf{WB}_{2}/\ker\mu=
\mathbf{S}(2)\cong\Z/2\Z
$$
 is generated 
by the image of $\bar \alpha$. 
\eproof

\brems\label{rem: action on conj} 1. Let $X$ be a connected and locally 
linearly 
connected topological space, and $p\in X$ a base point. 
Any continuous selfmap $f\colon X\to X$ induces an endomorphism 
$f_*\colon\pi_1(X,p)\to \pi_1(X,p)$, which is well defined up to a conjugation. 
That is, the coset $[f_*]$ of $f_*$  in ${\rm End} \,\pi_1(X,p)$ modulo 
the group ${\rm Inn}\, \pi_1(X,p)$ 
is well defined. 
Moreover,  the correspondence $f\mapsto [f_*]$ defines a homomorphism 
of semigroups 
${\rm End}\, X\to {\rm End} \,\pi_1(X,p)/{\rm Inn}\, \pi_1(X,p)$. 
which restricts further to a homomorphism 
$${\rm Aut}\, X\to 
{\rm Out}\, \pi_1(X,p)={\rm Aut}\, \pi_1(X,p)/{\rm Inn} \,\pi_1(X,p)\,,
$$
where ${\rm End}\, X$ and ${\rm Aut}\, X$ denote
(in this context) the semigroup of continuous selfmaps 
of $X$ and the group of homeomorphisms of $X$, respectively.

2. Furthermore, the action of $f_*$ is well defined on the set of 
conjugacy classes in $\pi_1(X,p)$. For $f\in {\rm Aut}\, X$ we let 
$f_{!}$ be the induced permutation of the conjugacy classes. 
The correspondence $f\to f_{!}$ yields a homomorphism
$ {\rm Aut}\, X\to {\rm Bij}\,C(\pi_1(X,p))$ to the permutation group 
of the set $C(\pi_1(X,p))$ of conjugacy classes of $\pi_1(X,p)$. 

3. Let $G$ be a group and $H\subset G$ be a subgroup. 
If $H$ is characteristic in $G$, then clearly $\Aut G$ and ${\rm Inn}\,G$ 
act on the quotient 
$G/N$, and there is a natural homomorphism 
${\rm Out}\,G\to {\rm Out}\,(G/N)$.
\erems

\noindent {\em Proof of Theorem} \ref{thm: Zinde Thm for n=2}. 
By the Cohen-Paris' Theorem \cite[Proposition 2.4]{CP03}, the kernel 
$\mathbf{CB}_{2}:=\ker(\omega_2\colon \mathbf{B}_{2}\to \mathbf{WB}_2)$ is a characteristic subgroup of the group $\mathbf{B}_{2}$,
that is, it is stable under the action  of $\Aut \mathbf{B}_{2}$. 
By virtue of Remarks \ref{rem: action on conj} and 
Lemma \ref{lem: Franzsen-2} this leads to the natural 
homomorphism
\be\label{eq: out-homo}
\widehat\mu\colon {\rm Aut}\,  \mathcal{C}^{2}(\mathbb{C}^*)\to 
{\rm Out}\, 
\mathbf{B}_{2}\to {\rm Out}\, (\mathbf{B}_{2}/\mathbf{CB}_{2})=
 {\rm Out}\,  \mathbf{WB}_{2}\cong\Z/2\Z
\,
\ee
in the realization 
$\mathbf{B}_{2}=\pi_1(\mathcal{C}^{2}(\mathbb{C}^*))$. 
The involution $U\in\Aut \mathcal{C}^{2}(\mathbb{C}^*)$ 
as in Example \ref{exa: non-liftable autom-m} induces the involution 
$U_*=\alpha\in \Aut \mathbf{B}_{2}$
of Remark \ref{rem: Franzsen-n=2}.1, 
which
descends to the graph involution 
$\bar\alpha\in\Aut \mathbf{WB}_{2}$ as 
in Lemma \ref{lem: Franzsen-2}, 
see also Remark \ref{rem: Franzsen-n=2}.2. 
Hence the image $\widehat\mu(U)$ generates the cyclic group $\Z/2\Z$ in (\ref{eq: out-homo}).

We claim that 
$\ker\widehat\mu={\Aut}_{\rm Zin}\, \mathcal{C}^{2}(\mathbb{C}^*)$. 
Indeed, in the notation as in (\ref{eq: conj-classes}) we have 
$\mathcal{E}_2=\langle\varepsilon_1,\varepsilon_2\rangle=
C_0\cup C_1\cup C_3\subset \mathbf{WB}_{2}$,
see Remark \ref{rem: Franzsen-n=2}.2. 
It follows from the proof of Lemma \ref{lem: Franzsen-2} 
that ${\rm Stab}_{\Aut \mathbf{WB}_{2}}(\mathcal{E}_2)=\ker\mu
={\rm Inn}\,\mathbf{WB}_{2}$, and so, 
${\rm Stab}_{{\rm Out} \,\mathbf{WB}_{2}}(\mathcal{E}_2)=\{\id\} $.  

Furthermore, $F\in\Aut\, \mathcal{C}^{2}(\mathbb{C}^*)$ belongs to
$\Aut_{\rm Zin}\, \mathcal{C}^{2}(\mathbb{C}^*)$ if and only 
if the induced automorphism
$F_*\in\Aut\mathbf{B}_{2}$
(whatever is its realization) stabilizes the subgroup 
$\mathbf{PB}_{2}\subset\mathbf{B}_{2}$,
see the discussion in \ref{sit: n=2}, 
if and only if the induced automorphism of $\mathbf{WB}_{2}$
stabilizes the subgroup $\mathcal{E}_2\subset\mathbf{WB}_{2}$, 
i.e., belongs to ${\rm Inn}\,\mathbf{WB}_{2}$, if and only if 
$F\in\ker\widehat\mu$, as claimed. 
  
Therefore, ${\Aut}_{\rm Zin}\, \mathcal{C}^{2}(\mathbb{C}^*)\subset{\Aut} \mathcal{C}^{2}(\mathbb{C}^*)$ 
is a subgroup of index 2 
which does not contain $U$. Now theorem follows. 
\qed

\section{The group $\Aut\Sigma^{n-2}_{\blc}$}\label{Sec: Aut-discriminant}
In this section we prove part (c) of Theorem
\ref{Thm: Kaliman-Lin-Zinde}. Let us recall this assertion.

\begin{thm}\label{Thm: aut balanced Sigma}
For $n>2$ we have
$$
\Aut \Sigma^{n-2}_{\blc}\cong\mathbb{C}^*\,,
$$
where $s\in\mathbb{C}^*$ acts on  $Q\in\Sigma^{n-2}_{\blc}$ via $Q\mapsto{s}Q$.
\end{thm}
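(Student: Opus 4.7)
The plan is to identify $\Aut\,\Sigma^{n-2}_{\blc}$ with a subgroup of $\Aut\,\mathcal{C}^{n-2}(\mathbb{C}^*)$ via restriction to the smooth locus, and then use Zinde's Theorem --- together with Theorem \ref{thm: Zinde Thm for n=2} in the case $n=4$ --- to exclude every element not of the scalar form $\widetilde Q \mapsto s\widetilde Q$. The $\mathbb{C}^*$-action $Q\mapsto sQ$ clearly preserves $\Sigma^{n-2}_{\blc}$, so the inclusion $\mathbb{C}^*\subseteq\Aut\,\Sigma^{n-2}_{\blc}$ is automatic, and what we must establish is the opposite one.

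First I would note that any $F\in\Aut\,\Sigma^{n-2}_{\blc}$ preserves both $\reg\,\Sigma^{n-2}_{\blc}$ and $\sing\,\Sigma^{n-2}_{\blc}$, hence restricts to an automorphism of $\reg\,\Sigma^{n-2}_{\blc}$. Conjugation with the isomorphism $\phi$ from Lemma \ref{Lm: reg Sigma(n-2)blc=C(n-2)(C*)} produces $\bar F := \phi\circ F|_{\reg\,\Sigma^{n-2}_{\blc}}\circ\phi^{-1}\in\Aut\,\mathcal{C}^{n-2}(\mathbb{C}^*)$; and since $\reg\,\Sigma^{n-2}_{\blc}$ is open and dense in the irreducible variety $\Sigma^{n-2}_{\blc}$, the assignment $F\mapsto\bar F$ is injective, so it suffices to identify the subgroup of $\Aut\,\mathcal{C}^{n-2}(\mathbb{C}^*)$ consisting of those $\bar F$ which come from some $F$. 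By Zinde's Theorem when $n\ge 5$, by Theorem \ref{thm: Zinde Thm for n=2} when $n=4$, and by the elementary description $\Aut\,\mathcal{C}^1(\mathbb{C}^*)\cong\mathbb{C}^*\rtimes(\mathbb{Z}/2\mathbb{Z})$ when $n=3$, any such $\bar F$ is a composition of the scaling $\widetilde Q\mapsto s\widetilde Q$ with one of the following: the identity; the inversion $\widetilde Q\mapsto \widetilde Q^{-1}$; multiplication by some nontrivial power $h_{n-2}^k$ (for $n\ge 4$); or, exclusively when $n=4$, the involution $U$ of Example \ref{exa: non-liftable autom-m}.

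Next I would rule out all but the first alternative by testing against a smooth curve $\gamma\subset\Sigma^{n-2}_{\blc}$ that meets the codimension-one stratum $\Xi\subset\sing\,\Sigma^{n-2}_{\blc}$ of triple-root multisets $\{q_1,\ldots,q_{n-3},u,u,u\}$ transversally at a generic point $P_0$. Away from $P_0$, the curve $\gamma$ lies in $\reg\,\Sigma^{n-2}_{\blc}$, and its image under $\phi$ enters $\mathcal{C}^{n-2}(\mathbb{C}^*)\subset\mathbb{C}^{n-2}$, limiting at a point where one coordinate of $\widetilde Q$ vanishes. Then: (i) the inversion sends this vanishing coordinate to $\infty$, so $\phi^{-1}\circ\bar F\circ\phi$ escapes every compact set as we approach $P_0$; (ii) for $n\ge 4$, the function $h_{n-2}(\widetilde Q)=D_{n-2}(\widetilde Q)/(\widetilde q_1\cdots\widetilde q_{n-2})^{n-3}$ has a finite nonzero numerator and a vanishing denominator along $\Xi$, hence $h_{n-2}^k$ with $k\ne 0$ has a pole or zero there, preventing a regular extension of $\phi^{-1}\circ\bar F\circ\phi$ through $P_0$; (iii) a direct computation in Vieta coordinates shows that the involution $U$ has the same defect when $n=4$. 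In each case the regularity of $F$ is contradicted, forcing $\bar F(\widetilde Q)=s\widetilde Q$, whence $F(Q)=sQ$ as claimed.

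The main obstacle is case (iii): the involution $U$ is given by the simple formula $(z_1,z_2)\mapsto(z_1,z_1^2/4-z_2)$ on $\mathcal{C}^2(\mathbb{C}^*)$, but it has no intrinsic description on $\Sigma^2_{\blc}$, so one must compose it explicitly with $\phi$ and $\phi^{-1}$ in the Vieta coordinates of the ambient $\mathbb{C}^{n-1}=\mathbb{C}^3$ to verify that the resulting birational self-map of $\Sigma^2_{\blc}$ has an indeterminacy along $\Xi$. A parallel technical point is ensuring that the transversal curve $\gamma$ can be chosen to avoid the lower-dimensional strata of $\sing\,\Sigma^{n-2}_{\blc}$, so that the limiting analysis in $\mathbb{C}^{n-2}$ at $P_0$ remains clean.
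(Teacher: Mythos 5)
Your overall route for $n>4$ (restrict to $\reg\,\Sigma^{n-2}_{\blc}$, transport to $\Aut\mathcal{C}^{n-2}(\mathbb{C}^*)$ via Lemma \ref{Lm: reg Sigma(n-2)blc=C(n-2)(C*)}, then obstruct extension across the singular strata) is close to the paper's, but your treatment of the exceptional case $n=4$ contains a genuine gap. You propose to kill the involution $U$ of Example \ref{exa: non-liftable autom-m} by exhibiting an indeterminacy of $\psi\circ U\circ\phi$ along the triple-root stratum $\Xi=\Sigma^{1}_{\rm Maxw,\blc}$. There is no such indeterminacy. In the coordinates of that example, approaching $\Xi$ corresponds to approaching a finite point of $C_1=\{z_2=0\}$; since $U$ is a polynomial automorphism of $\mathbb{C}^2$ carrying $C_1$ to $C_2=\{z_1^2-4z_2=0\}$, and since $\psi$ extends to a regular map $\bar\psi\colon\mathbb{C}^2\to\Sigma^2_{\blc}$, the composite $\bar\psi\circ U$ stays bounded and in fact extends across $\Xi$, sending the Maxwell stratum into the caustic. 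So your test detects nothing for $U$. The genuine obstruction sits along the \emph{caustic} and is of a different nature: $\bar\psi$ is bijective over the Maxwell stratum but two-to-one over the caustic, while $U$ interchanges $C_1$ and $C_2$; the two preimages $A\neq B$ of a generic caustic point are sent by $U$ to distinct points of $C_1$, whose $\bar\psi$-images are distinct, so $\psi\circ U\circ\phi$ has two different limits at that caustic point and cannot even extend continuously. This branch/degree mismatch (not a pole) is exactly how the paper rules out $U$, and some argument of this kind is unavoidable in your scheme.

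A second, related problem is that you read Zinde's Theorem as a list of mutually exclusive alternatives, whereas the general automorphism is the composite $Q\mapsto s\,h_{n-2}^{k}(Q)\,Q^{\epsilon}$ (and, for $n=4$, possibly further composed with $U$). Mixed elements can be invisible to a test along $\Xi$ alone: e.g.\ for $n=4$ the map $Q\mapsto h_2^{-1}(Q)\,Q^{-1}$ sends $\{w_1,w_2\}$ to $\bigl\{w_2/(w_1-w_2)^2,\,w_1/(w_1-w_2)^2\bigr\}$, which stays bounded as $w_1\to 0$, so it extends across the Maxwell stratum and is only excluded by looking at the caustic (where $h_2^{-1}$ blows up) or by the paper's a priori reduction of $\epsilon$ via the unique $\mathbb{C}^*$-fixed point $\bar 0$, which any extension must fix and which prevents interchanging the two ends of the $\mathbb{C}^*$-orbits. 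Finally, note that when $k<0$ your function $h_{n-2}^{k}$ has a zero, not a pole, along $\Xi$; a zero does not preclude a regular extension, it only forces the extension to collapse the divisor $\Xi$ to the origin, contradicting bijectivity — this should be said explicitly. With the mixed cases handled on both strata (or $\epsilon$ eliminated first as above) your argument works for $n>4$ and $n=3$, but for $n=4$ you must replace case (iii) by a caustic-side argument such as the paper's.
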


For the proof we need some preparation.
Recall (see e.g. \cite{Arnold}, \cite{Napolitano}) that for $n\ge 4$ the singular locus 
$\sing\,\Sigma^{n-1}
=\Sigma^{n-1}\setminus\reg\,\Sigma^{n-1}$ of $\Sigma^{n-1}$ is the
union\footnote{This is not a stratification of $\sing\,\Sigma^{n-1}$ since 
$\Sigma_{\rm Maxw}^{n-2}\cap\Sigma_{\rm cau}^{n-2}\ne\varnothing$.} of the 
{\em Maxwell stratum} $\Sigma_{\rm Maxw}^{n-2}$ 
and the {\em Arnold caustic} $\Sigma_{\rm cau}^{n-2}$ 
defined by
\begin{equation}\label{eq: maxw-cau}
\Sigma_{\rm Maxw}^{n-2}=p(\{q_{n-2}=q_{n-1}=q_n\})\quad\textup{and}
\quad\Sigma_{\rm cau}^{n-2}=p(\{q_{n-3}=q_{n-2},\,q_{n-1}=q_n\})\,,
\end{equation}
where $p$ is the projection (\ref{eq: projection p}). So, 
$\Sigma_{\rm Maxw}^{n-2}$ and $\Sigma_{\rm cau}^{n-2}$ 
consist, respectively, of all unordered $n$-multisets $Q\subset\mathbb{C}$ that can be written as 
$Q=\{q_1,...,q_{n-3},u,u,u\}$ and $Q=\{q_1,...,q_{n-4},u,u,v,v\}$.
\vskip6pt

{\em Proof of Theorem} \ref{Thm: aut balanced Sigma}. We start 
with the case where $n>4$.
Consider the isomorphism 
$$
\phi\colon\reg\,\Sigma^{n-2}_{\blc}\stackrel{\cong}{\longrightarrow}
\mathcal{C}^{n-2}(\mathbb{C}^{*})\,
$$
as in  (\ref{eq: phi-isomorphism})
of Lemma 
\ref{Lm: reg Sigma(n-2)blc=C(n-2)(C*)} with inverse $\psi=\phi^{-1}$ 
given by (\ref{eq: psi-isomorphism}). We can associate to any 
$F\in\Aut\mathcal{C}^{n-2}(\mathbb{C}^{*})$ an automorphism 
$\widetilde F=\phi^{-1}\circ F\circ\phi\in\Aut(\reg\,\Sigma^{n-2}_{\blc})$. 
By Zinde's Theorem, 
for $n>4$ the  automorphism $F$ is given by (\ref{eq: aut-form CnCstar}).
We have to show that,  for $F$ as in (\ref{eq: aut-form CnCstar}),
the automorphism $\widetilde F$ extends to an automorphism of
$\Sigma^{n-2}_{\blc}$ if and only if $k=0$ and $\varepsilon=1$,
that is, if and only if $F\in\Aut (\mathcal{C}^{n-2}(\mathbb{C}^*))$ 
belongs
to the identity component $\Aut_0
(\mathcal{C}^{n-2}(\mathbb{C}^*)) \cong\mathbb{C}^*$.

Note that the $\mathbb{C}^*$-action $Q\mapsto{sQ}$
($s\in\mathbb{C}^*$, $Q\in\mathcal{C}^{n-2}(\mathbb{C}^*)$) on
$\mathcal{C}^{n-2}(\mathbb{C}^*)$  induces  a $\mathbb{C}^*$-action
on $\reg\,\Sigma^{n-2}_{\blc}$ given  again by $Q\mapsto{sQ}$  ($s\in\mathbb{C}^*$). The latter
$\mathbb{C}^*$-action extends to $\Sigma^{n-2}_{\blc}$ so that the
origin $\bar 0\in\Sigma^{n-2}_{\blc}$ is a unique fixed point.
This  fixed point lies in the closure of any one-dimensional $\mathbb{C}^*$-orbit.

Thus, without loss of generality we may
restrict to the case, where $s=1$ in  (\ref{eq: aut-form
CnCstar}), and so, $F=F_{k,\varepsilon}\colon Q\mapsto h_{n-2}^kQ^\varepsilon$.

The ($\Aut\mathbb{C}^*$)-invariant function $h_{n-2}\in
\mathcal{O}^\times(\mathcal{C}^{n-2}(\mathbb{C}^*))$ yields an
invertible regular function $g\Def h_{n-2}\circ\phi$ on
$\reg\,\Sigma^{n-2}_{\blc}$.
An automorphism  $F_k\colon Q\mapsto h_{n-2}^kQ$  of
$\mathcal{C}^{n-2}(\mathbb{C}^*)$ ($k\in\mathbb{Z}$) induces the
automorphism $\widetilde{F}_k\colon Q\mapsto{g^k}Q$ of
$\reg\,\Sigma^{n-2}_{\blc}$.

The subgroup $\Aut_0 (\mathcal{C}^{n-2}(\mathbb{C}^*))
\cong\mathbb{C}^*$ of $\Aut (\mathcal{C}^{n-2}(\mathbb{C}^*))$ 
being normal,
an automorphism $F\in\Aut (\mathcal{C}^{n-2}(\mathbb{C}^*))$
sends any $\mathbb{C}^*$-orbit in $\mathcal{C}^{n-2}(\mathbb{C}^*)$
into a $\mathbb{C}^*$-orbit  of the same dimension.  Since the function $h_{n-2}$ is constant
along the  $\mathbb{C}^*$-orbits,
the multiplication $Q\mapsto h_{n-2}^kQ$ preserves
each $\mathbb{C}^*$-orbit. Hence the automorphism
$F_{k,\varepsilon}\colon Q\mapsto h_{n-2}^kQ^\varepsilon$ sends the
$\mathbb{C}^*$-orbits in $\mathcal{C}^{n-2}(\mathbb{C}^*)$
into $\mathbb{C}^*$-orbits.
It follows that $\widetilde{F}_{k,\varepsilon}$ also sends
the $\mathbb{C}^*$-orbits in $\reg\,\Sigma^{n-2}_{\blc}$ into $\mathbb{C}^*$-orbits.

The involution $Q\mapsto Q^{-1}$ on
$\mathcal{C}^{n-2}(\mathbb{C}^*)$ sends any $\mathbb{C}^*$-orbit into
another such orbit interchanging the punctures, while  the multiplication
$Q\mapsto h_{n-2}^kQ$ preserves the punctures. Hence $\widetilde{F}_{k,\varepsilon}$ interchanges
the punctures of the $\mathbb{C}^*$-orbits in $\reg\,\Sigma^{n-2}_{\blc}$
if and only if $\varepsilon=-1$.

On the other hand, if $\widetilde{F}\in\Aut (\reg\,\Sigma^{n-2}_{\blc})$ admits
an extension,  say $\bar F$, to an automorphism of  $\Sigma^{n-2}_{\blc}$,  
then $\bar F$ should fix the origin. Indeed, $\bar{F}$ normalizes the $\mathbb{C}^*$-action 
on  $\Sigma^{n-2}_{\blc}$, hence it preserves the unique $\mathbb{C}^*$-fixed point 
$0\in\Sigma^{n-2}_{\blc}$.
This point is a unique
common point of the $\mathbb{C}^*$-orbit closures.
Hence $\widetilde{F}$ cannot interchange the punctures of the
$\mathbb{C}^*$-orbits in $\reg\,\Sigma^{n-2}_{\blc}$.
This proves that $\varepsilon=1$ for such an extendable $\widetilde{F}$.

The function  $h_{n-2}\in\mathcal{O}^\times(\mathcal{C}^{n-2}(\mathbb{C}^*))$ 
can be regarded as the
rational function $d_{n-2}(z)/z_{n-2}^{n-3}$ on $\mathbb{C}^{n-2}_{(z)}$, 
where $z_{n-2}=(-1)^{n-2}\prod_{i=1}^{n-2} q_i$. 
It has pole along the coordinate
hyperplane $z_{n-2}=0$, and $h_{n-2}^{-1}$ has pole
along the discriminant hypersurface $\Sigma^{n-3}=\{d_{n-2}=0\}$.
It follows by (\ref{eq: phi-isomorphism}) that $g$ regarded as a rational
function  on $\Sigma^{n-2}_{\blc}$ has pole along the caustic
$\Sigma^{n-3}_{\rm cau, \blc}=\Sigma^{n-2}_{\rm cau}\cap 
\Sigma^{n-2}_{\blc}$, and $g^{-1}$
has pole along the Maxwell stratum $\Sigma^{n-3}_{\rm
Maxw, \blc}=\Sigma^{n-2}_{\rm
Maxw}\cap\Sigma^{n-2}_{\blc}$, see (\ref{eq: maxw-cau}). Anyway,
the automorphism $\widetilde{F}_k\colon Q\mapsto g_{n}^kQ$ of
$\reg\,\Sigma^{n-2}_{\blc}$ does not admit an extension to an
automorphism of $\Sigma^{n-2}_{\blc}$ unless $k=0$ in (\ref{eq:
aut-form CnCstar}). Thus,  
$\widetilde{F}_{k,\varepsilon} \in\Aut (\reg\,\Sigma^{n-2}_{\blc})$ 
admits an extension
to an automorphism of  $\Sigma^{n-2}_{\blc}$ if and only if $k=0$ 
and $\varepsilon =1$, 
as stated. This ends the proof in the case $n>4$.

Let now $n=4$. The automorphism group 
$\Aut \mathcal{C}^{2}(\mathbb{C}^*)$ is described in Theorem 
\ref{thm: Zinde Thm for n=2}. Due to this theorem, any automorphism
$F\in\Aut \mathcal{C}^{2}(\mathbb{C}^*)$ can be written either as 
$F=F'\circ U$ or as $F=F'$, 
where $F'\colon Q\mapsto sg_{2}^kQ^\varepsilon$
is as in (\ref{eq: aut-form CnCstar}), and 
$U\in\Aut \mathcal{C}^{2}(\mathbb{C}^*)$ 
is the involution 
as in Example \ref{exa: non-liftable autom-m}. In the second case 
we have as before that 
$\widetilde{F}=\phi^{-1}\circ F\circ\phi \in\Aut (\reg\,\Sigma^{2}_{\blc})$ 
admits an extension
to an automorphism of  $\Sigma^{n-2}_{\blc}$ if and only if 
$\varepsilon=1$, $k=0$, and so, $F\in\mathbb{C}^*$. 

Assume further that $F=F'\circ U$.
The identity component $\mathbb{C}^*$ of 
$\Aut \mathcal{C}^{2}(\mathbb{C}^*)$ being normal, $F$ 
preserves the family of $\mathbb{C}^*$-orbits in 
$ \mathcal{C}^{2}(\mathbb{C}^*)$. So does 
$\widetilde F=\phi^{-1}\circ F\circ\phi\in\Aut(\reg\,\Sigma^{2}_{\blc})$ 
as well, since
$\phi\colon\reg\,\Sigma^{2}_{\blc}\stackrel{\cong}{\longrightarrow}
\mathcal{C}^{2}(\mathbb{C}^{*})$ is 
$\mathbb{C}^*$-equivariant. 

Likewise in Example \ref{exa: non-liftable autom-m}, we realize 
$\mathcal{C}^{2}(\mathbb{C}^{*})$ as 
$\mathbb{C}^{2}_{(z)}\setminus (C_1\cup C_2)$ with $C_1=\{z_2=0\}$ and 
$C_2=\{z_1^2-4z_2=0\}$. 
Since $U$ extends to an automorphism 
of the ambient affine plane $\mathbb{C}^2$ (denoted by the same letter), 
it sends any 
$\mathbb{C}^*$-orbits in 
$ \mathcal{C}^{2}(\mathbb{C}^*)$ into another one without 
interchanging the punctures. The same arguments as before prove 
that $\widetilde F$ 
does not extend to an automorphism of $\Sigma^{2}_{\blc}$ unless 
$\varepsilon=1$ and 
$k=0$. Applying a suitable element of the $\mathbb{C}^*$-action on 
$\Sigma^{2}_{\blc}$, we may  consider that also $s=1$, and so, 
$F'=\id$ and $F=U$. 

We claim that $\widetilde F=\widetilde U=\phi^{-1}
\circ U\circ\phi\in\Aut(\reg\,\Sigma^{2}_{\blc})$ cannot be
extended to an automorphism of $\Sigma^{2}_{\blc}$, and so, 
$ \Aut\Sigma^{2}_{\blc}$ reduces to its identity component 
$\mathbb{C}^*$, as required.
Indeed, suppose that $\widetilde U$ does extend to an  automorphism of 
$\Sigma^{2}_{\blc}$, which will be denoted
by the same symbol $\widetilde U$.
Observe that the morphism
$\psi=\phi^{-1}\colon\mathcal{C}^{2}(\mathbb{C}^{*})\to 
\reg\,\Sigma^{2}_{\blc}$ as in  (\ref{eq: psi-isomorphism})
extends naturally to 
a birational morphism $\bar\psi\colon\mathbb{C}^{2}\to 
\Sigma^{2}_{\blc}$.
The latter morphism fits in the commutative diagram 
$$
\CD
\mathbb{C}^2 @ > {U} >> {\mathbb{C}^2}\\
@V{\bar\psi}VV @VV{\bar\psi}V\\
{\Sigma^{2}_{\blc}} @ > {\widetilde U}>>{\Sigma^{2}_{\blc}}
\endCD
$$

The morphism $\bar\psi$ is  surjective and sends 
$C_1$ to the Maxwell stratum 
$\Sigma^{1}_{\rm Maxw, \blc}$ and  $C_2$ to the Arnold caustic 
$\Sigma_{\rm cau,\blc}^{1}$. Furthermore, the restriction 
$\psi|_{C_1}\colon C_1\to \Sigma^{1}_{\rm Maxw, \blc}$ is a bijection, 
while
$\psi|_{C_2}\colon C_2\to \Sigma^{1}_{\rm cau, \blc}$ has degree 2. 
However, the existence of a diagram of morphisms with these properties
contradicts the fact that the involution $U$ 
interchanges the curves $C_1$ and $C_2$. This completes the proof 
in the case $n=4$.

In the remaining case $n=3$ the plane affine curve 
$\Sigma^{1}_{\blc}$ is a standard
cuspidal cubic, and so, 
$\Aut\Sigma^{1}_{\blc}=\mathbb{C}^*$,
as desired.
\qed

\section{The group $\Aut\mathcal{C}^{n-1}_{\blc}(\mathbb{C})$ revisited}
\label{sec: KLZ-thm revisited}
In this section we  give an alternative proof  of Theorem 
\ref{Thm: Kaliman-Lin-Zinde}(a)
following the lines of the proof of Zinde's Theorem in Section 
\ref{sec:Zinde's Thm.}. For the reader's convenience  we recall
this statement. To make a link to Zinde's Theorem, it will be convenient to replace $n$ by $n+1$ in 
Theorem \ref{Thm: Kaliman-Lin-Zinde}(a).

\begin{thm}\label{thm: 5.1-a} For any $n>1$ we have 
$\Aut\mathcal{C}^{n}_{\blc}(\mathbb{C})\cong\mathbb{C}^*$. 
Any automorphism $F\in\Aut\mathcal{C}^{n-1}_{\blc}(\mathbb{C})$ is of the form $Q\mapsto sQ$, 
where  $s\in\mathbb{C}^*$ and
$Q\in\mathcal{C}^{n-1}_{\blc}(\mathbb{C})$. \end{thm}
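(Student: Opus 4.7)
The plan is to mirror the strategy developed in Section~\ref{sec:Zinde's Thm.} for Zinde's Theorem. The classical theorem of Artin \cite{Artin} asserts that the pure braid group $\mathbf{PA}_n\subset\mathbf{A}_n$ is a characteristic subgroup for every $n\ge 2$. Since $\pi_1(\mathcal{C}^n_{\blc})=\pi_1(\mathcal{C}^{n+1})=\mathbf{A}_n$ and $\pi_1(\mathcal{C}^n_{\blc,\ord})=\mathbf{PA}_n$, this plays in our setting the role that Lemma~\ref{lem: Zin77-alg} played for Zinde's Theorem. First I would apply the monodromy argument of Corollary~\ref{lem: Zin77} to lift any $F\in\Aut\mathcal{C}^n_{\blc}$ to an automorphism $\widetilde F\in\Aut\mathcal{C}^n_{\blc,\ord}$; Lemma~\ref{lem: normalizer} would then identify $\Aut\mathcal{C}^n_{\blc}\cong N/\mathbf{S}(n+1)$ where $N=\Norm_{\Aut\mathcal{C}^n_{\blc,\ord}}(\mathbf{S}(n+1))$.

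Next I would exploit the regular isomorphism
\[
\phi\colon\mathcal{C}^n_{\blc,\ord}\stackrel{\simeq}{\longrightarrow}\mathcal{C}^n_{\ord}(\mathbb{C}^*),\qquad(q_1,\ldots,q_{n+1})\mapsto(q_1-q_{n+1},\ldots,q_n-q_{n+1})\,,
\]
whose inverse sends $(q_1',\ldots,q_n')$ to the balanced tuple with $q_{n+1}=-\tfrac{1}{n+1}\sum q_i'$ and $q_i=q_i'+q_{n+1}$. This transfers the entire problem into $\Aut\mathcal{C}^n_{\ord}(\mathbb{C}^*)$, whose structure is given by Corollary~\ref{cor: identity-component}:
\[
\Aut\mathcal{C}^n_{\ord}(\mathbb{C}^*)\cong\bigl(\mathcal{O}^\times(\mathcal{C}^n_{\ord}(\mathbb{C}^*))^{\mathbb{C}^*}\rtimes(\Z/2\Z)\bigr)\rtimes\mathbf{S}(n+2)\,.
\]
A direct calculation shows that, under $\phi$, the stabilizer $\mathbf{S}(n)\subset\mathbf{S}(n+1)$ of the index $n+1$ acts by the standard coordinate permutations on $\mathcal{C}^n_{\ord}(\mathbb{C}^*)$, whereas a transposition $(i,n+1)$ acts by the affine automorphism $(q_1',\ldots,q_n')\mapsto(q_1'-q_i',\ldots,-q_i',\ldots,q_n'-q_i')$, with $-q_i'$ placed in the $i$th slot.

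The final and most delicate step is to determine $N$ by an elimination modelled on \ref{sit: 4cases} and Proposition~\ref{prop: end-Zinde-thm}. Lemma~\ref{lem: algebra} forces the projection to the Kaliman factor $\mathbf{S}(n+2)$ of every element of $N$ to lie in the normalizer of the image of the transported $\mathbf{S}(n+1)$; the compatibility relations coming from the affine action of the exotic transpositions $(i,n+1)$ should then force the $\mathcal{O}^\times$-component to be a scalar and the $(\Z/2\Z)$-factor to be trivial, leaving exactly the product $\mathbf{S}(n+1)\cdot\mathbb{C}^*$ where $\mathbb{C}^*$ acts on $\mathcal{C}^n_{\blc,\ord}$ by the dilations $Q\mapsto sQ$. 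Quotienting by $\mathbf{S}(n+1)$ then produces $\Aut\mathcal{C}^n_{\blc}\cong\mathbb{C}^*$. The main obstacle will be to locate the image of the exotic transpositions $(i,n+1)$ inside the Kaliman decomposition — they mix the $\mathbf{S}(n+2)$-factor, the involution $\iota\colon Q\mapsto Q^{-1}$, and a non-trivial unit function — and then to solve the resulting system of functional equations, which is a genuinely enriched variant of the case analysis in the proof of Proposition~\ref{prop: end-Zinde-thm}.
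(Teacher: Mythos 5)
Your plan coincides with the paper's own proof in Section \ref{sec: KLZ-thm revisited}: lift $F$ via Artin's theorem and the monodromy argument, transport the problem by the isomorphism $\widetilde\varphi$ of (\ref{eq: isom})--(\ref{eq: isom-1}) into $\Aut\mathcal{C}^{n}_{\ord}(\mathbb{C}^*)$, identify $\Aut\mathcal{C}^{n}_{\blc}(\mathbb{C})$ with $\widetilde N/\mathbf{S}(n+1)$, and then eliminate cases inside the Kaliman decomposition exactly as in the proof of Proposition \ref{prop: end-Zinde-thm}. The delicate step you anticipate is carried out there by Lemmas \ref{lem: N2-bis} and \ref{lem: PSL2-invar} together with Proposition \ref{prop: end-of-the-proof-thm-8.1}; note only that the transported transposition $(n,n+1)$ comes out simpler than you predict, namely as the constant unit $-1$ times $\sigma_n\in\mathbf{S}(n+2)$ (so neither the inversion $\iota$ nor a non-constant unit function enters), while the $\widetilde{\mathbf{S}}$-invariance of the unit component needed to set up your functional equations is obtained via a ${\bf PSL}(2,\mathbb{C})$-equivariant lift to $\mathcal{C}^{n+2}_{\ord}(\mathbb{P}^1)$.
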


The reader can find  two different proofs of this result in Sections \ref{sec:Zinde's Thm.} and \ref{Sec: 
Holomorphic endomorphisms of Cblc(n-1)}. Both of them refer to Tame Map Theorem. The  
alternative proof given below avoids addressing 
Tame Map Theorem. In turn, by Proposition \ref{prop: weak-TMT}, Tame Map Theorem in the particular case of biregular automorphisms of 
the configuration spaces 
$\mathcal{C}_{\blc}^{n}(\mathcal{C})$ and $\mathcal{C}^{n}(\mathcal{C}^*)$  
can be derived from 
Theorem \ref{thm: 5.1-a} 
and Zinde's Theorem proven in Section \ref{sec:Zinde's Thm.}, 
respectively.

\bnota\label{not: iso-inverse}  For $Q=(q_1,\ldots,q_{n})\in{\mathcal{C}_{\ord}^{n}(\mathbb{C}^*)}$ 
we let as before $\bc(Q)=\frac{1}{n}\sum_{i=1}^{n} q_i$. Consider the map 
\begin{equation} \label{eq: isom}
\widetilde\varphi\colon \mathcal{C}_{\ord}^{n}(\mathbb{C}^*)\to\mathcal{C}_{\blc,\ord}^{n}(\mathbb{C}),
\quad Q\mapsto \left(Q-\frac{n}{n+1}\bc(Q),
 -\frac{n}{n+1}\bc(Q)\right).
\end{equation}
This map is an isomorphism with inverse 
\begin{equation} \label{eq: isom-1}
{\widetilde\varphi}^{-1}\colon \mathcal{C}_{\blc,\ord}^{n}(\mathbb{C})\to
\mathcal{C}_{\ord}^{n}(\mathbb{C}^*),\quad (q_1,\ldots,q_{n+1})\mapsto 
(q_1-q_{n+1},\ldots,q_{n}-q_{n+1})\,.
\end{equation}
\enota

\begin{prop}\label{lem: equivariant diagram} 
For $n>1$ the isomorphism ${\widetilde\varphi}$ as in (\ref{eq: isom}) 
fits in the commutative diagram
\begin{equation}\label{CD: diagram over T(0,m+3) to Com(C**)}
\CD
\mathcal{C}_{\ord}^{n-1}(\mathbb{C}^{**})\times\mathbb{C}^*
@> \eta >> \mathcal{C}_{\ord}^n(\mathbb{C}^*)
          @>\widetilde{\varphi} >> \mathcal{C}_{\blc,\ord}^{n}(\mathbb{C}) \\
@.
@V/\mathbf{S}(n)VV     @VV/\mathbf{S}(n+1) V  \\
@.\mathcal{C}^n(\mathbb{C}^*)
          @> \widetilde{\psi} >> \mathcal{C}_{\blc}^{n}(\mathbb{C})\\
\endCD
\end{equation}
where \begin{itemize}
\item[(i)]
the vertical columns are Galois coverings with the Galois groups $\mathbf{S}(n)$ and 
$\mathbf{S}(n+1)$, 
respectively, acting by permutations of coordinates. Furthermore, $\eta$ is an isomorphism as in 
{\rm (\ref{eq: transform of forms})}, and $\widetilde\psi$ is an unramified $n$-sheeted covering; 
\item[(ii)] $\widetilde\varphi$ conjugates $\mathbf{S}(n)$ with the stabilizer 
${\rm Stab}_{\mathbf{S}(n+1)}(n+1)\subset \mathbf{S}(n+1)$;
\item[(iii)] the morphisms in
 {\rm (\ref{CD: diagram over T(0,m+3) to Com(C**)})} are equivariant 
with respect to the free $\mathbb{C}^*$-actions  $Q\mapsto sQ$, $s\in\mathbb{C}^*$, 
on each of the four corners of the square,  and the standard $\mathbb{C}^*$-action on the second 
factor of $\mathcal{C}_{\ord}^{n-1}(\mathbb{C}^{**})\times\mathbb{C}^*$ identical on the first factor; 
\item[(iv)] the automorphisms of each of the varieties in 
{\rm (\ref{CD: diagram over T(0,m+3) to Com(C**)})} 
preserve the family of the $\mathbb{C}^*$-orbits on this variety. 
\end{itemize}
\end{prop}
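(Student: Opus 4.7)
The plan is to verify the four parts of the proposition in sequence, with the bulk of the work concentrated on checking the explicit formula for $\widetilde\varphi$ and its equivariance, from which the remaining statements follow by elementary manipulations of quotients.

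First I would check that $\widetilde\varphi$ of (\ref{eq: isom}) is a well-defined morphism into $\mathcal{C}_{\blc,\ord}^n(\mathbb{C})$: setting $c=\frac{n}{n+1}\bc(Q)$, the tuple $(q_1-c,\ldots,q_n-c,-c)$ has pairwise distinct entries (since $q_i\ne q_j$ for $i\ne j$ and $q_i\ne 0$ for each $i$) and its coordinates sum to $\sum_{i=1}^n q_i-(n+1)c=n\bc(Q)-n\bc(Q)=0$. A direct substitution confirms that the formula ${\widetilde\varphi}^{-1}$ of (\ref{eq: isom-1}) provides a two-sided inverse; this yields the top horizontal isomorphism in the diagram.

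For (ii), the $\mathbf{S}(n)$-invariance of $\bc(Q)$ immediately gives $\widetilde\varphi(\sigma Q)=\sigma\cdot\widetilde\varphi(Q)$ for every $\sigma\in\mathbf{S}(n)$, where $\sigma$ acts on the target as the element of ${\rm Stab}_{\mathbf{S}(n+1)}(n+1)$ that permutes the first $n$ coordinates of $\mathcal{C}_{\blc,\ord}^n(\mathbb{C})$; since ${\rm Stab}_{\mathbf{S}(n+1)}(n+1)$ consists of exactly these permutations, the conjugation assertion follows. The induced map $\widetilde\psi$ in (i) is then obtained as the composition
\[
\mathcal{C}^n(\mathbb{C}^*)=\mathcal{C}_{\ord}^n(\mathbb{C}^*)/\mathbf{S}(n)\xrightarrow{\widetilde\varphi}\mathcal{C}_{\blc,\ord}^n(\mathbb{C})/{\rm Stab}_{\mathbf{S}(n+1)}(n+1)\longrightarrow\mathcal{C}_{\blc,\ord}^n(\mathbb{C})/\mathbf{S}(n+1)=\mathcal{C}_{\blc}^n(\mathbb{C}),
\]
where the last arrow is the natural étale covering whose degree equals the index $[\mathbf{S}(n+1):{\rm Stab}_{\mathbf{S}(n+1)}(n+1)]$. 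The $\mathbb{C}^*$-equivariance in (iii) follows from $\bc(sQ)=s\bc(Q)$ for $\widetilde\varphi$, by inspection of (\ref{eq: transform of forms}) for $\eta$, and from the fact that the $\mathbf{S}(n)$- and $\mathbf{S}(n+1)$-actions commute with scalar multiplication for the vertical quotients.

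For (iv), the assertion on $\mathcal{C}_{\ord}^n(\mathbb{C}^*)$ is Corollary \ref{cor: C*-orbits}, and this propagates to $\mathcal{C}_{\ord}^{n-1}(\mathbb{C}^{**})\times\mathbb{C}^*$ and $\mathcal{C}_{\blc,\ord}^n(\mathbb{C})$ via the $\mathbb{C}^*$-equivariant isomorphisms $\eta$ and $\widetilde\varphi$. For the two unordered spaces I would prove the analog of Lemma \ref{lem: preservation-of-cylinder}, namely that any morphism $f\colon\mathbb{C}^*\to\mathcal{C}^n(\mathbb{C}^*)$ (respectively, $f\colon\mathbb{C}^*\to\mathcal{C}_{\blc}^n(\mathbb{C})$) has image contained in a single $\mathbb{C}^*$-orbit. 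Indeed, $f_*\colon\pi_1(\mathbb{C}^*)=\mathbb{Z}$ sends the generator to an element whose image in $\mathbf{S}(n)$ (respectively, $\mathbf{S}(n+1)$) has finite order, so the ramified precomposition $t\mapsto t^{n!}$ (respectively, $t\mapsto t^{(n+1)!}$) lifts $f$ through the Galois cover to a morphism into the ordered cover sharing the same image; Lemma \ref{lem: preservation-of-cylinder} applies upstairs, and the $\mathbb{C}^*$-equivariance of the covering maps transports the conclusion downstairs. Applied to $f(s)=F(sQ)$ and to $F^{-1}$, this shows that any automorphism $F$ preserves the $\mathbb{C}^*$-orbit family.

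The main obstacle is the lifting step in (iv) for the unordered spaces: one has to confirm that a suitable power of $t$ kills the monodromy of $f_*$ into the Galois group $\mathbf{S}(n)$ or $\mathbf{S}(n+1)$, after which the argument reduces cleanly to the already established ordered case via Lemma \ref{lem: preservation-of-cylinder}.
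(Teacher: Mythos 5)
Your verification of (i)--(iii) is exactly the routine check the paper has in mind (its proof dismisses these with ``can be verified without difficulty''), so there is nothing to quarrel with there. The real difference is in (iv). The paper disposes of (iv) by citing Corollary \ref{cor: C*-orbits}, which literally concerns only $\mathcal{C}_{\ord}^{n}(\mathbb{C}^*)$ and, via the $\mathbb{C}^*$-equivariant isomorphisms $\eta$ and $\widetilde\varphi$ of (iii), the other two \emph{ordered} spaces; the two unordered quotients are left to the reader. You close this gap explicitly, and in a way that is robust: instead of trying to lift an automorphism of $\mathcal{C}^{n}(\mathbb{C}^*)$ or $\mathcal{C}^{n}_{\blc}(\mathbb{C})$ to the ordered cover (which would require Corollary \ref{lem: Zin77} and would break down for $n=2$ by Example \ref{exa: non-liftable autom-m}), you lift a morphism $f\colon\mathbb{C}^*\to$ the quotient after the cyclic base change $t\mapsto t^{n!}$ (resp.\ $t^{(n+1)!}$), which kills the monodromy in the finite Galois group, and then invoke Lemma \ref{lem: preservation-of-cylinder} upstairs and push the conclusion down by equivariance; applying this to $s\mapsto F(sQ)$ and to $F^{-1}$ gives orbit preservation for all $n>1$. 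This is a correct and slightly stronger argument than the one-line reference in the paper. The only point you leave tacit is that the topological lift of an algebraic map along the finite \'etale covering is again a regular morphism (needed to apply Lemma \ref{lem: preservation-of-cylinder} as stated); this is standard (the lift is a section of the pulled-back covering, whose image is a connected component mapping isomorphically to $\mathbb{C}^*$), but it deserves a sentence.

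One small discrepancy you should have flagged: your construction of $\widetilde\psi$ exhibits it as an unramified covering of degree $[\mathbf{S}(n+1):{\rm Stab}_{\mathbf{S}(n+1)}(n+1)]=n+1$, whereas the statement asserts it is $n$-sheeted. The computation (a point of $\mathcal{C}^{n}_{\blc}(\mathbb{C})$ has exactly $n+1$ preimages, one for each choice of the distinguished point sent to the barycentric role) shows the degree is indeed $n+1$, so this is a slip in the statement rather than in your argument, but as written your proof establishes a claim that differs from the one quoted.
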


\begin{proof}
The assertions (i)--(iii) can be verified without difficulty; (iv) follows from (iii) due to Corollary \ref{cor: C*-orbits}.
\end{proof} 

\begin{sit}\label{sit: scheme-of-proof} The proof of Theorem \ref{thm: 5.1-a} 
starts as follows. Given $F\in\Aut\mathcal{C}_{\blc}^{n}(\mathbb{C})$, 
we lift it to an automorphism $\tilde F$ of $\mathcal{C}_{\blc,\ord}^{n}(\mathbb{C})$, and 
then conjugate with an automorphism $\tilde F'$ of  
$\mathcal{C}_{\ord}^{n-1}(\mathbb{C}^{**})\times\mathbb{C}^*$. Due to Proposition 
\ref{cor: sdp-decomposition}(a),
the resulting automorphism $\tilde F'$
has a triangular form $\tilde F'\colon (Q',y)\mapsto (SQ',A(Q')y)$, see Notation 
\ref{lem: Cstar-cylinder}.
The next lemma makes this first step possible.
\end{sit}

\begin{lem} Any automorphism $F$ of $\mathcal{C}_{\blc}^{n}(\mathbb{C})$
admits a lift to an automorphism $\tilde F$ of $\mathcal{C}_{\blc, \ord}^{n}(\mathbb{C})$. 
\end{lem}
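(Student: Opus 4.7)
The plan is to mimic the lifting argument already used for the $B$-type covering in Corollary \ref{lem: Zin77} and reduce the statement to the classical Artin theorem on the pure braid group.

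First I would recall the general monodromy criterion: a morphism between smooth varieties admits a lift along a connected unramified covering $\pi\colon \widetilde{\mathcal{Y}}\to\mathcal{Y}$ if and only if the induced endomorphism of $\pi_1(\mathcal{Y})$ (well defined up to an inner automorphism) stabilizes the subgroup $\pi_*(\pi_1(\widetilde{\mathcal{Y}}))\subset\pi_1(\mathcal{Y})$. In our setting the covering is the unramified Galois covering
$\pi\colon \mathcal{C}_{\blc,\ord}^{n}(\mathbb{C})\to\mathcal{C}_{\blc}^{n}(\mathbb{C})$
of Proposition \ref{lem: equivariant diagram}(i), whose deck group is $\mathbf{S}(n+1)$.

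Next, I would identify the fundamental groups. The projection $\mathcal{C}^{n+1}_{\blc}(\mathbb{C})\times\mathbb{C}\cong\mathcal{C}^{n+1}(\mathbb{C})$ in the decomposition (\ref{eq: 3 cylinders}) (with $n$ replaced by $n+1$) is $\mathbb{C}_+$-equivariantly trivial, so $\pi_1(\mathcal{C}_{\blc}^{n}(\mathbb{C}))\cong\pi_1(\mathcal{C}^{n+1}(\mathbb{C}))=\mathbf{A}_{n}$, the Artin braid group on $n+1$ strands, while the subgroup corresponding to the ordered cover is the pure Artin braid group $\mathbf{PA}_{n}=\pi_*(\pi_1(\mathcal{C}_{\blc,\ord}^{n}(\mathbb{C})))$, that is, the kernel of the natural surjection $\mathbf{A}_{n}\to\mathbf{S}(n+1)$.

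The crux is now the classical result of Artin \cite[Theorem 3]{Artin}, which asserts that $\mathbf{PA}_{n}$ is a \emph{characteristic} subgroup of $\mathbf{A}_{n}$, i.e., it is stable under every automorphism of $\mathbf{A}_{n}$. Thus, if $F\in\Aut\,\mathcal{C}_{\blc}^{n}(\mathbb{C})$, then $F_*\in\Aut\,\mathbf{A}_{n}$ (well defined up to inner conjugation, but this ambiguity is irrelevant since $\mathbf{PA}_{n}$ is normal) satisfies $F_*(\mathbf{PA}_{n})=\mathbf{PA}_{n}$. Applying the monodromy criterion both to $F$ and to $F^{-1}$ yields a lift $\tilde F\in\Aut\,\mathcal{C}_{\blc,\ord}^{n}(\mathbb{C})$ of $F$, as required.

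The only potential obstacle is the indexing and the reference: unlike in the $B$-type setting of Lemma \ref{lem: Zin77-alg}, where the classical Artin theorem fails for small $n$ and we had to invoke Cohen--Paris and Franzsen, here the statement is the original Artin theorem and holds verbatim for all $n\ge 2$; no low-dimensional case analysis is needed.
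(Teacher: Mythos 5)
Your proof is correct and coincides with the paper's own argument: identify $\pi_1(\mathcal{C}_{\blc}^{n}(\mathbb{C}))\cong\mathbf{A}_{n}$ and $\pi_*(\pi_1(\mathcal{C}_{\blc,\ord}^{n}(\mathbb{C})))=\mathbf{PA}_{n}$ via the cylinder decomposition (\ref{eq: 3 cylinders}), invoke Artin's theorem that $\mathbf{PA}_{n}$ is characteristic in $\mathbf{A}_{n}$, and conclude by the monodromy theorem. Your closing remark that no low-dimensional case analysis is needed (in contrast with the $B$-type situation) is also accurate.
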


\begin{proof} By virtue of (\ref{eq: 3 cylinders}) we have 
$\mathcal{C}_{\blc}^{n}(\mathbb{C})\times\mathbb{C}\cong\mathcal{C}^{n+1}(\mathbb{C})$. 
Hence $\pi_1(\mathcal{C}_{\blc}^{n}(\mathbb{C}))\cong \pi_1(\mathcal{C}^{n+1}(\mathbb{C}))=
\mathbf{A}_{n}$ is the Artin braid group with $n+1$ strands. 
Similarly, the isomorphism 
$\mathcal{C}_{\blc,\ord}^{n}(\mathbb{C})\times\mathbb{C}\cong\mathcal{C}_{\ord}^{n+1}(\mathbb{C})$ 
yields an isomorphism 
$\pi_1(\mathcal{C}_{\blc,\ord}^{n}(\mathbb{C}))\cong\pi_1(\mathcal{C}_{\ord}^{n+1}(\mathbb{C}))
=\mathbf{PA}_{n}$,
where as before $\mathbf{PA}_{n}\subset \mathbf{A}_{n}$ is the pure braid group on $n+1$ strands. 
By virtue of  
Artin's Theorem (\cite[Theorem 3]{Artin})\footnote{See \cite[Theorem 10]{Lin11} 
for a  more general result.}, $\mathbf{PA}_{n}$ is a characteristic subgroup of $\mathbf{A}_{n}$. 
Therefore, the induced automorphism $F_*$ of $\mathbf{A}_{n}$  (which is well defined modulo an inner automorphism of $\mathbf{A}_{n}$) 
preserves the subgroup $\mathbf{PA}_{n}$. 
Now the assertion follows by the monodromy theorem. 
 \end{proof} 

\bnota\label{not: normalizer-bis} Consider the subgroup 
$$
\widetilde{\mathbf{S}}=\widetilde\varphi^{-1}\mathbf{S}(n+1)
\widetilde\varphi\subset\Aut\mathcal{C}_{\ord}^{n}(\mathbb{C}^*)
$$
conjugated with $\mathbf{S}(n+1)\subset\Aut\mathcal{C}_{\blc,\ord}^{n}(\mathbb{C})$  
via $\widetilde\varphi$. 
We let $\widetilde N=\Norm_{\Aut\mathcal{C}_{\ord}^{n}(\mathbb{C}^*)}(\widetilde{\mathbf{S}})$
be the normalizer of $\widetilde{\mathbf{S}}$ in $\Aut\mathcal{C}_{\ord}^{n}(\mathbb{C}^*)$.
\enota

 By Lemma \ref{lem: normalizer}, $\widetilde N$ is isomorphic to the subgroup in 
$\Aut\mathcal{C}_{\blc,\ord}^{n}(\mathbb{C})$ of all lifts of the automorphisms from 
$\Aut\mathcal{C}_{\blc}^{n}(\mathbb{C})$. The next corollary of Lemma  \ref{lem: normalizer} 
is immediate.

\begin{cor}\label{cor: multiplication-by-cst} We have $\Aut\mathcal{C}_{\blc}^{n}(\mathbb{C})\cong 
\widetilde N/\mathbf{S}(n+1)$.
\ecor

\bsit\label{sit: proof-of-thm-8.1}  In Proposition \ref{prop: end-of-the-proof-thm-8.1} we show that 
$\widetilde N=\mathbb{C}^*\times\mathbf{S}(n+1)$, and so, by Corolary \ref{cor: multiplication-by-cst}, 
$\Aut\mathcal{C}_{\blc}^{n}(\mathbb{C})\cong\mathbb{C}^*$, where in both cases we mean the standard 
diagonal action of  $\mathbb{C}^*$ on $\mathcal{C}_{\blc,\ord}^{n}(\mathbb{C})$ and on 
$\mathcal{C}_{\blc}^{n}(\mathbb{C})$, respectively. 
This proves Theorem \ref{thm: 5.1-a}. \esit

\brem\label{rem: involutions-1} We let as before $\sigma_i=(i,i+1)\in\mathbf{S}(k)$ for $k>i$. 
Note that for any $i=1,\ldots,n-1$ the natural actions of the transposition $\sigma_i$ on the varieties 
in the upper line of (\ref{CD: diagram over T(0,m+3) to Com(C**)}) are mutually conjugated via the isomorphisms 
$\eta$ and $\widetilde\varphi$. In other words, the natural $\mathbf{S}(n)$-action on 
$\mathcal{C}_{\ord}^{n}(\mathbb{C}^*)$ is conjugated with the 
${\rm Stab}_{\mathbf{S}(n+1)}(n+1)$-action on $\mathcal{C}_{\blc,\ord}^{n}(\mathbb{C})$ and the 
$\left({\rm Stab}_{\mathbf{S}(n+2)}(n+1)\cap{\rm Stab}_{\mathbf{S}(n+2)}(n+2)\right)$-action on 
$\mathcal{C}_{\ord}^{n-1}(\mathbb{C}^{**})\times\mathbb{C}^*$, where $\mathbf{S}(n+2)$ 
is identified with 
$\Aut\mathcal{C}_{\ord}^{n-1}(\mathbb{C}^{**})\times\{\id\}\subset
\Aut(\mathcal{C}_{\ord}^{n-1}(\mathbb{C}^{**})
\times\mathbb{C}^*)$, see Lemma \ref{lem: Kaliman}.

The transposition $\sigma_n=(n,n+1)\in\mathbf{S}(n+2)$ acts on 
$\mathcal{C}_{\ord}^{n-1}(\mathbb{C}^{**})$ via $$Q=(q_1,\ldots,q_{n-1})\mapsto \alpha(Q)=
(\alpha(q_1),\ldots,\alpha(q_{n-1}))\,,$$ where 
$Q\in\mathcal{C}_{\ord}^{n-1}(\mathbb{C}^{**})$ and $\alpha(z)=1-z$ 
for $z\in\mathbb{C}$. The action of the transposition 
$\sigma_n\in\mathbf{S}(n+1)\subset\Aut\mathcal{C}_{\blc,\ord}^{n}(\mathbb{C})$ 
on $\mathcal{C}_{\blc,\ord}^{n}(\mathbb{C})$ gives rise to the involution, say, 
$\varrho$ on $\mathcal{C}_{\ord}^{n}(\mathbb{C}^*)$, where
\be\label{eq: invol-sigma-n}
\varrho=\widetilde\varphi^{-1}\circ\sigma_n\circ\widetilde\varphi\in\widetilde{\mathbf{S}},
\quad Q=(q_1,\ldots,q_n)\mapsto (q_1-q_n,\ldots,q_{n-1}-q_n,-q_n)\,.
\ee
In turn, $\varrho$ gives rize to  the involution $(\sigma_n,\delta)$ on 
$\mathcal{C}_{\ord}^{n-1}(\mathbb{C}^{**})\times\mathbb{C}^*$, 
where $\delta(z)=-z$ for $z\in\mathbb{C}^*$. Thus, in the notation of \ref{sit: decomposition-again}, 
\be\label{eq: sigma-n} 
\rho(\varrho)=\rho(\widetilde\varphi^{-1}\circ\sigma_n\circ\widetilde\varphi) =\rho(\sigma_n,\delta)
=\sigma_n\in\mathbf{S}(n+2)=\widetilde H_2\,,
\ee
where $\rho: \Aut\mathcal{C}_{\ord}^{n}(\mathbb{C}^*)=\widetilde H_1
\rtimes\widetilde H_2\to\widetilde H_2$ is the natural surjection. \erem

These observations lead to the following lemma.

\blem\label{lem: N2-bis} Letting $\widetilde N_2=\Norm_{\mathbf{S}(n+2)}
(\rho(\widetilde{\mathbf{S}}))\subset\widetilde H_2= \mathbf{S}(n+2)$ we have
\be\label{eq: norm-3} \widetilde N_2=\rho(\widetilde{\mathbf{S}})=
{\rm Stab}_{\mathbf{S}(n+2)}(n+2)\subset \widetilde N\,.
\ee
\elem

\bproof 
According to Remark 
\ref{rem: involutions-1}, the isomorphism 
$\widetilde\varphi$ conjugates 
$\sigma_i\in\mathbf{S}(n+1)\subset 
\Aut\mathcal{C}_{\blc,\ord}^{n}(\mathbb{C})$ to 
$\sigma_i\in\mathbf{S}(n)\subset 
\Aut\mathcal{C}_{\ord}^{n}(\mathbb{C}^*)$ for 
$i=1,\ldots,n-1$, and $\sigma_n$ to 
$\delta\circ\sigma_n\in\Aut\mathcal{C}_{\ord}^{n}(\mathbb{C}^*)$, 
where $\delta\in\mathbb{C}^*\subset \widetilde N$, 
$\delta\colon Q\mapsto -Q$, see (\ref{eq: sigma-n}). 
The latter projects further to $\sigma_n\in\mathbf{S}(n+2)$ 
under $\rho$,  see again (\ref{eq: sigma-n}). 
Hence $\rho(\widetilde{\mathbf{S}})=
{\rm Stab}_{\mathbf{S}(n+2)}(n+2)\subset\mathbf{S}(n+2)$. 
The normalizer of the subgroup 
${\rm Stab}_{\mathbf{S}(n+2)}(n+2)\cong\mathbf{S}(n+1)$ in 
$\mathbf{S}(n+2)$ coincides with this subgroup. 
These observations yield the equalities in (\ref{eq: norm-3}). 

We have $\widetilde{\mathbf{S}}=\widetilde\varphi^{-1}\mathbf{S}(n+1)
\widetilde\varphi\not\subset\mathbf{S}(n+2)=\widetilde H_2$. 
Indeed, $\widetilde\varphi^{-1}\sigma_n\widetilde\varphi=\delta\sigma_n$ by (\ref{eq: sigma-n}).
Nevertheless, $\rho (\widetilde{\mathbf{S}})= {\rm Stab}_{\mathbf{S}(n+2)}(n+2)
\subset \widetilde N$, since 
 $\mathbb{C}^*\subset \widetilde N$ and $\widetilde{\mathbf{S}}\subset \widetilde N$ 
by definition of $\widetilde N$, see \ref{not: normalizer-bis}. Hence 
$\mathbb{C}^*\cdot\widetilde{\mathbf{S}}\subset \widetilde N$, 
and, in particular, $\sigma_n=\rho(\delta\sigma_n)=\delta(\delta\sigma_n)\in \widetilde N$, because 
$\delta\sigma_n\in\widetilde{\mathbf{S}}\subset \widetilde N$. It remains to note that also 
$\mathbf{S}(n-1)\subset\widetilde{\mathbf{S}}\subset \widetilde N$, see Remark \ref{rem: involutions-1}, 
and so, $\rho (\widetilde{\mathbf{S}})=\langle\mathbf{S}(n-1), 
\rho(\delta\sigma_n)\rangle\subset \widetilde N$. 
\eproof

\bsit\label{sit: 4cases-bis} By Lemma \ref{lem: algebra} we have 
$\rho(\widetilde N)\subset\widetilde N_2$. Due to
Lemma \ref{lem: N2-bis}
and decomposition (\ref{eq: units3}), 
any automorphism $F\in \widetilde N$ admits one 
of the following presentations:
\be\label{eq: i-ii}
{\rm (i)}\quad F\colon  Q\mapsto f(Q)\cdot\sigma(Q)\quad\mbox{and}\quad {\rm (ii)}\quad
F\colon  Q\mapsto f(Q)\cdot (\tau\circ\sigma)(Q)\,,
\ee
where $Q\in \mathcal{C}_{\ord}^{n}(\mathbb{C}^*)$, $f\in\mathcal{O}^\times 
(\mathcal{C}_{\ord}^{n}(\mathbb{C}^*))^{\mathbb{C}^*}$, 
$\sigma\in {\rm Stab}_{\mathbf{S}(n+2)}(n+2)=\mathbf{S}(n+1)\subset\mathbf{S}(n+2)$, 
and
$\tau\colon Q\mapsto q_n^{-2}Q$ as in (\ref{eq: iota-tau}) generates the factor $\Z/2\Z$ 
in (\ref{eq: units3}).
\esit

\blem\label{lem: PSL2-invar} Let $F\in \widetilde N$ be as in $ (i)$ (as in $(ii)$, respectively). 
Then the function $f$ ($q_n^{-2}f$, respectively) is $\widetilde{\mathbf{S}}$-invariant. 
\elem

\bproof 
We have:  $F\in \widetilde N$ if and only if
$F\circ\sigma^{-1}\in \widetilde N$. Indeed, $\sigma\in\rho(\widetilde{\mathbf{S}})\subset 
\widetilde N$ 
by Lemma \ref{lem: N2-bis}.  
Thus, it suffices to prove the assertions for the automorphisms $F\in \widetilde N$ of the forms 
$$
{\rm (i')}\quad F\colon Q\mapsto f(Q)Q\quad\mbox{and}\quad {\rm (ii')}\quad
F\colon  Q\mapsto q_n^{-2}f(Q)Q\,,
$$
where $Q\in \mathcal{C}_{\ord}^{n}(\mathbb{C}^*)$ and $f\in\mathcal{O}^\times 
(\mathcal{C}_{\ord}^{n}(\mathbb{C}^*))^{\mathbb{C}^*}$.

To this end, consider the quotient morphism 
$$
\theta\colon\mathcal{C}_{\ord}^{n+2}(\mathbb{P}^1)
\stackrel{/{\bf PSL}(2,\mathbb{C})}{\longrightarrow}\mathcal{C}_{\ord}^{n-1}(\mathbb{C}^{**})\,
$$
with respect to the natural diagonal ${\bf PSL}(2,\mathbb{C})$-action on 
$\mathcal{C}_{\ord}^{n+2}(\mathbb{P}^1)$ (see Remark \ref{sit: Lin-Kaliman}). 
It admits a section 
$$
\mathcal{C}_{\ord}^{n-1}(\mathbb{C}^{**})\ni Q=(q_1,\ldots,q_{n-1})\mapsto 
(q_1,\ldots,q_{n-1},0,1,\infty)\in\mathcal{C}_{\ord}^{n+2}(\mathbb{P}^1)\,.
$$
The latter leads to a ${\bf PSL}(2,\mathbb{C})$-equivariant factorization
$$
\theta\colon\mathcal{C}_{\ord}^{n+2}(\mathbb{P}^1)\stackrel{\cong}{\longrightarrow}
\mathcal{C}_{\ord}^{n-1}(\mathbb{C}^{**})\times{\bf PSL}(2,\mathbb{C})
\stackrel{{\rm pr}_1}{\longrightarrow}\mathcal{C}_{\ord}^{n-1}(\mathbb{C}^{**})\,.
$$
The $\mathbf{S}(n+2)$-action on $\mathcal{C}_{\ord}^{n-1}(\mathbb{C}^{**})$
lifts naturally to the direct product, and then also to $
\mathcal{C}_{\ord}^{n+2}(\mathbb{P}^1)$, where it acts via permutations of coordinates, 
see Remark \ref{sit: Lin-Kaliman}. 
Consider natural isomorphisms
$$
\aligned
\mathcal{O}^\times(\mathcal{C}_{\ord}^n (\mathbb{C}^*))^{\mathbb{C}^*}\cong 
\mathcal{O}^\times(\mathcal{C}_{\ord}^{n-1}(\mathbb{C}^{**})\times\mathbb{C}^*)^{\mathbb{C}^*}\cong
\mathcal{O}^\times(\mathcal{C}_{\ord}^{n-1}(\mathbb{C}^{**}))\\
\cong\theta^*(\mathcal{O}^\times(\mathcal{C}_{\ord}^{n-1}(\mathbb{C}^{**})))=
\mathcal{O}^\times(\mathcal{C}_{\ord}^{n+2}(\mathbb{P}^1))^{{\bf PSL}(2,\mathbb{C})}\,.
\endaligned 
$$
Any function $f\in\mathcal{O}^\times(\mathcal{C}_{\ord}^n (\mathbb{C}^*))^{\mathbb{C}^*}$
 lifts through this chain of isomorphisms to a function 
$\tilde f\in \mathcal{O}^\times(\mathcal{C}_{\ord}^{n+2}(\mathbb{P}^1))^{{\bf PSL}(2,\mathbb{C})}$. 
Respectively, an automorphism $F\in\widetilde N$ as in (i$'$) lifts first to a triangular automorphism 
$$
F(\id, A)\in \Aut (\mathcal{C}_{\ord}^{n-1}(\mathbb{C}^{**})\times\mathbb{C}^*),\quad Q=(Q',q)\mapsto
(Q', f(Q')q)\,\,\,\mbox{for all}\,\,\, Q\in\mathcal{C}_{\ord}^{n-1}(\mathbb{C}^{**})\times\mathbb{C}^*,
$$
 and then to
$$
\tilde F\in\Aut (\mathcal{C}_{\ord}^{n+2}(\mathbb{P}^1)\times\mathbb{C}^*),\quad Q=(Q',q)\mapsto
(Q', \tilde f(Q')q)\,\,\,\mbox{for all}\,\,\,
Q\in\mathcal{C}_{\ord}^{n+2}(\mathbb{P}^1)\times\mathbb{C}^*\,,
$$
where, as before, $\tilde f\in 
\mathcal{O}^\times(\mathcal{C}_{\ord}^{n+2}(\mathbb{P}^1))^{{\bf PSL}(2,\mathbb{C})}$ is the lift of $f$. 
Since $F\in\widetilde N$, the automorphism $
\tilde F$ normalizes the subgroup $\mathbf{S}(n+1)={\rm Stab}_{\mathbf{S}(n+2)}(n+2)\subset 
\mathbf{S}(n+2)$ acting naturally on $\mathcal{C}_{\ord}^{n+2}(\mathbb{P}^1)\times\mathbb{C}^*$ 
identically on the second factor. 
Hence for any $\sigma\in \mathbf{S}(n+1)$ there exists $\sigma'\in \mathbf{S}(n+1)$ 
such that 
$\tilde F\circ\sigma=\sigma'\circ\tilde F$, where for any $Q=(Q',q)\in\mathcal{C}_{\ord}^{n+2}
(\mathbb{P}^1)\times\mathbb{C}^*$ we have
$$
\tilde F\circ\sigma\colon Q\mapsto \left(\sigma(Q'),\tilde f(\sigma(Q'))q\right)\quad\mbox{and}
\quad \sigma'\circ\tilde F\colon Q
\mapsto \left(\sigma'(Q'),\tilde f(Q')q\right)\,.
$$
Thus the equality $\tilde F\circ\sigma=\sigma'\circ\tilde F$ holds if and only if $\sigma=\sigma'$ and 
$\tilde f\circ\sigma=\tilde f$. Hence $\tilde f$ is  
$\mathbf{S}(n+1)$-invariant, and $\tilde F$ commutes with the $\mathbf{S}(n+1)$-action
on $\mathcal{C}_{\ord}^{n+2}(\mathbb{P}^1)\times\mathbb{C}^*$. Since $\widetilde{\mathbf{S}}=
\eta\mathbf{S}(n+1)\eta^{-1}\subset\Aut\mathcal{C}_{\ord}^{n}(\mathbb{C}^*)$ 
(see (\ref{CD: diagram over T(0,m+3) to Com(C**)})),
it follows that the function $f\in\mathcal{O}^\times(\mathcal{C}_{\ord}^n(\mathcal{C}^*))^{\mathcal{C}^*}$ 
is $\widetilde{\mathbf{S}}$-invariant. 

For an automorphism $F\in\widetilde N$ as in (ii$'$), the same argument  shows that the function 
$q_n^{-2}f$ is $\widetilde{\mathbf{S}}$-invariant, as reqired.
\eproof

The next proposition 
ends the proof of Theorem \ref{thm: 5.1-a}. 

\bprop\label{prop: end-of-the-proof-thm-8.1} In the notation as in \ref{sit: proof-of-thm-8.1}--\ref{lem: N2-bis} 
we have $\widetilde N=\mathbb{C}^*\times\mathbf{S}(n+1)$, where the subgroup 
$\mathbb{C}^*\subset \widetilde N$ acts on $\mathcal{C}_{\ord}^{n}(\mathbb{C}^*)$ via  
$(s, Q)\mapsto sQ$ for $s\in\mathbb{C}^*$ and $Q\in \mathcal{C}_{\ord}^{n}(\mathbb{C}^*)$. 
\eprop

\bproof 
Consider first $F\in\widetilde N$ as in (\ref{eq: i-ii}(i)). By Lemma \ref{lem: PSL2-invar} 
the function $f$  in (\ref{eq: i-ii}(i)) is $\widetilde{\mathbf{S}}$-invariant.  
Hence $f$ is $\mathbf{S}(n)$-invariant, where $\mathbf{S}(n)\subset\widetilde{\mathbf{S}}$ 
acts on $\mathcal{C}_{\ord}^{n}(\mathbb{C}^*)$
via permutations of coordinates. It follows that $f=s\tilde h_n^k$ for some $s\in\mathbb{C}^*$ 
and $k\in\Z$, see \ref{sit: tilde h}. Furthermore, $f$ is $\sigma_n$-invariant, where $\sigma_n\in\mathbf{S}(n+1)$ 
acts on $\mathcal{C}_{\ord}^{n}(\mathbb{C}^*)$ via the involution $\varrho$ in (\ref{eq: invol-sigma-n}).
The latter invariance translates as the identity
$$
s\tilde h_n^k(Q)=sh_n^k(\varrho(Q))\,,
$$
which is definitely wrong unless $k=0$, and so, $f=s\in\mathbb{C}^*$. 
Thus, in case (i) we have $F\in\mathbb{C}^*$. 
 
It remains to eliminate the possibility that 
some $F$ as in (\ref{eq: i-ii}(ii)) belongs to $\widetilde N$. 
By Lemma  \ref{lem: PSL2-invar}, in this case the function 
$q_n^{-2}f\in \mathcal{O}^\times
(\mathcal{C}_{\ord}^{n}(\mathbb{C}^*))$ is 
$\widetilde{\mathbf{S}}$-invariant, and, in particular, 
$\mathbf{S}(n)$-invariant, while $f$ is $\mathbb{C}^*$-invariant. 
However, for $n>2$ these lead to a contradiction
 in the same way as in the proof of Proposition \ref{prop: end-Zinde-thm}, 
case (ii). Indeed, an argument in this proof shows that for $n=2$ the condition 
$q_n^{-2}f\in \mathcal{O}^\times
(\mathcal{C}_{\ord}^{n}(\mathbb{C}^*))^{\mathbb{C}^*\times\mathbf{S}(n)}$ implies the equality $q_n^{-2}f=sd_2^k/z_2^{k+1}$ for some $s\in \mathbb{C}^*$ and $k\in\Z$. The latter function is also $\varrho$-invariant. Using (\ref{eq: invol-sigma-n}), this can be
translated as the identity 
$$
(-1)^kq_1^{3k+1}=(q_1-q_2)^{3k+1}\quad\mbox{for all}\quad (q_1,q_2)\in
\mathcal{C}_{\ord}^{2}(\mathbb{C}^*)\,,
$$
which is definitely wrong whatever is $k\in\Z$.
\eproof
 
\section{The group $\Aut\mathcal{SC}^2_{\rm blc}$}\label{sec: Kaliman for n=4}
In this section we prove Kaliman's Theorem \ref{Thm: Kaliman-Lin-Zinde}(b) 
in the remaining case $n=4$. Let us repeat this statement.

\begin{thm}\label{thm: Kaliman for n=4}
 We have $\Aut\mathcal{SC}^2_{\rm blc}\cong\Z/12\Z$, 
where $\zeta\in\Z/12\Z$ acts on 
$\mathcal{SC}^2_{\rm blc}$ via $Q\mapsto \zeta Q$ for $Q\in \mathcal{SC}^2_{\rm blc}$. 
\end{thm}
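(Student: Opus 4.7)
The inclusion $\mathbb{Z}/12\mathbb{Z}\hookrightarrow\Aut\mathcal{SC}^2_{\blc}$ is immediate: since $D_4$ is homogeneous of degree $4\cdot 3=12$ on $\Sym^4\mathbb{C}$, scalar multiplication by a $12$-th root of unity preserves both the balanced condition $\bc(Q)=0$ and the level $D_4=1$, and therefore acts on $\mathcal{SC}^2_{\blc}$. The content of the theorem is the reverse inclusion $\Aut\mathcal{SC}^2_{\blc}\subseteq\mu_{12}$.

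The strategy is to reduce to the known identity $\Aut\mathcal{C}^3_{\blc}\cong\mathbb{C}^*$ (Theorem \ref{thm: 5.1-a} with $n=3$) by means of the \'etale $\mu_{12}$-Galois covering
\[
\pi\colon\mathcal{SC}^2_{\blc}\times\mathbb{C}^*\longrightarrow\mathcal{C}^3_{\blc},\qquad (Q,s)\longmapsto sQ,
\]
arising from the scalar $\mathbb{C}^*$-action on $\mathcal{C}^3_{\blc}$ together with the identity $D_4(sQ)=s^{12}D_4(Q)$; here $\mu_{12}$ acts on the total space by $\zeta\cdot(Q,s)=(\zeta Q,\zeta^{-1}s)$. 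Given $F\in\Aut\mathcal{SC}^2_{\blc}$, the product automorphism $F\times\id$ descends through $\pi$ to $\widehat F\in\Aut\mathcal{C}^3_{\blc}$ exactly when $F$ commutes with the $\mu_{12}$-action (Lemma \ref{lem: normalizer}). Assuming such equivariance, Theorem \ref{thm: 5.1-a} forces $\widehat F(Q')=tQ'$ for some $t\in\mathbb{C}^*$; chasing the relation $\widehat F\circ\pi=\pi\circ(F\times\id)$ yields $F(Q)=tQ$ for all $Q\in\mathcal{SC}^2_{\blc}$, and the preservation of the fiber $D_4^{-1}(1)$ forces $t^{12}=1$, so $F\in\mu_{12}$.

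The main obstacle is thus to establish the $\mu_{12}$-equivariance of an arbitrary $F\in\Aut\mathcal{SC}^2_{\blc}$. This is precisely the point where Kaliman's original argument for $n\ge 5$ collapses, since the subgroup $\mathbf{A}'_{n-1}\cap\mathbf{PA}_{n-1}\subset\mathbf{A}'_{n-1}$, characteristic for $n\ge 5$, is no longer stable under arbitrary endomorphisms of $\mathbf{A}'_3$ when $n=4$ (Example \ref{ex: Lin's counterexample-2}). The plan is to replace the group-theoretic characteristicity argument by a geometric one, modelled on Section \ref{sec: KLZ-thm revisited}. Restrict the $\mathbf{S}(4)$-Galois cover $\mathcal{C}^3_{\blc,\ord}\to\mathcal{C}^3_{\blc}$ to $D_4^{-1}(1)=\mathcal{SC}^2_{\blc}$ and select a connected component, distinguished by a fixed sign of $\prod_{i<j}(q_i-q_j)$, to obtain a connected degree-$12$ Galois cover whose Galois group is the alternating subgroup of $\mathbf{S}(4)$ and whose fundamental group is $\mathbf{A}'_3\cap\mathbf{PA}_3$. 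The first task is to show that every $F\in\Aut\mathcal{SC}^2_{\blc}$ lifts to an automorphism $\widetilde F$ of this cover, the lifting criterion being the preservation of $\mathbf{A}'_3\cap\mathbf{PA}_3\subset\mathbf{A}'_3=\pi_1(\mathcal{SC}^2_{\blc})$ by $F_*$. Unlike for arbitrary endomorphisms, this preservation for automorphisms can be forced by combining the finiteness of the set of index-$12$ subgroups of $\mathbf{A}'_3$ with the constraints coming from the induced $F_*$-action on the lattice $H^1(\mathcal{SC}^2_{\blc},\mathbb{Z})\cong\Hom(\mathbf{A}'_3/\mathbf{A}''_3,\mathbb{Z})$ (cf.~Example \ref{ex: Mult}(b)) and from the fact that $F$ is compatible with the rulings of the natural $\mathbb{C}^*$-bundle structure on $\mathcal{SC}^2_{\blc}\times\mathbb{C}^*$. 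Once $\widetilde F$ has been obtained, embed the connected cover as a level set of a weighted-homogeneous function inside $\mathcal{C}^3_{\ord}(\mathbb{C}^*)$ via an analogue of the isomorphism $\widetilde\varphi$ of (\ref{eq: isom}), and invoke Proposition \ref{prop: end-of-the-proof-thm-8.1} (based on Zinde's Theorem, which holds already for $n=3$) to conclude that $\widetilde F$ is the composition of a scalar multiplication with a coordinate permutation. Descending back to $\mathcal{SC}^2_{\blc}$ exhibits $F$ as an element of $\mu_{12}$, which in particular secures the required $\mu_{12}$-equivariance and closes the argument.
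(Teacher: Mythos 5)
Your reduction scheme is fine as far as it goes: the cyclic covering $(Q,s)\mapsto sQ$ from $\mathcal{SC}^2_{\blc}\times\mathbb{C}^*$ onto $\mathcal{C}^3_{\blc}$ is indeed \'etale with deck group $\mu_{12}$, and \emph{if} an automorphism $F$ of $\mathcal{SC}^2_{\blc}$ were known to commute with the $\mu_{12}$-action, then Theorem \ref{thm: 5.1-a} would force $F(Q)=tQ$ with $t^{12}=1$. But the equivariance is exactly the whole content of the theorem, and you do not prove it. Your substitute — lifting $F$ to the connected degree-$12$ cover inside $\mathcal{C}^3_{\blc,\ord}$ with Galois group $\mathbf{A}(4)$ and $\pi_1=\mathbf{A}'_3\cap\mathbf{PA}_3$ — requires precisely that $F_*$ preserve $\mathbf{A}'_3\cap\mathbf{PA}_3$ inside $\mathbf{A}'_3=\pi_1(\mathcal{SC}^2_{\blc})$, and this is the step that has no known group-theoretic proof at $n=4$: Example \ref{ex: Lin's counterexample-2} shows the subgroup is not characteristic for endomorphisms, $\mathbf{A}'_3\cong\mathbb{F}_2\rtimes\mathbb{F}_2$ has a large automorphism group, and the ingredients you invoke (finiteness of the set of index-$12$ subgroups, the $F_*$-action on $H^1(\mathcal{SC}^2_{\blc},\mathbb{Z})\cong\Z^2$, ``compatibility with rulings'') do not single out this subgroup — the quotient $\mathbf{A}(4)$ is non-abelian, so the subgroup is not detected on $H_1$, and there is no natural $\mathbb{C}^*$-ruling on $\mathcal{SC}^2_{\blc}$ itself to exploit. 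Asserting that the preservation ``can be forced'' is, at this point, circular: it is a consequence of the theorem, not an available hypothesis.

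There is a second gap at the end: even granting the lift $\widetilde F$, Proposition \ref{prop: end-of-the-proof-thm-8.1} and Zinde's Theorem describe automorphisms of the \emph{full} spaces $\mathcal{C}^3_{\ord}(\mathbb{C}^*)$, $\mathcal{C}^3(\mathbb{C}^*)$, not automorphisms of a two-dimensional level hypersurface of a weighted-homogeneous function inside them; an automorphism of the covering surface need not extend to the ambient threefold, so nothing in Sections \ref{sec:Zinde's Thm.}--\ref{sec: KLZ-thm revisited} applies to $\widetilde F$. What you would need there is an ``ordered'' analogue of Kaliman's theorem for that surface, which is again the statement being proved. The paper avoids both difficulties by an entirely different, geometric argument: via the cubic resolvent it fibers $\mathcal{SC}^2_{\blc}$ over the elliptic curve $\mathcal{SC}^1_{\blc}$ with non-isotrivial smooth elliptic fibers, shows every automorphism preserves this fibration (only countably many elliptic curves are isogenous to the base, while $j(E(P))=2^83^3u_2^3$ varies), obtains a surjection $\rho\colon\Aut\mathcal{SC}^2_{\blc}\to\Aut\mathcal{SC}^1_{\blc}\cong\mu_6$, and identifies $\ker\rho$ with the fiberwise hyperelliptic involution using that a general fiber has $j\neq 0,1728$. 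To repair your proposal you would either have to supply an honest proof of the $\mu_{12}$-equivariance (equivalently, of the invariance of $\mathbf{A}'_3\cap\mathbf{PA}_3$ under the automorphisms induced by biregular automorphisms) or switch to a fibration-type argument of this kind.
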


The proof is done in Subsection \ref{ss: proof of Kaliman theorem  for n=4}.  
In Subsection \ref{ss: elliptic-pencil}
we construct an elliptic fibration $\mathcal{SC}^2_{\rm blc}\to\mathcal{SC}^1_{\rm blc}$ 
over 
an elliptic curve $\mathcal{SC}^1_{\rm blc}$. Its fibers and the base curve are 
smooth
affine plane cubics with one place at infinity. 
The group $\mu_{12}$ of the roots of unity of order 12 acts naturally on $\mathcal{SC}^2_{\rm blc}$
preserving the fibration. The action of 
$-1\in\mu_{12}$ yields the hyperelliptic involution on each fiber. 
We show in Subsection \ref{ss: proof of Kaliman theorem  for n=4} 
that any automorphism $F\in\Aut\mathcal{SC}^2_{\rm blc}$ 
acts as an element of $\mu_{12}$. This gives a proof of Theorem \ref{thm: Kaliman for n=4}. 

\subsection{Elliptic fibration of $\mathcal{SC}^2_{\rm blc}$}
\label{ss: elliptic-pencil}
Given a quartic polynomial
\begin{equation}\label{eq: polyn-f}
f(X) =X^4 + z_2X^2 + z_3X + z_4\,
\end{equation} 
we consider its cubic resolvent\footnote{In fact, the choice of a cubic 
resolvent is irrelevant for our purposes.}
$$
R_3(X) = X^3 +v_1X^2 + v_2X +v_3\,,
$$
where 
$$
v_1=-z_2,\,\,v_2=- 4z_4,\quad\mbox{and}\quad v_3= 4z_2z_4-z_3^2\,.
$$
If $q_1,\ldots,q_4$ are the roots of $f$, then, up to reordering,  
the roots of $R_3$ are
$$
\lambda_1=q_1q_2 +q_3q_4,\,\, \lambda_2=q_1q_3 +q_2q_4,
\quad\mbox{and}\quad \lambda_3=q_1q_4 +
q_2q_3\,.
$$
 We have discr$\,R_3 =$ discr $f$.

The Tschirnhausen transformation gives the cubic polynomial
\begin{equation}\label{eq: polyn-g}
g(Y) =R_3(Y+z_2/3)=
Y^3 + u_2Y +u_3,\,
\end{equation}
where
\begin{equation}\label{eq: u2}
u_2=-z_2^2/3- 4z_4
\quad\mbox{and}\quad
u_3 =
8z_2z_4/3
-2z_2^3/27-z_3^2\,.
\end{equation}
Once again, we have discr $g =$ discr$\,R_3 =$ discr $f$. 

The balanced special configuration space $\mathcal{SC}^2_{\rm blc}$ can be realized as
the surface  in $\mathbb{C}^3$ with coordinates
$(z_2,z_3,z_4)$
given by equation discr $f=1$, and the  balanced special configuration
space $\mathcal{SC}^1_{\rm blc}$ as the curve  in
$\mathbb{C}^2$  with coordinates $(u_2, u_3)$ given by equation 
\begin{align}\label{eq: discr g}
{\rm discr}\,g=-(4u_2^3+27u_3^2)=1\,,
\end{align} 
where $f$ and $g$ are as in (\ref{eq: polyn-f}) and 
(\ref{eq: polyn-g}) respectively.
 Clearly, 
$\mathcal{SC}^2_{\blc}$ is a smooth affine surface and
$\mathcal{SC}^1_{\blc}$  is 
a smooth affine elliptic cubic curve with one place at infinity 
and  zero $j$-invariant.
Since discr $g =$ discr $f$, the correspondence 
$$\pi\colon \mathcal{SC}^2_{\blc}\to \mathcal{SC}^1_{\blc},\quad
f\mapsto g\,, 
$$
yields a surjective morphism 
 given by formulas (\ref{eq: u2}). 

\begin{lem}\label{lem: elliptic fibration} In the notation as before, 
the morphism
$
\pi\colon  \mathcal{SC}^2_{\blc}\to \mathcal{SC}^1_{\blc}\,
$
yields an elliptic fibration on $\mathcal{SC}^2_{\blc}$. 
The fiber $E(P)=\pi^*(P)$ over a point $P=(u_2, u_3)\in\mathcal{SC}^1_{\blc}$ is a 
smooth, reduced 
elliptic cubic with one place at infinity and with 
$j(E(P))=2^8 3^3 u_2^3$.
\end{lem}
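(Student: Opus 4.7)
The plan is to describe $E(P)=\pi^{-1}(P)$ explicitly for $P=(u_2,u_3)\in\mathcal{SC}^1_{\blc}$ and recognize it as an affine Weierstrass model. The key observation, already recorded above, is that the Tschirnhausen substitution and the passage to the cubic resolvent preserve the discriminant, so $\discr f=\discr R_3=\discr g$. Consequently, on the preimage $\pi^{-1}(P)\subset\mathbb{C}^3_{(z_2,z_3,z_4)}$ the equation $\discr f=1$ is automatically satisfied since $\discr g(P)=1$. Thus $E(P)$ is cut out in $\mathbb{C}^3$ by the two equations
\begin{equation*}
-z_2^2/3-4z_4=u_2,\qquad 8z_2z_4/3-2z_2^3/27-z_3^2=u_3.
\end{equation*}

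Next I would eliminate $z_4$ using the first equation, $z_4=-(z_2^2/3+u_2)/4$, and substitute into the second. A short computation produces the single plane equation
\begin{equation*}
z_3^2=-\tfrac{8}{27}z_2^3-\tfrac{2}{3}u_2z_2-u_3
\end{equation*}
in $\mathbb{C}^2_{(z_2,z_3)}$. The rescaling $w=-\tfrac{2}{3}z_2$ brings this into standard Weierstrass form
\begin{equation*}
z_3^2=w^3+u_2w-u_3,
\end{equation*}
and since $E(P)\to\{z_3^2=\cdots\}$ via $(z_2,z_3,z_4)\mapsto(w,z_3)$ is an isomorphism (with $z_4$ recovered from $z_2$ by a regular formula), this identifies $E(P)$ with an affine plane cubic.

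I would then verify smoothness and reducedness as follows. The discriminant of the right-hand cubic equals $-(4u_2^3+27u_3^2)=\discr g=1$ at $P$, so its three roots are simple; hence the plane curve is smooth, reduced, and irreducible, and its projective closure in $\mathbb{P}^2$ is a smooth elliptic cubic meeting the line at infinity only at the flex $[0:1:0]$, giving one place at infinity. The value of $j(E(P))$ follows from the standard formula for $y^2=x^3+Ax+B$ with $A=u_2,\ B=-u_3$, using $4u_2^3+27u_3^2=-1$ on $\mathcal{SC}^1_{\blc}$, and yields (up to the sign convention implicit in the paper's discriminant normalization) $j(E(P))=2^8\,3^3\,u_2^3$.

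The main obstacle, and the only point requiring genuine care, is the very first step: one must check that on $\pi^{-1}(P)$ the defining equation $\discr f=1$ of $\mathcal{SC}^2_{\blc}$ is already implied by the two fiber equations $u_2(z)=u_2$ and $u_3(z)=u_3$. Without this, the fiber would be the intersection of three hypersurfaces in $\mathbb{C}^3$ rather than a plane curve, and one would have to argue separately that one of the equations is redundant. The identity $\discr f=\discr g$ makes this redundancy transparent and is the crux of the computation; once it is in hand, the rest (surjectivity of $\pi$ onto $\mathcal{SC}^1_{\blc}$, flatness from equidimensionality of the fibers on a smooth base, and the formulas for smoothness and $j$) is routine.
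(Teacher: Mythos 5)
Your proposal is correct and follows essentially the same route as the paper's proof: one uses $\discr f=\discr g$ to see that the defining equation of $\mathcal{SC}^2_{\blc}$ is redundant on the fiber, eliminates $z_4$ to present $E(P)$ as the plane Weierstrass cubic $z_3^2=Z_2^3+u_2Z_2-u_3$ with $Z_2=-2z_2/3$, notes that a singular fiber would force $4u_2^3+27u_3^2=0$, which is incompatible with $\discr g=1$, and reads off $j$ from the classical formula. Your hedge on the sign of $j(E(P))$ is reasonable (the standard normalization gives $-2^8 3^3 u_2^3$, which is what makes the later evaluation $j=1728$ at $u_2^3=-1/4$ come out right), but nothing in the application depends on this sign.
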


\bproof Since discr $g =$ discr $f$, 
plugging the expressions for $u_2$ and $u_3$ from (\ref{eq: u2}) into 
(\ref{eq: discr g}) gives the following equation of $\mathcal{SC}^2_{\blc}$ in $\mathbb{C}^3_{(z)}$,
where $z=(z_2,z_3,z_4)$:
\begin{align}\label{eq: S} 
4(-z_2^2/3- 4z_4)^3+27
(8z_2z_4/3
-2z_2^3/27-z_3^2)^2=-1\,.
\end{align} 
We can equally realize $\mathcal{SC}^2_{\blc}$ in $\mathbb{C}^5_{(z,u)}$, where 
$u=(u_2,u_3)$,  as intersection of three hypersurfaces given 
by equations (\ref{eq: u2}) and (\ref{eq: discr g}).
It is convenient however to simplify the latter system by
eliminating the variable $z_4$ from (\ref{eq: u2}). In this way we
arrive at the relation 
\begin{align}\label{eq: z4}
z_3^2  =
Z_2^3+g_2Z_2+g_3\,,
\end{align} 
where 
$$
Z_2=-2z_2/3, \,\, \,\,g_2=u_2,\quad\mbox{and}\quad g_3=-u_3\,.
$$
This yields an embedding of $\mathcal{SC}^2_{\blc}$ in $\mathbb{C}^4$ with coordinates 
$(z_2,z_3,u_2,u_3)$ onto the complete intersection of two hypersurfaces given by 
(\ref{eq: discr g}) and (\ref{eq: z4}). Then the morphism $\pi\colon \mathcal{SC}^2_{\blc}\to \mathcal{SC}^1_{\blc}$
coincides with the restriction to $\mathcal{SC}^2_{\blc}$ of the standard projection 
$(z_2,z_3,u_2,u_3)\mapsto (u_2,u_3)$. 

For a general point $(u_2,u_3)\in\mathbb{C}^2$ equation 
(\ref{eq: z4}) defines a smooth elliptic 
 cubic curve $E=E(u_2,u_3)$ in $\mathbb{C}^2_{(z_2,z_3)}$
with one place at infinity.
The member of this family of elliptic curves in $\mathbb{C}^4$ over a point $(u_2,u_3)\in\mathbb{C}^2$ is  singular 
if and only if the polynomial
 $h(Z)=Z_2^3+g_2Z_2+g_3$ has a multiple root, 
if and only if its discriminant vanishes, i.e.,
\begin{align}\label{eq: discr h}
{\rm discr}\,h=-(4g_2^3+27g_3^2)=-(4u_2^3+27u_3^2) =0\,.
\end{align} 
Any nonsingular member is a reduced and irreducible elliptic cubic with one place at infinity.
Since (\ref{eq: discr g}) and (\ref{eq: discr h}) are incompatible, 
the induced elliptic fibration on the surface
 $\mathcal{SC}^2_{\blc}\subset \mathbb{C}^4$ has no degenerate fiber, as stated.

Finally, the formula for the $j$-invariant in the lemma is the classical one, 
where the denominator disappears because of the equalities 
discr $h=$ discr $g=1$. 
\eproof

\subsection{Proof of Theorem \ref{thm: Kaliman for n=4}}\label{ss: proof of Kaliman theorem  for n=4}
In this subsection we give a proof of Theorem \ref{thm: Kaliman for n=4}. 

We let $\mu_n$ stand for the group of $n$th roots 
of unity acting on $\mathbb{C}$ in a natural way. 
The group
$\mu_3\times\mu_2=\mu_6\cong\Z/6\Z$ acts on $\mathcal{SC}^1_{\blc}$ via
$$
(u_2,u_3)\mapsto (\zeta u_2, \xi u_3),\quad\mbox{where}\quad 
\zeta\in\mu_3,\,\,\,\xi\in\mu_2\,,
$$
or, in other terms, via
\begin{equation}\label{eq: action-Gamma}
(u_2,u_3)\mapsto (\theta^2 u_2, \theta^3 u_3),\quad\mbox{where}\quad \theta\in\mu_6\,.
\end{equation}
Recall that $j(\mathcal{SC}^1_{\blc})=0$ and $\Aut\mathcal{SC}^1_{\blc}=\mu_6$ with the action of $\mu_6$ on $\mathcal{SC}^1_{\blc}$ 
as in (\ref{eq: action-Gamma}); see, e.g., 
\cite[ Ch.\ 12, Remark 4.8]{Husemoller} or \cite[Ch.\ IV, Corollary 4.7]{Ha}.
 
 We claim that any 
 automorphism $F\in\Aut \mathcal{SC}^2_{\blc}$ preserves the elliptic fibration  
$\pi\colon \mathcal{SC}^2_{\blc}\to\mathcal{SC}^1_{\blc}$ as in Lemma \ref{lem: elliptic fibration}. 
Indeed, our family $E(P)$, $P\in\mathcal{SC}^1_{\blc}$, 
is not isotrivial, 
i.e., the $j$-invariant $j(E(P))=2^8 3^3 u_2^3$ is a non-constant 
function of $P\in\mathcal{SC}^1_{\blc}$. 
If $P=P_0^\pm=(0,\pm u_3)\in\mathcal{SC}^1_{\blc}$, where $u_3=i\sqrt{3}/9$, 
then $j(E(P))=0$, and if $P=(u_2,0)\in\mathcal{SC}^1_{\blc}$, 
where $u_2^3=-1/4\in\mathbb{R}$, 
then $j(E(P))=12^3=1728$. For a general point 
$P\in\mathcal{SC}^1_{\blc}$, the 
$j$-invariant $j(E(P))$
 is different from $0$ and $1728$.

Assume that there is an automorphism $F\in\Aut \mathcal{SC}^2_{\blc}$ that does not preserve
 the fibration $\pi$. 
Then for a general fiber $E(P)$, the morphism
$\pi\circ F|_{E(P)}\colon E(P)\to\mathcal{SC}^1_{\blc}$ is non-constant. 
It extends to the projectivizations $\overline{E(P)}$ 
and $\overline{\mathcal{SC}^1_{\blc}}$ of $E(P)$ and $\mathcal{SC}^1_{\blc}$, respectively,
sending the point at infinity of $E(P)$ to the point at infinity  of $\mathcal{SC}^1_{\blc}$.  
Thus $\pi\circ F|_{E(P)}\colon E(P)\to\mathcal{SC}^1_{\blc}$ is an isogeny. 
However, the set of all elliptic curves isogeneous to $\mathcal{SC}^1_{\blc}$ is countable, 
see \cite[Ch.\ IV, Exercise 4.9.b]{Ha}. 
This yields a contradiction.

Thus for any $P\in\mathcal{SC}^1_{\blc}$ we have $F(E(P))=E(P')$ for some point $P'\in\mathcal{SC}^1_{\blc}$. 
The correspondence $\varphi\colon P\mapsto P'$ defines an automorphism of $\mathcal{SC}^1_{\blc}$. 
This gives a homomorphism $\rho\colon\Aut \mathcal{SC}^2_{\blc}\to\Aut \mathcal{SC}^1_{\blc}$, 
$F\mapsto\varphi$, where $\varphi\in
\Aut\mathcal{SC}^1_{\blc}=\mu_6$ acts as in (\ref{eq: action-Gamma}) 
for a certain $\theta\in\mu_6$. 

Consider the action  on $\mathcal{SC}^2_{\blc}$ of the cyclic group $\mu_{12}\cong\Z/12\Z$ via
\begin{equation}\label{eq: action-S}
(z_2,z_3,u_2,u_3)\mapsto (\zeta^2z_2,\zeta^3z_3,\zeta^4u_2,\zeta^6u_3),\quad\mbox{where}
\quad \zeta\in\mu_{12}\,.
\end{equation}
The projection $\pi\colon \mathcal{SC}^2_{\blc}\to\mathcal{SC}^1_{\blc}$ induces the surjection $\mu_{12}\to\mu_6$, $\zeta\mapsto\zeta^2$. 
It follows that $\rho\colon\Aut \mathcal{SC}^2_{\blc}\to\mu_6$ is  a surjection. Theorem \ref{thm: Kaliman for n=4} claims that in fact 
$\Aut \mathcal{SC}^2_{\blc}=\mu_{12}$. To confirm this claim, it suffices to establish the equality  $\ker\rho=\mu_2$, where $\mu_2$ 
acts on $\mathcal{SC}^2_{\blc}$ via (\ref{eq: action-S}) with  $\zeta\in\{1,-1\}$.  
This action restricts  to the hyperelliptic involution on any fiber of $\pi$.

Suppose that $F\in\ker\rho$, i.e., $\varphi= {\rm id}$, and so, $F$ preserves each fiber of $\pi$. 
For a fiber  $E=E(P)$, 
the automorphism $\alpha=F|_E\in\Aut E$ extends to an automorphism $\bar\alpha$ of the 
projectivization $\bar E$ of $E$. The extended automorphism $\bar\alpha$ fixes the unique 
point of $\bar E$ at infinity. This point is a flex of the cubic $\bar E$ that can be chosen 
for zero of the group low on $\bar E$.  
Any automorphism of $\bar E$ that fixes the zero point  
is a group automorphism. 
Hence also $\bar\alpha$ is. 

For a general point $P\in\mathcal{SC}^1_{\blc}$, the value $j(E(P))$ is different from $0$ 
and 
$1728$. So,  the hyperelliptic involution of $E(P)$ is a unique non-identical 
automorphism 
preserving the point at infinity, see \cite[Ch.\ IV, Corollary 4.7]{Ha}. 
Thus $\alpha=F|_E\in\Aut E$
is either identical or the hyperelliptic involution. Hence the kernel 
$\ker\rho=\mu_2$ is contained in $\mu_{12}$.
It follows that the group $\Aut \mathcal{SC}^2_{\blc}$ coincides with $\mu_{12}$ acting on 
$\mathcal{SC}^2_{\rm blc}$ via (\ref{eq: action-S}), where 
the latter action is induced by the action $Q\mapsto \zeta Q$ 
for $\zeta\in\mu_{12}$ and 
$Q\in\mathcal{SC}^2_{\rm blc}$. This proves Theorem \ref{thm: Kaliman 
for n=4}. \qed

\vskip 3mm

The following 
example is essentially borrowed in \cite[\S 8, 2.2]{Lin79} 
(cf.\ \cite[\S 14.1]{Lin04b}). 
It shows that Kaliman's theorem 
on endomorphisms does not hold any longer for 
$n=4$, although it does hold for automorphisms  in this case as well, see Theorem  
\ref{thm: Kaliman for n=4}. 

\begin{exa} \label{ex: Lin's counterexample} 
Consider 
the endomorphism 
\be\label{eq: F}
F\colon \mathcal{SC}^2_{\blc}\stackrel{\pi}{\longrightarrow} 
\mathcal{SC}^1_{\blc}\stackrel{\varphi}{\longrightarrow} 
E(P_0^+)\hookrightarrow \mathcal{SC}^2_{\rm blc}\,,
\ee
where 
$\varphi$ is an isomorphism of $\mathcal{SC}^1_{\blc}$ 
onto the fiber $E(P_0^+)$  over the point $P_0^+=(0,i\sqrt{3}/9)\in\mathcal{SC}^1_{\blc}$.
Such an isomorphism exists since $j(E(P_0^+))=j(\mathcal{SC}^1_{\blc})=0$. 
In contrast with the case $n\neq 4$ in 
Kaliman's theorem, $F$ is not an automorphism. 

More explicitly, an endomorphism $F$ as in (\ref{eq: F}) can be given as follows. Note that the curve $E(P_0^+)$ on the surface $\mathcal{SC}^2_{\blc}\subset\mathbb{C}^3_{(z)}$
is contained in the complete intersection given  by equations (\ref{eq: S}) of $\mathcal{SC}^2_{\blc}$ and $u_2=0$. The latter intersection is a disjoint union of the fibers $E(P_0^\pm)$ of $\pi$ with $j=0$. The equation $u_2=0$ is equivalent to $12z_4+z_2^2=0$, see (\ref{eq: u2}). Consider the family $(F_{a,b})$ of endomorphisms of $\mathbb{C}^3_{(z)}$  defined by
$$
F_{a,b}\colon (z_2,z_3,z_4)\stackrel{\pi}{\mapsto} (u_2,u_3)\stackrel{\phi}{\mapsto} \left(au_2,bu_3,-\frac{(au_2)^2}{12}\right)\,,
$$
where $a,b\in\mathbb{C}$ and
$u_2, u_3$ are given by formulas (\ref{eq: u2}). In other terms,
$$
F_{a,b}\colon f=X^4+z_2X^2+z_3X+z_4\mapsto F_{a,b}(f)=X^4+au_2X^2+bu_3X-\frac{(au_2)^{2}}{12}\,.
$$
A simple computation shows that $\discr(F_{a,b}(f))=-(1/27)(8a^3u_2^3+27b^2u_3^2)^2$. Choosing the constants $a$ and $b$ 
such that $a^3=\frac{i3\sqrt{3}}{2}$ and $b^2=i3\sqrt{3}$ we obtain the equality
$$
-(1/27)(8a^3u_2^3+27b^2u_3^2)^2=(4u_2^3+27u_3^2)^2\,,
$$
and so, 
$$
\discr(F_{a,b}(f))=(\discr(g))^2=(\discr(f))^2\,,
$$
 see (\ref{eq: discr g}).
With this choice of constants,
$F=F_{a,b}$ yields an endomorphism with one-dimensional fibers of every one of the spaces $\mathcal{C}^3_{\blc}$, $\mathcal{SC}^2_{\blc}$, and $\Sigma^2_{\blc}\subset\mathbb{C}^3_{(z)}$. Moreover, we have $F(\mathcal{SC}^2_{\blc})\subset E(P_0^+)\cup E(P_0^-)$. Acting with the subgroup $\mu_2\times\mu_3\subset\mu_{12}=\Aut\mathcal{SC}^2_{\blc}$ 
(see (\ref{eq: action-S})) we can achieve in addition that $F(\mathcal{SC}^2_{\blc})=E(P_0^+)$, and so,  $F$ fits in  (\ref{eq: F}).
\end{exa}

The next example is an algebraic counterpart of the previous one.

\begin{exa} \label{ex: Lin's counterexample-2} 

Recall that for $n>4$, the subgroup 
$\mathbf{A}'_{n-1}\cap\mathbf{PA}_{n-1}\subset \mathbf{A}'_{n-1}$ 
is stable under any endomorphism of $\mathbf{A}'_{n-1}$, 
see the discussion following Theorem \ref{Thm: Kaliman-Lin-Zinde}. 
This does not hold any longer for $n=4$. Indeed, recall the presentation 
(\cite[Theorem 2.1]{G-L69})
$$
\mathbf{A}'_3=\big\langle s,t,u,v\,|\,usu^{-1}=t, \,\,
utu^{-1}=t^2s^{-1}t, \,\,
vsv^{-1}=s^{-1}t,\,\,
vtv^{-1}=(s^{-1}t)^3s^{-2}t\big\rangle\,,
$$
where 
$$
s=\sigma_3\sigma_1^{-1},\,\,
t=\sigma_2\sigma_3\sigma_1^{-1}\sigma_2^{-1},\,\,
u=\sigma_2\sigma_1^{-1}, \,\,
v=\sigma_1\sigma_2\sigma_1^{-2}\,.
$$
We have $\mathbf{A}'_3=T\rtimes V$, where $T=\langle s,t\rangle\cong
\mathbb{F}_2$ is a normal subgroup of $\mathbf{A}'_3$ 
and $V=\langle u,v\rangle\cong
\mathbb{F}_2$. Consider the composition $f=i\circ p\in\End \mathbf{A}'_3$, 
where $p\colon \mathbf{A}'_3\to V=\mathbf{A}'_3/T$ is the quotient 
morphism  and $i\colon V\stackrel{\cong}{\longrightarrow} T
\hookrightarrow  \mathbf{A}'_3$ an isomorphism onto the subgroup $T$.
We have
$$
f\colon s\mapsto 1, \,\, t\mapsto 1, \,\,u \mapsto s, \,\, v\mapsto t\,.
$$
We claim that $uv\in \mathbf{PA}_3$, whereas
$f(uv)=st\not\in \mathbf{PA}_3$. Indeed, the images of $u$ and $v$ in
the alternating group $\mathbf{A}(4)\subset\mathbf{S}(4)$ 
are mutually inverse three-cycles, while the images of 
$s$ and $t$ are products of independent transpositions, which
generate the Klein four-group $K\subset \mathbf{A}(4)$. 
Thus, the subgroup $\mathbf{A}'_3\cap\mathbf{PA}_3$ of 
$\mathbf{A}'_3$ is not stable under $f$.

We claim that, likewise $f$, 
the endomorphism $F_*\in\End\pi_1(\mathcal{SC}_{\blc}^2)
=\End\mathbf{A}'_3$ 
does not preserve 
the intersection $\mathbf{A}'_3\cap\mathbf{PA}_3$. 
Implicitly, this follows from the proof of Kaliman's Theorem in 
\cite[Theorem 12.13]{Lin04b}.
Indeed, this proof shows that, if $H$ is an endomorphism of 
$\mathcal{SC}_{\blc}^2$ such that $H_*$ preserves 
the subgroup $\mathbf{A}'_3\cap\mathbf{PA}_3$, then 
$H\in\Aut\mathcal{SC}_{\blc}^2$. Let us give a direct proof of our claim,
which uses some ideas from \cite{G-L69}.

The homotopy exact sequence 
of the fiber bundle $\pi\colon \mathcal{SC}^2_{\blc}\to\mathcal{SC}^1_{\blc}$ with general 
fiber $E=E(P)$ is
$$
1\to \pi_1(E)\to \pi_1(\mathcal{SC}^2_{\blc})
\stackrel{\pi_*}{\longrightarrow} 
\pi_1(\mathcal{SC}^1_{\blc})\to 1\,.
$$
 This leads to 
a  semi-direct product decomposition 
$\mathbf{A}'_3\cong \pi_1(\mathcal{SC}^2_{\blc})=B\rtimes A$, where 
$A=\pi_1(\mathcal{SC}^1_{\blc})\cong \mathbb{F}_2$ and 
$B=\pi_1(E)\cong\mathbb{F}_2$, cf.\ 
\cite[Corollary 2.7]{G-L69} (cf.\ also \cite[\S 14.1]{Lin04b}).
Let us show that $T= B$. 

Indeed, $T$ is 
the intersection of the members of the lower 
central series of the group
$\mathbf{A}'_3$, see \cite[Theorem 2.10.a]{G-L69}. 
Hence the image of $T$ in the quotient group $\mathbf{A}'_3/B\cong A$ is 
the intersection of the members of the lower 
central series of the group $A\cong \mathbb{F}_2$. The latter intersection
is trivial due to a theorem of Magnus (\cite{Magnus}; 
see also \cite[Ch.\ IX, \S 36]{Kurosh}). 
Thus $T\subset B$.
Hence $V=\mathbf{A}'_3/T\cong (B/T)\rtimes A$. Due 
to another theorem of Magnus 
(\cite[\S 5, VIII]{Magnus}; see also \cite[Theorem 41.52]{Neumann}), 
a free group of finite rank is Hopfian, i.e., it does not admit 
an isomorphic proper quotient group.  Since $A\cong V\cong\mathbb{F}_2$, 
this implies that $B/T=1$, and so, $T=B$. 

It follows from our
 constructions 
that the endomorphisms 
$F_*$ and  $f$ of $\mathbf{A}'_3$ with the same image $T=B$ and the same kernel
  $T=B$ differ by an 
automorphism, say, $\alpha$ of $T$. The image of $T$ 
in the alternating group 
$\mathbf{A}(4)$ is the  Klein four-group $K$.
The images of $\alpha(s)=F_*(u)$ and $\alpha(t)=F_*(v)$ generate $K$. 
Hence the image of $F_*(uv)=\alpha(st)$ in $K$ is different from $1$. 
Once again, we have $uv\in \mathbf{PA}_3$, while 
$F_*(uv)\not\in\mathbf{PA}_3$. Thus, the subgroup $\mathbf{A}'_3\cap\mathbf{PA}_3$ of 
$\mathbf{A}'_3$ is not stable under the endomorphism 
$F_*\in\End \mathbf{A}'_3$.
 \end{exa}

\section{Holomorphic endomorphisms of the balanced configuration space}
\label{Sec: Holomorphic endomorphisms of Cblc(n-1)}

In this section, $\mathcal{O}_{\hol}^{\times}(\mathcal{Z})$ stands for
the multiplicative group of the algebra $\mathcal{O}_{\hol}(\mathcal{Z})$
of all holomorphic functions on a complex space $\mathcal{Z}$, and  
$\mathcal{O}_{\hol,+}(\mathcal{Z})$ for its additive group.

By (\ref{eq: 3 cylinders}), any holomorphic endomorphism $f$ of 
$\mathcal{C}^{n-1}_{\blc}$
extends to a holomorphic endomorphism of $\mathcal{C}^n$. Such an
extension is non-Abelian whenever $f$ is non-Abelian\footnote{The latter 
means
 that the image of the induced endomorphism of the corresponding 
fundamental group 
is non-Abelian, see the Introduction.}. 
The {\em minimal} extension $F$ given by $F(Q)=f(Q-\bc(Q))$ for all 
$Q\in\mathcal{C}^n$ 
maps $\mathcal{C}^n$ to 
$\mathcal{C}^{n-1}_{\blc}\subset\mathcal{C}^n$, see (\ref{eq: pi}).

Among the affine transformations of $\mathbb{C}$ acting diagonally on
$\mathcal{C}^n$, only the elements of the multiplicative subgroup
$\mathbb{C}^*\subset\Aff\mathbb{C}$ fixing the origin
$0\in\mathbb{C}$ preserve the balanced configuration space
$\mathcal{C}^{n-1}_{\blc}\subset\mathcal{C}^n$. Let $\mathcal{S}$
denote this $\mathbb{C}^*$-action on each of the spaces
$\mathcal{C}^n$ and $\mathcal{C}^{n-1}_{\blc}$, and let
$\mathcal{O}_{\hol}^{\mathcal{S}}(\mathcal{C}^{n-1}_{\blc})$ be the
subalgebra of $\mathcal{O}_{\hol}(\mathcal{C}^{n-1}_{\blc})$ of 
all $\mathcal{S}$-invariant functions.

For any configuration ${Q^\circ}\in\mathcal{C}^{n-1}_{\blc}$ its
$\mathbb{C}^*$-stabilizer
$\textup{St}_{\mathbb{C}^*}(Q^\circ)=\{\zeta\in\mathbb{C}^*\,|\
\zeta\cdot {Q^\circ}={Q^\circ}\}$ is a cyclic rotation subgroup in
$\mathbb{C}^*$ of order\,$\le{n}$ permuting elements of $Q^\circ$.
If $n\ge 3$, it follows that the set
$\{Q^\circ\in\mathcal{C}^{n-1}_{\blc}\mid
\textup{St}_{\mathbb{C}^*}(Q^\circ)\ne\{1\}\}$  is a Zariski
closed subset in $\mathcal{C}^{n-1}_{\blc}$ of dimension $1$ and
$\{Q^\circ\in\mathcal{C}^{n-1}_{\blc}\mid
\textup{St}_{\mathbb{C}^*}(Q^\circ)=\{1\}\}$ is a Zariski open
dense subset of $\mathcal{C}^{n-1}_{\blc}$.

\begin{defi}\label{def: C*-tame map}
We say that a holomorphic self-map $f$ of
$\mathcal{C}^{n-1}_{\blc}$ is $\mathbb{C}^*$-{\em tame}, if there
is a holomorphic function
$h\colon\,\mathcal{C}^{n-1}_{\blc}\to\mathbb{C}^*$ such that
$f(Q^\circ)=h(Q^\circ)\cdot{Q}^\circ$ for all
$Q^\circ\in\mathcal{C}^{n-1}_{\blc}$.
\vskip4pt

Note that the cohomology group
$H^1(\mathcal{C}^{n-1}_{\blc},\mathbb{Z})\cong\mathbb{Z}$ of the
Stein manifold $\mathcal{C}^{n-1}_{\blc}$ is generated by the
cohomology class of the discriminant $D_n|_{\mathcal{C}^{n-1}_{\blc}}$ 
(see (\ref{eq: discriminant of Q})) restricted to
$\mathcal{C}^{n-1}_{\blc}$. Hence any function
$h\in\mathcal{O}_{\rm hol}^\times(\mathcal{C}^{n-1}_{\blc})$ can be
written as $h=e^{\chi}D_n^m$ with some
$\chi\in\mathcal{O}_{\hol}(\mathcal{C}^{n-1}_{\blc})$ and
$m\in\mathbb{Z}$.
\end{defi}

The results below, stated in \cite{Lin72b} and \cite[Sec. 8.2.1]{Lin79},
are simple consequences of the analytic counterpart of Tame Map Theorem (see \cite{Lin04b} 
or \cite{Lin11} for the proof) and the facts mentioned above.

\begin{thm}\label{Thm: Non-Abelian endomorphisms of Cblc(n-1)}
For $n>4$ every non-Abelian holomorphic self-map $f$ of
$\mathcal{C}^{n-1}_{\blc}$ is ${\mathbb C}^*$-tame, i.e., it can be given by
\begin{equation}\label{eq: general form of f: Cblc(n-1) to Cblc(n-1)}
f({Q^\circ})=\mathcal{S}_{e^{\chi({Q^\circ})}
D_n^m({Q^\circ})}{Q^\circ}= e^{\chi({Q^\circ})}
D_n^m({Q^\circ})\cdot {Q^\circ} \ \ \text{\rm for all} \ \
{Q^\circ}\in\mathcal{C}^{n-1}_{\blc}\,,
\end{equation}
where $\chi\in\mathcal{O}_{\hol}(\mathcal{C}^{n-1}_{\blc})$
and $m\in\mathbb{Z}$.
\end{thm}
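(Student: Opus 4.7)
The plan is to reduce the claim for $f$ on the balanced space to the analytic Tame Map Theorem applied to a carefully chosen extension $F$ of $f$ to the full configuration space $\mathcal{C}^n$, and then to combine this with the description of $\mathcal{O}^\times_{\hol}(\mathcal{C}^{n-1}_{\blc})$ given by Samuel's lemma and Example \ref{ex: Mult}(a).

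First, I would form the minimal extension $F\colon\mathcal{C}^n\to\mathcal{C}^n$ given by $F(Q)=f(\pi(Q))=f(Q-\bc(Q))$, whose image lies in $\mathcal{C}^{n-1}_{\blc}\subset\mathcal{C}^n$. Because $\pi\colon\mathcal{C}^n=\mathcal{C}^{n-1}_{\blc}\times\mathbb{C}\to\mathcal{C}^{n-1}_{\blc}$ is the projection of a trivial $\mathbb{C}$-bundle, $\pi$ is a homotopy equivalence and $\pi_*$ is an isomorphism of fundamental groups; hence $F_*=f_*\circ\pi_*$ has non-Abelian image whenever $f$ does, and $F$ is a non-Abelian holomorphic endomorphism of $\mathcal{C}^n$.

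Next, I would invoke the analytic counterpart of Tame Map Theorem for $X=\mathbb{C}$ (\cite{Lin03,Lin04b,Lin11}) to produce a holomorphic map $T\colon\mathcal{C}^n\to\Aff\mathbb{C}$ with $F(Q)=T(Q)Q$ for all $Q$. Writing $T(Q)\zeta=a(Q)\zeta+b(Q)$ with $a\in\mathcal{O}^\times_{\hol}(\mathcal{C}^n)$ and $b\in\mathcal{O}_{\hol}(\mathcal{C}^n)$, the inclusion $F(\mathcal{C}^n)\subset\mathcal{C}^{n-1}_{\blc}$ translates into $\bc(F(Q))=a(Q)\bc(Q)+b(Q)=0$, so $b(Q)=-a(Q)\bc(Q)$. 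Restricting to $\mathcal{C}^{n-1}_{\blc}\subset\mathcal{C}^n$ (where $\bc(Q^\circ)=0$) gives $f(Q^\circ)=a(Q^\circ)\,Q^\circ$ with $a|_{\mathcal{C}^{n-1}_{\blc}}\in\mathcal{O}^\times_{\hol}(\mathcal{C}^{n-1}_{\blc})$, so $f$ is already $\mathbb{C}^*$-tame in the sense of Definition \ref{def: C*-tame map}.

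Finally, to put $a$ in the stated form, I would apply the exact cohomology sequence appearing in the proof of Samuel's lemma (Lemma \ref{lem: Mult-group}) together with Example \ref{ex: Mult}(a): since $H^1(\mathcal{C}^{n-1}_{\blc},\mathbb{Z})\cong\mathbb{Z}$ is generated by the cohomology class of $D_n|_{\mathcal{C}^{n-1}_{\blc}}$, any unit $a\in\mathcal{O}^\times_{\hol}(\mathcal{C}^{n-1}_{\blc})$ can be written as $a=e^{\chi}D_n^m$ for some $\chi\in\mathcal{O}_{\hol}(\mathcal{C}^{n-1}_{\blc})$ and $m\in\mathbb{Z}$, yielding (\ref{eq: general form of f: Cblc(n-1) to Cblc(n-1)}). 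The only genuinely nontrivial input is the analytic Tame Map Theorem itself, which is assumed as a black box here; the rest of the argument is a short bookkeeping exercise ensuring that the triangular structure forced by Tame Map Theorem restricts correctly to $\mathcal{C}^{n-1}_{\blc}$.
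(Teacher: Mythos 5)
Your proposal is correct and follows essentially the same route as the paper's proof: extend $f$ to the non-Abelian endomorphism $F(Q)=f(Q-\bc(Q))$ of $\mathcal{C}^n$, apply the analytic Tame Map Theorem to get $F(Q)=a(Q)Q+b(Q)$, use the barycenter condition to kill $b$ on $\mathcal{C}^{n-1}_{\blc}$, and then write the resulting unit as $e^{\chi}D_n^m$ via $H^1(\mathcal{C}^{n-1}_{\blc},\mathbb{Z})\cong\mathbb{Z}$ generated by the class of $D_n$. The only (harmless) additions are your explicit homotopy-equivalence justification that $F$ is non-Abelian and the derivation of $b(Q)=-a(Q)\bc(Q)$ on all of $\mathcal{C}^n$ before restricting, which the paper does directly on the balanced space.
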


\begin{proof}
The map $f$ admits a holomorphic non-Abelian extension 
$F\colon\mathcal{C}^n\to\mathcal{C}^{n-1}_{\blc}\subset\mathcal{C}^n$
defined by $F(Q)=f(Q-\bc(Q))$ for all $Q\in\mathcal{C}^n$. 
By the analytic version of Tame Map Theorem, $F(Q)=A(Q)Q+B(Q)$ for all 
$Q\in\mathcal{C}^n$ with $A\in\mathcal{O}^\times_{\hol}(\mathcal{C}^n)$ and
$B\in\mathcal{O}_{\hol}(\mathcal{C}^n)$. Since
$\mathcal{C}^{n-1}_{\blc}\subset\mathcal{C}^n$ and
$\bc({Q^\circ})=0$ for any ${Q^\circ}\in\mathcal{C}^{n-1}_{\blc}$,
we see that
$$
f({Q^\circ})=a({Q^\circ}){Q^\circ}+b({Q^\circ})\ \ \textup{for
all} \ \ {Q^\circ}\in\mathcal{C}^{n-1}_{\blc}\,,
\ \ \textup{where} \ \
a=A|_{\mathcal{C}^{n-1}_{\blc}} \ \ \textup{and} \ \
b=B|_{\mathcal{C}^{n-1}_{\blc}}\,.
$$
Moreover, $b=0$. Indeed, the condition $\bc(f(Q^\circ))=\bc(Q^\circ)=0$
implies that
$$
b({Q^\circ})=a({Q^\circ})\bc({Q^\circ})+b({Q^\circ})
=\bc(a({Q^\circ}){Q^\circ}+b({Q^\circ}))=\bc(f({Q^\circ}))=0\ \
$$
for all $Q^\circ\in\mathcal{C}^{n-1}_{\blc}$ and
$$
a=e^\chi D_n^m \ \ \textup{for some} \ m\in\mathbb{Z} \
\textup{and} \
\chi\in\mathcal{O}_{\hol}(\mathcal{C}^{n-1}_{\blc})\,.
$$
This proves (\ref{eq: general form of f: Cblc(n-1) to Cblc(n-1)}).
\end{proof}

\begin{thm}\label{Thm: properties of C*-tame maps}
Let $n\ge{3}$, and let
$f=f_{\chi,m}\colon\,\mathcal{C}^{n-1}_{\blc}\to\mathcal{C}^{n-1}_{\blc}$
be   a holomorphic map as in
\textup{(\ref{eq: general form of f: Cblc(n-1) to Cblc(n-1)})}.
Then the following hold.

\textup{(a)} The map $f$ is surjective\,\footnote{In view of
Theorem \ref{Thm: Non-Abelian endomorphisms of Cblc(n-1)}, for $n>4$ {\em any} 
non-Abelian holomorphic endomorphism of $\mathcal{C}^{n-1}_{\blc}$ is surjective.}, 
and the set $f^{-1}(Q^\circ)$ is discrete for any
$Q^\circ\in\mathcal{C}^{n-1}_{\blc}$. This set consists of all
points $\omega\cdot Q^\circ$, where $\omega\in\mathbb{C}^*$ is any
root of the system of equations
\begin{equation}\label{eq: equation for preimage}
\omega^{mn(n-1)+1}e^{\chi(\omega\cdot Q^\circ)} D_n^m(Q^\circ) \cdot
Q^\circ=Q^\circ\,,
\end{equation}
which always has solutions.
\vskip3pt

\textup{(b)} The map $f$ is proper $($in the complex topology$)$ 
if and only if $\chi\in\mathcal{O}_{\hol}^{\mathcal{S}}({\mathcal{C}^{n-1}_{\blc}})$.
In this case $f\colon\,\mathcal{C}^{n-1}_{\blc}\to\mathcal{C}^{n-1}_{\blc}$ 
is a finite unramified cyclic holomorphic covering of degree
$N=mn(n-1)+1$. The corresponding normal subgroup
$f_*(\pi_1(\mathcal{C}^{n-1}_{\blc}))$ of index $N$ in the Artin
braid group $\mathbf{A}_{n-1}=\pi_1(\mathcal{C}^{n-1}_{\blc})$ consists of all the elements
$g=\sigma_{i_1}^{m_1}\cdot\ldots\cdot\sigma_{i_q}^{m_q}\in \mathbf{A}_{n-1}$
such that $N$ divides $m_1+\ldots+m_q$,  \textup{}where
$\{\sigma_1,\ldots,\sigma_{n-1}\}$ is the standard system of
generators of $\mathbf{A}_{n-1}$. Every two such coverings of the same degree
are equivalent. \vskip3pt

\textup{(c)} The map $f$ is a biholomorphic automorphism of
$\mathcal{C}^{n-1}_{\blc}$ if and only if it is of the form
$f({Q^\circ})=e^{\chi({Q^\circ})}\cdot {Q^\circ}$ for any
${Q^\circ}\in\mathcal{C}^{n-1}_{\blc}$ and some
$\chi\in\mathcal{O}_{\hol}^{\mathcal{S}}({\mathcal{C}^{n-1}_{\blc}})$.
Every automorphism is isotopic to the identity and
$\Aut_{\hol}\mathcal{C}^{n-1}_{\blc}\cong
\mathcal{O}_{\hol,+}^{\mathcal{S}}({\mathcal{C}^{n-1}_{\blc}})/2\pi
i\mathbb{Z}$. \vskip3pt

\textup{(d)} If $f$ is regular, then $\chi=\const$ and so
$f({Q^\circ})=cD_n^m({Q^\circ})\cdot {Q^\circ}$ for all
${Q^\circ}\in\mathcal{C}^{n-1}_{\blc}$, where $c\in\mathbb{C}^*$
and $m\in\mathbb{Z}$. Every biregular automorphism $f$ of
$\mathcal{C}^{n-1}_{\blc}$ is of the form $f({Q^\circ})=s\cdot
{Q^\circ}$, ${Q^\circ}\in\mathcal{C}^{n-1}_{\blc}$, where
$s\in\mathbb{C}^*$. In particular, the group of all biregular automorphisms 
$\Aut\mathcal{C}^{n-1}_{\blc}$ is isomorphic to $\mathbb{C}^*$.
\end{thm}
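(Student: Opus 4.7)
The whole argument hinges on a single structural observation: since $D_n$ is homogeneous of degree $n(n-1)$ under $\mathcal{S}$, the map $f = f_{\chi,m}$ sends every $\mathcal{S}$-orbit in $\mathcal{C}^{n-1}_{\blc}$ into itself. For $n\geq 3$, a dense Zariski-open subset of points $Q_0^\circ$ have trivial $\mathbb{C}^*$-stabilizer, so their orbits are biholomorphic to $\mathbb{C}^*$ via $\omega\mapsto\omega Q_0^\circ$; on such an orbit, $f$ restricts to
\[
\psi(\omega) = \omega^N\, e^{\chi(\omega Q_0^\circ)}\, D_n^m(Q_0^\circ), \qquad N := mn(n-1)+1,
\]
and because $n\geq 3$ one has $N\neq 0$ for every $m\in\mathbb{Z}$. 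Orbits with non-trivial (necessarily cyclic) stabilizer are treated in the same way modulo the stabilizer. All four parts of the theorem will be read off properties of $\psi$.

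For part (a), setting $\omega = e^s$ lifts $\psi$ to the entire function $\Psi(s) = Ns + \chi(e^s Q_0^\circ) + \log D_n^m(Q_0^\circ)$. On every vertical line $\{\Re s = \mathrm{const}\}$, the image $\{e^s Q_0^\circ\}$ traces a compact circle in $\mathcal{C}^{n-1}_{\blc}$, so $\chi(e^s Q_0^\circ)$ stays bounded there, whereas $Ns$ is unbounded; hence $\Psi$ is non-constant. Picard's little theorem then yields that $\Psi(s)\equiv v\pmod{2\pi i\mathbb{Z}}$ has solutions for every $v\in\mathbb{C}$ with at most one exception, which in turn gives surjectivity of $\psi$ and discreteness of its fibers as zero sets of a non-constant entire function; the solutions translate back into equation \textup{(\ref{eq: equation for preimage})}. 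Surjectivity extends across the lower-dimensional locus of non-trivially stabilized orbits by density and continuity.

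For part (b), properness of $f$ is equivalent to properness of each $\psi:\mathbb{C}^*\to\mathbb{C}^*$. A proper holomorphic self-map of $\mathbb{C}^*$ extends to $\mathbb{P}^1\to\mathbb{P}^1$ preserving $\{0,\infty\}$ and is thus of the form $\omega\mapsto c\omega^N$; this forces $e^{\chi(\omega Q_0^\circ)}$ to be independent of $\omega$, i.e., $\chi\in\mathcal{O}_{\hol}^\mathcal{S}(\mathcal{C}^{n-1}_{\blc})$. Conversely, if $\chi$ is $\mathcal{S}$-invariant, then each $\psi$ is the unramified power map $\omega\mapsto c\omega^N$, and assembling these yields an unramified covering of degree $|N|$ globally. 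The subgroup $f_*(\pi_1(\mathcal{C}^{n-1}_{\blc}))$ is then determined by the induced map on $H_1(\mathcal{C}^{n-1}_{\blc},\mathbb{Z})\cong\mathbb{Z}$, which is multiplication by $N$; combined with the standard abelianization $\mathbf{A}_{n-1}\to\mathbb{Z}$, $\sigma_i\mapsto 1$, this gives the announced description, and equivalence of such coverings is classical.

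Parts (c) and (d) follow from (a) and (b) essentially formally. An automorphism must have degree one, so $N=1$, forcing $m=0$ (since $n(n-1)\geq 6$), and $\chi\in\mathcal{O}_{\hol}^\mathcal{S}$; the composition law $f_{\chi_1}\circ f_{\chi_2} = f_{\chi_1+\chi_2}$ then exploits $\mathcal{S}$-invariance and yields the announced group isomorphism with kernel $2\pi i\mathbb{Z}$, and the isotopy $f_t(Q^\circ):=e^{t\chi(Q^\circ)}Q^\circ$ joins $f$ to the identity. For regularity in (d), $e^\chi D_n^m\in\mathcal{O}^\times(\mathcal{C}^{n-1}_{\blc}) = \mathbb{C}^*\cdot D_n^{\mathbb{Z}}$ by Example \ref{ex: Mult}\,(a); hence $e^\chi = cD_n^{k-m}$, and single-valuedness of $\chi$ (while $\log D_n$ generates $H^1(\mathcal{C}^{n-1}_{\blc},\mathbb{Z})$) forces $k=m$ and $\chi$ constant. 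The biregular automorphisms additionally require $m=0$. The main obstacle is the Picard step in (a): one must ensure that $\Psi$ is non-constant uniformly for every orbit and every choice of $(\chi, m)$, which rests on the fact $N\neq 0$ for $n\geq 3$ together with the boundedness of $\chi$ on compact arcs; once this is secured, the rest unfolds by routine bookkeeping.
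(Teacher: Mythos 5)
Your proposal is correct in substance and follows essentially the same route as the paper: both restrict $f$ to the $\mathcal{S}$-orbits, study the multiplier $\psi_{Q^\circ}(\omega)=\omega^{N}e^{\chi(\omega\cdot Q^\circ)}D_n^m(Q^\circ)$ with $N=mn(n-1)+1\neq 0$, obtain surjectivity from a Picard-type argument, derive $\mathcal{S}$-invariance of $\chi$ from properness, read off the unramified cyclic covering and identify $f_*(\pi_1)$ through the abelianization $\mathbf{A}_{n-1}\to\mathbb{Z}$, and settle (c) and (d) via $N=1$ and the unit group $\mathcal{O}^\times(\mathcal{C}^{n-1}_{\blc})\cong\mathbb{C}^*\times D_n^{\mathbb{Z}}$; your small variants (lifting to the logarithm and using little Picard instead of the paper's direct appeal to Picard, and characterizing proper self-maps of $\mathbb{C}^*$ by extension to $\mathbb{P}^1$ rather than by infinitude of fibers) change nothing essential. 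One point to fix: in (a) the claim that surjectivity ``extends across the lower-dimensional locus of non-trivially stabilized orbits by density and continuity'' is not a valid inference, since the image of a non-proper holomorphic map need not be closed; but the detour is unnecessary, because your orbitwise argument applies verbatim to every $Q^\circ$: a solution of $\psi_{Q^\circ}(\omega)=1$ gives $f(\omega\cdot Q^\circ)=Q^\circ$ whatever the stabilizer, which only enters when enumerating the full fiber via the condition $\psi_{Q^\circ}(\omega)\in\mathrm{St}_{\mathbb{C}^*}(Q^\circ)$ --- and this uniform treatment is exactly how the paper argues.
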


\begin{proof}
\textup{(a)} Given a configuration $Q^\circ\in\mathcal{C}^{n-1}_{\blc}$, we
set
\begin{equation}\label{eq: function psi on C*}
\psi_{Q^\circ}(\omega)\Def\omega^{mn(n-1)+1}e^{\chi(\omega\cdot
{Q^\circ})} D_n^m(Q^\circ)\ \ \textup{for any} \
\omega\in\mathbb{C}^*\,.
\end{equation}
Clearly
$\psi_{Q^\circ}\in\mathcal{O}^\times_{\hol}(\mathcal{C}^{n-1}_{\blc})$
and $\psi_{Q^\circ}\ne\const$, since $mn(n-1)+1\ne{0}$ and
$e^{\chi(\omega\cdot{Q^\circ})}$ cannot be a non-constant rational
function of $\omega\in\mathbb{C}^*$. Hence, by the Picard theorem,
$\psi_{Q^\circ}(\mathbb{C}^*)=\mathbb{C}^*$. According to
(\ref{eq: general form of f: Cblc(n-1) to Cblc(n-1)}), we have
$$
f(\omega\cdot{Q^\circ})=e^{\chi(\omega\cdot{Q^\circ})}
D_n^m(\omega\cdot{Q^\circ})\cdot {\omega\cdot Q^\circ}
=\omega^{mn(n-1)+1}e^{\chi(\omega\cdot{Q^\circ})}
D_n^m(Q^\circ)\cdot Q^\circ=\psi_{Q^\circ}(\omega)\cdot{Q^\circ}\,.
$$
Thus, taking $\omega\in\mathbb{C}^*$ such that
$\psi_{Q^\circ}(\omega)=1$, we see that
$Q^\circ\in{f}(\mathcal{C}^{n-1}_{\blc})$. Hence $f$ is
surjective. Furthermore, all such $\omega$ satisfy the system of
equations (\ref{eq: equation for preimage}). Since the stabilizer
$\textup{St}_{\mathbb{C}^*}(Q^\circ)$ is finite, all solutions
$\omega$ of (\ref{eq: equation for preimage}) form a finite union
of countable discrete subsets of $\mathbb{C}^*$. Thus the set
$f^{-1}(Q^\circ)$ is countable and discrete. \vskip3pt

\textup{(b)} If $f$ as in (\ref{eq: general form of f: Cblc(n-1)
to Cblc(n-1)}) is proper then $f^{-1}(Q^\circ)$ is finite for any
$Q^\circ\in\mathcal{C}_{\blc}^{n-1}$. This is possible only when
the exponent $\chi(\omega\cdot Q^\circ)$ in (\ref{eq: equation for
preimage}) and (\ref{eq: function psi on C*}) does not depend on
$\omega\in\mathbb{C}^*$, i.e., the function $\chi$ is
$\mathcal{S}$-invariant. Then, for any fixed $Q^\circ$, the function
(\ref{eq: function psi on C*}) takes the form
$$
\psi_{Q^\circ}(\omega)=\widetilde{\psi}_{Q^\circ}(\omega)
\Def\omega^{mn(n-1)+1}e^{\chi(Q^\circ)}
D_n^m(Q^\circ)\,.
$$
The latter function is homogeneous of degree $N=mn(n-1)+1$, 
and the equation $\widetilde{\psi}_{Q^\circ}(\omega)=1$ has
precisely $N$ distinct roots $\omega_1,\ldots,\omega_N$. 
If the stabilizer $\textup{St}_{\mathbb{C}^*}(Q^\circ)$ is trivial, then
$f^{-1}(Q^\circ)$ consists on $N$ distinct points
$\omega_1{Q^\circ},\ldots,\omega_N{Q^\circ}$. If
$\textup{St}_{\mathbb{C}^*}(Q^\circ)\ne\{1\}$, then, to find the preimage
 $f^{-1}(Q^\circ)$, we have to solve the inclusion
$\widetilde{\psi}_{Q^\circ}(\omega)\in\textup{St}_{\mathbb{C}^*}(Q^\circ)$.
Fix some $\omega_0$ such that $\widetilde{\psi}_{Q^\circ}(\omega_0)=1$,
take any solution $\omega\in\mathbb{C}^*$ of the above inclusion, and
set $\lambda=\omega/\omega_0$. 
Then 
\begin{equation}\label{eq: equation for lambda}
\lambda^N=\left(\frac{\omega}{\omega_0}\right)^N
=\frac{\widetilde{\psi}_{Q^\circ}(\omega)}
{\widetilde{\psi}_{Q^\circ}(\omega_0)}
=\widetilde{\psi}_{Q^\circ}(\omega)\in\textup{St}_{\mathbb{C}^*}(Q^\circ)\,.
\end{equation}
The preimage $f^{-1}(Q^\circ)$ of $Q^\circ$ consists of all configurations
$\omega{Q^\circ}=\omega_0\lambda{Q^\circ}$, where $\lambda$ runs
over all solutions of the inclusion (\ref{eq: equation for lambda}).
All such configurations $\omega_0\lambda{Q^\circ}$
form a periodic sequence $\omega_0\lambda^k{Q^\circ}$, $k\in\mathbb{Z}_{\ge\,0}$, 
with period $N$; therefore, this sequence contains precisely $N$ distinct elements. 
It follows easily from these 
facts that $f\colon\mathcal{C}^{n-1}_{\blc}\to\mathcal{C}^{n-1}_{\blc}$ is an
unramified cyclic covering of degree $N$.

The proof of  the other assertions in (b) is easy, and
we leave it to the reader.
\vskip3pt

\textup{(c)} For a given
$\chi\in\mathcal{O}^{\mathcal{S}}_{\hol}(\mathcal{C}_{\blc}^{n-1})$ we let
 $f_1({Q^\circ})=e^{\chi({Q^\circ})}{Q^\circ}$ and
$f_2({Q^\circ})=e^{-\chi({Q^\circ})}{Q^\circ}$.
It follows from the $\mathcal{S}$-invariance of $\chi$ that
$f_1(f_2(Q^\circ))=f_2(f_1(Q^\circ))=Q^\circ$ for every
${Q^\circ}\in\mathcal{C}_{\blc}^{n-1}$. Thus $f_1$ and $f_2$ are
mutually inverse biholomorphic automorphisms of
$\mathcal{C}_{\blc}^{n-1}$. To prove the converse note that any
automorphism is a proper map. According to Theorem \ref{Thm:
Non-Abelian endomorphisms of Cblc(n-1)} (formula (\ref{eq: general
form of f: Cblc(n-1) to Cblc(n-1)})) and part \textup{(b)}, such a
map is of the form ${Q^\circ}\mapsto e^{\chi({Q^\circ})}{Q^\circ}$
with
$\chi\in\mathcal{O}^{\mathcal{S}}_{\hol}(\mathcal{C}_{\blc}^{n-1})$.
The other two assertions of part \textup{(c)} are clear.
\vskip3pt

\textup{(d)} A map as in  (\ref{eq: general form of f: Cblc(n-1)
to Cblc(n-1)}) is regular if and only if $\chi=\const$, i.e.,
$$
f({Q^\circ})=s D_n^m({Q^\circ})\cdot {Q^\circ} \ \ \textup{for
all} \ \ {Q^\circ}\in\mathcal{C}^{n-1}_{\blc}\,, \ \
\textup{where} \ \ s\in\mathbb{C}^* \ \ \textup{and} \ \
m\in\mathbb{Z}\,.
$$
It is a biregular automorphism of $\mathcal{C}^{n-1}_{\blc}$ if
and only if $m=0$. Hence
$\Aut\mathcal{C}^{n-1}_{\blc}\cong\mathbb{C}^*$.
\end{proof}

\begin{rem}[{\em Dimension of the image}\rm]
\label{Rm: dimension of image} In what follows we assume that $n>4$.
According to \cite[Theorem 14]{Lin11}, for $X=\mathbb{C}$ or $\mathbb{P}^1$
and any non-Abelian holomorphic endomorphism $F$ of $\mathcal{C}^n(X)$ 
we have $\dim_{\mathbb{C}}F(\mathcal{C}^n(X)) \ge{n-\dim_{\mathbb{C}}(\Aut X)+1}$. 
Moreover, by \cite[Remark 7]{Lin11} or theorems \ref{Thm: Non-Abelian endomorphisms of Cblc(n-1)} 
and \ref{Thm: properties of C*-tame maps}\,(a) above, for $X=\mathbb{C}$ the composition $\pi\circ{F}$ 
of any non-Abelian holomorphic endomorphism $F$ of $\mathcal{C}^n$ with the projection 
$\pi\colon\mathcal{C}^n\to\mathcal{C}^{n-1}_{\blc}$ is surjective, so that 
$\dim_{\mathbb{C}}{F}(\mathcal{C}^n)\ge{n-1}$. Clearly, the latter  bound  cannot be improved. 
Seemingly, for $X=\mathbb{P}^1$ no example of $F$ with 
$\dim_{\mathbb{C}}F(\mathcal{C}^n(\mathbb{P}^1))<n$
is known. Zinde (\cite{Zin78}) proved that  for $X=\mathbb{C}^*$  any non-Abelian 
holomorphic endomorphism of $\mathcal{C}^n(\mathbb{C}^*)$ is surjective.
\end{rem}

\end{document}